\newtheorem{theorem}{Theorem}
\newtheorem*{ThmA}{Theorem}
\newtheorem{proposition}{Proposition}[section]
\newtheorem{lemma}[proposition]{Lemma}
\theoremstyle{definition}
\newtheorem{remark}{Remark}[section]
\newtheorem{definition}{Definition}[section]
\numberwithin{equation}{section}
\DeclareMathOperator{\supp}{supp}
\newcommand{\N}{{\mathbb N}}
\newcommand{\R}{{\mathbb R}}
\newcommand{\abs}[1]{\lvert #1 \rvert}
\newcommand{\Bigabs}[1]{\Bigl\lvert #1 \Bigr\rvert}
\title[Nonexistence of supersolutions of Choquard equations]
{Nonexistence and optimal decay of supersolutions to Choquard equations in exterior domains}
\author{Vitaly Moroz}
\address{Swansea University\\ Mathematics Department\\ Singleton Park\\
Swansea\\ SA2~8PP\\ Wales, United Kingdom}	
\email{V.Moroz@swansea.ac.uk}
\author{Jean Van Schaftingen}
\address{Universit\'e Catholique de Louvain\\ Institut de Recherche en Math\'ematique et Phy\-si\-que\\ Chemin du Cyclotron 2 bte L7.01.01\\ 1348 Louvain-la-Neuve \\ Belgium}
\email{Jean.VanSchaftingen@uclouvain.be}
\keywords{Stationary Choquard equation; stationary focusing Hartree equation; stationary nonlinear Schr\"odinger--Newton equation; Riesz potential;
nonlocal semilinear elliptic problem; exterior domain;
nontrivial nonnegative supersolutions;
ground-state transformation; decay estimates; nonlinear Liouville theorems}
\subjclass[2010]{35J61 (35B09, 35B33, 35B40, 35J45, 35Q55, 45K05)}
\date{\today}
\begin{document}

\begin{abstract}
We consider a semilinear elliptic problem with a nonlinear term which is the product of a power and the Riesz potential of a power.
This family of equations includes the Choquard or nonlinear Schr\"odinger--Newton equation.
We show that for some values of the parameters the equation does not have
nontrivial nonnegative supersolutions in exterior domains.
The same techniques yield optimal decay rates when supersolutions exists.
\end{abstract}

\maketitle

\begin{small}
\tableofcontents
\end{small}

\section{Introduction.}

We study the nonlocal nonlinear equation
\begin{equation}\label{eqChoquardV}
\tag{$\mathcal{C}$}
  -\Delta u + V u = (I_\alpha \ast u^p)u^q\quad\text{in \(\Omega\)},
\end{equation}
where \(\Omega \subseteq \R^N\) is a domain, for given exponents \(p > 0\), \(q \in \R\) and potential \(V : \Omega \to \R\).
Here,
\(I_\alpha : \R^N\setminus\{0\}\to \R\) denotes the Riesz potential, which is defined for \(0<\alpha<N\) and \(x \in \R^N \setminus \{0\}\) by
\[
  I_\alpha(x)=\frac{A_\alpha}{\abs{x}^{N - \alpha}},
\]
where
\[
 A_\alpha = \frac{\Gamma(\tfrac{N - \alpha}{2})}{\Gamma(\tfrac{\alpha}{2})\pi^{N/2}2^{\alpha}}
\]
and \(\Gamma\) is the Gamma function \cite{Riesz}*{p.19}.

The nonlocal equation \eqref{eqChoquardV} has several physical origins.
When \(\Omega = \R^3\), \(\alpha = 2\), \(p = 2\), \(q = 1\) and \(V\) is constant,
the equation writes as
\begin{equation}\label{Choquard}
-\Delta u + V u=(I_2\ast u^2)u\quad\text{in \(\R^3\)},
\end{equation}
or, equivalently,
\begin{equation}
\label{ChoquardS}
\left\{
\begin{aligned}
-\Delta u +V u &=Wu & & \text{in \(\R^3\)},\\
 - \Delta W&=u^2 & & \text{in \(\R^3\)}.
\end{aligned}
\right.
\end{equation}
If \(u\) solves \eqref{Choquard} then the function \(\psi\) defined by \(\psi(t,x)=e^{it\lambda}u(x)\) is a solitary wave of the focusing Hartree equation
\[
 i \psi_t = - \Delta \psi - (I_2 \ast \psi^2)\psi.
\]
Equation \eqref{Choquard} first appeared at least as early as in 1954, in a work by S.\thinspace I.\thinspace Pekar \cite{Pekar} describing the quantum mechanics of a polaron at rest (see discussion in \cite{Lieb-polaron}).
In 1976 P.\thinspace Choquard used \eqref{Choquard} to describe an electron trapped in its own hole,
in a certain approximation to Hartree--Fock theory of one component plasma, see \cite{Lieb-77}.
In 1996 R.\thinspace Penrose proposed \eqref{ChoquardS} as a model of self-gravitating matter,
in a programme in which quantum state reduction is understood as a gravitational phenomenon \cite{Penrose}.

When the potential \(V\) is constant,
E.\thinspace H.\thinspace Lieb \cite{Lieb-77} established existence and uniqueness of a positive radial ground state solution of \eqref{Choquard} using variational methods.
P.-L.\thinspace Lions extended Lieb's results by replacing \(I_2\) with a wider class of convolution kernels and  established existence of infinitely many radial (changing-sign) solutions with increasing energy \citelist{\cite{Lions-80}\cite{Lions-1-1}*{Chapter III}}.
G.\thinspace P.\thinspace Menzala established further existence and nonexistence results for equations
of type \eqref{Choquard} with a variable potential \(V(x)\) and general convolution kernels  \citelist{\cite{Menzala-80}\cite{Menzala-83}}.

I.\thinspace Moroz, R.\thinspace Penrose and P.\thinspace Tod have studied independently the existence, uniqueness and decay properties of the positive ground state and changing-sign radial solutions
of \eqref{Choquard} numerically and via ODE methods
\citelist{\cite{Penrose-1}\cite{Penrose-2}} (see also \cite{Lenzmann}*{Appendix A}).
An ODE based proof of the existence and uniqueness
of the radial ground state of \eqref{eqChoquardV} with \(V = 1\), \(\alpha=2\), \(p=2\) and \(q=1\) in dimensions \(N\ge 1\) was recently obtained by P.\thinspace Choquard, J.\thinspace Stubbe and
M.\thinspace Vuffray \cite{Choquard-08}.
L.\thinspace Ma and L.\thinspace Zhao  have studied the symmetry of positive radial ground state of \eqref{eqChoquardV} with constant \(V\) in higher dimensions using the moving--plane method \cite{Ma-Zhao}.

J.\thinspace Wei and M.\thinspace Winter \cite{Wei-Winter} have considered the singular perturbation problem
\begin{equation}\label{SN-eps}
-\varepsilon^2\Delta u + V u = (I_2 \ast u^2)u\quad\text{in \(\R^3\)}.
\end{equation}
Assuming that \(\inf V>0\) they have proved that in the semi-classical limit \(\varepsilon\to 0\)  there exist multibump positive solutions of \eqref{SN-eps} which concentrate as \(\varepsilon\to 0\)
to critical points of the potential \(V\).
S.\thinspace Secchi \cite{Secchi} studied the existence of positive solutions of \eqref{SN-eps} which concentrate to critical points of \(V\) under the assumption that \(V\) does not decay too fast at infinity.

In the context of local semilinear equations of the type
\begin{equation}\label{C-local}
-\Delta u+Vu=W u^q
\end{equation}
it is well known that the existence of positive solutions and supersolutions in \(\R^N\) or in exterior domains of \(\R^N\)
requires a careful apriori balance between the value of the nonlinear exponent \(q\) and the decay rate at infinity of the potentials \(V\) and \(W\) \cites{Gidas,Bidaut-Veron}.
Such results are often called nonlinear Liouville theorems, see \cite{Quittner-Souplet}*{Section 1.8} and further references therein.
\smallskip

The main purpose of this work is to establish sharp Liouville type nonexistence results for positive supersolutions of
nonlocal Choquard equation \eqref{eqChoquardV} in exterior domains.
For instance, for the classical Choquard equation \eqref{Choquard}  we obtain the following result as a particular case of Theorem~\ref{Thm-exp+}.

\begin{ThmA}
Let \(\lambda > 0\). The problem
\begin{equation}\label{Choquard-gamma}
\tag{\protect{$\mathcal{C}_{\gamma,\lambda}$}}-\Delta u (x) + \frac{\lambda^2}{\abs{x}^\gamma} u (x) \ge (I_2 \ast u^2) (x) \, u(x)\quad
\text{in \(\R^3\setminus\Bar{B}_1\)}
\end{equation}
admits a positive supersolution if and only if \(\lambda>0\) and \(-\infty < \gamma \le 1\).\\
Moreover, if \(\gamma < 1\)  and if \(u\ge 0\) is a nontrivial nonnegative supersolution of \eqref{Choquard-gamma},
then there exists \(m > 0\) such that ,
\[
\liminf_{\abs{x}\to \infty} u(x)\abs{x}^{1 - \frac{\gamma}{4}}
\exp \int_{m^{\frac{1}{1 - \gamma}}}^{\abs{x}} \sqrt{\frac{\lambda^2}{s^\gamma}-\frac{m}{s}\;} \,ds >0;
\]
if \(\gamma = 1\)  and if \(u\ge 0\) is a nontrivial nonnegative supersolution of \eqref{Choquard-gamma},
then there exists \(m \in (0, \lambda)\) such that,
\[
\liminf_{\abs{x}\to \infty} u(x)\abs{x}^\frac{3}{4}
\exp \bigl(2 (\lambda - m) \sqrt{\abs{x}}\bigr) > 0.
\]
The above lower bounds are optimal.
\end{ThmA}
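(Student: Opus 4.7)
My strategy is to reduce the nonlinear, nonlocal inequality to a linear Schrödinger-type problem by extracting a universal lower bound on the Riesz convolution, and then to analyze the resulting one-dimensional ODE. Because $\lambda^{2}/|x|^{\gamma} \geq 0$, the strong maximum principle applied to $-\Delta u + (\lambda^{2}/|x|^{\gamma})u \geq (I_{2} \ast u^{2}) u \geq 0$ forces any nontrivial nonnegative supersolution to be strictly positive on $\mathbb{R}^{3} \setminus \overline{B}_{1}$. In particular $m_{0} := \tfrac{A_{2}}{2}\int_{B_{R}} u^{2} > 0$ for any fixed $R>1$, and the elementary estimate $|x-y|^{-1} \geq (2|x|)^{-1}$ for $y \in B_{R}$ and $|x| \geq 2R$ yields the pointwise lower bound $(I_{2} \ast u^{2})(x) \geq m_{0}/|x|$. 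Hence $u$ is automatically a positive supersolution of the \emph{linear} inequality
\[
-\Delta u + \Bigl(\frac{\lambda^{2}}{|x|^{\gamma}} - \frac{m_{0}}{|x|}\Bigr) u \;\geq\; 0 \qquad\text{for } |x|\geq 2R,
\]
and averaging over spheres, the function $V(r):=r\,\overline{u}(r)$ (with $\overline{u}$ the spherical mean of $u$) satisfies the scalar inequality $-V''(r) + (\lambda^{2}/r^{\gamma} - m_{0}/r)V(r) \geq 0$ for $r \geq 2R$.

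For $\gamma>1$, the coefficient $\lambda^{2}/r^{\gamma} - m_{0}/r$ is dominated at infinity by $-m_{0}/(2r)$, so $V''+(m_{0}/(2r))V \leq 0$ eventually. The reference equation $y'' + (m_{0}/(2r))y = 0$ is oscillatory at infinity since $\int^{\infty} \sqrt{m_{0}/(2r)}\,dr = \infty$, and a Sturm-type comparison argument forces any positive function satisfying $V''+(m_{0}/(2r))V \leq 0$ to cross zero eventually, contradicting the strict positivity $V>0$; this rules out positive supersolutions for $\gamma>1$. For $\gamma\leq 1$, on the other hand, the radial ODE $-\psi''+(\lambda^{2}/r^{\gamma}-m/r)\psi=0$ is non-oscillatory, and the WKB ansatz
\[
\psi_{-}(r) := r^{\gamma/4}\exp\!\Bigl(-\int_{r_{0}}^{r}\sqrt{\lambda^{2}/s^{\gamma}-m/s}\,ds\Bigr)
\]
(replaced by $r^{1/4}\exp(-2\sqrt{\lambda^2-m'}\,\sqrt{r})$ in the $\gamma=1$ case, with $m' = \lambda^2-(\lambda-m)^2$ matching the parametrisation of the statement) provides, for $m$ in a suitable range, a radial subsolution of the associated 3D linear operator via $\phi(x):=\psi_{-}(|x|)/|x|$.

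Comparing $u$ with $\epsilon\phi$ on annuli $B_{R'}\setminus B_{r_{1}}$ through the weak maximum principle --- choosing $\epsilon>0$ small using positivity of $u$ on $\partial B_{r_{1}}$ and letting $R'\to\infty$ --- delivers the pointwise lower bound, with the prefactor $|x|^{-1+\gamma/4}$ in the theorem arising precisely from $\psi_{-}(|x|)/|x|$. Sharpness is obtained by verifying that the same ansatz, with parameter slightly larger than the critical value, is an \emph{honest} supersolution of the full nonlinear--nonlocal inequality: this is possible exactly when $\gamma\leq 1$, since then $\phi^{2}\in L^{1}(\mathbb{R}^{3}\setminus B_{1})$, so that $I_{2}\ast\phi^{2}$ is finite and asymptotically matches $m/|x|$. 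The most delicate steps are executing the Sturm oscillation argument for a distributional supersolution (rather than an exact ODE solution), passing from the averaged ODE estimate for $V(r)$ to a pointwise estimate on $u(x)$ (for which a Harnack/comparison step is needed), and justifying the WKB ansatz as an honest sub- or super-solution with remainder terms of small enough order to preserve the sharp decay exponents and constants appearing in the theorem.
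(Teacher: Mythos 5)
Your overall strategy for the decay estimates and their optimality is essentially the paper's: extract the pointwise bound \((I_2\ast u^2)(x)\ge m_0/\abs{x}\) from positivity of \(u\), view \(u\) as a supersolution of the linear operator \(-\Delta+\lambda^2\abs{x}^{-\gamma}-m\abs{x}^{-1}\), compare with a WKB-type solution of that operator (the paper's Proposition~\ref{prop-exp-lambda} together with the comparison principle of Proposition~\ref{p-minimal}), and for sharpness scale the linear profile by a small constant and absorb the nonlocal term via the Riesz estimate \(I_2\ast\phi^2\le C/\abs{x}\) (Lemma~\ref{lemmaUpperasympt}), exactly as in Proposition~\ref{prop-exp-equiv}. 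Where you genuinely diverge is the nonexistence for \(\gamma>1\): you pass to spherical means and run a Sturm oscillation/comparison argument for \(V(r)=r\,\overline{u}(r)\) against the oscillatory equation \(y''+\tfrac{m_0}{2r}y=0\), whereas the paper derives a contradiction from the nonlocal Agmon--Allegretto--Piepenbrink principle (Proposition~\ref{propositionGroundState}) applied to scaled annular test functions, which for \(q=1\) gives \(R^{4-\gamma}\gtrsim R^{3}\) (Lemma~\ref{lemmaLiminfExistence-slow}). Your route is more elementary and purely one-dimensional, but it leans on radial symmetry of the potential and needs some care to run oscillation theory for a merely distributional supersolution (in effect one ends up using the one-dimensional positivity principle with the same annular test functions); the paper's route is what generalizes to all \(q\) and to the other potential classes.

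Three precision points you flag but should not underestimate. First, the bare ansatz \(r^{\gamma/4}\exp\bigl(-\int\sqrt{\lambda^2 s^{-\gamma}-m s^{-1}}\,ds\bigr)\) is not a subsolution as it stands: the leading WKB error terms cancel only to order \(o(1/r)\), so you must take \(m<m_0\) strictly (or insert a correction of the paper's type \(\tau s^{-1-\beta}\)) to absorb them; this is harmless since the theorem only asserts existence of some \(m>0\). Second, in the annulus comparison you cannot assume \(u\ge\epsilon\phi\) on the outer sphere; either subtract the (vanishing) supremum of \(\phi\) on \(\partial B_{R'}\), which stays a subsolution because the potential is nonnegative there, or use the paper's minimal-solution-at-infinity machinery together with the Brezis--Ponce truncation (Lemma~\ref{lemmaKato}) to legitimize comparison for distributional supersolutions. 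Third, for sharpness do not perturb the decay parameter: for \(0<\gamma\le 1\) replacing \(m\) by \(m+\varepsilon\) changes the asymptotics by an unbounded factor, so optimality for a prescribed \(m\) would be lost; keep the parameter equal to \(m\) (or use the exact solution \(H_m\) as in the paper) and use the smallness of the multiplicative constant \(\mu\), which leaves room \(m-C\mu^2>0\) to absorb both the WKB error and the nonlocal term, at the price of working in a sufficiently far exterior domain and then extending the radial profile down to \(\abs{x}=1\) as in Proposition~\ref{prop-exp-lambda}.
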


In particular, this gives a negative answer to a question posed by S.\thinspace Secchi \cite{Secchi}*{p.~3855}.

When  \(\gamma < 1\), the integral in the asymptotics is an incomplete Beta function
\[
 \int_{m^{\frac{1}{1 - \gamma}}}^{\abs{x}}\frac{\lambda}{s^\frac{\gamma}{2}} \sqrt{1-\frac{\rho^{1 - \gamma}}{s^{1 - \gamma}}} \,ds
= \tfrac{1}{1 - \gamma} B_{1-\frac{m}{\abs{x}^{1 - \gamma}}}
    \bigl(\tfrac{3}{2}, -\tfrac{2 + \gamma}{2 (1 - \gamma)}\bigr),
\]
which can also be represented in terms of the hypergeometric function.
The asymptotic can be made explicit by taking the Taylor expansion of the square root:
\begin{compactitem}[---]
\item if \(\gamma < 0\), then
\[
 \liminf_{\abs{x}\to \infty} u(x)\abs{x}^{1 - \frac{\gamma}{4}}
\exp \bigl(\tfrac{2 \lambda}{2 - \gamma} \abs{x}^\frac{2 - \gamma}{2}\bigr) > 0,
\]
\item if \(\gamma = 0\), then there exists \(\rho > 0\) such that
\[
  \liminf_{\abs{x}\to \infty}\textstyle{u (x)\abs{x}^{1-\frac{\rho}{2 \lambda}}
\exp\big( \lambda\abs{x}\big)>0},
\]
\item if \(0 < \gamma < \frac{2}{3}\), then there exists \(\rho >0\) such that
\[
\liminf_{\abs{x}\to \infty}\textstyle{u (x)\abs{x}^{1 - \frac{\gamma}{4}}
\exp\big(\frac{2\lambda}{2-\gamma}\abs{x}^\frac{2-\gamma}{2}-
\frac{m}{\lambda \gamma} \abs{x}^{\frac{\gamma}{2}} \big)>0};
\]
\end{compactitem}
for \(\frac{2}{3} \le \gamma < 1\), see Remark~\ref{r-exp-lambda}.

In particular if \(u\) is the unique radial positive ground state solution of
\begin{equation*}
-\Delta u + u = (I_2 \ast u^2)u \quad \text{in } \R^3
\end{equation*}
(see for example \citelist{\cite{Lieb-77}\cite{Choquard-08}\cite{Lenzmann}\cite{Ma-Zhao}} for proofs of existence and uniqueness),
then there exists \(\rho >0\) such that
\begin{equation*}
\liminf_{\abs{x}\to \infty} u(x)\abs{x}^{1-\frac{\rho}{2}}\exp(\abs{x})>0.
\end{equation*}
Thus the ground state \(u\) decays slower than the fundamental solution of \(-\Delta + 1\) in \(\R^3\).
(Note that in \cite{Wei-Winter}*{Theorem I.1 (1.7)}, the correction to the exponent $\rho$ seems missing.)
One has in fact
\[
 \liminf_{\abs{x}\to \infty} u(x)\abs{x}^{1-\frac{\rho}{2}}\exp(\abs{x}) \in (0, \infty),
\]
where \(\rho > 0\) is characterized by the groundstate energy \cite{MVS-ground}.

\medbreak

In the study of the general Choquard equation \eqref{eqChoquardV}
with its multiple parameters, we classify the cases with respect to the decay rate of the potential \(V\) and with respect to the type of the nonlinearity.

We distinguishing between four different types of potentials:
\begin{compactenum}[(i)]
\item \emph{unperturbed} Laplacian
\begin{align*}
V (x) & = 0,
\intertext{\item \emph{fast decay potentials}}
V(x)& =\frac{\lambda}{\abs{x}^\gamma}, & & \text{with \(\lambda\in\R\) and \(\gamma>2\)},
\intertext{\item \emph{Hardy potentials}}
V(x)&=\frac{\nu^2-\big(\tfrac{N - 2}{2}\big)^2}{\abs{x}^2} & & \text{with \(\nu>0\)},
\intertext{\item \emph{slow decay potentials}}
V(x)&=\frac{\lambda^2}{\abs{x}^\gamma} & & \text{with \(\lambda >0\) and \(\gamma < 2\)}.
\end{align*}
\end{compactenum}
The classification of potentials is motivated by the decay rate of the fundamental solution of the linear operator \(-\Delta+V\):
for fast decay and Hardy potentials
it decays polynomially, while for slow decay potentials
it has exponential decay.
This difference is essential for our considerations.

The above radial potentials could be replaced by wider classes of nonradial potentials
with equivalent decay rate of the fundamental solution of \(-\Delta+V\).
We restrict ourself to the explicit power-like potentials in order to simplify the exposition.

\medbreak

The other distinction is made with respect to the types of the nonlinearity.
In the context of the local equations \eqref{C-local} one usually distinguishes between the superlinear case \(q>1\) and sublinear case \(q<1\). The corresponding classification for Choquard's equation \eqref{eqChoquardV} is more complex. According to the order of homogeneity of its right-hand side, we
distinguish
\begin{compactenum}[(i)]
 \item the \emph{superlinear case} \(q > 1\),
 \item the \emph{locally sublinear case} \(p + q > 1\) and \(q < 1\),
 \item the \emph{globally sublinear case} \(p + q < 1\).
\end{compactenum}
The superlinear and the globally sublinear cases correspond to the superlinear and the sublinear cases for the local equation \eqref{C-local}; the locally sublinear case is a transitional region which has no analogue in the local equation.
The transitional \emph{locally linear case} (\(q = 1\)) and \emph{globally linear case} (\(p + q = 1\)) require particularly careful consideration.

\medbreak

The above classifications of potentials and nonlinearities produces a large variety of different cases
in our analysis of \eqref{eqChoquardV} requiring specific consideration.
Nevertheless, for all classes of potentials and types nonlinearities we use the same unified approach
which is based on two main tools:
\begin{compactitem}[---]
\item
lower and upper Phragmen--Lindel\"of type estimates on the decay at infinity of positive supersolutions
of the linear operator \(-\Delta+V\);
see Proposition~\ref{propositionLinearLowerBound} and Proposition~\ref{prop-exp-lambda};
\item
a nonlocal nonlinear extension of the Agmon--Allegretto--Piepenbrink positivity principle \cite{Agmon}*{Theorem 3.1}, which relates the existence of a positive supersolution to \eqref{eqChoquardV} to an integral inequality; see Proposition~\ref{propositionGroundState}.
\end{compactitem}
\noindent
Combining linear estimates for \(-\Delta+V\) with the positivity principle
either leads to a contradiction which implies nonexistence of positive supersolutions of \eqref{eqChoquardV},
or provides a bound on the admissible rate of the decay of a solution.
Explicit construction of appropriate supersolutions shows that these bounds are optimal.

We point out that our nonexistence and decay results are optimal for \emph{supersolutions}.
We have observed that these bounds give nonetheless good insight in the decay of minimal energy \emph{solutions} of \eqref{eqChoquardV} on \(\R^N\) in the variational case \(q = p - 1\) \cite{MVS-ground}.

\section{Statement of the results.}

\subsection{Notion of a supersolution.}
Let \(N\ge 1\) and \(\Omega \subseteq \R^N\) be an open set and \(V\in L^1_\mathrm{loc}(\R^N)\) be a generic potential.
In order to formulate our results we shall clarify the notion of supersolution to Choquard equation \eqref{eqChoquardV} which we adopt in this work.

\begin{definition}\label{def-distro}
A nonnegative function \(u\in L^1_\mathrm{loc}(\Omega)\) is a \emph{(distributional) supersolution} of \eqref{eqChoquardV}
if
\begin{equation}\label{equLp}
  \int_{\Omega} \frac{u (x)^p}{1 + \abs{x}^{N - \alpha}} \,dx < \infty,
\end{equation}
\(Vu \in L^1_{\mathrm{loc}} (\Omega)\), \((I_\alpha \ast u^p) u^q \in L^1_\mathrm{loc} (\Omega)\),
and for every nonnegative test function \(\varphi \in C^\infty_c(\Omega)\) one has
\begin{equation*}
  - \int_{\Omega} u \Delta \varphi + \int_\Omega V u \varphi \ge \int_{\Omega} (I_\alpha \ast u^p) u^q \varphi.
\end{equation*}
\end{definition}

Here we extend the usual definition of the convolution product by setting, for \(x \in \Omega\),
\begin{equation*}
 (I_\alpha \ast u^p) (x) = \int_{\Omega} I_\alpha (x - y) u (y)^p \,dy.
\end{equation*}
This coincides with the standard convolution product of \(I_\alpha\) with the extension of \(u\)  by \(0\) to \(\R^N\).

If \(\omega \subset \Omega\) is open, then in general \(I_\alpha \ast (u{|_\omega})^p < I_\alpha \ast u^p\). Hence if \(u\) is a supersolution of \eqref{eqChoquardV} in \(\Omega\), then \(u\) is a supersolution of \eqref{eqChoquardV} in \(\omega\) but our notion of supersolution is not local, as \(u\) can be a distributional supersolution of \eqref{eqChoquardV} in the open sets \(\Omega_1 \subset \R^N\) and \(\Omega_2 \subset \R^N\) but not in \(\Omega_1 \cap \Omega_2\).

\subsection{Equation with the unperturbed Laplacian.}
Consider the Choquard equation \eqref{eqChoquardV} with the potential \(V\equiv 0\), that is
\begin{equation}
\label{eqChoquard0}
\tag{$\mathcal{C}_0$}
  -\Delta u = (I_\alpha \ast u^p)u^q.
\end{equation}
It is an easy consequence of \eqref{equLp} and standard lower bounds on superharmonic functions that \eqref{eqChoquard0}
has no positive supersolutions in exterior domains of \(\R^N\) in dimensions \(N=1,2\) (Proposition~\ref{dim12}).
In higher dimensions the existence of nontrivial nonnegative supersolutions of \eqref{eqChoquard0} is more complex.

\begin{theorem}\label{Thm-free}
Let \(N\ge 3\), \(0<\alpha<N\), \(p > 0\), \(q \in\R\) and \(\rho>0\).
Equation \eqref{eqChoquard0} has a nonnegative nontrivial supersolution in \(\R^N \setminus\Bar{B}_\rho\) if and only if the following assumptions hold simultaneously:
\begin{subequations}\label{eqA}
\begin{align}
p & > \frac{\alpha}{N - 2},\label{A-1}\\
p + q & > \frac{N + \alpha}{N - 2},\label{A-2}\\
q & > \frac{\alpha}{N - 2} &  \text{if } \alpha>N-2 \label{A-3},\\
q & \ge 1 &  \text{if } \alpha=N-2 \label{A-3plus},\\
q & > 1-\frac{N - \alpha - 2}{N}p&  \text{if } \alpha<N-2.\label{A-4}
\end{align}
\end{subequations}
Moreover, if \(u\ge 0\) is a nontrivial supersolution of \eqref{eqChoquard0} in \(\R^N \setminus\Bar{B}_{\rho}\) then
\begin{subequations}
\label{eqB}
\begin{align}
&&\liminf_{\abs{x} \to \infty} u (x) \abs{x}^{N - 2}&>0& & \text{if }\textstyle{q > \frac{\alpha}{N - 2},}\label{B-1}\\
& &\exists m>0:\;\qquad\liminf_{\abs{x} \to \infty} u (x) \abs{x}^{N - 2-m}& >0
& & \text{if \(q = \tfrac{\alpha}{N - 2} = 1\),}\label{B-1plus}\\
&&\liminf_{\abs{x} \to \infty} u (x) \abs{x}^{N - 2}  \bigl(\log \abs{x}\bigr)^{-\frac{N - 2}{N - \alpha - 2}}
&>0 & & \text{if \(q =\tfrac{\alpha}{N - 2} < 1\),}\label{B-1plusplus}\\
&&\liminf_{\abs{x} \to \infty} u (x) \abs{x}^{\frac{N - \alpha - 2}{1-q}}&>0
& &\text{if \(q < \tfrac{\alpha}{N - 2} < 1\).}\label{B-2}
\end{align}
\end{subequations}
The above lower bounds are optimal.
\end{theorem}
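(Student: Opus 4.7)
My approach follows the strategy outlined in the introduction of the paper: combine the linear Phragm\'en--Lindel\"of lower bound (Proposition~\ref{propositionLinearLowerBound}, specialized to the unperturbed operator $-\Delta$) with the nonlocal Agmon--Allegretto--Piepenbrink positivity principle (Proposition~\ref{propositionGroundState}), and then construct explicit power-law supersolutions to handle the existence direction and optimality.

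For the nonexistence side, I would start from the observation that any nontrivial nonnegative supersolution of \eqref{eqChoquard0} is superharmonic on $\R^N \setminus \Bar{B}_\rho$, so the baseline Phragm\'en--Lindel\"of estimate yields $\liminf_{\abs{x} \to \infty} u(x)\abs{x}^{N-2} > 0$, which is~\eqref{B-1} and, combined with~\eqref{equLp}, forces $p(N-2) > \alpha$, giving the necessity of~\eqref{A-1}. I would then bootstrap: substitute the current lower bound $u(y) \gtrsim \abs{y}^{-\beta}$ into the Riesz convolution and split the integral into the annuli $\{\abs{y} \le \abs{x}/2\}$, $\{\abs{x}/2 \le \abs{y} \le 2\abs{x}\}$, $\{\abs{y} \ge 2\abs{x}\}$ to obtain $(I_\alpha \ast u^p)(x) \gtrsim \abs{x}^{\alpha - p\beta}$ when $\alpha < p\beta < N$ and $(I_\alpha \ast u^p)(x) \gtrsim \abs{x}^{\alpha - N}$ when $p\beta > N$ (with a logarithmic factor at $p\beta = N$); multiplying by $u(x)^q \gtrsim \abs{x}^{-q\beta}$ and invoking Phragm\'en--Lindel\"of once more produces an improved exponent. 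The non-saturated recursion $\beta \mapsto (p+q)\beta - \alpha - 2$ has the repelling fixed point $\frac{\alpha+2}{p+q-1}$, and the saturated recursion $\beta \mapsto q\beta + (N-\alpha-2)$ has the attracting fixed point $\frac{N-\alpha-2}{1-q}$; their positions relative to $N-2$, $N/p$, and the threshold $q = \alpha/(N-2)$ encode precisely the conditions \eqref{A-2}--\eqref{A-4}, so that violation of any of them drives the bootstrap through a regime where \eqref{equLp} collapses, whereas in the admissible regime the bootstrap converges to the optimal exponent appearing in~\eqref{B-2}. The borderline $q = \alpha/(N-2)$ is exactly where each bootstrap step yields only a logarithmic gain, producing \eqref{B-1plus} when $q = 1$ and \eqref{B-1plusplus} when $q < 1$.

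For the converse direction and optimality, I would produce an explicit supersolution of the ansatz $u(x) = c\abs{x}^{-\beta}$ (with logarithmic corrections on the critical lines) for $\abs{x} \ge \rho$, computing $-\Delta u = c\beta(N - 2 - \beta)\abs{x}^{-\beta-2}$ and estimating $I_\alpha \ast u^p$ from above by the same annular splitting; the pointwise inequality $-\Delta u \ge (I_\alpha \ast u^p) u^q$ then reduces to an algebraic inequality in $\beta$ whose solvability is precisely equivalent to \eqref{A-1}--\eqref{A-4}, and selecting $\beta$ to match the exponent in~\eqref{B-1}--\eqref{B-2} simultaneously proves that those lower bounds are sharp. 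The main technical obstacle is the fine book-keeping at the four critical lines $p = \alpha/(N-2)$, $p + q = (N+\alpha)/(N-2)$, $\alpha = N-2$, and $q = 1 - \frac{N - \alpha - 2}{N} p$, on each of which the bootstrap stagnates and logarithmic corrections must be tracked carefully; here the full strength of Proposition~\ref{propositionGroundState}, tested against a power of $u$ with exponent dictated by the scaling of the nonlocal term, is needed to rule out borderline supersolutions and to detect the subtle boundary $1 - \frac{N-\alpha-2}{N} p$ in \eqref{A-4}.
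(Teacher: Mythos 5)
Your overall strategy --- a pointwise bootstrap on the decay exponent, with the saturated and unsaturated recursions \(\beta\mapsto q\beta+(N-\alpha-2)\) and \(\beta\mapsto(p+q)\beta-\alpha-2\) --- is genuinely different from the paper, which never iterates pointwise bounds: it works with the single integral estimate \(\bigl(\int_{B_{2R}\setminus B_\rho}u^p\bigr)\bigl(\int_{B_{2R}\setminus B_R}u^{q-1}\bigr)\le CR^{2N-\alpha-2}\) furnished by the nonlocal positivity principle, combined with H\"older/Cauchy--Schwarz and a weak Harnack inequality for (possibly negative) powers. Your fixed-point bookkeeping does recover the thresholds \eqref{A-2} and \eqref{A-4} correctly when \(q\ge 0\) (indeed \(\frac{N-\alpha-2}{1-q}\le\frac{N}{p}\) is equivalent to \(q\le 1-\frac{N-\alpha-2}{N}p\), and then \(\frac{N}{p}\le\frac{\alpha+2}{p+q-1}\), so the iterates escape to \(-\infty\) and violate \eqref{equLp}). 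But there is a genuine gap for \(q<0\), and the theorem allows every \(q\in\R\): for \(\alpha<N-2\) and \(p\) large both the boundary \eqref{A-4} and the decay bound \eqref{B-2} live at negative \(q\). There a lower bound \(u\ge c\abs{x}^{-\beta}\) yields an \emph{upper} bound on \(u^q\), so the step ``multiply by \(u^q\gtrsim\abs{x}^{-q\beta}\)'' is false, and supersolutions carry no a priori pointwise upper bound with which to repair it. This is exactly what the paper's integral formulation avoids: \(u^{q-1}\) enters only through integrals controlled by Proposition~\ref{propositionGroundState}, through H\"older with the negative exponent \(q-1\), and through the weak Harnack inequality of Lemma~\ref{P-XXX} for negative powers --- which is also how \eqref{B-2} is obtained in a single step rather than as the limit of an infinite iteration.

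The borderline cases are a second gap, not mere book-keeping. On the critical lines \(p+q=\frac{N+\alpha}{N-2}\), \(q=\frac{\alpha}{N-2}\) and \(q=1-\frac{N-\alpha-2}{N}p\), your proposed mechanism --- drive the bootstrap until \eqref{equLp} collapses --- cannot give a contradiction: under the strict inequality \eqref{A-1} the stagnating iteration produces only logarithmic gains over \(\abs{x}^{2-N}\), and \eqref{equLp} is insensitive to logarithms. The paper instead shows that \(\int(I_\alpha\ast u^p)u^q\) diverges (resp.\ that \(\int_{B_R\setminus B_\rho}u^p\to\infty\)) and contradicts the integral upper bound of Lemma~\ref{lemmaLiminfExistence}; some substitute for that step is indispensable, and invoking ``the full strength of Proposition~\ref{propositionGroundState}'' without specifying the test functions does not supply it. Finally, on the existence/optimality side the pure-power ansatz cannot certify sharpness of \eqref{B-1}: \(\abs{x}^{-(N-2)}\) is harmonic, so \(-\Delta u=0\) cannot dominate the strictly positive nonlocal right-hand side, and this occurs in the open Green-decay region, not only on critical lines; the paper's supersolution \(\mu\abs{x}^{2-N}\bigl(1-(1+\log\frac{\abs{x}}{\rho})^{-\beta}\bigr)\), together with the log-power and \(\abs{x}^{-(N-2-m)}\) variants for \eqref{B-1plusplus} and \eqref{B-1plus}, shows the kind of correction that is actually required.
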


\begin{figure}
\subfigure[\(\alpha \ge N - 2\)]{\includegraphics{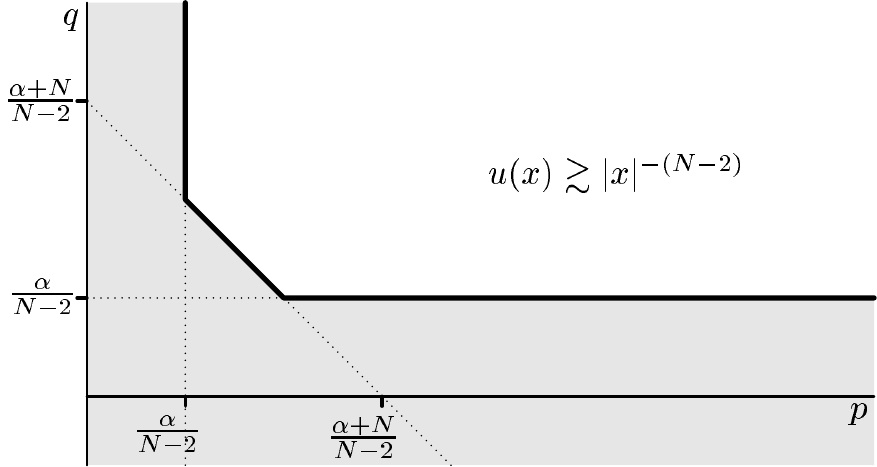}}
\subfigure[\(\alpha < N - 2\)]{\includegraphics{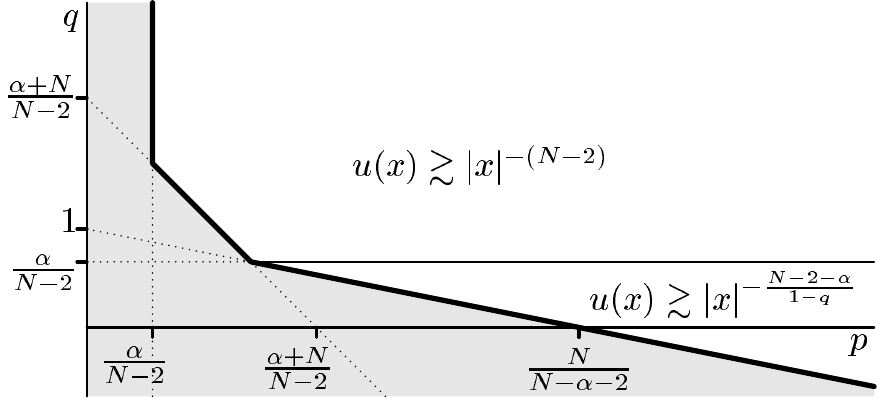}}

\caption{Existence, decay and nonexistence regions for \eqref{eqChoquard0} in the \((p,q)\)--plane}
\end{figure}

The optimality of lower bounds \eqref{eqB} is understood in the sense that
\begin{compactitem}[---]
\item
if \(p > \frac{\alpha}{N - 2}\), \(p + q > \frac{N + \alpha}{N - 2}\) and \(q > \frac{\alpha}{N - 2}\), then there exists a nontrivial supersolution \(u \ge 0\)  such that
\[
  \limsup_{\abs{x} \to \infty} u (x) \abs{x}^{N - 2} < \infty,
\]
\item
if \(p>\frac{N}{N-2}\) and \(q = \frac{\alpha}{N - 2} = 1\),  then for every \(m > 0\) there exists a nontrivial supersolution \(u \ge 0\)  such that
\[
  \limsup_{\abs{x} \to \infty} u (x) \abs{x}^{N - 2 - m} < \infty,
\]
\item
if \(p>\frac{N}{N-2}\) and \(q = \frac{\alpha}{N - 2} = 1\),  then there exists a nontrivial supersolution \(u \ge 0\)  such that
\[
  \limsup_{\abs{x} \to \infty} u (x) \abs{x}^{N - 2} \bigl(\log \abs{x}\bigr)^{-\frac{N - 2}{N - \alpha - 2}} < \infty,
\]
\item
if \(1-\frac{N - \alpha - 2}{N}p < q<\frac{\alpha}{N - 2} < 1\), then there exists a nontrivial supersolution \(u \ge 0\) such that
\[
  \limsup_{\abs{x} \to \infty} u (x) \abs{x}^{\frac{N - \alpha - 2}{1-q}}< \infty.
\]
\end{compactitem}
In view of the bounds \eqref{eqB} in what follows we refer to the region \(\big\{q > \frac{\alpha}{N - 2}\}\)
of the \((p,q)\)--plane as the \emph{Green decay} region,
while we call \(\big\{q < \frac{\alpha}{N - 2}\big\}\) the \emph{sublinear decay} region.

When \(q=p-1\), equation \eqref{eqChoquard0} has a variational structure, with the energy formally defined by
\[
E(u)=\frac{1}{2}\int_{\R^N \setminus B_\rho} \abs{\nabla u}^2-\frac{1}{2p}\int_{\R^N \setminus B_\rho} \bigl(I_\alpha\ast u^p\bigr)u^p.
\]
The existence conditions \eqref{eqA} then transform into
\begin{align*}
\eqref{A-2}'&& p & > \frac{1}{2}\frac{2N - 2+\alpha}{N - 2} & & \text{if } 0<\alpha\le 2,\\
\eqref{A-4}'&& p & >  \frac{2N}{2N - \alpha - 2} & &\text{if } 2<\alpha< N - 2,\\
\eqref{A-3plus}'&& p & \ge 2 & &\text{if } 2<\alpha= N - 2,\\
\eqref{A-3}'&& p & > 1+\frac{\alpha}{N - 2} & &\text{if } \max\{2,N - 2\}<\alpha<N.
\end{align*}

When \(q=0\), equation \eqref{eqChoquard0} is written as \(-\Delta u = I_\alpha\ast u^p\). If \(\alpha<N - 2\), this is equivalent to
\(
  u=I_{\alpha+2}\ast u^p.
\)
Existence and nonexistence of nontrivial nonnegative supersolutions in exterior domains for such equations were recently studied in \cites{Mitidieri,Caristi}.

As \(\alpha \to 0\) one has \(\lim_{\alpha\to 0} I_\alpha\ast\varphi=\varphi\) for every \(\varphi\in C^\infty_c(\R^N)\) and \(-\Delta u = u^{p + q}\) becomes a limiting equation for \eqref{eqChoquard0} when \(\alpha\to 0\).
Such local equation admits nontrivial nonnegative supersolutions in exterior domains if and only if \(p + q>\frac{N}{N - 2}\) (see \citelist{\cite{Gidas}\cite{Quittner-Souplet}*{Section 1.8}}). Similarly, as \(\alpha \to N\), \(-\Delta u = (4 \pi)^{-\frac{N}{2}} (\int_{\R^N} \abs{u}^p) u^q\) is a limiting equation which has a supersolution in an exterior domains if and only if \(q>\frac{N}{N - 2}\) and \(p > \frac{N}{N - 2}\).
Our results are formally consistent with these limiting cases.



\subsection{Equations with fast decay and Hardy potentials.}
Consider Choquard equation \eqref{eqChoquardV} with the \emph{fast decay} potential, that is
\begin{equation}\label{eqChoquardFast}
\tag{$\mathcal{C}^F_{\lambda, \gamma}$}
  -\Delta u (x) + \frac{\lambda}{\abs{x}^\gamma} u (x) = (I_\alpha \ast u^p) (x) u(x)^q,
\end{equation}
where \(\lambda \in \R\) and \(\gamma > 2\).
In Theorem~\ref{Thm-fast} we show that all the nonexistence, existence and optimal decay results of Theorem~\ref{Thm-free} remain stable with respect to the perturbations of \(-\Delta\) by the fast decay potentials
and do not depend on particular values of \(\lambda\) and \(\gamma\).
This is a consequence of the well known fact that the fundamental solution of the Schr\"odinger operator \(-\Delta+V\)
with a fast decay potential \(V\) decays  at infinity as \(\abs{x}^{-(N-2)}\),
that is as the Green function of \(-\Delta\) on \(\R^N\).
It turns out that the values of all critical exponents and decay rates of Theorem~\ref{Thm-free}
are controlled by the decay rate of the fundamental solution of \(-\Delta+V\).
See Section~\ref{sect-Fast} for details.
\smallskip

In Section~\ref{Sect-Hardy} we study Choquard equation \eqref{eqChoquardV} with the \emph{Hardy} potential,
that is
\begin{equation}\label{eqChoquardHardy}
\tag{$\mathcal{C}^H_{\nu}$}
  -\Delta u (x) + \bigl( \nu^2-\big(\tfrac{N - 2}{2}\big)^2 \bigr)\frac{1}{\abs{x}^2} u (x) = (I_\alpha \ast u^p) (x) u(x)^q,
\end{equation}
where \(\nu > 0\). Hardy potential provides an important example of a perturbation
where the decay rate of the fundamental solution of \(-\Delta+V\) remains polynomial
but depends explicitly on the value of the constant \(\nu\).
In Theorem~\ref{Thm-Hardy} we show that as a consequence,
some of the critical exponents and decay rates of Theorem~\ref{Thm-free} become sensitive
to the constant \(\nu\), although the qualitative picture remains essentially
similar to the case of the unperturbed equation \eqref{eqChoquardV}.
Full statements and sketches of the proofs of relevant results are given in Section~\ref{Sect-Hardy}.

\subsection{Equation with slow decay potentials.}
Consider the Choquard equation \eqref{eqChoquardV} with the \emph{slow decay} potential, that is
\begin{equation}\label{eqChoquardSlow}
\tag{$\mathcal{C}^S_{\lambda, \gamma}$}
  -\Delta u (x) + \frac{\lambda^2}{\abs{x}^\gamma} u (x) = (I_\alpha \ast u^p) (x) u(x)^q,
\end{equation}
where \(\lambda>0\) and \(-\infty<\gamma < 2\).
It is well known that if \(V\) is a slow decay potential then the fundamental solution of \(-\Delta+V\)
decays exponentially at infinity. As a consequence, the qualitative picture of the existence and nonexistence
of positive supersolutions of \eqref{eqChoquardSlow} changes compared to equations with fast decay or Hardy potentials.

For local equations of type \eqref{C-local} with superlinear \(q>1\) one usually expects to find
a \emph{fast decay} positive solution, which decays at infinity at the same rate
as the fundamental solution of \(-\Delta+V\).
We will see that this is indeed the case for the \eqref{eqChoquardSlow} when \(q> 1\),
while for \(q<1\) positive supersolutions of \eqref{eqChoquardSlow} decay at most polynomially.
The decay of solutions in the borderline region \(q=1\) remains exponential
but the detailed picture becomes particularly complex.
Note however that for Choquard equation \eqref{eqChoquardSlow} the natural threshold between sub and superlinear homogeneity is \(p + q=1\) rather then \(q=1\), so the polynomial behavior of supersolutions to \eqref{eqChoquardSlow}
in the superlinear region seems to be a new phenomenon.

For equation \eqref{eqChoquardSlow} we shall consider separately the \emph{exponential decay region \(q\ge 1\)} and
the \emph{polynomial decay region \(q<1\)}, because different mechanisms are responsible
for the decay and nonexistence properties of positive solutions in these two regions.

\subsubsection{Exponential decay region $q > 1$.}
Our first result regarding equations with slow decaying potentials
states that in the globally superlinear case \(q>1\) then \eqref{eqChoquardSlow} always admits a positive solution
which decay at infinity at the same rate as the fundamental solution of \(-\Delta+V\).

\begin{theorem}\label{Thm-exp}
Let \(N\ge 1\), \(\lambda > 0\), \(\gamma<2\), \(0<\alpha<N\), \(p>0\), \(q>1\) and \(\rho > 0\).
Then equation \eqref{eqChoquardSlow} has a nontrivial nonnegative supersolution in \(\R^N \setminus \Bar{B}_{\rho}\).
Moreover, if \(u\ge 0\) is a nontrivial supersolution of \eqref{eqChoquardSlow} in \(\R^N \setminus \Bar{B}_{\rho}\) then
\begin{equation}\label{exp-1+}
\liminf_{\abs{x}\to \infty}u (x)\abs{x}^{\frac{N - 1}{2} - \frac{\gamma}{4}}\exp\big(\textstyle{\frac{2\lambda}{2-\gamma}}\abs{x}^\frac{2-\gamma}{2}\big)>0.
\end{equation}
The above lower bound is optimal.
\end{theorem}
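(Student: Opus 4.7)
The plan is to exploit the superlinear exponent $q > 1$ to reduce both the lower bound and the existence statement to sharp linear Phragmén--Lindelöf estimates for the Schrödinger operator $-\Delta + \lambda^2/\abs{x}^\gamma$. The heuristic is that, when $\psi$ decays exponentially, $\psi^{q-1}$ is super-polynomially small, so the full nonlinear term $(I_\alpha \ast \psi^p)\psi^q$ is negligible at infinity, and supersolutions of \eqref{eqChoquardSlow} should decay at the same rate as the fundamental solution of $-\Delta + V$.

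For the lower bound I would simply observe that, since $(I_\alpha \ast u^p)(x)\, u(x)^q \ge 0$ pointwise, any nontrivial nonnegative supersolution $u$ of \eqref{eqChoquardSlow} in $\R^N \setminus \Bar{B}_{\rho}$ automatically satisfies $-\Delta u + (\lambda^2/\abs{x}^\gamma) u \ge 0$ in the same exterior domain, i.e.\ $u$ is a nontrivial nonnegative supersolution of the purely linear problem. The estimate \eqref{exp-1+} then follows immediately from the exterior lower bound of Proposition~\ref{prop-exp-lambda}, whose WKB form furnishes exactly the polynomial prefactor $\abs{x}^{-(N-1)/2 + \gamma/4}$ and the exponential $\exp(-\tfrac{2\lambda}{2-\gamma}\abs{x}^{(2-\gamma)/2})$.

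For existence and optimality I would produce an explicit supersolution with matching decay. Starting from the WKB comparison function underlying Proposition~\ref{prop-exp-lambda}, I would take a smooth positive radial $\psi$ on $\R^N \setminus \Bar{B}_{\rho}$ with
\[
\psi(x) \sim \abs{x}^{-\frac{N-1}{2}+\frac{\gamma}{4}} \exp\bigl(-\tfrac{2\lambda}{2-\gamma}\abs{x}^{\frac{2-\gamma}{2}}\bigr)
\]
and a strict linear supersolution inequality $-\Delta \psi + (\lambda^2/\abs{x}^\gamma)\psi \ge \eta(\abs{x})\psi$ at infinity for some positive, slowly decaying $\eta$. Exponential decay gives $\psi^p \in L^1(\R^N)$, so $I_\alpha \ast \psi^p$ grows at most polynomially; combined with the super-polynomial smallness of $\psi^{q-1}$ ensured by $q > 1$, this yields $(I_\alpha \ast \psi^p)(x)\, \psi(x)^{q-1} \to 0$ faster than any polynomial, hence is dominated by $\eta(\abs{x})$ for $\abs{x}$ large. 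Consequently $u = c\psi$ is a nontrivial nonnegative supersolution of \eqref{eqChoquardSlow} in the exterior of a large ball, and a standard cutoff and matching argument extends the construction back to $\R^N \setminus \Bar{B}_{\rho}$. Since $\limsup u(x)\abs{x}^{(N-1)/2 - \gamma/4}\exp(\tfrac{2\lambda}{2-\gamma}\abs{x}^{(2-\gamma)/2}) < \infty$, this $u$ also proves the optimality of \eqref{exp-1+}.

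The only delicate ingredient is the WKB comparison function with the sharp polynomial correction: one applies $-\Delta$ to the ansatz $r^a e^{-S(r)}$, cancels the leading order via $(S'(r))^2 = \lambda^2/r^\gamma$, which forces $S(r) = \tfrac{2\lambda}{2-\gamma}r^{(2-\gamma)/2}$, and then tunes $a$ so that the next-order term $(2a+N-1)S'(r)/r + S''(r)$ vanishes, which forces $a = -(N-1)/2 + \gamma/4$. This computation is, however, already carried out in Proposition~\ref{prop-exp-lambda}, so the genuinely new content of the present theorem is the routine nonlinear comparison that absorbs $(I_\alpha \ast \psi^p)\psi^q$ into the positive linear remainder $\eta(\abs{x})\psi$.
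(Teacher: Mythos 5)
Your overall strategy coincides with the paper's: drop the nonnegative right-hand side to get the lower bound from linear theory, and obtain existence/optimality by taking a small multiple of an exponentially decaying linear object and absorbing \((I_\alpha \ast u^p)u^{q-1}\), which is super-polynomially small because \(q>1\) while \(I_\alpha\ast u^p\) is at most polynomial, into a polynomially decaying linear margin. One small gloss first: Proposition~\ref{prop-exp-lambda} only \emph{constructs} a particular solution \(H\) with the stated asymptotics; to deduce \eqref{exp-1+} for an arbitrary nontrivial distributional supersolution you must combine it with the comparison principle against a minimal positive solution at infinity (Proposition~\ref{p-minimal}), using the weak Harnack inequality (Proposition~\ref{P-XXX}) to get \(\inf u>0\) on an annulus; this is how the paper argues, and ``follows immediately from Proposition~\ref{prop-exp-lambda}'' elides that step.

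The genuine gap is in how you produce the positive margin \(\eta\). With the sharp tuning \(a=-\frac{N-1}{2}+\frac{\gamma}{4}\) that kills the \(S'/r\)-order term, the ansatz \(\psi=\abs{x}^{a}e^{-S(\abs{x})}\) satisfies \(-\Delta\psi+\lambda^2\abs{x}^{-\gamma}\psi=-a(a+N-2)\abs{x}^{-2}\psi\), and the constant \(-a(a+N-2)\) is \emph{not} positive in general: for \(N=2\), \(\gamma=0\) it equals \(-\tfrac14\) (the sharp profile is the Bessel profile \(r^{-1/2}e^{-\lambda r}\), which is a subsolution, and the exact solution \(K_0\) leaves zero room), and it is also negative e.g.\ for \(N=1\), \(\gamma<0\). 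So the sharp WKB profile need not be a linear supersolution at all, and no ``positive, slowly decaying \(\eta\)'' comes out of the computation you describe; killing the next-order term is precisely what destroys the margin. The paper's fix is to give up exactness of the linear equation instead of exactness of the prefactor: choose \(0<m<\lambda^2\rho^{2-\gamma}\) and take the exact positive solution \(H\) of \(-\Delta H+\lambda^2\abs{x}^{-\gamma}H=m\abs{x}^{-2}H\) on all of \(\R^N\setminus\Bar{B}_\rho\), provided by Proposition~\ref{prop-exp-lambda} with \(W(s)^2=\lambda^2-m s^{\gamma-2}\); since \(2>1+\tfrac{\gamma}{2}\) the correction to the eikonal is integrable, so \(H\) keeps the sharp asymptotics, while the equation itself supplies the margin \(m\abs{x}^{-2}H\) on the whole exterior domain, not merely ``at infinity''. (Equivalently, the explicit supersolution \(\Phi_{\overline{\tau}}\), \(\overline{\tau}>0\), from the proof of Proposition~\ref{prop-exp-lambda} carries a margin of order \(\abs{x}^{-(1+\gamma/2+\beta)}\).) This also removes the need for your unspecified ``cutoff and matching'' step, which is delicate here since modifying the function on a compact annulus changes the nonlocal term everywhere; with \(H\) positive on all of \(\R^N\setminus\Bar{B}_\rho\), the conclusion follows for \(u_\mu=\mu H\) with \(\mu\) small, exactly as you intend, via Lemma~\ref{lemmaUpperasympt} and \(q>1\).
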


\subsubsection{Locally linear region $q = 1$.}
In the borderline case \(q=1\) the behavior of positive solutions to \eqref{eqChoquardSlow} is more complex.
It turns out that the existence and decay properties of nontrivial nonnegative supersolutions of \eqref{eqChoquardSlow}
with \(q=1\) and arbitrary \(p>0\) are controlled by the relevant properties
of positive solutions of the linear equation
\begin{align}\label{fundamental}
-\Delta u (x) + \frac{\lambda^2}{\abs{x}^\gamma} u (x) & =\frac{m}{\abs{x}^{N - \alpha}} u (x),
\end{align}
where \(\gamma<2\), \(\lambda>0\) and \(m> 0\).
The detailed analysis of the decay rate of positive supersolutions of equation \eqref{fundamental} is given
in Section~\ref{fundamental-sect}.
The corresponding result for \eqref{eqChoquardSlow} reads as follows.

\begin{theorem}\label{Thm-exp+}
Let \(N\ge 1\), \(\lambda > 0\), \(\gamma<2\), \(0<\alpha<N\), \(p>0\), \(q=1\) and \(\rho > 0\).
Then equation \eqref{eqChoquardSlow} has a nontrivial nonnegative supersolution in \(\R^N \setminus \Bar{B}_{\rho}\) if and only if
\begin{equation}\label{nonexist-exp+}
  \gamma\le  N - \alpha.
\end{equation}
Moreover, if \(\gamma < N - \alpha\) and \(u\ge 0\) is a nontrivial nonnegative supersolution of \eqref{eqChoquardSlow} in \(\R^N \setminus \Bar{B}_{\rho}\) then
then there exists \(m > 0\) such that
\begin{equation}\label{exp-fund-Thm}
\liminf_{\abs{x}\to \infty} u(x)\abs{x}^{\frac{N - 1}{2} - \frac{\gamma}{4}}
\exp \int_{m^\frac{1}{N - \alpha - \gamma}}^{\abs{x}} \sqrt{\frac{\lambda^2}{s^{\gamma}}-\frac{m}{s^{N - \alpha}}\;} \,ds >0.
\end{equation}
and if \(\gamma = N - \alpha\) and \(u\ge 0\) is a nontrivial nonnegative supersolution of \eqref{eqChoquardSlow} in \(\R^N \setminus \Bar{B}_{\rho}\) then there exists \(m \in (0, \lambda) \) such that
\begin{equation}\label{exp-fund-Thm2}
  \liminf_{\abs{x}\to \infty}\textstyle{u (x)\abs{x}^{\frac{N - 1}{2} - \frac{\gamma}{4}}
\exp\big(\frac{2 (\lambda - m)}{2 - \gamma}) \abs{x}^\frac{2-\gamma}{2}\big)>0}.
\end{equation}
The above lower bounds are optimal.
\end{theorem}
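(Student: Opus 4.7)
The proof follows the general strategy described in the introduction by combining the nonlocal Agmon--Allegretto--Piepenbrink positivity principle of Proposition~\ref{propositionGroundState}, specialized to the locally linear regime $q = 1$, with the linear Phragmen--Lindel\"of estimates of Propositions~\ref{propositionLinearLowerBound} and~\ref{prop-exp-lambda} applied to an \emph{effective} linear operator in which the potential $V$ is corrected by the Riesz convolution $I_\alpha \ast u^p$.

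For \emph{nonexistence} when $\gamma > N - \alpha$, I would suppose a positive supersolution $u$ and apply the positivity principle to obtain
\[
\int_\Omega \abs{\nabla \varphi}^2 + \int_\Omega \frac{\lambda^2}{\abs{x}^\gamma} \varphi^2
\ge \int_\Omega (I_\alpha \ast u^p) \varphi^2
\]
for every $\varphi \in C^\infty_c(\Omega)$. The strong maximum principle for $-\Delta + V$ gives $u \ge c_0 > 0$ on some ball $B \subset \Omega$; swapping the order of integration then yields $(I_\alpha \ast u^p)(y) \ge c_1/\abs{y}^{N - \alpha}$ for $\abs{y}$ large. Testing with a scaled bump $\varphi_R(y) = \eta(y/R)$ concentrated on an annulus at scale $R \to \infty$ gives the scaling inequality $R^{N-2} + R^{N-\gamma} \ge c R^\alpha$. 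The combined hypothesis $N - \alpha < \gamma < 2$ forces both $N - 2 < \alpha$ and $N - \gamma < \alpha$, so the inequality fails for large $R$, a contradiction.

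For \emph{existence and optimality} when $\gamma \le N - \alpha$, I would construct the supersolution $u = c\Phi$, where $\Phi$ is the positive solution of the linear equation~\eqref{fundamental} supplied by Section~\ref{fundamental-sect}. For $\gamma < N - \alpha$ any $m > 0$ works; for $\gamma = N - \alpha$ one picks $m$ so that $\lambda^2 - m > 0$ and the effective potential $(\lambda^2 - m)/\abs{x}^\gamma$ remains positive. Since $\Phi$ decays exponentially, $\Phi^p \in L^1(\Omega)$ and $(I_\alpha \ast \Phi^p)(x) \abs{x}^{N - \alpha}$ is bounded on the exterior domain. Choosing $c > 0$ sufficiently small, the identity $-\Delta u + V u = (m/\abs{x}^{N-\alpha}) u$ dominates $(I_\alpha \ast u^p) u = c^p (I_\alpha \ast \Phi^p)\, u$, so $u$ is a supersolution of~\eqref{eqChoquardSlow}. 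The Phragmen--Lindel\"of WKB asymptotics of $\Phi$ reproduce exactly the bounds~\eqref{exp-fund-Thm} and~\eqref{exp-fund-Thm2}, establishing their optimality.

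For the \emph{lower bound on decay}, the principal step is the upper estimate
\[
(I_\alpha \ast u^p)(x) \le \frac{m}{\abs{x}^{N - \alpha}} \quad \text{for } \abs{x} \text{ large,}
\]
with some $m > 0$ (and $m < \lambda^2$ when $\gamma = N - \alpha$). Once this is known, $u$ becomes a supersolution of the linear equation $-\Delta u + (\lambda^2/\abs{x}^\gamma - m/\abs{x}^{N-\alpha}) u \ge 0$, and Proposition~\ref{prop-exp-lambda} applied to this effective operator delivers precisely~\eqref{exp-fund-Thm} and~\eqref{exp-fund-Thm2}. The technical core, and the main obstacle, is verifying the upper bound on $I_\alpha \ast u^p$: splitting $(I_\alpha \ast u^p)(x) = A_\alpha \int u^p(y)/\abs{x - y}^{N - \alpha}\,dy$ into the regions $\{\abs{y} \le \abs{x}/2\}$, $\{\abs{y} \ge 2\abs{x}\}$, and the middle annulus $\{\abs{x}/2 < \abs{y} < 2\abs{x}\}$, the outer two pieces are tamed by the integrability condition~\eqref{equLp} together with the inequalities $\abs{x - y} \ge \abs{x}/2$ and $\abs{x - y} \ge \abs{y}/2$; the middle annulus is the delicate part, where one needs quantitative pointwise decay of $u$ obtained by a bootstrap combining~\eqref{equLp} with interior elliptic estimates (Harnack on dyadic shells) and successive improvements of the linear lower bound via Proposition~\ref{propositionLinearLowerBound}.
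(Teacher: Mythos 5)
Your nonexistence argument for \(\gamma > N-\alpha\) and your construction of the supersolution \(u=c\Phi\) from the linear profile of Proposition~\ref{prop-exp-lambda} (small \(c\), using \(I_\alpha\ast\Phi^p\lesssim \abs{x}^{-(N-\alpha)}\)) are correct and essentially identical to the paper's proof. The genuine gap is in the decay lower bounds \eqref{exp-fund-Thm}--\eqref{exp-fund-Thm2}: you have the key inequality in the wrong direction. From \(-\Delta u+Vu\ge (I_\alpha\ast u^p)u\) and an \emph{upper} bound \(I_\alpha\ast u^p\le m\abs{x}^{-(N-\alpha)}\) you cannot conclude \(-\Delta u+\bigl(\lambda^2\abs{x}^{-\gamma}-m\abs{x}^{-(N-\alpha)}\bigr)u\ge 0\); replacing the weight on the right-hand side of a supersolution inequality by a larger one is not legitimate, and at best you would recover the uncorrected bound with \(\lambda^2\abs{x}^{-\gamma}\) alone, which is strictly weaker than \eqref{exp-fund-Thm}. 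What is actually needed is the \emph{lower} bound \((I_\alpha\ast u^p)(x)\ge \frac{A_\alpha}{2^{N-\alpha}\abs{x}^{N-\alpha}}\int_{B_{2\rho}\setminus B_\rho}u^p=:\frac{m}{\abs{x}^{N-\alpha}}\), which is trivial (no splitting, no bootstrap); it shows that \(u\) is a distributional supersolution of the effective linear operator \(-\Delta+\lambda^2\abs{x}^{-\gamma}-m\abs{x}^{-(N-\alpha)}\), whose potential is \emph{smaller}, so its solutions decay slower and the lower bound on \(u\) is correspondingly stronger — this is precisely the mechanism producing the \(-m/s^{N-\alpha}\) correction under the square root.

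Consequently the "technical core" you identify (a bootstrap proving a pointwise upper bound on \(I_\alpha\ast u^p\) for an arbitrary supersolution) is both unnecessary and not available: the definition of supersolution only gives \eqref{equLp}, which does not prevent \(\int_{B_R}u^p\) from growing, and in the critical case \(\gamma=N-\alpha\) your scheme would in addition require \(m<\lambda^2\), which cannot be extracted from such an estimate. Two further points you should make explicit: first, Proposition~\ref{prop-exp-lambda} only produces a particular solution \(H\) of the effective linear equation with WKB asymptotics; to transfer the asymptotics to the given supersolution \(u\) you need the comparison principle with minimal positive solutions at infinity (Proposition~\ref{p-minimal}), combined with the weak Harnack inequality to guarantee \(\inf u>0\) on an inner annulus; second, the constant \(m\) in \eqref{exp-fund-Thm} necessarily depends on \(u\) (through \(\int_{B_{2\rho}\setminus B_\rho}u^p\)), which is exactly why the optimality statement is formulated for each admissible \(m\) separately, as in Proposition~\ref{prop-exp-equiv}.
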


The value of the constant \(m >0\) in  \eqref{exp-fund-Thm} and  in \eqref{exp-fund-Thm2} depends on the supersolution \(u\).
Hence, optimality of  \eqref{exp-fund-Thm} should be understood in the sense that
given any \(m >0\) there is a nontrivial supersolution \(u\ge 0\) of \eqref{eqChoquardSlow}
in an exterior domain such that
\[
 \limsup_{\abs{x}\to \infty} u(x)\abs{x}^{\frac{N - 1}{2} - \frac{\gamma}{4}}
\exp \int_{m^\frac{1}{N - \alpha - \gamma}}^{\abs{x}} \sqrt{\frac{\lambda^2}{s^{\gamma}}-\frac{m}{s^{N - \alpha}}\;} \,ds < \infty.
\]

\subsubsection{Sublinear region \(q<1\).}
When \(q<1\) solutions of \eqref{eqChoquardSlow} start to decay polynomially.
The values \(\gamma=N - \alpha\) and \(\gamma=-\alpha\) should be distinguished as two critical thresholds
separating different qualitative properties of positive supersolutions to \eqref{eqChoquardSlow}.
We set apart our results depending on the value of \(\gamma\).
First we consider the case when \(\alpha > N - 2\) and \(V\) decays at a moderately slow
rate \(N - \alpha \le \gamma < 2\).

\begin{theorem}\label{Thm-slow}
Let \(N\ge 1\), \(\lambda > 0\), \(\gamma<2\), \(N - 2<\alpha<N\), \(p>0\), \(q < 1\) and \(\rho > 0\).
If
\[
  N - \alpha \le \gamma<2,
\]
then equation \eqref{eqChoquardSlow} has no nontrivial nonnegative supersolution in \(\R^N \setminus \Bar{B}_\rho\).
\end{theorem}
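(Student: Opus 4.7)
The plan is to derive a contradiction by combining the nonlocal positivity principle (Proposition~\ref{propositionGroundState}) with a localized test-function estimate that exploits the subcriticality $N-\alpha<2$, which follows from the hypothesis $\alpha>N-2$. First, since $u\ge 0$ is nontrivial, by strong maximum principle $u>0$ on the exterior domain, and there is $R_0>\rho$ with $m_0:=\int_{B_{R_0}\setminus\Bar{B}_\rho}u^p>0$; a straightforward estimate using the positivity of the Riesz kernel yields
\[
(I_\alpha\ast u^p)(x)\ge\frac{c_0}{\abs{x}^{N-\alpha}}\qquad\text{for every \(\abs{x}\ge 2R_0\),}
\]
with $c_0=c_0(m_0)>0$.

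Applying Proposition~\ref{propositionGroundState} to $u$ regarded as a positive supersolution of the \emph{linear} inequality $-\Delta u+(V-W)u\ge 0$ with $W:=(I_\alpha\ast u^p)u^{q-1}$, one obtains for every $\varphi\in C^\infty_c(\R^N\setminus\Bar{B}_{2R_0})$
\[
\int\bigl(\abs{\nabla\varphi}^2+V\varphi^2\bigr)\ge c_0\int\frac{u^{q-1}}{\abs{x}^{N-\alpha}}\varphi^2.
\]
The key new ingredient, unavailable in the locally linear situation $q=1$ of Theorem~\ref{Thm-exp+}, is that $q-1<0$: wherever $u\le M$ one has $u^{q-1}\ge M^{q-1}$. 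I fix $M_0>0$ small enough that $c_0M_0^{q-1}>\lambda^2$; the assumption $\gamma\ge N-\alpha$ yields $\lambda^2/\abs{x}^\gamma\le\lambda^2/\abs{x}^{N-\alpha}$ for $\abs{x}\ge 1$, so on the set $\{u\le M_0\}$ the coefficient of $\varphi^2$ on the left becomes bounded above by the strictly negative quantity $(\lambda^2-c_0M_0^{q-1})\abs{x}^{-(N-\alpha)}$. The matter then reduces to refuting a Hardy-type inequality $\int\abs{\nabla\varphi}^2\ge c'\int\abs{x}^{-(N-\alpha)}\varphi^2$ for test functions supported in unbounded pieces of $\{u\le M_0\}$. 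A standard radial bump $\varphi_R(x)=\psi(\abs{x}/R)$ supported in the annulus $A_R=\{R\le\abs{x}\le 2R\}$ satisfies $\int\abs{\nabla\varphi_R}^2\lesssim R^{N-2}$ while $\int\abs{x}^{-(N-\alpha)}\varphi_R^2\gtrsim R^{\alpha}$; since $\alpha>N-2$ the latter dominates as $R\to\infty$, giving the desired contradiction.

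What remains is to place such a bump inside $\{u\le M_0\}$ for a sequence $R_k\to\infty$. Here I would use a dichotomy: either $u\le M_0$ outside a sufficiently large ball, in which case the construction is immediate; or $\{u\ge M_0\}$ meets every sufficiently large annulus, in which case the weak Harnack inequality applied to the positive supersolution $u$ of $-\Delta u+Vu\ge 0$ (permitted since the potential is locally bounded away from the origin) propagates this pointwise bound to a uniform $L^\infty$ lower bound $u\ge c M_0$ on full sub-annuli $A_{R_k}$. But then the distributional integrability requirement \eqref{equLp} from Definition~\ref{def-distro} is violated, since $\sum_k\int_{A_{R_k}}\abs{x}^{-(N-\alpha)}\,dx\gtrsim\sum_k R_k^{\alpha}=\infty$.

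The main obstacle is the uniformity of the threshold $M_0$ in the borderline case $\gamma=N-\alpha$: one must genuinely exceed $\lambda^2$ with $c_0 M_0^{q-1}$, and this is possible precisely because $q<1$ lets $M_0^{q-1}\to\infty$ as $M_0\to 0$, whereas for $q=1$ the factor is $1$ and the competition between the coefficients becomes a constraint rather than a contradiction. The second delicate point is the transition from the pointwise information ``$u\ge M_0$ at some point'' to the annular $L^\infty$ lower bound through weak Harnack, which requires that the linearised problem satisfies the hypotheses of the classical Moser--Serrin estimates and that the test functions used in the positivity principle are supported where $u$ is bounded below away from zero so that the integrals make sense.
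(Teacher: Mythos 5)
Your overall strategy (Riesz lower bound, positivity principle, annular test functions exploiting \(\alpha>N-2\) and \(q-1<0\)) is sound in spirit, but the step that places the bump is where the argument breaks. In the second branch of your dichotomy you invoke the weak Harnack inequality to ``propagate'' the fact that \(\{u\ge M_0\}\) meets an annulus into the bound \(u\ge cM_0\) on a full sub-annulus. The weak Harnack inequality for \emph{super}solutions (Lemma~\ref{P-XXX}) runs in the opposite direction: it bounds \(\inf u\) from below by an integral average, i.e.\ it converts integral largeness into pointwise largeness, not a large value at one point into largeness on a whole annulus. A positive supersolution of \(-\Delta u+Vu\ge 0\) may consist of arbitrarily narrow spikes (for instance a convergent sum of Green functions centred along a sequence tending to infinity, plus a small background), for which \(\{u>M_0\}\) is unbounded, \(u\) is tiny on most of every annulus, and \eqref{equLp} still holds; such a \(u\) falls through both branches of your dichotomy, since it is not eventually below \(M_0\) and no annulus-wide lower bound (hence no contradiction with \eqref{equLp}) is available. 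The first branch is also shaky as phrased, because \(\{u\le M_0\}\) is merely a measurable set attached to an \(L^1_{\mathrm{loc}}\) function, so ``test functions supported in unbounded pieces of \(\{u\le M_0\}\)'' has no controlled meaning.

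The gap is repairable inside your own scheme, and the repair removes the dichotomy altogether: by Chebyshev's inequality and \eqref{equLp}, \(M_0^p\,\bigabs{\{u>M_0\}\cap(B_{2R}\setminus B_R)}\le\int_{B_{2R}\setminus B_R}u^p=o(R^{N-\alpha})\), so for fixed \(M_0\) the set \(\{u\le M_0\}\) fills all but a vanishing proportion of each large annulus. Keep the full annular bump \(\varphi_R\) and restrict only the right-hand integral of the positivity principle to \(\{u\le M_0\}\); the right-hand side is then at least \(c\,c_0M_0^{q-1}R^{\alpha}\), while the left-hand side is at most \(C\bigl(R^{N-2}+\lambda^2R^{N-\gamma}\bigr)\le C'R^{\alpha}\) because \(\gamma\ge N-\alpha\) and \(\alpha>N-2\), and choosing \(M_0\) small (possible precisely because \(q<1\)) gives the contradiction. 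For comparison, the paper avoids pointwise thresholds entirely: it combines the annular integral estimate of Lemma~\ref{lemmaLiminfExistence-slow} with H\"older's inequality relating \(\int u^p\), \(\int u^{q-1}\) and the measure of the annulus (Proposition~\ref{nonSlowDecayq}), and uses \eqref{equLp} separately when \(p+q<1\) (Proposition~\ref{nonSlowDecaypq}); your repaired argument would treat all \(q<1\) at once, but as written the Harnack-propagation step is a genuine gap.
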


\begin{figure}
\subfigure[\(N - \alpha\le\gamma<2\)]{\includegraphics{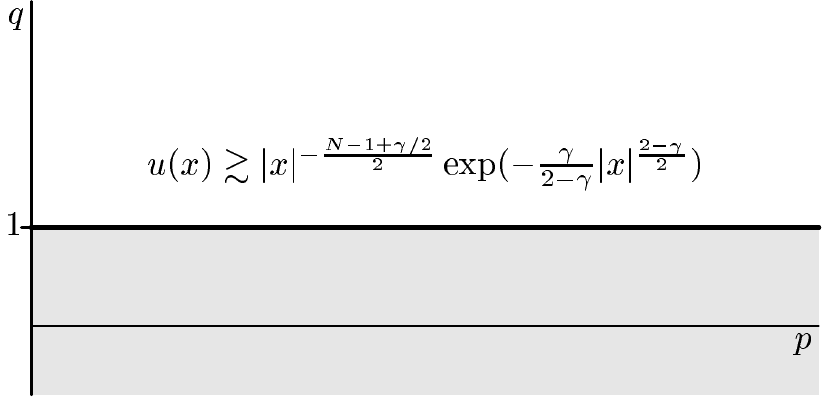}}
\subfigure[\(-\alpha\le \gamma < \min\{N - \alpha,2\}\)]{\includegraphics{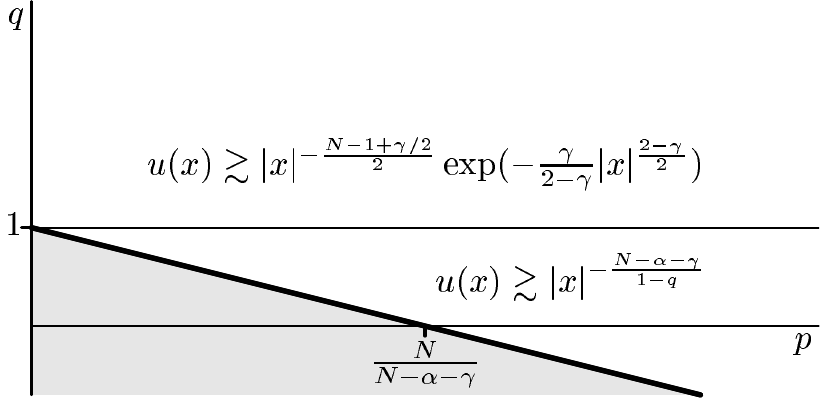}}
\subfigure[\(\gamma < -\alpha\)]{\includegraphics{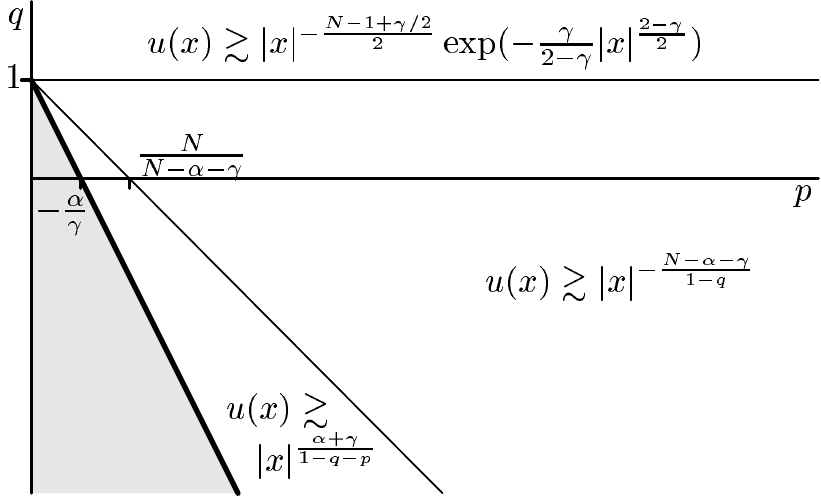}}
\caption{Existence, decay and nonexistence regions for \eqref{eqChoquardSlow} in the \((p,q)\)--plane}
\end{figure}

Next we look at the intermediate slow decay or slow growth r\'egime  \(-\alpha<\gamma < \min\{N - \alpha,2\}\), which includes in particular the autonomous case \(\gamma=0\).

\begin{theorem}\label{Thm-slow-moderate}
Let \(N\ge 1\), \(\lambda > 0\), \(\gamma<2\), \(0<\alpha<N\), \(p>0\), \(q < 1\), and \(\rho > 0\).
If \(p+q\neq 1\) and
\[-\alpha<\gamma < \min\{N - \alpha,2\}.\]
then equation \eqref{eqChoquardSlow} has a nontrivial nonnegative supersolution in \(\R^N \setminus \Bar{B}_{\rho}\) if and only if
\begin{equation}\label{non-slow-moderate}
  q > 1 - \frac{N - \alpha - \gamma}{N}p.
\end{equation}
Moreover, if \(u\ge 0\) is a nontrivial supersolution of \eqref{eqChoquardSlow} in \(\R^N \setminus \Bar{B}_{\rho}\) then
\begin{equation}\label{decay-slow-moderate}
 \liminf_{\abs{x} \to \infty} u (x)\abs{x}^{\frac{N - \alpha - \gamma}{1-q}} > 0.
\end{equation}
The above lower bound is optimal.
\end{theorem}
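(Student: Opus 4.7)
Set $\beta_* := (N-\alpha-\gamma)/(1-q)$, so that the claimed lower bound is $u(x) \gtrsim |x|^{-\beta_*}$ and condition \eqref{non-slow-moderate} is equivalent to $p\beta_* > N$. Following the unified method described in the introduction, the plan is to prove existence by exhibiting an explicit power-like supersolution, derive the decay bound by comparison against a power subsolution (invoking the linear Phragmen--Lindel\"of estimate of Proposition~\ref{propositionLinearLowerBound}), and establish nonexistence by iterating the lower bound until it contradicts the integrability condition~\eqref{equLp} of Definition~\ref{def-distro}.

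\noindent\textbf{Existence and lower bound.} When $p\beta_* > N$, I would try $u(x) := C|x|^{-\beta_*}$: since $p\beta_* > N$, $u^p$ is integrable outside a ball and $(I_\alpha \ast u^p)(x) \sim M(C)|x|^{-(N-\alpha)}$ with $M(C) \sim C^p$; because $\gamma < 2$, the dominant term in $-\Delta u + Vu$ at infinity is $\lambda^2 C|x|^{-\beta_*-\gamma}$, and by the definition of $\beta_*$ this matches the decay rate $|x|^{-\beta_* q - (N-\alpha)}$ of $(I_\alpha \ast u^p)u^q$. The supersolution inequality thus reduces to the scalar condition $\lambda^2 C \gtrsim C^{p+q}$, which is solvable for $C$ precisely because $p+q \neq 1$. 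For the decay bound applied to an arbitrary nontrivial supersolution, the strong maximum principle gives $u > 0$ on $\R^N \setminus \bar B_\rho$, and integrating $I_\alpha$ against $u^p$ on a fixed annulus yields $(I_\alpha \ast u^p)(x) \geq c|x|^{-(N-\alpha)}$ for large $|x|$; hence $u$ satisfies the local inequality $-\Delta u + Vu \geq c u^q/|x|^{N-\alpha}$. The candidate $w(x) := \varepsilon |x|^{-\beta_*}$ satisfies the reverse inequality asymptotically for sufficiently small $\varepsilon$, and Proposition~\ref{propositionLinearLowerBound} combined with a sub-/supersolution comparison for $-\Delta+V$ on the exterior domain then promotes this to $u \geq w$, proving \eqref{decay-slow-moderate}.

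\noindent\textbf{Nonexistence by bootstrap.} Now assume $p\beta_* \leq N$. If already $p\beta_* \leq \alpha$, the bound $u \geq c|x|^{-\beta_*}$ immediately contradicts \eqref{equLp}. Otherwise, given a provisional lower bound $u \geq c_n|x|^{-\beta_n}$ with $\alpha < p\beta_n \leq N$, the standard Riesz-convolution estimate gives $(I_\alpha \ast u^p)(x) \gtrsim |x|^{\alpha - p\beta_n}$, strictly sharper than the $|x|^{-(N-\alpha)}$ used previously. Rerunning the comparison step yields an improved lower bound with
\[
  \beta_{n+1} = \frac{p\beta_n - \alpha - \gamma}{1-q}.
\]
This affine recursion has fixed point $\beta^\dagger := (\alpha+\gamma)/(p+q-1)$, well-defined because $p+q \neq 1$, and its sign is determined by $p+q-1$ (the hypothesis $\gamma > -\alpha$ giving $\alpha+\gamma > 0$); its multiplier $p/(1-q)$ is $> 1$ exactly when $p+q > 1$. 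A brief algebraic check shows that $p\beta_* \leq N$ translates to $\beta_* \leq \beta^\dagger$ when $p+q>1$, while $\beta_* > 0 > \beta^\dagger$ holds automatically when $p+q<1$. In the first case the repelling dynamics drives $\beta_n \to -\infty$; in the second the contractive dynamics drives $\beta_n \to \beta^\dagger < 0$. Either way $p\beta_n$ falls below $\alpha$ in finitely many steps, contradicting \eqref{equLp}.

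\noindent\textbf{Optimality and main obstacles.} Optimality is realized by the supersolution $u = C|x|^{-\beta_*}$ built in the existence step, which obeys $\limsup_{|x|\to\infty} u(x)|x|^{\beta_*} < \infty$. The principal difficulty is to make each bootstrap step rigorous within the nonlocal distributional framework, reinjecting the improved pointwise estimate into $I_\alpha \ast u^p$ and carefully tracking the boundary and constants in the comparison on each exterior annulus. A secondary delicate point is the borderline $p\beta_* = N$, where the recursion is frozen at its fixed point and must be closed by a parallel argument with logarithmic weights $(\log|x|)^{a_n}$ whose exponents $a_n$ follow an analogous affine iteration.
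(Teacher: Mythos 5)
The central step of your argument --- promoting the reduced local inequality $-\Delta u + \lambda^2\abs{x}^{-\gamma}u \ge c\abs{x}^{-(N-\alpha)}u^q$ to the pointwise bound $u\ge \varepsilon\abs{x}^{-\beta_*}$, $\beta_*=(N-\alpha-\gamma)/(1-q)$, by ``Proposition~\ref{propositionLinearLowerBound} combined with a sub-/supersolution comparison for $-\Delta+V$'' --- does not work as stated, and the gap propagates into your bootstrap nonexistence proof, which reruns the same comparison at every stage. Proposition~\ref{propositionLinearLowerBound} concerns Hardy potentials $\bigl(\nu^2-(\tfrac{N-2}{2})^2\bigr)\abs{x}^{-2}$, for which the linear theory is polynomial; here $V=\lambda^2\abs{x}^{-\gamma}$ with $\gamma<2$ is a slow decay potential, and everything the linear theory of $-\Delta+V$ can deliver (Proposition~\ref{prop-exp-lambda}, Proposition~\ref{p-minimal}) is an \emph{exponential} lower bound, far weaker than \eqref{decay-slow-moderate}. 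What you would actually need is a comparison principle between a supersolution and a subsolution of the nonlinear sublinear inequality on an exterior domain: since $u^q-w^q$ has the wrong sign where $u<w$, the difference $u-w$ satisfies no usable linear inequality, and a sweeping-type argument cannot even start without an a priori bound excluding fast decay of $u$ at infinity --- which is precisely what is to be proved. The paper obtains \eqref{decay-slow-moderate} by a different mechanism: the nonlocal positivity principle (Proposition~\ref{propositionGroundState}) tested on balls $B_{\abs{x}^{\gamma/2}}(x)$ centred at $x$, which bounds the average of $u^{q-1}$ there, followed by the weak Harnack inequality of Proposition~\ref{P-XXX} (Lemma~\ref{lemmaLimsupExistence-local} and Proposition~\ref{propositionSlowDecayq}); likewise nonexistence is proved not by a pointwise bootstrap but from the integral estimate of Lemma~\ref{lemmaLiminfExistence-slow} via H\"older's inequality, together with \eqref{equLp} when $p+q<1$ (Propositions~\ref{nonSlowDecayq} and~\ref{nonSlowDecaypq}). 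Your bootstrap also leaves the borderline case $q=1-\frac{N-\alpha-\gamma}{N}p$, which is part of the nonexistence claim, to an unexecuted ``logarithmic weights'' argument; the paper closes it by showing $\int_{B_{R}\setminus B_\rho}u^p\to\infty$ and then sharpening the integral estimate.

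A smaller but real defect is the existence construction: with the pure power $u=C\abs{x}^{-\beta_*}$ the two sides scale as $C$ and $C^{p+q}$, so no choice of $C$ helps at points where $-\Delta u+Vu<0$; when $\beta_*>N-2$ this occurs on an inner annulus once $\rho$ is small relative to $\lambda$, because the negative term $\beta_*(N-2-\beta_*)\abs{x}^{-\beta_*-2}$ dominates $\lambda^2\abs{x}^{-\beta_*-\gamma}$ there. The paper avoids this by taking $v_\nu(x)=(\abs{x}^2+\nu^2)^{-\frac{N-\alpha-\gamma}{2(1-q)}}$ with $\nu$ large (Proposition~\ref{slow-polynom-pq+optimal}). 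Note also that when $-\alpha<\gamma$ the existence region $q>1-\frac{N-\alpha-\gamma}{N}p$ forces $p+q>1$, so only the small-constant regime of your scalar condition actually occurs.
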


The most complex picture occurs in the fast growth regime  \(\gamma<-\alpha\).

\begin{theorem}\label{Thm-slow-fast}
Let \(N\ge 1\), \(\lambda > 0\), \(\gamma<2\), \(0<\alpha<N\), \(p>0\), \(q < 1\) and \(\rho > 0\).
If \(p+q\neq 1\) and
\[\gamma \le -\alpha.\]
then equation \eqref{eqChoquardSlow} has a nontrivial nonnegative supersolution in \(\R^N \setminus \Bar{B}_{\rho}\) if and only if
\begin{equation}\label{non-slow-fast}
q > 1+\frac{\gamma}{\alpha}p.
\end{equation}
Moreover, if \(u\ge 0\) is a nontrivial supersolution of \eqref{eqChoquardSlow} in \(\R^N \setminus \Bar{B}_{\rho}\) then
\begin{subequations}
\label{eqC}
\begin{align}
\liminf_{\abs{x} \to \infty} u (x)\abs{x}^{\frac{N - \alpha - \gamma}{1-q}}&>0
&\text{if }&
\textstyle{1+\frac{N - \alpha - \gamma}{N}p<q<1},\label{C-1}\\
\liminf_{\abs{x} \to \infty} u (x)\abs{x}^{\frac{N}{p}}(\log\abs{x})^{-\frac{1}{1 - q - p}}&>0
&\text{if }&
\textstyle{q=1+\frac{N - \alpha - \gamma}{N}p},\label{C-2}\\
\liminf_{\abs{x} \to \infty} u (x) \abs{x}^{-\frac{\alpha + \gamma}{1 - q - p}}&>0
&\text{if }&
\textstyle{1+\frac{\gamma}{\alpha}p<q<\frac{\alpha}{N - 2}}.\label{C-3}
\end{align}
\end{subequations}
The above lower bounds are optimal.
\end{theorem}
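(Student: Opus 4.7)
The plan is to combine the nonlocal Agmon--Allegretto--Piepenbrink principle of Proposition~\ref{propositionGroundState} with the polynomial Phragmen--Lindel\"of bounds for $-\Delta+\lambda^{2}|x|^{-\gamma}$ from Proposition~\ref{propositionLinearLowerBound}. Since $-\gamma\ge\alpha>0$ the potential grows at infinity, yet the sublinearity $q<1$ forbids exponential decay: the exponentially decaying fundamental solution of $-\Delta+\lambda^{2}|x|^{-\gamma}$ would be dominated at infinity by $(I_\alpha\ast u^p)u^q$. The proof therefore reduces to a polynomial balance between the growing term $\lambda^{2}|x|^{-\gamma}u$ and the right-hand side. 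For $u(x)\sim|x|^{-\beta}$ at infinity the convolution behaves as $(I_\alpha\ast u^p)(x)\sim c\,|x|^{-(N-\alpha)}$ when $p\beta>N$ and as $(I_\alpha\ast u^p)(x)\sim c\,|x|^{\alpha-p\beta}$ when $p\beta<N-\alpha$; these two regimes together with the threshold $p\beta=N$ give rise to the three decay rates of \eqref{eqC}.

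For the \emph{existence} half when $q>1+\gamma p/\alpha$, I would construct explicit supersolutions of the form $u(x)=c(1+|x|^{2})^{-\beta/2}$, with an extra logarithmic factor at the borderline. In the regime $q>1+\frac{N-\alpha-\gamma}{N}p$ take $\beta=\frac{N-\alpha-\gamma}{1-q}$, so that $p\beta>N$ and the leading balance is $\lambda^{2}|x|^{-\gamma-\beta}\sim c\,|x|^{-(N-\alpha)-q\beta}$. In the regime $1+\frac{\gamma}{\alpha}p<q<1+\frac{N-\alpha-\gamma}{N}p$ take $\beta=-\frac{\alpha+\gamma}{1-q-p}$, which is positive since $\alpha+\gamma\le 0$ (and $1-q-p\neq 0$ by assumption), so that $p\beta<N-\alpha$ and the balance is $\lambda^{2}|x|^{-\gamma-\beta}\sim c\,|x|^{\alpha-p\beta-q\beta}$. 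In each case the contribution of $-\Delta u$ is of lower order and is absorbed by the constant~$c$.

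For the \emph{decay bounds} I would bootstrap: insert the linear polynomial lower bound on $u$ into the right-hand side, re-apply Proposition~\ref{propositionLinearLowerBound} to the source $(I_\alpha\ast u^p)u^q$, and iterate. The iteration has a unique attracting fixed point equal to the optimal exponent in \eqref{C-1} or \eqref{C-3} according to the regime. For \emph{nonexistence} when $q\le 1+\gamma p/\alpha$ the bootstrap diverges: each step produces a strictly slower decay than the previous one and eventually violates the integrability condition \eqref{equLp}. Equivalently, testing the integral inequality of Proposition~\ref{propositionGroundState} against a cut-off supported on an annulus $\{R<|x|<2R\}$ and inserting the iterated bound on $u$ yields a left-hand side unbounded in $R$ while the right-hand side remains bounded, producing the required contradiction.

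The main obstacle is the threshold \eqref{C-2} with $q=1+\frac{N-\alpha-\gamma}{N}p$: here $p\beta=N$ exactly, neither power asymptotic of $I_\alpha\ast u^p$ is clean, and the bootstrap becomes logarithmic. One must first establish the power bound $u(x)\gtrsim|x|^{-N/p}$, then extract a refined estimate $(I_\alpha\ast u^p)(x)\gtrsim c\,|x|^{-(N-\alpha)}\log|x|$, and finally iterate once more to produce the factor $(\log|x|)^{1/(1-q-p)}$. The matching supersolution at this threshold requires a logarithmic ansatz whose verification involves tracking lower-order terms carefully. A parallel subtlety appears at the existence/nonexistence boundary $q=1+\gamma p/\alpha$, which must be ruled out by a logarithmic sharpening of the power-counting argument rather than by pure scaling.
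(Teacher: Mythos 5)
The existence half of your plan (explicit power-type supersolutions with exponent \(\tfrac{N-\alpha-\gamma}{1-q}\) or \(-\tfrac{\alpha+\gamma}{1-q-p}\), a logarithmic correction at the threshold, and the Riesz estimates of Lemmas~\ref{lemmaUpperasympt} and~\ref{lemmaUpperasymptCritical}; in each case the potential term \(\lambda^2\abs{x}^{-\gamma}u\) supplies the dominant left-hand side) is essentially the paper's construction and is sound, up to a small slip: the regime in which \((I_\alpha\ast \abs{x}^{-p\beta})\sim \abs{x}^{\alpha-p\beta}\) is \(\alpha<p\beta<N\), not \(p\beta<N-\alpha\). The genuine gap is in your decay bounds and nonexistence. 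Your bootstrap is anchored on ``the linear polynomial lower bound'' from Proposition~\ref{propositionLinearLowerBound}, but that proposition concerns Hardy potentials and gives nothing for \(-\Delta+\lambda^2\abs{x}^{-\gamma}\) with \(\gamma\le-\alpha<0\); for this growing potential the only linear lower bound is exponential (Proposition~\ref{prop-exp-lambda}), so your iteration has no base case and there is no linear estimate to ``re-apply to the source''. The missing ingredient is a \emph{nonlinear, localized} estimate: the paper's Lemma~\ref{lemmaLimsupExistence-local} tests the positivity principle of Proposition~\ref{propositionGroundState} with functions supported on balls \(B_{\abs{x}^{\gamma/2}}(x)\) (the radius chosen so that \(\int\abs{\nabla\psi}^2\) and \(\int V\psi^2\) balance) and uses the weak Harnack inequality (Proposition~\ref{P-XXX}) to convert an upper bound on \(\int u^{q-1}\) (recall \(q-1<0\)) into the pointwise bound \(u(x)^{1-q}\ge c\,\abs{x}^{\alpha+\gamma-N}\int_{B_{2\abs{x}}}u^p\). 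With this in hand, \eqref{C-1} is immediate, and \eqref{C-2}--\eqref{C-3} follow with no fixed-point iteration at all: the paper lower-bounds \(\int_{B_R}u^p\ge c R^{N+p\frac{\alpha+\gamma}{1-p-q}}\) (with a logarithm at the threshold) in one step by combining the annular positivity-principle bound of Lemma~\ref{lemmaLiminfExistence-slow} with H\"older (Lemma~\ref{lemmaRieszdivergence}). If you do iterate, you must also explain how to reach the limiting exponent itself rather than every exponent \(\beta^\ast+\varepsilon\), which requires uniform control of the constants along the iteration.

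For nonexistence your mechanics are also off: a pointwise lower bound on \(u\) gives an \emph{upper} bound on \(\int u^{q-1}\varphi^2\), so ``inserting the iterated bound'' into Proposition~\ref{propositionGroundState} does not make its right-hand side large; one must instead lower-bound \(\int u^{q-1}\) by H\"older in terms of an upper bound on \(\int u^p\). This is exactly what Proposition~\ref{nonSlowDecaypq} does: H\"older on the annulus, the bound \(\bigl(\int u^p\bigr)\bigl(\int u^{q-1}\bigr)\le CR^{2N-\alpha-\gamma}\) of Lemma~\ref{lemmaLiminfExistence-slow}, and the fact that \eqref{equLp} plus dominated convergence forces \(R^{N-\alpha}\int_{B_{2R}\setminus B_R}u^p\to0\). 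That little-\(o\) handles the borderline \(q=1+\frac{\gamma}{\alpha}p\) outright, whereas your plan defers it to an unspecified ``logarithmic sharpening''. Without the localized pointwise estimate and the H\"older/positivity-principle integral estimates, neither the lower bounds \eqref{eqC} nor the nonexistence part of your argument closes.
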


In the homogeneous case \(p + q=1\) we obtain results similar
to those of Theorem~\ref{Thm-slow-moderate}, with exception
that, unlike in all previous results, the existence becomes sensitive to the choice of radius $\rho>0$.
In addition, in the fully homogeneous case \(\gamma = -\alpha\) and \(p + q=1\),
the existence and nonexistence becomes sensitive to the value of \(\lambda\).
To ensure the existence of a positive solution,
\(\lambda\) has to be sufficiently large so that the potential \(V\) can compensate
for the loss of positivity due to the nonlocal right hand side. To formulate the result, denote
\[
  \lambda^\ast := 2^{-\frac{\alpha}{2}}\frac{\Gamma(\frac{N - \alpha}{4})}{\Gamma(\frac{N + \alpha}{4})}.
\]
This quantity is related to the optimal constant in a weighted Hardy-Littlewood-Sobolev inequality
of Stein and Weiss \cite{Stein-Weiss}, see Section~\ref{sect-pq1}.

\begin{theorem}\label{t-pq1}
Let \(N\ge 2\), \(0<\alpha<N\),  \(\lambda>0\) and \(p>0\).
If
\[p+q=1,\]
then 
\begin{compactenum}[(i)]
\item if $\alpha>-\gamma$ and $\lambda>0$, or if $\alpha=-\gamma$ and \(\lambda<\lambda^\ast\), then for every \(\rho > 0\), \eqref{eqChoquardSlow} has no nontrivial nonnegative supersolutions in \(\R^N \setminus \Bar{B}_{\rho}\);
\item if $\alpha<-\gamma$ and $\lambda>0$, or if $\alpha=-\gamma$ and \(\lambda>\lambda^\ast\), then there exists \(\rho_0 > 0\) such that if \(\rho > \rho_0\), \eqref{eqChoquardSlow} has a positive supersolutions in \(\R^N \setminus \Bar{B}_{\rho}\).
\end{compactenum}
Moreover, if \(u\ge 0\) is a nontrivial supersolution of \eqref{eqChoquardSlow} in \(\R^N \setminus \Bar{B}_{\rho}\) then
\begin{equation*}
 \liminf_{\abs{x} \to \infty} u (x)\abs{x}^{\frac{N}{1-q}} > 0.
\end{equation*}
The above lower bound is optimal if $\alpha<-\gamma$.
\end{theorem}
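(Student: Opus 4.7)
The hallmark of the regime $p+q=1$ is that the right-hand side of \eqref{eqChoquardSlow} is positively homogeneous of degree one in $u$: the supersolution inequality is preserved under $u\mapsto\mu u$ for every $\mu>0$. Consequently, the existence question is reducible to a linear spectral condition through the nonlocal Agmon--Allegretto--Piepenbrink positivity principle (Proposition~\ref{propositionGroundState}): the existence of a positive supersolution $u$ forces, for every $\varphi\in C^\infty_c(\R^N\setminus\overline B_\rho)$,
\[
  \int \abs{\nabla\varphi}^2+\int \tfrac{\lambda^2}{\abs{x}^\gamma}\varphi^2\;\ge\;\int(I_\alpha\ast u^p)\,u^{q-1}\varphi^2=A_\alpha\iint\tfrac{u(y)^p\varphi(x)^2}{u(x)^p\abs{x-y}^{N-\alpha}}\,dx\,dy,
\]
where we used $q-1=-p$.

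Symmetrising the right-hand side under $x\leftrightarrow y$ and applying the elementary inequality $\tfrac{u(y)^p\varphi(x)^2}{u(x)^p}+\tfrac{u(x)^p\varphi(y)^2}{u(y)^p}\ge 2\varphi(x)\varphi(y)$ eliminates $u$ from the estimate and yields a bona fide weighted Stein--Weiss inequality
\[
  \int\abs{\nabla\varphi}^2+\lambda^2\int\tfrac{\varphi^2}{\abs{x}^\gamma}\;\ge\;A_\alpha\iint\tfrac{\varphi(x)\varphi(y)}{\abs{x-y}^{N-\alpha}}\,dx\,dy,
\]
valid for every nonnegative $\varphi\in C^\infty_c(\R^N\setminus\overline B_\rho)$. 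Dilations $\varphi_R(x)=\varphi(x/R)$, $R\to\infty$, reveal the scaling: the Riesz term scales as $R^{N+\alpha}$, the potential term as $R^{N-\gamma}$ and the gradient term as $R^{N-2}$. If $\gamma>-\alpha$ the Riesz term wins and the inequality fails for every $\rho$; if $\gamma<-\alpha$ the potential term dominates at large scales and the inequality is compatible; in the critical case $\gamma=-\alpha$ the potential and Riesz terms scale identically and the displayed inequality reduces exactly to a Stein--Weiss inequality whose sharp constant is $(\lambda^\ast)^{-2}$, so $\lambda<\lambda^\ast$ is obstructed by a near-extremiser concentrated at infinity. This establishes the non-existence claim~(i).

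For the existence~(ii), I would use the ansatz $\bar u(x)=A\abs{x}^{-N/p}$ on $\{\abs{x}>\rho\}$, modulated by a suitable logarithmic factor in the borderline subcase. Direct computation gives $-\Delta\bar u=c_{N,p}\bar u\abs{x}^{-2}$ and $V\bar u=\lambda^2\bar u\abs{x}^{-\gamma}$, while a rescaling argument yields the pointwise estimate $(I_\alpha\ast\bar u^p)(x)\le C A^p\abs{x}^{\alpha-N}(1+\log(\abs{x}/\rho))$. In the regime $\gamma<-\alpha$ the potential term overwhelms $(I_\alpha\ast\bar u^p)\bar u^q=O(\log\abs{x}\cdot\abs{x}^{\alpha-N/p})$ as soon as $\rho\ge\rho_0(\lambda)$, for any $\lambda>0$; in the borderline $\gamma=-\alpha$ with $\lambda>\lambda^\ast$, the strict Stein--Weiss gap absorbs the logarithmic error after the aforementioned correction to the ansatz. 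Because $\bar u(x)=O(\abs{x}^{-N/p})$, this same construction witnesses the optimality of the decay rate when $\gamma<-\alpha$. The matching lower bound $u(x)\gtrsim\abs{x}^{-N/p}$ for every supersolution is obtained by a bootstrap starting from the polynomial Phragm\'en--Lindel\"of lower bound of Proposition~\ref{propositionLinearLowerBound}: the homogeneity $p+q=1$ admits $N/p$ as the unique self-consistent decay exponent.

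The main difficulty lies in the critical case $\gamma=-\alpha$: identifying $\lambda^\ast$ as the sharp constant in the weighted Stein--Weiss inequality (a Beckner--Herbst-type computation) and producing admissible near-extremisers for the positivity principle both require care. A secondary technical point is the emergence of the critical radius $\rho_0$ in~(ii), which reflects the fact that the problem is scale-invariant on all of $\R^N$ and admits no global supersolution, so the exterior obstacle of radius $\rho$ must itself quantitatively break the scaling balance.
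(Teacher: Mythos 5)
Your nonexistence argument is essentially the paper's: the symmetrisation of the positivity principle (your AM--GM step is the Cauchy--Schwarz step in Lemma~\ref{lemmaGroundState-symmetric}) followed by dilations is exactly how the paper treats \(\gamma=-\alpha\), \(\lambda<\lambda^\ast\) (Lemma~\ref{lemmaOptimalSW} and Proposition~\ref{propauie}); for \(\gamma>-\alpha\) the paper instead argues through the H\"older/annuli estimates of Proposition~\ref{nonSlowDecayq}, but your scaling route is a correct alternative. Your exterior supersolution \(\abs{x}^{-N/p}\) for \(\gamma<-\alpha\) also works: the logarithmic loss in \(I_\alpha\ast\abs{x}^{-N}\) (Lemma~\ref{lemmaUpperasympt} with \(\beta=N\)) is beaten by the potential since \(-\gamma>\alpha\), whereas the paper uses the faster-decaying power \(\abs{x}^{-(N-\alpha-\gamma)/(1-q)}\) (Proposition~\ref{slow-polynom-pq+optimalhomog}).

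There are, however, two genuine gaps. First, the critical existence case \(\gamma=-\alpha\), \(\lambda>\lambda^\ast\) cannot be handled by a logarithmically modulated \(\abs{x}^{-N/p}\): if \(u^p=\abs{x}^{-N}(\log\abs{x})^{\sigma}\) with \(\sigma\ge 0\) (and negative \(\sigma\) is excluded by comparing the powers of \(\log\) already without the convolution loss), then \(I_\alpha\ast u^p\gtrsim\abs{x}^{\alpha-N}(\log\abs{x})^{\sigma+1}\), so \((I_\alpha\ast u^p)u^{1-p}\) always carries one extra factor \(\log\abs{x}\) over \(\lambda^2\abs{x}^{\alpha}u\); no finite gap \(\lambda^2-\sigma_\alpha^\ast\) absorbs a divergent logarithm. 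The missing idea is the paper's choice \(u=\abs{x}^{-(N+\alpha)/(2p)}\), so that \(u^p=\abs{x}^{-(N+\alpha)/2}\) is the exact Stein--Weiss power and the semigroup identity gives \(I_\alpha\ast u^p=\sigma_\alpha^\ast\abs{x}^{-(N-\alpha)/2}\) with precisely the sharp constant, whence \(\lambda^2>\sigma_\alpha^\ast\) yields a supersolution outside a large ball (Proposition~\ref{proptsrn}); note also that no supersolution at \(\gamma=-\alpha\) can decay like \(\abs{x}^{-N/p}\), since then \(\int_{B_R}u^p\gtrsim\log R\) and Lemma~\ref{lemmaLimsupExistence-local} would force \(u^p\gtrsim\abs{x}^{-N}\log\abs{x}\). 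Second, your decay lower bound ``\(u\gtrsim\abs{x}^{-N/p}\) by a bootstrap, \(N/p\) being the unique self-consistent exponent'' is asserted rather than proved, and the mechanism you invoke does not deliver it: the paper obtains the decay estimate from Proposition~\ref{propositionSlowDecayq} (via Lemma~\ref{lemmaLimsupExistence-local} and the weak Harnack inequality), whose exponent is \(\tfrac{N-\alpha-\gamma}{1-q}\) and coincides with \(\tfrac{N}{1-q}\) only when \(\gamma=-\alpha\). For \(\gamma<-\alpha\) every exponent between \(N/p\) and \((N-\alpha-\gamma)/p\) is ``self-consistent'' in your sense, and the paper's own supersolution of Proposition~\ref{slow-polynom-pq+optimalhomog} decays like \(\abs{x}^{-(N-\alpha-\gamma)/p}\), strictly faster than \(\abs{x}^{-N/p}\), so the uniform bound you claim for all supersolutions cannot be established in that regime; any proof of the ``Moreover'' clause must go through the exponent of Proposition~\ref{propositionSlowDecayq}, as the paper's does.
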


Theorem~\ref{t-pq1} gives only partial results in the borderline case \(p + q=1\).
The accurate description of the existence,
nonexistence and optimal decay properties of positive supersolutions of the equation
\[
  -\Delta u (x)+ \frac{\lambda^2}{\abs{x}^{\gamma}} u=(I_\alpha\ast u^{1-q}) (x) u^{q}(x)
\]
is an interesting open problem which is however lies beyond the scope of the present work.

\subsection{Outline.}
The rest of the paper is organized as follows.
In Section~\ref{sectPositivity} we prove general local and nonlocal versions of positivity principles, in the spirit
of the Agmon--Allegretto--Piepenbrink positivity principle (see \cites{Agmon,Agmon-2}).
These positivity principles are fundamental in our considerations both for nonexistence as well as for optimal decay estimates.
In Section~\ref{sectFree} we prove Theorem~\ref{Thm-free} and discuss briefly equation with fast decay potentials.
In Section~\ref{Sect-Hardy} we consider equation with  Hardy potentials.
Finally, in Sections~\ref{sectSlowLargeq} and~\ref{sectSlowSmallq} we study equation with slow decay potentials.
Appendix A contains various estimates of the Riesz potentials which are extensively used in the paper.
In Appendix B we prove suitable versions of a comparison principle and a weak Harnack inequality
for distributional supersolutions.

\section{Local and nonlocal positivity principles.}
\label{sectPositivity}

According to the classical Agmon--Allegretto--Piepenbrink positivity principle  (see~\cite{Agmon}*{Theorem 3.1}),
the linear equation \(-\Delta u+Vu=0\) admits a nontrivial nonnegative weak supersolution \(u\in H^1_{\mathrm{loc}}(\Omega)\) if and only if
the corresponding quadratic form \(\int_{\Omega} \abs{\nabla \varphi}^2 +V \varphi^2\)  is nonnegative
for every \(\varphi \in C^\infty_c(\Omega)\).
An extension of such positivity principle to distributional supersolutions can be found in \cite{CyconFroeseKirschSimon}*{Theorem 2.12} (see also \cite{Fall-I}*{Lemma B.1}).

We formulate a version of the positivity principle adapted to distributional supersolutions of the nonlinear equation
\[
-\Delta u+Vu=W u^q.
\]

\begin{proposition}
\label{propositionLocalGroundstate}
Let \(N\ge 1\), \(\Omega \subseteq \R^N\) be an open connected set and \(q\in\R\).
Let  \(V \in L^1_{\mathrm{loc}} (\Omega)\), \(W : \Omega \to [0, \infty)\) be measurable and let \(u \in L^1_\mathrm{loc}(\Omega)\).
Assume that \(Vu,\, W u^q \in L^1_\mathrm{loc}(\Omega)\).
If \(u \ge 0\) and
\[
  -\Delta u + V u \ge W u^q \quad\text{in \(\Omega\)},
\]
in the sense of distributions, then either \(u = 0\) almost everywhere
or \(u > 0\) almost everywhere in \(\Omega\), \(W u^{q - 1} \in L^1_\mathrm{loc} (\Omega)\) and for every \(\varphi \in C^\infty_c(\Omega)\) one has
\begin{equation*}
  \int_{\Omega} \abs{\nabla \varphi}^2 +V \varphi^2\ge\int_{\Omega}W u^{q - 1}\varphi^2.
\end{equation*}
\end{proposition}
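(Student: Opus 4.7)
The plan is to establish the two conclusions in sequence: first the strong positivity dichotomy via a weak Harnack inequality, and then the quadratic form inequality via a ground state substitution performed after a mollification regularization.

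For the dichotomy, I would observe that since \(W u^q \ge 0\) and \(V^- u \ge 0\) almost everywhere, the distributional hypothesis yields
\[
  -\Delta u + V^+ u \ge V^- u + W u^q \ge 0 \quad \text{in } \Omega,
\]
so \(u\) is a nonnegative distributional supersolution of a linear Schr\"odinger inequality with locally integrable potential \(V^+\) and nonnegative right-hand side. The weak Harnack inequality for distributional supersolutions established in Appendix~B, applied on each ball contained in \(\Omega\), yields a local dichotomy between vanishing almost everywhere and strict positivity almost everywhere. A standard covering argument, combined with the connectedness of \(\Omega\), propagates this to the global dichotomy \(u \equiv 0\) or \(u > 0\) almost everywhere in \(\Omega\).

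For the quadratic form inequality, assume \(u > 0\) almost everywhere, let \(\rho_\epsilon\) be a standard nonnegative mollifier, and set \(u_\epsilon := \rho_\epsilon * u\); this function is smooth and strictly positive on \(\Omega_\epsilon := \{x \in \Omega : \mathrm{dist}(x, \partial\Omega) > \epsilon\}\), and mollifying the distributional inequality yields the pointwise bound
\[
  -\Delta u_\epsilon + (Vu)_\epsilon \ge (W u^q)_\epsilon \quad \text{in } \Omega_\epsilon.
\]
Dividing by \(u_\epsilon\), using the identity \(-\Delta u_\epsilon / u_\epsilon = -\Delta \log u_\epsilon - \abs{\nabla \log u_\epsilon}^2\), testing against \(\varphi^2\) for \(\varphi \in C^\infty_c(\Omega)\) eventually supported in \(\Omega_\epsilon\), integrating by parts, and applying the elementary bound \(2\varphi\, \nabla \log u_\epsilon \cdot \nabla \varphi \le \abs{\nabla \varphi}^2 + \abs{\nabla \log u_\epsilon}^2 \varphi^2\), I would arrive at
\[
  \int_\Omega \abs{\nabla \varphi}^2 + \int_\Omega \frac{(Vu)_\epsilon}{u_\epsilon}\, \varphi^2 \ge \int_\Omega \frac{(W u^q)_\epsilon}{u_\epsilon}\, \varphi^2.
\]

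The main obstacle lies in the passage to the limit \(\epsilon \to 0\). By Fubini, the potential term rewrites as \(\int_\Omega V(y) u(y) \psi_\epsilon(y) \,dy\) with \(\psi_\epsilon(y) := \int_\Omega \rho_\epsilon(x-y) \varphi^2(x)/u_\epsilon(x) \,dx\); at Lebesgue points one has \(\psi_\epsilon(y) \to \varphi^2(y)/u(y)\), pointing toward the limit \(\int_\Omega V \varphi^2\). The analogous reformulation of \((Wu^q)_\epsilon/u_\epsilon\) is nonnegative, and Fatou's lemma delivers \(\int_\Omega W u^{q-1} \varphi^2\) in the limit inferior. The delicate issue is the \(V\)-term when \(V\) is sign-indefinite: splitting \(V = V^+ - V^-\), applying Fatou to each nonnegative contribution, and exploiting the already-controlled right-hand side to dominate \(\int_\Omega (V^- u)_\epsilon \varphi^2 / u_\epsilon\) allows the limiting inequality to close. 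Once the stated inequality is established, taking \(\varphi\) identically equal to \(1\) on any prescribed compact subset of \(\Omega\) yields the local integrability \(W u^{q - 1} \in L^1_\mathrm{loc}(\Omega)\).
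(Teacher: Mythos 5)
Your overall skeleton (mollify, perform the ground-state substitution, pass to the limit by Fatou/monotone convergence) is the same as the paper's, but two steps have genuine gaps. First, the positivity dichotomy: the weak Harnack inequality of Appendix~B is stated for \(-\Delta u + \lambda u \ge 0\) with a \emph{constant} \(\lambda\), and it cannot be applied to \(-\Delta u + V^+ u \ge 0\) when \(V^+\) is merely in \(L^1_{\mathrm{loc}}\); indeed a quantitative weak Harnack inequality (a positive lower bound for \(\inf_B u\)) is false in that generality — for instance \(u(x)=\abs{x-x_0}^k\) with \(V^+=\Delta u/u\sim c\abs{x-x_0}^{-2}\in L^1_{\mathrm{loc}}\) for \(N\ge 3\) — which is exactly the point of the remark following the proposition in the paper. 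The a.e.\ dichotomy is true, but it is obtained by the Brezis--Ponce argument: one bounds \(\int\int\bigabs{\log\frac{\eta_\delta\ast u(x)+\varepsilon}{\eta_\delta\ast u(y)+\varepsilon}}\) via the Poincar\'e--Wirtinger inequality and the extra term \(\int\bigabs{\nabla\bigl(\frac{\varphi}{\eta_\delta\ast u+\varepsilon}\bigr)}^2(\eta_\delta\ast u+\varepsilon)^2\) kept in the regularized ground-state identity, and then lets \(\varepsilon\to 0\); citing a Harnack inequality here does not close the argument.

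Second, your limit \(\epsilon\to 0\) does not close for the \(V\)-term. After splitting \(V=V^+-V^-\), the inequality reads \(\int\abs{\nabla\varphi}^2+\int\frac{(V^+u)_\epsilon}{u_\epsilon}\varphi^2\ge\int\frac{(Wu^q)_\epsilon}{u_\epsilon}\varphi^2+\int\frac{(V^-u)_\epsilon}{u_\epsilon}\varphi^2\); Fatou handles the right-hand side, but for the \(V^+\)-term you need \(\limsup_{\epsilon\to 0}\int\frac{(V^+u)_\epsilon}{u_\epsilon}\varphi^2\le\int V^+\varphi^2\), and Fatou gives the opposite inequality. Dominated convergence is unavailable because \(u_\epsilon\) admits no positive lower bound on \(\supp\varphi\) uniform in \(\epsilon\) (again, uniform positivity can fail for \(V\in L^1_{\mathrm{loc}}\)), so \(\psi_\epsilon\) is not dominated, and your sentence about ``exploiting the already-controlled right-hand side'' does not produce such a bound. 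This is precisely why the paper keeps the shifted denominator \(\eta_\delta\ast u+\varepsilon\): at fixed \(\varepsilon\) the quotient is dominated by \(\varepsilon^{-1}\abs{\eta_\delta\ast(Vu)}\), which converges in \(L^1_{\mathrm{loc}}\) as \(\delta\to 0\), and then as \(\varepsilon\to 0\) one has \(\bigabs{\frac{Vu}{u+\varepsilon}}\le\abs{V}\in L^1_{\mathrm{loc}}\), so dominated convergence applies while the \(W\)-term is handled by monotone convergence. Either adopt this two-parameter regularization, or supply an argument giving the missing upper semicontinuity of the \(V^+\)-term; as written, the passage to the limit is not justified.
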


The conclusion \(u > 0\) is crucial to interpret the \(u^{q - 1}\) when \(q \le 1\).
If \(q < 0\) and \(W > 0\), then this conclusion is already contained implicitly in the assumption
\(W u^q \in L^1_\mathrm{loc} (\Omega)\).

\begin{proof}[Proof of Proposition~\ref{propositionLocalGroundstate}]
Let \(\eta\in C^\infty_c(\R^N)\) be such that \(\supp \eta \subset B_1\), \(\int_{\R^N} \eta = 1\) and \(\eta \ge 0\).
For \(\delta>0\) and \(x \in \R^N\), let \(\eta_\delta(x)=\delta^{-N}\eta (x/\delta)\) and let \(\check{\eta}_{\delta} (x) = \eta_\delta (-x)\).
Let \(\varphi \in C^\infty_c(\Omega)\).
Since the support of \(\varphi\) is compact, there exists \(\delta_0 > 0\) such that for every \(x \in \Omega\), \(\varphi (x) > 0\) implies \(B_{\delta_0} (x) \subset \Omega\).
Given \(\varepsilon>0\) and \(\delta\in(0,\delta_0]\), we can thus take \(\check{\eta}_\delta \ast \frac{\varphi^2}{\eta_\delta \ast u + \varepsilon}\in C^\infty_c(\Omega)\) as a test function.
We compute
\[
\begin{split}
 - \int_{\Omega} u \,\Delta \Bigl(\check{\eta}_\delta \ast \frac{\varphi^2}{\eta_\delta \ast u + \varepsilon} \Bigr)
 &= - \int_{\Omega} (\eta_\delta \ast u)\, \Delta \Bigl(\frac{\varphi^2}{\eta_\delta \ast u + \varepsilon} \Bigr)\\
 &=  \int_{\Omega} \nabla (\eta_\delta \ast u)
                   \cdot \nabla \Bigl(\frac{\varphi^2}{\eta_\delta \ast u + \varepsilon} \Bigr)
\end{split}
\]
and
\begin{equation*}
  \abs{\nabla \varphi}^2 = \nabla (\eta_\delta \ast u) \cdot \nabla \Bigl( \frac{\varphi^2}{\eta_\delta \ast u +\varepsilon}\Bigr)+\Bigabs{\nabla \Bigl( \frac{\varphi}{\eta_\delta \ast u + \varepsilon} \Bigr)}^2(\eta_\delta \ast u + \varepsilon)^2.
\end{equation*}
Therefore,
\begin{equation}
\label{eqGroundStateMaster}
 \int_{\Omega} \abs{\nabla \varphi}^2 + \frac{\eta_\delta \ast (V u)}{\eta_\delta \ast u + \varepsilon} \varphi^2 \ge \int_{\Omega} \frac{\eta_\delta \ast (W u^q)}{\eta_\delta \ast u + \varepsilon} \varphi^2
+ \int_{\Omega} \Bigabs{\nabla \Bigl( \frac{\varphi}{\eta_\delta \ast u + \varepsilon} \Bigr)}^2(\eta_\delta \ast u + \varepsilon)^2 .
\end{equation}
Since \(Vu \in L^1_{\mathrm{loc}} (\Omega)\) and \(W u^q \in L^1_\mathrm{loc}(\Omega)\), one has \(\eta_\delta \ast Vu \to Vu\) and \(\eta_\delta \ast W u^q \to W u^q\) in \(L^1_\mathrm{loc} (\Omega)\) as \(\delta \to 0\). Since \(u \in L^1 (\Omega)\), \(\eta_\delta \ast u \to u\) almost everywhere in \(\Omega\) as \(\delta \to 0\).
Hence, letting \(\delta \to 0\) we obtain by Lebesgue's dominated convergence theorem
\[
 \int_{\Omega} \abs{\nabla \varphi}^2 + \frac{V u}{u + \varepsilon} \varphi^2 \ge \int_{\Omega} \frac{W u^q}{u + \varepsilon} \varphi^2.
\]
Letting now \(\varepsilon \to 0\), by Lebesgue's dominated convergence theorem again and by Lebesgue's monotone convergence theorem we conclude that
\[
  \int_{\Omega} \abs{\nabla \varphi}^2 + V \varphi^2 \ge \int_{\Omega \setminus u^{-1} (\{0\})} W u^{q - 1} \varphi^2.
\]

Let us now prove that \(u > 0\) almost everywhere, following H.\thinspace Brezis and A.\thinspace C.\thinspace{} Ponce \cite{BrezisPonce2003}.
Let \(a \in \Omega\) and \(\rho > 0\) such that \(B_{2 \rho} (a) \subset \Omega\) and take \(\varphi \in C^\infty_c(B_{2 \rho}(a))\) such that \(\varphi = 1\) on \(B_{\rho} (a)\).
By the Poincar\'e--Wirtinger inequality and the triangle inequality we have
\[
\int_{B_{\rho}(a)}\int_{B_{\rho}(a)} \Bigabs{\log \frac{\eta_\delta \ast u (x) + \varepsilon}{\eta_\delta \ast u (y) + \varepsilon}} \,dx\,dy
\le C \int_{B_{\rho} (a)} \Bigl\lvert \nabla \frac{1}{\eta_\delta \ast u + \delta}\Bigr\rvert^2 \abs{\eta_\delta \ast u + \delta}^2,
\]
whence by \eqref{eqGroundStateMaster}
\[
\int_{B_{\rho}(a)}\int_{B_{\rho}(a)} \Bigabs{\log \frac{\eta_\delta \ast u (x) + \varepsilon}{\eta_\delta \ast u (y) + \varepsilon} } \,dx\,dy
\le \int_{\Omega} \abs{\nabla \varphi}^2 + \frac{(\eta_\delta \ast Vu)}{\eta_\delta \ast u + \varepsilon} \varphi^2 - \frac{\eta_\delta \ast W u^q}{\eta_\delta \ast u + \varepsilon} \varphi^2.
\]
Letting now \(\delta \to 0\), we have as before
\[
 \int_{B_{\rho}(a)}\int_{B_{\rho}(a)} \Bigl\lvert \log \frac{u (x) + \varepsilon}{u (y) + \varepsilon} \Bigr\rvert\,dx\,dy \le
\int_{\Omega} \abs{\nabla \varphi}^2 + \frac{V u}{u + \varepsilon} \varphi^2.
\]
Now note that for every \(x, y \in B_{\rho}(a) \times B_{\rho}(a)\) such that \(u (x) = 0\) and \(u (y) > 0\),
\[
 \lim_{\varepsilon \to 0} \Bigabs{\log \frac{u (x) + \varepsilon}{u (y) + \varepsilon}} = \infty,
\]
this allows to conclude that either \(u = 0\) or \(u > 0\) almost everywhere in \(B_{\rho} (a)\).
Since \(a \in \Omega\) is arbitrary and \(\Omega\) is connected, we conclude that either \(u > 0\) almost everywhere or \(u > 0\) almost everywhere in \(\Omega\).
\end{proof}

If one assumed \(u \in H^1_{\mathrm{loc}} (\Omega)\), one could have taken directly \(\delta = 0\) in the above proof.
If \(u^{-1} \in L^\infty_\mathrm{loc}(\Omega)\), one could take directly  \(\varepsilon = 0\).
The latter would follow
from the weak Harnack inequality if for example \(V_+ \in L^{s}_\mathrm{loc}(\Omega)\) for some \(s > \frac{N}{2}\) (see for example \cite{GilbargTrudinger1983}*{Theorem 8.18}),
but uniform positivity of the solution fails for more singular potentials \(V\in L^1_\mathrm{loc}(\Omega)\).

In the context of Choquard's equation \eqref{eqChoquardV}
we prove the following nonlocal version of the Agmon--Allegretto--Piepenbrink positivity principle for distributional solutions in the sense of Definition~\ref{def-distro}.

\begin{proposition}
\label{propositionGroundState}
Let \(N\ge 1\), \(\Omega\subseteq\R^N\) be an open and connected set, \(p>0\), \(q\in\R\), \(0 < \alpha < N\), \(V \in L^1_{\mathrm{loc}}(\Omega)\) and \(u \in L^1_{\mathrm{loc}} (\R^N)\).
If \(u \ge 0\) and
\begin{equation*}
  -\Delta u + V u \ge (I_\alpha \ast u^p)u^q\quad\text{in \(\Omega\)},
\end{equation*}
in the sense of distribution, then either \(u = 0\) almost everywhere or \(u > 0\) almost everywhere in \(\Omega\), \(u^{q - 1} \in L^1_\mathrm{loc} (\Omega)\) and
for every \(R>0\) and \(\varphi \in C^\infty_c(\Omega\cap B_R)\) one has
\begin{equation*}
  \int_{\Omega} \abs{\nabla \varphi}^2 + \int_{\Omega} V \varphi^2 \ge \frac{A_\alpha}{2^{N - \alpha} R^{N - \alpha}} \Bigl(\int_{\Omega \cap B_R} u^{p}\Bigr) \Bigl(\int_{\Omega} u^{q - 1} \varphi^2 \Bigr).
\end{equation*}
\end{proposition}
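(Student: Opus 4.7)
The plan is to reduce Proposition~\ref{propositionGroundState} to the local version (Proposition~\ref{propositionLocalGroundstate}) by treating the Riesz term as a nonnegative weight, and then exploit a uniform lower bound on the Riesz potential over bounded sets.

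First, set $W(x) := (I_\alpha \ast u^p)(x)$ for $x\in \Omega$. Because $I_\alpha$ and $u^p$ are nonnegative, $W$ is a nonnegative measurable function on $\Omega$. By Definition~\ref{def-distro} we have $Vu \in L^1_{\mathrm{loc}}(\Omega)$ and $Wu^q = (I_\alpha \ast u^p)u^q \in L^1_{\mathrm{loc}}(\Omega)$, and the distributional inequality $-\Delta u + Vu \ge W u^q$ holds on $\Omega$. Proposition~\ref{propositionLocalGroundstate} therefore applies and yields the dichotomy ($u=0$ a.e.\ or $u>0$ a.e.), the integrability $Wu^{q-1}\in L^1_{\mathrm{loc}}(\Omega)$, and the inequality
\begin{equation*}
  \int_{\Omega} \abs{\nabla \varphi}^2 + \int_{\Omega} V\varphi^2 \ge \int_\Omega W u^{q-1}\varphi^2 \qquad \text{for every } \varphi\in C^\infty_c(\Omega).
\end{equation*}

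Next, assume we are in the nontrivial case $u>0$ a.e. For any $x,y\in B_R$ we have $\abs{x-y}\le 2R$, hence
\begin{equation*}
  I_\alpha(x-y) = \frac{A_\alpha}{\abs{x-y}^{N-\alpha}} \ge \frac{A_\alpha}{(2R)^{N-\alpha}}.
\end{equation*}
Dropping the integral over $\Omega\setminus B_R$ (which is nonnegative), this yields the pointwise lower bound
\begin{equation*}
   W(x) \ge \frac{A_\alpha}{2^{N-\alpha}R^{N-\alpha}} \int_{\Omega\cap B_R} u^p \qquad \text{for every } x\in B_R.
\end{equation*}
Given $\varphi\in C^\infty_c(\Omega\cap B_R)$, inserting this estimate into the integral inequality above (the integrand $Wu^{q-1}\varphi^2$ is supported in $\Omega\cap B_R$) gives precisely the claimed inequality.

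Finally, to upgrade $Wu^{q-1}\in L^1_{\mathrm{loc}}(\Omega)$ to $u^{q-1}\in L^1_{\mathrm{loc}}(\Omega)$, note that the integrability condition \eqref{equLp} implies $u^p\in L^1_{\mathrm{loc}}(\Omega)$ (the weight $(1+\abs{x}^{N-\alpha})^{-1}$ is bounded below on any ball), so if $u\not\equiv 0$ then $\int_{\Omega\cap B_R}u^p>0$ for all sufficiently large $R$. For any compact $K\subset \Omega$, choose such an $R$ with $K\subset B_R$; the pointwise bound then gives a strictly positive constant $c_K$ with $W\ge c_K$ on $K$, and $Wu^{q-1}\in L^1(K)$ forces $u^{q-1}\in L^1(K)$. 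The only subtle point is really the bookkeeping between the two propositions; the Riesz lower bound is elementary and the dichotomy is inherited directly from Proposition~\ref{propositionLocalGroundstate}, so no additional analytic machinery is required.
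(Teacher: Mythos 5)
Your proposal is correct and follows essentially the same route as the paper: apply Proposition~\ref{propositionLocalGroundstate} with the weight \(W = I_\alpha \ast u^p\), then use the elementary bound \(I_\alpha(x-y) \ge A_\alpha/(2R)^{N-\alpha}\) for \(x,y \in B_R\) to pull \(\int_{\Omega\cap B_R} u^p\) out of the integral. Your closing remark spelling out why \(W u^{q-1}\in L^1_{\mathrm{loc}}\) upgrades to \(u^{q-1}\in L^1_{\mathrm{loc}}\) (via the strictly positive lower bound for \(W\) on compact sets in the nontrivial case) is a detail the paper leaves implicit, but it is the same mechanism.
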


\begin{proof}
By Proposition~\ref{propositionLocalGroundstate} with \(W = I_\alpha \ast u^p\in L^1_\mathrm{loc} (\Omega)\),
either \(u = 0\) in \(\R^N\setminus \Bar{B}_{\rho}\) or \(u > 0\) almost everywhere in \(\Omega\), \( (I_\alpha \ast u^p) u^{q - 1} \in L^1_\mathrm{loc} (\Omega)\)
and for every \(\varphi \in C^\infty_c(\Omega)\)
\begin{equation*}
  \int_{\Omega} \abs{\nabla \varphi}^2 +V \varphi^2
  \ge \int_{\Omega}(I_\alpha \ast u^p) u^{q - 1}\varphi^2.
\end{equation*}
Since for every \(x, y \in B_R\),
\[
 I_\alpha(x-y) \ge \frac{A_\alpha}{2^{N - \alpha} R^{N - \alpha}},
\]
we have \(u^{q - 1} \in L^1_\mathrm{loc} (\Omega)\),
\[
  \frac{A_\alpha}{2^{N - \alpha} R^{N - \alpha}} \Bigl(\int_{\Omega \cap B_R} u^p \Bigr) \Bigl(\int_{\Omega} u^{q - 1}\varphi^2
 \Bigr) \le \int_{\Omega}(I_\alpha \ast u^p) u^{q - 1}\varphi^2,
\]
and the conclusion follows.
\end{proof}

\section{Equation with the unperturbed Laplacian: proof of Theorem~\ref{Thm-free}.}
\label{sectFree}

\subsection{Nonexistence.}
Our essential tools in the analysis of nonexistence of nontrivial nonnegative supersolutions to equation \eqref{eqChoquard0}
are the nonlocal positivity principle of Proposition~\ref{propositionGroundState}
and the following quantitative integral estimate, which can be viewed as an integral version
of the comparison principle for the Laplacian in exterior domains.
The result is a particular case of its generalization to Hardy potentials in Proposition~\ref{propositionLinearLowerBound}.

\begin{proposition}
\label{propositionLinearLowerBoundFree}
Let \(R, \rho > 0\) be such that \(R > 2 \rho\), \(u \in L^1_\mathrm{loc}(B_{4 R} \setminus B_{\rho /4})\) and \(f \in L^1_\mathrm{loc}(B_{4 R} \setminus B_{\rho/4})\). If
\(u \ge 0\), \(f \ge 0\) in \(B_{4 R} \setminus \Bar{B}_{\rho / 4}\) and
\[
 -\Delta u \ge f \quad\text{in}\quad B_{4 R} \setminus \Bar{B}_{\rho/4}
\]
in the sense of distributions, then
\[
 \frac{1}{\rho^{2}} \int_{B_{\rho} \setminus B_{\rho/2}} u
 + \int_{B_{R} \setminus B_{\rho}} f
  \le \frac{C}{R^{2}} \int_{B_{2R} \setminus B_{R}} u.
\]
\end{proposition}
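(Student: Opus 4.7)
The result is a quantitative, integrated maximum principle: for \(-\Delta u \ge f \ge 0\) in the annulus, the inner-ring contribution of \(u\) weighted by \(\rho^{-2}\), together with the total forcing \(\int f\), should be controlled by the outer-ring contribution of \(u\) weighted by \(R^{-2}\). I would prove it by testing the distributional inequality against a non-negative radial function \(\varphi \in C^\infty_c(B_{4R}\setminus \Bar{B}_{\rho/4})\), which yields
\[
  \int u\,(-\Delta \varphi) \ge \int f\,\varphi.
\]
The natural choice is \(\varphi(x) = \psi_0(\abs{x})\) where \(\psi_0\) is a smoothing of the piecewise-linear tent
\[
 \psi_0(r) = \min\!\Bigl(1,\; \tfrac{2(r - \rho/2)_+}{\rho},\; \tfrac{(2R - r)_+}{R}\Bigr)_+,
\]
which equals \(1\) on \([\rho, R]\), is supported in \([\rho/2, 2R]\), and is linear on each transition annulus. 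On these linear pieces \(\psi_0''(r) = 0\) pointwise, so the radial Laplacian reduces to \((N-1)\psi_0'(r)/r\), giving \(\Delta\varphi\) of order \(+\rho^{-2}\) on \(\rho/2 < \abs{x} < \rho\) and of order \(-R^{-2}\) on \(R < \abs{x} < 2R\). Thus \(-\Delta \varphi\) is of signed order \(-\rho^{-2}\) on the inner transition (producing, after rearranging the testing identity, the term \(\rho^{-2}\int_{B_\rho \setminus B_{\rho/2}} u\) on the left) and of signed order \(+R^{-2}\) on the outer transition (producing the bound \(R^{-2}\int_{B_{2R}\setminus B_R} u\) on the right). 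Combined with \(\psi_0 \ge \chi_{[\rho, R]}\) and \(f \ge 0\), which gives \(\int f \varphi \ge \int_{B_R \setminus B_\rho} f\), this delivers the claimed inequality at the heuristic level.

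The main obstacle is the smoothing step. When \(\psi_0\) is regularized to obtain an admissible \(\varphi \in C^\infty_c\), the distributional Laplacian picks up four corner contributions of the form \([\psi_0'(r_i^+) - \psi_0'(r_i^-)]\int_{\partial B_{r_i}} u\,dS\) at the breakpoints \(r_i \in \{\rho/2, \rho, R, 2R\}\), each a spherical integral of \(u\). Since the statement features only volume integrals on \(B_\rho \setminus B_{\rho/2}\) and \(B_{2R} \setminus B_R\), these sphere integrals must be absorbed into or dominated by the main terms. I would accomplish this via the one-dimensional ODE for the spherical average \(\bar u(r)\): the distributional inequality \(-\Delta u \ge 0\) yields that \(r \mapsto r^{N-1}\bar u'(r)\) is non-increasing on \((\rho/4, 4R)\), and combined with \(u \ge 0\) this provides quantitative sphere-to-volume comparisons sufficient to absorb the unfavorable corner terms at the cost of the constant \(C\). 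As noted in the statement, this is the special case \(V \equiv 0\) of Proposition~\ref{propositionLinearLowerBound}, whose proof supplies the full argument with the radial fundamental solution of \(-\Delta + V\) replacing the affine tent \(\psi_0\) used here.
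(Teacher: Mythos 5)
Your plan is correct in outline, but it is a genuinely different route from the paper's. The paper never argues with a tent function: it obtains this proposition as the special case \(\nu=\tfrac{N-2}{2}\) of the second inequality in Proposition~\ref{propositionLinearLowerBound}, whose test function \(\varphi(x)=\theta(\abs{x}/r)\,\psi(x/R)\) is built by solving the radial ODE \(-\theta''-\tfrac{N-1}{s}\theta'+\ldots=\eta\) with a smooth bump \(\eta\); outside the bump \(\theta\) is exactly a truncated multiple of the fundamental solution, so \(\varphi\) is smooth, \(-\Delta\varphi\) has the right sign region by region by construction, no corner terms ever appear, and the same construction transfers verbatim to Hardy potentials. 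Your mollified tent does the same job for \(V\equiv 0\) at the price of the four sphere terms you identify, and the absorption of the two unfavorable ones (at \(\partial B_\rho\) and \(\partial B_R\)) is the real crux, which you assert rather than execute. It does work: passing to the superharmonic (lsc) representative so that spherical means \(\Bar{u}(r)\) make sense (plain \(L^1_{\mathrm{loc}}\) functions have no trace on spheres), the monotonicity of \(r^{N-1}\Bar{u}'\) says \(\Bar{u}\) is concave in \(t=r^{2-N}\) (for \(N\ge 3\); use \(\log r\) or \(r\) in low dimensions), and the extra room \(\rho/4<\abs{x}<4R\) then yields \(\Bar u(r)\ge c_N\,\Bar u(R)\) on \(R\le r\le 2R\) and \(\rho^{N-2}\Bar u(\rho)\le C R^{N-2}\Bar u(R)\), which dominates both bad terms by \(C R^{-2}\int_{B_{2R}\setminus B_R}u\); you must also justify that the mollified corner contributions converge to these spherical means. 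What your route buys is an elementary, self-contained argument for the free Laplacian; what the paper's ODE construction buys is the complete absence of corner analysis and the immediate generalization to the Hardy case, which is why the paper proves only the general statement.
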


Taking \(f=0\) and applying the weak Harnack inequality for superharmonic functions
(see \cite{Lieb-Loss}*{Theorem 9.10} or Lemma~\ref{P-XXX} below),
we immediately derive from Proposition~\ref{propositionLinearLowerBoundFree} the standard Green decay pointwise lower bounds on nontrivial nonnegative supersolutions to \eqref{eqChoquard0} in exterior domains.

\begin{lemma}
\label{lemmaGreenDecay}
Let \(\rho > 0\) and \(u \in L^1_\mathrm{loc}(\R^N\setminus B_{\rho})\).
If \(u \ge 0\) and
\[
 -\Delta u \ge 0\quad\text{in}\quad \R^N\setminus \Bar{B}_{\rho}
\]
in the sense of distributions, then either \(u = 0\) in \(\R^N\setminus \Bar{B}_{\rho}\), or
\begin{enumerate}[(i)]
 \item \label{eqLinearDecay12} if \(N=1,\,2\), then \(\liminf_{\abs{x} \to \infty} u (x) > 0\),
 \item \label{eqLinearDecay} if \(N \ge 3\), then \(\liminf_{\abs{x} \to \infty} u (x)\abs{x}^{N - 2} > 0\).
\end{enumerate}
\end{lemma}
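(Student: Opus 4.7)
The plan is to combine the integral lower bound of Proposition~\ref{propositionLinearLowerBoundFree} (applied with $f \equiv 0$) with the positivity principle and the weak Harnack inequality.

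First I would invoke Proposition~\ref{propositionLocalGroundstate} with $V \equiv 0$ and $W \equiv 0$ on the connected open set $\R^N \setminus \Bar B_\rho$, which gives the dichotomy that either $u = 0$ almost everywhere, or $u > 0$ almost everywhere on $\R^N \setminus \Bar B_\rho$. In the latter case the constant
\[
    c_0 := \frac{1}{(4\rho)^2} \int_{B_{4\rho} \setminus B_{2\rho}} u
\]
is strictly positive, and Proposition~\ref{propositionLinearLowerBoundFree} applied with its parameter $\rho$ replaced by $4\rho$ and $f \equiv 0$ gives, for every $R > 8\rho$,
\[
    \int_{B_{2R} \setminus B_R} u \;\ge\; \frac{c_0}{C}\, R^2.
\]

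I would then convert this annular integral lower bound into a pointwise one using the weak Harnack inequality (Lemma~\ref{P-XXX}). Covering the annulus $\{R \le \abs{x} \le 2R\}$ by a bounded number $K = K(N)$ of balls $B_{R/16}(y_i)$ with $\abs{y_i} \in [R, 2R]$, pigeonhole produces an index $i_0$ with $\int_{B_{R/16}(y_{i_0})} u \ge c_0 R^2 / (CK)$. Since $u$ is a nonnegative distributional supersolution of $-\Delta u \ge 0$ on the ball $B_{R/8}(y_{i_0}) \subset \R^N \setminus \Bar B_\rho$, the weak Harnack inequality then delivers a lower bound on the essential infimum on $B_{R/32}(y_{i_0})$ of order $c_0 R^{-(N-2)}$. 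A chain-of-balls argument propagates this to the whole annulus $\{R \le \abs{x} \le 2R\}$, giving $u(x) \ge c\, c_0 \abs{x}^{-(N-2)}$ almost everywhere for all $\abs{x}$ large. This is the claim for $N \ge 3$; for $N = 1, 2$ the exponent $2 - N$ is nonnegative, so the same estimate immediately yields $\liminf_{\abs{x} \to \infty} u(x) > 0$.

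The main subtlety is the chain-of-balls propagation: because supersolutions of $-\Delta u \ge 0$ satisfy only the one-sided weak Harnack inequality, a pointwise lower bound cannot be transferred directly from one point to another. Instead one alternates between pointwise infima and integral averages over overlapping balls, using at each step a bound of the form $\inf_{B_{r/2}(y)} u \ge c\, \abs{B_r(y)}^{-1} \int_{B_r(y)} u$: the $L^1$ mass delivered by weak Harnack at $y_i$ feeds into the integral over any overlapping ball $B_r(y_{i+1})$ that covers a fixed fraction of $B_{r/2}(y_i)$, and finitely many such steps chain the bound across the annulus with a constant depending only on $N$.
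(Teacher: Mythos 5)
Your proposal is correct and takes essentially the same route as the paper, which obtains the bound by exactly the two ingredients you use --- Proposition~\ref{propositionLinearLowerBoundFree} with \(f=0\) and the weak Harnack inequality --- and simply declares the conclusion ``immediate,'' whereas you spell out the positivity dichotomy and the covering/chaining step in detail. One cosmetic caveat: for \(N=1\) the set \(\R^N\setminus\Bar{B}_\rho\) is not connected, so the dichotomy from Proposition~\ref{propositionLocalGroundstate} should be invoked on each component separately (a wrinkle shared by the paper's own statement).
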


Comparing Riesz potential blowup upper bound of \eqref{equLp} with
the Green decay bounds of Lemma~\ref{lemmaGreenDecay} we immediately
establish our first nonexistence results.

\begin{proposition}\label{dim12}
Let \(N=1,2\) and $\rho>0$.
If \(u \ge 0\) is a supersolution of \eqref{eqChoquard0} in \(\R^N \setminus \Bar{B}_{\rho}\) then \(u = 0\) in \(\R^N\setminus \Bar{B}_{\rho}\).
\end{proposition}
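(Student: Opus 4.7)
The plan is to combine the one-sided lower bound on exterior superharmonic functions in low dimensions (Lemma~\ref{lemmaGreenDecay}(i)) with the integrability condition \eqref{equLp} built into the very notion of supersolution.

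First, since \(u \geq 0\) and \((I_\alpha \ast u^p) u^q \geq 0\) in \(\R^N \setminus \Bar{B}_\rho\), any supersolution of \eqref{eqChoquard0} in the sense of Definition~\ref{def-distro} satisfies
\[
 -\Delta u \ge (I_\alpha \ast u^p) u^q \ge 0 \quad \text{in \(\R^N \setminus \Bar{B}_\rho\)}
\]
in the sense of distributions, so \(u\) is in particular a nonnegative distributional supersolution of \(-\Delta u \ge 0\). By Lemma~\ref{lemmaGreenDecay}(\ref{eqLinearDecay12}), applied in dimensions \(N = 1, 2\), either \(u = 0\) in \(\R^N \setminus \Bar{B}_\rho\) — in which case we are done — or there exist constants \(c > 0\) and \(R_0 > \rho\) such that
\[
  u(x) \ge c \quad \text{for every } x \in \R^N \setminus B_{R_0}.
\]

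I would then rule out the second alternative by contradiction with the integrability assumption \eqref{equLp}. Indeed, if \(u \ge c > 0\) outside some large ball, then
\[
  \int_{\R^N \setminus B_\rho} \frac{u(x)^p}{1 + \abs{x}^{N - \alpha}} \, dx
  \ge c^p \int_{\R^N \setminus B_{R_0}} \frac{dx}{1 + \abs{x}^{N - \alpha}}.
\]
The integral on the right is, up to a constant, \(\int_{R_0}^{\infty} r^{N - 1 - (N - \alpha)} \, dr = \int_{R_0}^\infty r^{\alpha - 1} \, dr\), which diverges for every \(\alpha > 0\) (regardless of whether \(N = 1\) or \(N = 2\)). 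This contradicts condition \eqref{equLp} of Definition~\ref{def-distro}, and therefore the second alternative cannot occur.

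No step is particularly delicate here; the only point worth checking is that the divergence of \(\int^\infty r^{\alpha - 1}\, dr\) at infinity really does follow from the hypothesis \(0 < \alpha < N\) with \(N \le 2\), which it does trivially since \(\alpha > 0\). The heart of the argument is the dimensional fact recorded in Lemma~\ref{lemmaGreenDecay}(\ref{eqLinearDecay12}): in low dimensions superharmonic functions cannot dissipate at infinity, and any fixed positive lower bound is already too large to be compatible with the local \(L^p\) weighted integrability forced by the nonlocal right-hand side.
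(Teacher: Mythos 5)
Your proposal is correct and follows essentially the same route as the paper, which simply notes that the lower bound of Lemma~\ref{lemmaGreenDecay}~\eqref{eqLinearDecay12} is incompatible with the integrability condition \eqref{equLp}; you have merely spelled out the divergence of \(\int^\infty r^{\alpha-1}\,dr\) explicitly, which is a fine level of detail.
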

\begin{proof}
Simply note that Lemma~\ref{lemmaGreenDecay} \eqref{eqLinearDecay12} and \eqref{equLp} are incompatible for all \(p>0\).
\end{proof}

\begin{proposition}
Let \(N\ge 3\) and $\rho>0$.
Assume that \(p \le \frac{\alpha}{N - 2}\).
If \(u \ge 0\) is a supersolution of \eqref{eqChoquard0} in \(\R^N \setminus \Bar{B}_{\rho}\) then \(u = 0\) in \(\R^N\setminus \Bar{B}_{\rho}\).
\end{proposition}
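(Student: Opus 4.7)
The plan is to contradict the integrability requirement \eqref{equLp} by combining the Green decay lower bound with a simple radial tail computation.

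First, I would argue that under the hypothesis $p \le \frac{\alpha}{N-2}$ there can be no \emph{nontrivial} nonnegative supersolution. Suppose $u \ge 0$ is a supersolution of \eqref{eqChoquard0} and $u \not\equiv 0$ in $\R^N \setminus \Bar{B}_\rho$. Since $(I_\alpha \ast u^p) u^q \ge 0$, we have $-\Delta u \ge 0$ in $\R^N \setminus \Bar{B}_\rho$ in the sense of distributions, so $u$ is a nontrivial nonnegative superharmonic function on this exterior domain. Applying Lemma~\ref{lemmaGreenDecay}\,\eqref{eqLinearDecay} (which uses $N \ge 3$), there exist $c > 0$ and $R_0 > \rho$ such that
\[
  u(x) \ge c \abs{x}^{-(N-2)} \qquad \text{for every } \abs{x} \ge R_0.
\]

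Next I would feed this lower bound into the integrability condition \eqref{equLp} in the definition of a distributional supersolution. From the pointwise bound above,
\[
  \int_{\R^N \setminus \Bar{B}_\rho} \frac{u(x)^p}{1 + \abs{x}^{N-\alpha}} \,dx
  \ge c^p \int_{\R^N \setminus B_{R_0}} \frac{dx}{\abs{x}^{p(N-2)} (1 + \abs{x}^{N-\alpha})}
  \ge c' \int_{R_0}^{\infty} r^{N - 1 - p(N-2) - (N-\alpha)} \,dr.
\]
The exponent of $r$ under the integral equals $\alpha - 1 - p(N-2)$. Under the hypothesis $p \le \frac{\alpha}{N-2}$, we have $p(N-2) \le \alpha$, so this exponent is $\ge -1$, and the radial integral diverges (either as a power or, in the borderline case $p = \frac{\alpha}{N-2}$, logarithmically). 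This contradicts the assumption \eqref{equLp} built into Definition~\ref{def-distro}. Hence $u \equiv 0$ in $\R^N \setminus \Bar{B}_\rho$, which is the claim.

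The only delicate point is the borderline case $p = \frac{\alpha}{N-2}$, where the divergence is merely logarithmic rather than polynomial; but the estimate above still produces infinity, so no separate argument is needed. Everything else is a direct application of the already-proved superharmonic lower bound (Lemma~\ref{lemmaGreenDecay}) against the mandatory integrability built into the notion of supersolution; there is no real obstacle.
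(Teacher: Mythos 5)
Your argument is correct and is exactly the paper's proof: the paper simply observes that the Green decay lower bound of Lemma~\ref{lemmaGreenDecay}~\eqref{eqLinearDecay} is incompatible with the integrability requirement \eqref{equLp} when \(p \le \frac{\alpha}{N-2}\), which is precisely the radial tail computation you carry out (including the borderline logarithmic case).
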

\begin{proof}
Simply note that Lemma~\ref{lemmaGreenDecay} \eqref{eqLinearDecay} and \eqref{equLp} are incompatible for \(p \le \frac{\alpha}{N - 2}\).
\end{proof}

Our next step is to explore nonlocal positivity principle of Proposition~\ref{propositionGroundState}
in order to obtain an upper bound on nontrivial nonnegative supersolutions of \eqref{eqChoquard0}
in the superlinear region \(p + q\ge 1\) more accurate than \eqref{equLp}.

\begin{lemma}
\label{lemmaLiminfExistence}
Let \(N \ge 1\), \(0 < \alpha < N\), \(p > 0\), \(q \in \R\) and \(\rho > 0\).
There exists \(C > 0\) such that if \(u \ge 0\) is a supersolution of \eqref{eqChoquard0} in \(\R^N \setminus \Bar{B}_{\rho}\), then either \(u = 0\) almost everywhere or \(u > 0\) almost everywhere, \(u^{q - 1} \in L^1_\mathrm{loc} (\R^N \setminus \Bar{B}_{\rho})\) and for every \(R \ge 2 \rho\),
\[
   \Bigl(\int_{B_{2R}  \setminus B_{\rho}} u^{p} \Bigr) \Bigl(\int_{B_{2R} \setminus B_R} u^{q - 1}\Bigr) \le C R^{2N - \alpha - 2}.
\]
\end{lemma}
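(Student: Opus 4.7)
The plan is to apply the nonlocal positivity principle of Proposition~\ref{propositionGroundState} with $V \equiv 0$ and $\Omega = \R^N \setminus \Bar{B}_\rho$, feeding it a single well-chosen cutoff $\varphi$. The dichotomy ``$u = 0$ a.e.\ or $u > 0$ a.e.'' and the local integrability $u^{q-1} \in L^1_{\mathrm{loc}}$ are immediate consequences of that proposition once we assume $u \not\equiv 0$, so the only real work is to extract the claimed quantitative bound by testing the functional inequality
\[
   \int_\Omega \abs{\nabla \varphi}^2 \ge \frac{A_\alpha}{2^{N-\alpha}\,R_0^{N-\alpha}}\,
   \Bigl(\int_{\Omega \cap B_{R_0}} u^p\Bigr)\Bigl(\int_\Omega u^{q-1} \varphi^2\Bigr)
\]
against a cutoff localised in the right annulus, with $R_0$ appropriately chosen.

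\textbf{Choice of cutoff.} Fix $R \ge 2\rho$. I would pick $\varphi \in C^\infty_c(B_{3R} \setminus \Bar{B}_{R/2})$ with $0 \le \varphi \le 1$, $\varphi \equiv 1$ on the annulus $B_{2R} \setminus B_{R}$, and $\abs{\nabla \varphi} \le C/R$; since $R/2 \ge \rho$ this $\varphi$ is indeed supported in $\Omega \cap B_{3R}$. Because $\varphi$ is supported in a shell of volume $\sim R^N$ on which $\abs{\nabla \varphi}^2 \le C R^{-2}$, the Dirichlet energy is bounded:
\[
   \int_\Omega \abs{\nabla \varphi}^2 \le C R^{N-2}.
\]
On the right-hand side of the positivity inequality, applied with ``$R$'' equal to $3R$, the pointwise inequalities $\varphi^2 \ge \mathbf{1}_{B_{2R}\setminus B_R}$ and the monotonicity of the $u^p$-integral over $\Omega \cap B_{3R} \supset B_{2R} \setminus \Bar{B}_\rho$ give the two lower bounds
\[
   \int_\Omega u^{q-1}\varphi^2 \ge \int_{B_{2R} \setminus B_R} u^{q-1},
   \qquad
   \int_{\Omega \cap B_{3R}} u^p \ge \int_{B_{2R} \setminus \Bar{B}_\rho} u^p.
\]

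\textbf{Putting it together.} Combining the upper bound on $\int \abs{\nabla\varphi}^2$ with the Proposition applied at scale $R_0 = 3R$ yields
\[
   \frac{A_\alpha}{2^{N-\alpha}(3R)^{N-\alpha}}
   \Bigl(\int_{B_{2R}\setminus \Bar{B}_\rho} u^p\Bigr)
   \Bigl(\int_{B_{2R}\setminus B_R} u^{q-1}\Bigr)
   \le C R^{N-2},
\]
and multiplying through by $(3R)^{N-\alpha}$ gives exactly $C\, R^{2N-\alpha-2}$ on the right, as desired. The dichotomy and the integrability statement $u^{q-1}\in L^1_\mathrm{loc}$ are the nonlinear content of Proposition~\ref{propositionGroundState}; once those are in hand, the rest is purely a test-function calculation. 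I do not anticipate a serious obstacle here: the only subtlety is ensuring that the cutoff fits inside $\Omega$, which is precisely why the hypothesis $R \ge 2\rho$ appears (so that $B_{R/2} \supset \Bar{B}_\rho$). The constant $C$ that emerges depends on $N$, $\alpha$, and the cutoff normalisation, all of which are fixed once $N$, $\alpha$, $p$, $q$, $\rho$ are fixed, consistent with the statement.
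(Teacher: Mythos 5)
Your proposal is correct and is essentially the paper's own argument: the paper likewise feeds Proposition~\ref{propositionGroundState} (with \(V\equiv 0\)) an annular cutoff, obtained by scaling a fixed \(\varphi\) supported in \(B_4\setminus\Bar{B}_{1/2}\) that equals \(1\) on \(B_2\setminus\Bar{B}_1\), so that \(\int\abs{\nabla\varphi_R}^2 = R^{N-2}\int\abs{\nabla\varphi}^2\), and then lower-bounds the right-hand side exactly as you do. The only differences are cosmetic (support in \(B_{4R}\setminus\Bar{B}_{R/2}\) rather than \(B_{3R}\setminus\Bar{B}_{R/2}\), and the explicit scaling construction guaranteeing the gradient bound uniform in \(R\)).
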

\begin{proof}
Choose \(\varphi \in C^\infty_c(\R^N)\) such that \(\supp \varphi \subset B_{4} \setminus \Bar{B}_{1/2}\), \(\varphi = 1\) on \(B_2 \setminus \Bar{B}_1\) and \(\varphi \le 1\).
For \(R > 0\) and \(x \in \R^N\) set
\begin{equation}\label{phiR}
\varphi_R (x) = \varphi\bigl(\tfrac{x}{R}\bigr).
\end{equation}
If \(R \ge 2\rho\), we have
\(\supp \varphi_R \subset B_{4 R} \setminus \Bar{B}_{R/2}\subset{\R^N \setminus \Bar{B}_{\rho}}\) and
\begin{equation*}
\label{scaling-grad}
 \int_{{\R^N \setminus B_{\rho}}} \abs{\nabla \varphi_R}^2= R^{N - 2} \int_{\R^N} \abs{\nabla \varphi}^2.
\end{equation*}
Therefore, by Proposition~\ref{propositionGroundState},
\[
 \Bigl(\int_{B_{2R} \setminus B_{\rho}} u^{p} \Bigr) \Bigl(\int_{B_{2R} \setminus B_R} u^{q - 1}\Bigr) \le R^{2N - \alpha - 2} \Bigl(\frac{4^{N - \alpha}}{A_\alpha} \int_{\R^N} \abs{\nabla \varphi}^2 \Bigr).\qedhere
\]
\end{proof}

A consequence of the upper bound of Lemma~\ref{lemmaLiminfExistence} is the following nonexistence result.

\begin{proposition}
Let \(N \ge 3\) and \(\rho > 0\).
Assume that
\[
  1\le p + q \le \frac{N + \alpha}{N - 2}.
\]
If \(u \ge 0\) is a supersolution of \eqref{eqChoquard0} in \(\R^N \setminus \Bar{B}_{\rho}\) then \(u = 0\) in \(\R^N\setminus \Bar{B}_{\rho}\).
\end{proposition}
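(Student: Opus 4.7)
The plan is to combine the nonlocal bound of Lemma~\ref{lemmaLiminfExistence} with the universal Green-type lower bound of Lemma~\ref{lemmaGreenDecay} by means of Cauchy--Schwarz. Suppose for contradiction that $u\ge 0$ is a nontrivial distributional supersolution of \eqref{eqChoquard0} in $\R^N\setminus\Bar{B}_\rho$. Applying Proposition~\ref{propositionLocalGroundstate} with $V=0$ and $W=I_\alpha\ast u^p$ (which is locally integrable by \eqref{equLp}) yields $u>0$ almost everywhere. Since $-\Delta u\ge 0$ in the sense of distributions and $u\not\equiv 0$, Lemma~\ref{lemmaGreenDecay}\,(ii) supplies constants $c_0>0$ and $R_0>2\rho$ with the Green-decay lower bound $u(x)\ge c_0\,\abs{x}^{-(N-2)}$ on $\{\abs{x}\ge R_0\}$.

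Next I would feed this bound into Lemma~\ref{lemmaLiminfExistence}. Since $u>0$ almost everywhere, the Cauchy--Schwarz inequality applied with $f=u^{p/2}$ and $g=u^{(q-1)/2}$ on the annulus $B_{2R}\setminus B_R$ gives
\[
 \Bigl(\int_{B_{2R}\setminus B_R} u^{\frac{p+q-1}{2}}\Bigr)^{\!2}
 \le \Bigl(\int_{B_{2R}\setminus B_\rho} u^p\Bigr)\Bigl(\int_{B_{2R}\setminus B_R} u^{q-1}\Bigr)
 \le C\,R^{2N-\alpha-2}
\]
for every $R\ge 2\rho$. The hypothesis $p+q\ge 1$ makes the exponent $(p+q-1)/2$ nonnegative, so the Green lower bound produces
\[
 \int_{B_{2R}\setminus B_R} u^{\frac{p+q-1}{2}}\ \ge\ c\,R^{N-\frac{(p+q-1)(N-2)}{2}},\qquad R\ge R_0.
\]
Comparing the two displays and sending $R\to\infty$ forces $(p+q-1)(N-2)\ge \alpha+2$, that is, $p+q\ge \frac{N+\alpha}{N-2}$. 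Under the strict hypothesis $p+q<\frac{N+\alpha}{N-2}$ this is already a contradiction.

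It remains to rule out the borderline equality $p+q=\frac{N+\alpha}{N-2}$, where the comparison of exponents is saturated and a logarithmic correction must be extracted. I would bootstrap the Green lower bound: using $u\ge c_0\abs{x}^{-(N-2)}$ together with the Riesz estimates from Appendix~A, one obtains the sharp pointwise bound $(I_\alpha\ast u^p)(x)\ge c\,\abs{x}^{\alpha-p(N-2)}$ for large $\abs{x}$, which at criticality makes the right-hand side of \eqref{eqChoquard0} weakly of order $\abs{x}^{-N}$. Plugging this into Proposition~\ref{propositionLinearLowerBoundFree} applied with $f=(I_\alpha\ast u^p)u^q$ and using $\int_{B_R\setminus B_\rho}\abs{x}^{-N}\,dx\asymp \log R$ yields the improvement $\int_{B_{2R}\setminus B_R}u\ge c\,R^2\log R$. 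The weak Harnack inequality from Appendix~B then upgrades this into the pointwise bound $u(x)\gtrsim \abs{x}^{-(N-2)}\log\abs{x}$ on $\{\abs{x}\ge R_0\}$. Reinjecting the sharpened lower bound into the Cauchy--Schwarz display above produces an extra $(\log R)^{p+q-1}$ factor on the left-hand side while the right-hand side stays $C\,R^{2N-\alpha-2}$, the sought contradiction.

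The main obstacle is precisely this borderline case: the exponent comparison is saturated and the contradiction must come from a logarithmic gain, which for merely distributional supersolutions needs the delicate interplay of Proposition~\ref{propositionLinearLowerBoundFree} with the weak Harnack inequality of Appendix~B to convert volume averages back into pointwise control. The strict case, by contrast, is a direct Cauchy--Schwarz consequence of the two key lemmas already established.
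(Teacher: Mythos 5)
Your treatment of the strict case \(1\le p+q<\frac{N+\alpha}{N-2}\) is exactly the paper's argument (Cauchy--Schwarz between Lemma~\ref{lemmaLiminfExistence} and Lemma~\ref{lemmaGreenDecay}), and it is correct. The genuine gap is in the borderline case \(p+q=\frac{N+\alpha}{N-2}\): your route to the logarithmic gain rests on the pointwise estimate \((I_\alpha\ast u^p)(x)\,u(x)^q\ge c\,\abs{x}^{-N}\), obtained by raising the Green bound \(u\ge c_0\abs{x}^{-(N-2)}\) to the powers \(p\) and \(q\). This is only legitimate for \(q\ge 0\). The proposition assumes nothing beyond \(p>0\) and \(1\le p+q\le\frac{N+\alpha}{N-2}\), so \(q\) may well be negative; in that range a lower bound on \(u\) yields an \emph{upper} bound on \(u^q\), and no upper bound on \(u\) is available for a mere distributional supersolution. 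Hence the claimed \(\abs{x}^{-N}\) lower bound on the right-hand side, and with it the \(\log R\) improvement fed into Proposition~\ref{propositionLinearLowerBoundFree}, breaks down precisely when \(q<0\). (A smaller point: Appendix~A only contains upper bounds for Riesz potentials; the lower bound \((I_\alpha\ast u^p)(x)\ge c\,\abs{x}^{\alpha-p(N-2)}\) is elementary by restricting the convolution to \(B_{\abs{x}/2}(x)\), so this is a mis-citation rather than an error.)

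The paper avoids the sign issue by symmetrizing before inserting the Green bound: since \(I_\alpha\) is a symmetric positive kernel, the Cauchy--Schwarz inequality for the associated bilinear form gives
\[
\int_{\R^N\setminus B_\rho}(I_\alpha\ast u^p)\,u^q
\ \ge\ \int_{\R^N\setminus B_\rho}\int_{\R^N\setminus B_\rho} u(x)^{\frac{p+q}{2}}\,I_\alpha(x-y)\,u(y)^{\frac{p+q}{2}}\,dx\,dy,
\]
which involves only the \emph{positive} exponent \(\frac{p+q}{2}\); with \(u\ge c_0\abs{x}^{-(N-2)}\) and \(\frac{(p+q)(N-2)}{2}=\frac{N+\alpha}{2}\) this double integral diverges, so Proposition~\ref{propositionLinearLowerBoundFree} yields \(R^{-2}\int_{B_{2R}\setminus B_R}u\to\infty\), which is then converted into a lower bound for \(\int_{B_{2R}\setminus B_R}u^{\frac{p+q-1}{2}}\) by H\"older's inequality when \(p+q\ge 3\) or by the weak Harnack inequality of Proposition~\ref{P-XXX} when \(1\le p+q<3\), contradicting Lemma~\ref{lemmaLiminfExistence}. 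If you restrict to \(q\ge 0\), your bootstrap (with the Harnack-chain step from annulus averages to pointwise bounds spelled out) does deliver the \((\log R)^{p+q-1}\) gain and a valid contradiction; the missing ingredient is solely the treatment of negative \(q\), for which the symmetrization trick is the natural remedy.
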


\begin{proof}
In the \emph{subcritical case} \(1\le p + q < \frac{N + \alpha}{N - 2}\), we observe
that by the Cauchy--Schwarz inequality and Lemma~\ref{lemmaGreenDecay} \eqref{eqLinearDecay} we obtain
\[
  \Bigl(\int_{B_{2R}  \setminus B_{\rho}} u^{p} \Bigr) \Bigl(\int_{B_{2R} \setminus B_R} u^{q - 1}\Bigr) \ge \Bigl(\int_{B_{2R} \setminus B_R} u^\frac{p + q - 1}{2}\Bigr)^2 \ge R^{2 N-(N - 2)(p + q - 1)}.
\]
This is not compatible with the upper bound of Lemma~\ref{lemmaLiminfExistence}.

In the \emph{critical case} \(p + q=\frac{N + \alpha}{N - 2}\), we need to improve the lower bound
of Lemma~\ref{lemmaGreenDecay} \eqref{eqLinearDecay}.
To do this, assume by contradiction that \(u > 0\) on a set of positive measure of \(\R^N \setminus \Bar{B}_{\rho}\).
Using the Cauchy--Schwarz inequality and the lower bound of Lemma~\ref{lemmaGreenDecay} \eqref{eqLinearDecay},
since \(\frac{p + q}{2} = \frac{N + \alpha}{2(N - 2)} > 0\) we obtain
\[
\begin{split}
  \int_{\R^N \setminus B_{\rho}} (I_\alpha \ast u^p) u^q & \ge
  \int_{\R^N \setminus B_{\rho}} \int_{\R^N \setminus B_{\rho}} u (x)^{\frac{p + q}{2}} I_\alpha (x - y) u (y)^{\frac{p + q}{2}}\,dx\,dy\\
  &\ge \int_{\R^N \setminus B_{\rho}} \int_{\R^N \setminus B_{\rho}} \frac{1}{\abs{x}^\frac{N + \alpha}{2}} I_\alpha (x - y) \frac{1}{\abs{y}^\frac{N + \alpha}{2}}\,dx\,dy = \infty.
\end{split}
\]
Then by Proposition~\ref{propositionLinearLowerBoundFree} we conclude that
\[
 \liminf_{R \to \infty} \frac{1}{R^2} \int_{B_{2R} \setminus B_R} u = \infty.
\]
Applying H\"older's inequality if \(p + q \ge 3\) and the weak Harnack inequality (Lem\-ma~\ref{P-XXX})
if \(1 \le p + q < 3\), we obtain
\[
 \Bigl(\frac{1}{R^2} \int_{B_{2R} \setminus B_R} u\Bigr)^\frac{p + q - 1}{2} \le
 \frac{C}{R^{N - \frac{\alpha + 2}{2}}} \int_{B_{2R} \setminus B_R} u^\frac{p + q - 1}{2},
\]
which brings a contradiction with the upper bound of Lemma~\ref{lemmaLiminfExistence} combined with the Cauchy-Schwarz inequality.
\end{proof}

If \(\alpha \ge N - 2\) we give precise lower bounds on \(\int_{B_{2 R} \setminus B_R} u^{q - 1}\) to obtain a further nonexistence result.

\begin{proposition}
\label{propositionLocalNonexistence}
Let \(N\ge 3\) and \(\rho > 0\).
Assume that \(\alpha \ge N - 2\) and \(1< q \le \frac{\alpha}{N - 2}\).
If \(u \ge 0\) is a supersolution of \eqref{eqChoquard0} in \(\R^N \setminus \Bar{B}_{\rho}\) then \(u = 0\) in \(\R^N\setminus \Bar{B}_{\rho}\).
\end{proposition}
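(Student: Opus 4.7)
The strategy is to combine the nonlocal positivity principle of Proposition~\ref{propositionGroundState} (via Lemma~\ref{lemmaLiminfExistence}) with the Green lower bound from Lemma~\ref{lemmaGreenDecay} in order to force a uniform upper bound on $\int u^p$ that contradicts $u \not\equiv 0$. The argument naturally splits according to whether the hypothesis $q \le \alpha/(N-2)$ is strict or tight. Suppose $u$ is a nontrivial supersolution; by the local positivity principle $u > 0$ almost everywhere, and Lemma~\ref{lemmaGreenDecay} gives $u(x) \ge c\abs{x}^{2-N}$ for $\abs{x}$ large, hence $\int_{B_{2R}\setminus B_R}u^{q-1} \ge cR^{N-(q-1)(N-2)}$. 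Substituting into the upper bound of Lemma~\ref{lemmaLiminfExistence} yields
\[
  \int_{B_{2R}\setminus B_\rho} u^p \le C R^{q(N-2) - \alpha}.
\]

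In the strict subcritical case $q(N-2) < \alpha$ the right-hand side tends to $0$; by monotonicity in $R$ of the left-hand side, it must vanish identically, contradicting $u > 0$ a.e. The difficulty is the critical case $q(N-2) = \alpha$, where the above only yields $u \in L^p(\R^N\setminus B_\rho)$ with $M_\infty := \int u^p \in (0, C]$. To close this case, the plan is to upgrade the Green decay by a logarithmic factor and then reinsert the improvement into Lemma~\ref{lemmaLiminfExistence}. Splitting the convolution and using $M_\infty > 0$ gives $(I_\alpha \ast u^p)(x) \ge c M_\infty \abs{x}^{\alpha - N}$ for $\abs{x}$ large, and combining with Green decay (together with $q(N-2) = \alpha$) yields $-\Delta u \ge c\abs{x}^{-N}$ outside a large ball $B_{R_0}$. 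A direct computation shows that $v(x) := \abs{x}^{2-N}\log(\abs{x}/R_0)$ vanishes on $\partial B_{R_0}$ and satisfies $-\Delta v = (N-2)\abs{x}^{-N}$, so that $u - \tfrac{c}{N-2} v$ is superharmonic in $\R^N\setminus B_{R_0}$, nonnegative on $\partial B_{R_0}$, and has $\liminf \ge 0$ at infinity. The comparison principle for distributional supersolutions from Appendix~B then delivers the improved pointwise lower bound
\[
  u(x) \ge c\,\abs{x}^{-(N-2)}\log \abs{x} \qquad \text{for $\abs{x}$ large.}
\]

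Reapplying Lemma~\ref{lemmaLiminfExistence} with this improved lower bound boosts the estimate on $\int u^{q-1}$ by a factor $(\log R)^{q-1}$ (using $q > 1$), so that after simplification via $q(N-2) = \alpha$ one obtains
\[
  0 < M_\infty \le \int_{B_{2R}\setminus B_\rho} u^p \le \frac{C}{(\log R)^{q-1}},
\]
and letting $R \to \infty$ yields the desired contradiction. The main obstacle is precisely this critical exponent case $q(N-2) = \alpha$: the first round of Lemma~\ref{lemmaLiminfExistence} is exactly tight there, and closing the gap requires both the logarithmic correction of the Green decay through comparison with an explicit exterior solution and its bootstrap back into the nonlocal positivity principle.
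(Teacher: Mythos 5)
Your proposal is correct in substance. In the subcritical case \(q(N-2)<\alpha\) it is the paper's own argument, merely rearranged: the paper divides the product estimate of Lemma~\ref{lemmaLiminfExistence} by \(\int u^p\) and compares the resulting upper bound on \(\int_{B_{2R}\setminus B_R}u^{q-1}\) with the Green lower bound, while you substitute the Green bound and let the resulting upper bound on \(\int_{B_{2R}\setminus B_\rho}u^p\) tend to zero. In the critical case \(q(N-2)=\alpha\) you take a genuinely different route. The paper stays at the integral level: it shows \(\int_{\R^N\setminus B_{2\rho}}(I_\alpha\ast u^p)u^q=\infty\), feeds this into the integral comparison estimate of Proposition~\ref{propositionLinearLowerBoundFree} to force \(R^{-2}\int_{B_{2R}\setminus B_R}u\to\infty\), and converts this through the weak Harnack inequality (Lemma~\ref{P-XXX}) into a blow-up of \(R^{-(2N-\alpha-2)}\int_{B_{2R}\setminus B_R}u^{q-1}\) (the divergent factor enters to the power \(q-1\), which is where \(q>1\) is used), contradicting Lemma~\ref{lemmaLiminfExistence}. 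You instead upgrade the pointwise Green bound to \(u(x)\ge c\abs{x}^{2-N}\log\abs{x}\) via the explicit barrier \(\abs{x}^{2-N}\log(\abs{x}/R_0)\), and bootstrap the \((\log R)^{q-1}\) gain back into Lemma~\ref{lemmaLiminfExistence}; again \(q>1\) is what makes the gain divergent. Both proofs rest on the same two ingredients; your pointwise variant is closer in spirit to the paper's treatment of the sublinear threshold in Proposition~\ref{sharp-B1plusplus-bound}, and it amounts to a self-contained proof of the critical case of the local nonexistence result for \(-\Delta u\ge c\abs{x}^{\alpha-N}u^q\) that the paper's remark after the proposition only cites from the literature. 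The paper's integral route avoids any pointwise comparison argument; yours gives, as a by-product, an explicit logarithmically improved decay bound.

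Two points to tighten. First, the comparison step is not literally covered by Appendix~B: Proposition~\ref{p-minimal} compares a supersolution with a minimal positive solution at infinity of the \emph{homogeneous} linear equation, whereas your \(v\) solves an inhomogeneous one. You should either use the classical fact that a distributional supersolution of \(-\Delta\) admits an l.s.c.\ superharmonic representative and run the minimum principle on annuli \(B_S\setminus B_{R_0}\) (comparing \(u+\varepsilon\) with the barrier and using \(v\to 0\) at infinity before letting \(\varepsilon\to 0\)), or adapt the truncation argument of Lemma~\ref{lemmaKato} so as to retain the source term; either fix is routine but should be stated. Second, the displayed inequality \(0<M_\infty\le\int_{B_{2R}\setminus B_\rho}u^p\) is backwards if \(M_\infty\) denotes the full integral over the exterior domain; what you need, and have, is the fixed positive lower bound \(\int_{B_{2\rho}\setminus B_\rho}u^p>0\), so the contradiction with the bound \(C(\log R)^{-(q-1)}\to 0\) stands.
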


\begin{proof}
Assume that \(u > 0\) on a set of positive measure.
From Lemma~\ref{lemmaGreenDecay} \eqref{eqLinearDecay} we obtain
\begin{equation}
\label{ineqLoweruq_1}
 \int_{B_{2R} \setminus B_R} u^{q - 1} \ge c R^{N - (N - 2)(q - 1)}.
\end{equation}
On the other hand, by Lemma~\ref{lemmaLiminfExistence}
\[
 \int_{B_{2R} \setminus B_R} u^{q - 1} \le \frac{C R^{2 N- \alpha - 2}}{\displaystyle \int_{B_{2R}  \setminus B_{\rho}} u^p} \le C' R^{2 N- \alpha - 2}.
\]
This brings a contradiction if \(q < \frac{\alpha}{N - 2}\).

In the case \(q = \frac{\alpha}{N - 2}\) using Lemma~\ref{lemmaGreenDecay} \eqref{eqLinearDecay} we obtain
\[
\begin{split}
 \int_{\R^N \setminus B_{2 \rho}} (I_\alpha \ast u^p) u^q & \ge \frac{A_\alpha}{2^{N - \alpha}} \int_{B_{2 \rho} \setminus B_\rho} u^p \int_{\R^N \setminus B_{\rho}} \frac{u (x)^q}{\abs{x}^{N - \alpha}}\\
&\ge c \int_{\R^N \setminus B_{\rho}} \frac{1}{\abs{x}^{N - \alpha + (N - 2) q}} \,dx = \infty,
\end{split}
\]
for some \(c > 0\),
since \((N - 2) q = \alpha\).
By Proposition~\ref{propositionLinearLowerBoundFree} and the weak Harnack inequality,
\[
 \liminf_{R \to \infty} \frac{1}{R^{N - (N - 2) (q - 1)} } \int_{B_{2R} \setminus B_R} u^{q - 1} = \infty,
\]
which leads to a contradiction with \eqref{ineqLoweruq_1}.
\end{proof}

An alternative proof of Proposition~\ref{propositionLocalNonexistence} is obtained by
noting that if \(\abs{x} \ge 2 \rho\),
\[
 I_\alpha \ast u (x) \ge \frac{A_\alpha}{2^{N - \alpha}\abs{x}^{N - \alpha}} \int_{B_{2 \rho} \setminus B_\rho} u^p,
\]
and exploring the fact that
\begin{equation}
\label{eqnLocal}
  - \Delta u (x) \ge \frac{c}{\abs{x}^{N - \alpha}} u (x)^{q}
\end{equation}
does not have positive solutions in exterior domains if \(1 < q \le \frac{\alpha}{N - 2}\)
\citelist{\cite{KLS}*{Theorem 1.2}\cite{LLM}*{Theorem 2.2 and Lemma 6.3}\cite{Brezis-Tesei}}.
\smallskip

The transitional locally linear case \(q=1\) requires a special consideration.

\begin{proposition}
Let \(N\ge 3\) and \(\rho > 0\).
Assume that \(\alpha > N - 2\) and \(q=1\).
If \(u \ge 0\) is a supersolution of \eqref{eqChoquard0} in \(\R^N \setminus \Bar{B}_{\rho}\) then \(u = 0\) in \(\R^N\setminus \Bar{B}_{\rho}\).
\end{proposition}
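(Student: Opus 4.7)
The plan is to invoke the positivity-principle upper bound of Lemma~\ref{lemmaLiminfExistence} specialized to the case $q=1$, where the second factor $\int_{B_{2R}\setminus B_R} u^{q-1}$ collapses into the Lebesgue volume of the annulus. This is precisely the reason the hypothesis $\alpha>N-2$ suffices here: with $q=1$ the auxiliary integral grows as fast as possible (like $R^N$), leaving very little room on the right-hand side to accommodate a nontrivial $L^p$ mass.

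Concretely, suppose for contradiction that $u$ is a nontrivial nonnegative supersolution. Proposition~\ref{propositionGroundState} then forces $u>0$ almost everywhere in $\R^N\setminus \Bar{B}_\rho$, so that $\int_{B_{2R}\setminus B_\rho}u^p>0$ for $R$ large. Applying Lemma~\ref{lemmaLiminfExistence} with $q=1$ (so $u^{q-1}\equiv 1$) gives, for every $R\ge 2\rho$,
\[
  \Bigl(\int_{B_{2R}\setminus B_\rho} u^p\Bigr)\,\bigl\lvert B_{2R}\setminus B_R\bigr\rvert \le C\, R^{2N-\alpha-2}.
\]
Since $\lvert B_{2R}\setminus B_R\rvert$ is of order $R^N$, this reduces to
\[
  \int_{B_{2R}\setminus B_\rho} u^p \le C'\, R^{N-\alpha-2}.
\]
The hypothesis $\alpha>N-2$ makes the exponent $N-\alpha-2$ strictly negative, hence the right-hand side tends to $0$ as $R\to\infty$. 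The left-hand side is monotone non-decreasing in $R$, so taking the limit yields $\int_{\R^N\setminus B_\rho} u^p=0$, contradicting $u>0$ almost everywhere. Therefore $u=0$ in $\R^N\setminus \Bar{B}_\rho$.

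I do not anticipate a substantive obstacle: once Lemma~\ref{lemmaLiminfExistence} is in hand, the argument reduces to a single scaling comparison. The only mild verification required is that Lemma~\ref{lemmaLiminfExistence} genuinely applies when $q=1$, which it does --- the convention $u^{q-1}\equiv 1$ is trivially in $L^1_\mathrm{loc}$, and the dichotomy $u\equiv 0$ versus $u>0$ almost everywhere from Proposition~\ref{propositionGroundState} automatically rules out the degenerate alternative. Worth noting is that the proof depends only on the positivity principle and crude volume estimates, and does not use the pointwise Green decay of Lemma~\ref{lemmaGreenDecay}; this reflects the fact that the nonexistence mechanism in the locally linear case $q=1$ is driven purely by the nonlocal $L^p$-type obstruction rather than by a pointwise blow-up comparison.
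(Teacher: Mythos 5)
Your proposal is correct and follows essentially the same route as the paper: the paper's proof also applies Lemma~\ref{lemmaLiminfExistence} with \(q=1\), so that the factor \(\int_{B_{2R}\setminus B_R} u^{q-1}\) becomes the volume of the annulus of order \(R^N\), and then concludes from \(\alpha>N-2\) that the fixed positive mass \(\int u^p\) is incompatible with the bound \(C R^{2N-\alpha-2}\). Your write-up just spells out the limiting step a bit more explicitly; there is no substantive difference.
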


\begin{proof}
Since \(q=1\), by Lemma~\ref{lemmaLiminfExistence} for any \(R>\rho\) we have
\[
R^{2N - \alpha-2}\ge c\Bigl(\int_{B_R \setminus B_{\rho}} u^{p}\Bigr)\Bigl(\int_{B_{2R} \setminus B_R} 1\Bigr).
\]
We conclude that \(u = 0\) in \(\R^N\setminus \Bar{B}_{\rho}\) when \(N - 2<\alpha\).
\end{proof}

The next result shows that in the superlinear case \(p + q\ge 1\)
different mechanisms are responsible for the nonexistence and decay of nontrivial nonnegative supersolutions of \eqref{eqChoquard0}
in the Green decay region \(q\ge 1\) and \emph{sublinear decay region} \(q<1\).
\begin{proposition}
Let \(N\ge 3\) and $\rho>0$.
Assume that \(p + q\ge 1\), \(q < 1\) and
\[
  q \le 1 - \frac{N - \alpha - 2}{N}p.
\]
If \(u \ge 0\) is a supersolution of \eqref{eqChoquard0} in \(\R^N \setminus \Bar{B}_{\rho}\) then \(u = 0\) in \(\R^N\setminus \Bar{B}_{\rho}\).
\end{proposition}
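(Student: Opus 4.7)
The plan is to sandwich \(\int u^p\) between two incompatible bounds obtained by combining Lemma~\ref{lemmaLiminfExistence} with a Jensen estimate tuned to the Riesz exponent. First I reduce to the case \(u > 0\) almost everywhere via Proposition~\ref{propositionLocalGroundstate}. The crucial observation is that because \(q < 1\) and \(p > 0\), the function \(t \mapsto t^{(q-1)/p}\) is convex on \((0,\infty)\); Jensen's inequality on any annulus \(A = B_{2R}\setminus B_R\) therefore yields
\[
  \int_A u^{q-1} \;=\; \int_A (u^p)^{(q-1)/p} \;\ge\; \abs{A}^{(p+1-q)/p} \Bigl(\int_A u^p\Bigr)^{(q-1)/p}.
\]

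Substituting this into Lemma~\ref{lemmaLiminfExistence} and using both the inclusion \(\int_A u^p \le \int_{B_{2R}\setminus B_\rho} u^p\) (legitimate since \((q-1)/p<0\)) and \(\abs{A} \sim R^N\), I obtain the master inequality
\[
  \Bigl(\int_{B_{2R}\setminus B_\rho} u^p\Bigr)^{(p+q-1)/p} \;\le\; C R^{[p(N-\alpha-2) - N(1-q)]/p}.
\]
The hypothesis \(q \le 1 - (N-\alpha-2) p/N\) is precisely what makes the right-hand exponent non-positive. In the degenerate case \(p+q=1\), the left-hand side equals~\(1\) while the exponent reduces to \(-(\alpha+2)/p < 0\), so the inequality fails as \(R \to \infty\) and the Proposition follows at once.

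When \(p + q > 1\), the master inequality yields the uniform bound \(\int_{B_{2R}\setminus B_\rho} u^p \le C\). Running the same Jensen--Lemma combination in the opposite direction, using the elementary positivity bound \(\int_{B_{2R}\setminus B_\rho} u^p \ge c_\rho > 0\) in place of the monotonicity, gives \(\int_A u^p \ge c\) uniformly in \(R\). Summing over the disjoint dyadic annuli \(A_k = B_{2^{k+1}\rho}\setminus B_{2^k\rho}\) for \(k = 0, \ldots, K\) then produces \(\int_{B_{2^{K+1}\rho}\setminus B_\rho} u^p \ge cK \to \infty\), contradicting the uniform upper bound.

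The main obstacle is finding the right Jensen exponent: applying Jensen naively to \(t \mapsto t^{q-1}\) produces a bound involving \(\int u\) that dilutes the Riesz information, whereas using the exponent \((q-1)/p\) aligns exactly with the scaling in Lemma~\ref{lemmaLiminfExistence} and delivers the quantity \(p(N-\alpha-2)-N(1-q)\) whose sign pins down the critical curve \eqref{A-4} of the existence region, confirming the sharpness of the nonexistence statement.
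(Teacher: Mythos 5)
Your argument is correct and rests on the same two pillars as the paper's proof: the integral estimate of Lemma~\ref{lemmaLiminfExistence} and an interpolation inequality on the annulus \(A = B_{2R}\setminus B_R\). Your Jensen step with the convex function \(t\mapsto t^{(q-1)/p}\) is exactly the paper's H\"older inequality \(\int_A 1 \le \bigl(\int_A u^p\bigr)^{\frac{1-q}{p+1-q}}\bigl(\int_A u^{q-1}\bigr)^{\frac{p}{p+1-q}}\) rearranged, so the core estimate is identical. Where you differ is the endgame: the paper treats the subcritical case \(q<1-\frac{N-\alpha-2}{N}p\) by a direct clash of the two consequences of Lemma~\ref{lemmaLiminfExistence}, and handles the critical line separately by first proving \(\int_{B_{2R}\setminus B_\rho}u^p\to\infty\) and then rerunning the subcritical computation with the improved bound on \(\int_A u^{q-1}\); you instead compress everything into one master inequality, dispose of \(p+q=1\) immediately, and for \(p+q>1\) confront the resulting uniform bound on \(\int_{B_{2R}\setminus B_\rho}u^p\) with the uniform annulus lower bound \(\int_A u^p\ge c\) summed over dyadic annuli. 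This is a mild but genuine streamlining, since it treats the subcritical and critical subcases of \(p+q>1\) in a single stroke (the dyadic summation is in fact implicit in the paper's critical-case step as well). One harmless slip: when \(p+q=1\) the exponent in your master inequality is \(\bigl(p(N-\alpha-2)-Np\bigr)/p=-(\alpha+2)\), not \(-(\alpha+2)/p\); only its negativity matters, so the contradiction stands.
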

\begin{proof}
We observe that by H\"older's inequality since \(q < 1\),
\[
\begin{split}
  \int_{B_{2R} \setminus B_R} 1 &\le \Bigl( \int_{B_{2R} \setminus B_R} u^p \Bigr)^\frac{1 - q}{p + 1 - q}\Bigl( \int_{B_{2R} \setminus B_R} u^{q - 1} \Bigr)^\frac{p}{p + 1 - q}\\
  &\le \Bigl( \int_{B_{2R} \setminus B_R} u^p \Bigr)^\frac{1 - q}{p + 1 - q}\biggl(\Bigl( \int_{B_{2R} \setminus B_R} u^{q - 1} \Bigr)\Bigl( \int_{B_{2R} \setminus B_R} u^{q - 1} \Bigr)\biggr)^\frac{p + q - 1}{p + 1 - q}.
\end{split}
\]
By Lemma~\ref{lemmaLiminfExistence},
on the one hand
\[
 \Bigl(\int_{B_{2R} \setminus B_R} u^p \Bigr)\Bigl( \int_{B_{2R} \setminus B_R} u^{q - 1} \Bigr) \le C R^{2N - \alpha - 2},
\]
and on the other hand
\[
 \int_{B_{2R} \setminus B_R} u^{q - 1} \le C \frac{R^{2N - \alpha - 2}}{\displaystyle \int_{B_{2R}  \setminus B_{\rho}} u^{p}} \le  \frac{C}{\displaystyle \int_{B_{2 \rho}  \setminus B_{\rho}} u^{p}}R^{2N - \alpha - 2}.
\]
In the \emph{subcritical case} \(q < 1 - \frac{N - \alpha - 2}{N}p\), this brings a contradiction since \(p + q \ge 1\).

Otherwise, in the \emph{critical case} \(q = 1 - \frac{N - \alpha - 2}{N}p\), we have by the previous inequalities
\[
 \int_{B_{2R} \setminus B_R} u^p \ge \frac{\Bigl(\displaystyle{\int_{B_{2R} \setminus B_R} 1}\Bigr)^{1 + \frac{p}{1 - q}}} {\displaystyle{\Bigl(\int_{B_{2R} \setminus B_R} u^{1 - q} }\Bigr)^\frac{p}{1 - q} } \ge c R^{N-\frac{p}{1 - q} (N - \alpha - 2)}= c
\]
for some \(c > 0\).
Therefore,
\[
 \lim_{R \to \infty} \int_{B_{2R}  \setminus B_{\rho}} u^{p} = \infty
\]
and
\[
 \lim_{R \to \infty} R^{\alpha + 2 - 2 N} \int_{B_{2R} \setminus B_R} u^{q - 1} = 0,
\]
so we can conclude as previously.
\end{proof}

Comparing Riesz potential blowup upper bound of \eqref{equLp} with
Lemma~\ref{lemmaLiminfExistence} we obtain the next nonexistence statement.

\begin{proposition}
Let \(N\ge 3\) and \(\rho > 0\).
Assume that \(p + q<1\).
If \(u \ge 0\) is a supersolution of \eqref{eqChoquard0} in \(\R^N \setminus \Bar{B}_{\rho}\) then \(u = 0\) in \(\R^N\setminus \Bar{B}_{\rho}\).
\end{proposition}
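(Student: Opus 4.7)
The plan is to derive a contradiction from assuming that $u\ge 0$ is a nontrivial supersolution, by extracting on dyadic annuli a lower bound on $\int u^p$ that grows faster than $R^{N-\alpha}$ and hence violates the integrability \eqref{equLp}. The mechanism is to exploit that the product-type upper bound of Lemma~\ref{lemmaLiminfExistence} can be inverted into a lower bound on $\int u^p$ once the hypothesis $1-p-q>0$ is available.

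First, I would apply Proposition~\ref{propositionGroundState} with $V\equiv 0$ to upgrade nontriviality of $u$ to $u>0$ almost everywhere in $\R^N\setminus\Bar{B}_\rho$ and $u^{q-1}\in L^1_{\mathrm{loc}}(\R^N\setminus\Bar{B}_\rho)$. Fixing $R>2\rho$ and writing $A_R = B_{2R}\setminus B_R$, I would apply H\"older's inequality to the identity $1 = u^{p\theta}\cdot u^{-p\theta}$ with the choice $\theta = (1-q)/(p+1-q)\in(0,1)$ (valid since $p>0$ and $q<1-p<1$). Raising the resulting estimate to the power $p+1-q$ produces
\[cR^{N(p+1-q)} \le \Bigl(\int_{A_R} u^p\Bigr)^{1-q}\Bigl(\int_{A_R} u^{q-1}\Bigr)^p.\]

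The key algebraic step is the splitting $1-q = (1-p-q) + p$. Writing $\bigl(\int_{A_R} u^p\bigr)^{1-q} = \bigl(\int_{A_R} u^p\bigr)^{1-p-q}\,\bigl(\int_{A_R} u^p\bigr)^p$ and bounding the bracket $\bigl[\bigl(\int_{A_R} u^p\bigr)\bigl(\int_{A_R} u^{q-1}\bigr)\bigr]^p$ by $CR^{p(2N-\alpha-2)}$ via Lemma~\ref{lemmaLiminfExistence} (combined with $\int_{A_R} u^p \le \int_{B_{2R}\setminus\Bar{B}_\rho} u^p$), the sign hypothesis $1-p-q>0$ would then let me solve to obtain
\[\int_{A_R} u^p \ge c\, R^{N+\frac{p(\alpha+2)}{1-p-q}}.\]
Dividing by $\abs{x}^{N-\alpha}\le (2R)^{N-\alpha}$ on $A_R$ yields $\int_{A_R} u^p/\abs{x}^{N-\alpha}\ge c\, R^{\alpha + p(\alpha+2)/(1-p-q)}$, whose exponent is strictly positive; letting $R\to\infty$ forces the tail of the integral in \eqref{equLp} to diverge, giving the desired contradiction.

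I expect the principal obstacle to be the exponent bookkeeping. The success of the argument hinges on the precise split $1-q=(1-p-q)+p$, which supplies exactly one power of the product bounded by Lemma~\ref{lemmaLiminfExistence} while leaving a residual factor of $\int_{A_R} u^p$ carrying the positive exponent $1-p-q$; this residual is the exact mechanism that converts the upper bound of that lemma into a growing lower bound on $L^p$ mass, and it is precisely where the hypothesis $p+q<1$ enters in a decisive way.
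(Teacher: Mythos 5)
Your proposal is correct and takes essentially the same route as the paper's proof: the same H\"older inequality on the annulus \(B_{2R}\setminus B_R\) with the split \(1-q=(1-p-q)+p\), the same use of Lemma~\ref{lemmaLiminfExistence} to control the product \(\bigl(\int u^p\bigr)\bigl(\int u^{q-1}\bigr)\), and the integrability condition \eqref{equLp} for the final contradiction. The only (cosmetic) difference is the order of the contradiction: the paper uses \eqref{equLp} to get \(\int_{B_{2R}\setminus B_R}u^p=o(R^{N-\alpha})\) and contradicts the H\"older estimate, whereas you solve the H\"older estimate for the growing lower bound \(\int_{B_{2R}\setminus B_R}u^p\ge c\,R^{N+p(\alpha+2)/(1-p-q)}\) and contradict \eqref{equLp} directly.
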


\begin{proof}
Since \(p + q  < 1\), by the H\"older inequality we obtain
\[
\begin{split}
  \int_{B_{2R} \setminus B_R} 1 &\le \Bigl( \int_{B_{2R} \setminus B_R} u^p \Bigr)^\frac{1 - q}{p + 1 - q}\Bigl( \int_{B_{2R} \setminus B_R} u^{q - 1} \Bigr)^\frac{p}{p + 1 - q}\\
  &\le \Bigl( \int_{B_{2R} \setminus B_R} u^p \Bigr)^\frac{1 - q - p}{p + 1 - q}\biggl(\Bigl( \int_{B_{2R} \setminus B_R} u^{p} \Bigr) \Bigl( \int_{B_{2R} \setminus B_R} u^{q - 1} \Bigr)\biggr)^\frac{p}{p + 1 - q}.
\end{split}
\]
By \eqref{equLp} and by Lebesgue's dominated convergence theorem
\[
  \lim_{R \to \infty} R^{N - \alpha} \int_{B_{2R} \setminus B_R} u^p = 0.
\]
This brings a contradiction with Lemma~\ref{lemmaLiminfExistence}.
\end{proof}

\subsection{Pointwise decay bounds.}
In the sublinear decay region \(q<1\) the integral estimate
of Lemma~\ref{lemmaLiminfExistence} could be used to prove that
the Green decay bounds of Lemma~\ref{lemmaGreenDecay} are no longer accurate if applied to \eqref{eqChoquard0}.
In fact, if \(q<1\) then nontrivial nonnegative supersolution of \eqref{eqChoquard0} in \(\R^N \setminus \Bar{B}_{\rho}\)
decay at the same polynomial rate as positive supersolutions
to the sublinear local equation \eqref{eqnLocal}.

\begin{proposition}
\label{propLimsupExistence-local-free}
Let \(N\ge 3\) and \(\rho > 0\).
Assume that \(q<1\).
If \(u\ge 0\) is a supersolution of \eqref{eqChoquard0} in \(\R^N \setminus \Bar{B}_{\rho}\), then
either \(u = 0\) in \(\R^N\setminus \Bar{B}_{\rho}\) or
\begin{equation*}
 \liminf_{\abs{x}\to \infty} u (x)\abs{x}^{\frac{N - \alpha - 2}{1-q}} > 0.
\end{equation*}
\end{proposition}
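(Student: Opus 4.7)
The plan is to reduce the nonlocal supersolution problem to a local sublinear one, then appeal to the minimal-growth principle for positive supersolutions of that local equation.

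\textbf{Step 1: Localisation of the nonlinearity.} Since \(u\) is nontrivial and nonnegative, Proposition~\ref{propositionLocalGroundstate} gives \(u > 0\) almost everywhere in \(\R^N \setminus \Bar{B}_\rho\). Fixing any \(\rho_1 > \rho\), the mass \(M := \int_{B_{2\rho_1}\setminus B_{\rho_1}} u^p\) is therefore strictly positive. The elementary pointwise bound \(I_\alpha(x-y) \ge A_\alpha (2\abs{x})^{-(N-\alpha)}\), valid for \(\abs{y} \le \abs{x}/2\), then yields
\[
 (I_\alpha \ast u^p)(x) \ge \frac{c_0}{\abs{x}^{N-\alpha}} \qquad \text{for every } \abs{x} \ge 4\rho_1,
\]
with \(c_0 := A_\alpha M/2^{N-\alpha} > 0\). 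Consequently \(u\) is a distributional supersolution of the purely local sublinear inequality
\[
 -\Delta u \ge \frac{c_0}{\abs{x}^{N-\alpha}} u^q \qquad \text{in } \R^N \setminus \Bar{B}_{4\rho_1}.
\]

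\textbf{Step 2: Explicit radial subsolution.} The power function \(\phi(x) := c_1 \abs{x}^{-\beta}\) with \(\beta := (N-\alpha-2)/(1-q)\) satisfies \(-\Delta\phi(x) = c_1\beta(N-2-\beta)\abs{x}^{-\beta-2}\), while \(c_0\abs{x}^{-(N-\alpha)}\phi(x)^q = c_0 c_1^q \abs{x}^{-\beta-2}\); the exponents of \(\abs{x}\) match precisely by the choice of \(\beta\). In the nontrivial regime \(0 < \beta < N-2\), outside of which the Green decay bound of Lemma~\ref{lemmaGreenDecay} is already sharper, the condition \(c_1^{1-q}\beta(N-2-\beta) \le c_0\) can be arranged by taking \(c_1 > 0\) sufficiently small, and then \(\phi\) is a subsolution of the same local inequality.

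\textbf{Step 3: Comparison at infinity.} The weak Harnack inequality (Lemma~\ref{P-XXX}) supplies a positive essential lower bound for \(u\) on any sphere \(\{\abs{x}=R_1\}\) with \(R_1 > 4\rho_1\). Shrinking \(c_1\) once more if needed, I may arrange \(\phi \le u\) on that sphere. A comparison principle on the exterior domain \(\R^N\setminus\Bar{B}_{R_1}\), of the type provided in Appendix~B, applied to the supersolution \(u\) and the subsolution \(\phi\) (which vanishes at infinity), then forces \(u \ge \phi\) throughout, yielding the desired bound \(\liminf_{\abs{x}\to\infty} u(x)\abs{x}^\beta \ge c_1 > 0\).

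\textbf{Main obstacle.} The sublinearity \(q < 1\) makes \(t \mapsto t^q\) non-Lipschitz near the origin, so the naive maximum principle is inadequate. I would circumvent this via Kato's inequality applied to \(w := (\phi - u)_+\): on \(\{w > 0\}\), concavity of \(t \mapsto t^q\) for \(0 < q < 1\) gives \(\phi^q - u^q \le q u^{q-1}(\phi - u)\), reducing the question to a linear differential inequality \(-\Delta w \le q c_0 \abs{x}^{-(N-\alpha)} u^{q-1} w\), for which a suitably localised maximum principle on the exterior domain — obtained by testing against a radial cut-off and exploiting the decay \(\phi\to 0\) at infinity to discard the boundary term at infinity — closes the argument. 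For \(q \le 0\), monotonicity of \(t \mapsto -t^q\) yields an even simpler comparison.
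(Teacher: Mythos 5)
Your Steps 1 and 2 are sound: the localisation \(-\Delta u \ge c_0 \abs{x}^{\alpha - N} u^q\) and the computation showing \(\phi = c_1\abs{x}^{-\beta}\), \(\beta = \frac{N-\alpha-2}{1-q}\), is a subsolution for small \(c_1\) are both correct. The genuine gap is in Step 3, precisely at the point your "main obstacle" paragraph is meant to close. After the concavity linearisation you need a maximum principle for \(-\Delta w \le V w\) on the exterior domain with \(V = q\, c_0 \abs{x}^{\alpha - N} u^{q-1}\), and on the set \(\{w > 0\}\) you only know \(u < \phi\) together with the a priori Green bound \(u(x) \ge c\abs{x}^{2-N}\) of Lemma~\ref{lemmaGreenDecay}. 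That gives only \(V(x) \le C\abs{x}^{(N-2)(1-q)-(N-\alpha)}\), and in the nontrivial regime \(0 < \beta < N-2\) the exponent satisfies \((N-2)(1-q)-(N-\alpha) > -2\), i.e.\ \(V\) is a positive potential decaying strictly slower than \(\abs{x}^{-2}\). For such \(V\) the operator \(-\Delta - V\) has no positive supersolutions in exterior domains (test the quadratic form with \(\varphi_R(x)=\varphi(x/R)\): \(R^{N-2}\) versus \(R^{N-\sigma}\) with \(\sigma<2\)), so neither an energy argument nor a comparison with a positive supersolution is available to discard the behaviour at infinity; equivalently, \(\phi\) decays strictly slower than the only known lower bound for \(u\), so "touching at infinity" cannot be excluded by cut-off testing. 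The comparison principle of Appendix~B (Proposition~\ref{p-minimal}) does not help either: it concerns \(-\Delta + V\) with \emph{nonnegative} \(V\) and compares a supersolution with a minimal positive solution at infinity of the same linear equation, which is not the situation here. (Your remark that \(t\mapsto -t^q\) handles \(q\le 0\) is essentially right; the problem is exactly the range \(0<q<1\).) A secondary slip: when \(\beta\le 0\) (i.e.\ \(\alpha\ge N-2\)) the Green bound does \emph{not} imply the claim; that regime is vacuous only because of the nonexistence results, not because of Lemma~\ref{lemmaGreenDecay}.

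For contrast, the paper avoids any comparison at infinity: it combines the integral estimate of Lemma~\ref{lemmaLiminfExistence} (coming from the nonlocal positivity principle, Proposition~\ref{propositionGroundState}), which gives \(\int_{B_{2R}\setminus B_R} u^{q-1} \le C R^{2N-\alpha-2}\), with the weak Harnack inequality of Lemma~\ref{P-XXX} applied with the negative exponent \(q-1\); since \(1/(q-1)<0\), the upper integral bound converts directly into the pointwise lower bound \(\inf_{B_{5R/3}\setminus B_{4R/3}} u \ge c R^{-\frac{N-\alpha-2}{1-q}}\). If you want to salvage your route, you would need to replace the one-shot comparison by a bootstrap (improving the lower bound on \(u\), hence on \(u^{q-1}\), iteratively until the potential becomes subcritical), which is essentially the iteration hidden in the Harnack-plus-integral-estimate argument.
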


In view of Proposition~\ref{propositionLocalNonexistence}, Proposition~\ref{propLimsupExistence-local-free} is trivially true when \(\alpha > N - 2\).
Proposition~\ref{propLimsupExistence-local-free} only gives an improvement over Lemma~\ref{lemmaGreenDecay} if \(q < \frac{\alpha}{N - 2}\).

The lower bound of Proposition~\ref{propLimsupExistence-local-free} for the local equation \eqref{eqnLocal} is well--known \cite{LLM}*{Lemma 6.1}.
We present the proof here 6for completeness and to illustrate the use of Lemma~\ref{lemmaLiminfExistence}.

\begin{proof}[Proof of Proposition~\ref{propLimsupExistence-local-free}]
By Lemma~\ref{lemmaLiminfExistence} we have
\[
 \int_{B_{2R} \setminus B_R} u^{q - 1} \le C \frac{R^{2N - \alpha - 2}}{\displaystyle \int_{B_{2R}  \setminus B_{\rho}} u^{p}} \le C' R^{2N - \alpha - 2}.
\]
By the weak Harnack inequality of Lemma~\ref{P-XXX},
there exists \(c>0\) such that
\begin{equation*}
\inf_{B_{5R/3} \setminus B_{4R/3}} u\ge c \Bigl( R^{-N}\int_{B_{2R} \setminus B_R}u^{q - 1} \Bigr)^\frac{1}{q - 1},
\end{equation*}
so the assertion follows.
\end{proof}

In the limiting case \(q= \frac{\alpha}{N - 2}\) the bounds of Lemma~\ref{lemmaGreenDecay} and Proposition~\ref{propLimsupExistence-local-free} coincide but can be improved.

\begin{proposition}
\label{sharp-B1plusplus-bound}
Let \(N\ge 3\) and \(\rho > 0\). Assume that
$$q =\frac{\alpha}{N - 2} < 1.$$
If \(u\ge 0\) is a supersolution of \eqref{eqChoquard0} in \(\R^N \setminus \Bar{B}_{\rho}\),
then either \(u = 0\) in \(\R^N\setminus \Bar{B}_{\rho}\) or
\begin{equation}\label{log-new}
\liminf_{\abs{x} \to \infty} u (x) \abs{x}^{N - 2} \bigl(\log \abs{x}\bigr)^{-\frac{N - 2}{N - \alpha - 2}} >0.
\end{equation}
\end{proposition}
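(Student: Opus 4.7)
The plan is to bootstrap the Green decay bound of Lemma~\ref{lemmaGreenDecay}\eqref{eqLinearDecay} along the map $\tau \mapsto q\tau + 1$, whose unique fixed point is exactly
$\sigma := 1/(1-q) = (N-2)/(N-\alpha-2)$.
The critical identity $q(N-2) = \alpha$ makes the iteration self-sustaining: one logarithmic power in the lower bound on $u$ produces $q$ logarithmic powers in $u^q$, and one additional logarithmic power is recovered by integrating $-\Delta u \ge f$.

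First, if $u \not\equiv 0$, Lemma~\ref{lemmaGreenDecay}\eqref{eqLinearDecay} yields $c_0 > 0$ and $R_* \ge 2\rho$ with $u(x) \ge c_0 |x|^{2-N}$ for $|x| \ge R_*$. Second, since $u$ is superharmonic and nontrivial, $c_1 := \tfrac{A_\alpha}{2^{N-\alpha}}\int_{B_{2\rho}\setminus B_\rho} u^p > 0$ and $(I_\alpha \ast u^p)(x) \ge c_1/|x|^{N-\alpha}$ for $|x|\ge 2\rho$. Define $\tau_0 = 0$ and $\tau_{n+1} = q\tau_n + 1$, so that $\tau_n = (1-q^n)/(1-q) \uparrow \sigma$. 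I claim by induction on $n$ that there exist $c_n > 0$ and $R_n$ such that
\[
u(x) \ge c_n \frac{(\log |x|)^{\tau_n}}{|x|^{N-2}}, \qquad |x|\ge R_n. \tag{$\ast_n$}
\]
For the induction step, using $q(N-2)=\alpha$ and the baseline Riesz bound,
\[
-\Delta u(x) \ge (I_\alpha \ast u^p)(x)\, u(x)^q \ge \frac{c_1 c_n^q (\log|x|)^{q\tau_n}}{|x|^N}, \qquad |x| \ge R_n.
\]
Proposition~\ref{propositionLinearLowerBoundFree} with $f = (I_\alpha \ast u^p)u^q$ and integration in polar coordinates give
\[
\int_{B_{2R}\setminus B_R} u \gtrsim \frac{c_n^q R^2}{\tau_{n+1}}\bigl[(\log R)^{\tau_{n+1}} - (\log R_n)^{\tau_{n+1}}\bigr].
\]
For $R \ge R_n^2$ the bracket is at least $\tfrac{1}{2}(\log R)^{\tau_{n+1}}$; the weak Harnack inequality of Lemma~\ref{P-XXX} then converts this integral bound into the pointwise bound ($\ast_{n+1}$) with $R_{n+1} := \tfrac{4}{3}R_n^2$ and $c_{n+1} = K(\tau_n)\,c_n^q$, where $K$ is continuous and strictly positive on $[0,\sigma]$.

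Because $q \in (0,1)$, the recurrence $\log c_{n+1} = \log K(\tau_n) + q \log c_n$ is a contraction on the logarithmic scale. Since $K(\tau_n) \to K(\sigma)$, one finds $c_n \to c^* := K(\sigma)^{1/(1-q)} > 0$.

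The genuine difficulty is the passage to the limit: each round of the bootstrap shrinks the region of validity, since $\log R_{n+1} \sim 2^{1/\sigma} \log R_n$ produces a doubly exponential blow-up $\log R_n \sim (2^{1/\sigma})^n \log R_0$. One therefore cannot send $n \to \infty$ at fixed $x$. Instead, for each $|x|$ large I choose $n(|x|)$ to be the largest integer with $R_{n(|x|)} \le |x|$, so that $n(|x|) \sim (\log 2^{1/\sigma})^{-1}\log\log|x|$ and
\[
\sigma - \tau_{n(|x|)} = \sigma q^{n(|x|)} \lesssim (\log|x|)^{-\gamma}, \qquad \gamma := -\sigma \log q/\log 2 > 0.
\]
Consequently
\[
(\log|x|)^{\tau_{n(|x|)} - \sigma} = \exp\bigl(-O\bigl((\log|x|)^{-\gamma}\bigr)\,\log\log|x|\bigr) \longrightarrow 1 \quad \text{as } |x|\to\infty,
\]
since $\log\log|x|$ is dominated by any positive power of $\log|x|$. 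Combining with $c_{n(|x|)} \to c^*$ yields
\[
\liminf_{|x|\to\infty} u(x)\,|x|^{N-2}\,(\log|x|)^{-\sigma} \ge c^* > 0,
\]
which is the desired bound \eqref{log-new}. The main obstacle is precisely the balancing of two competing asymptotics: the exponential convergence $\tau_n \to \sigma$ must outrun the doubly exponential growth of $R_n$ in the adaptive choice of $n(|x|)$, and this turns out to work because $\log\log|x|$ grows much more slowly than any negative power of $\log|x|$.
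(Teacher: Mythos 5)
Your bootstrap argument is correct, but it takes a genuinely different route from the paper. Both proofs rest on the same three ingredients --- the baseline Riesz bound \((I_\alpha\ast u^p)(x)\ge c_1\abs{x}^{\alpha-N}\), the integral comparison estimate of Proposition~\ref{propositionLinearLowerBoundFree}, and the weak Harnack inequality of Lemma~\ref{P-XXX} --- but you iterate them: each pass upgrades \(u\gtrsim(\log\abs{x})^{\tau_n}\abs{x}^{2-N}\) to the exponent \(\tau_{n+1}=q\tau_n+1\), and the fixed point \(\sigma=\tfrac{N-2}{N-\alpha-2}\) is only reached in the limit, which forces you to control the constants \(c_n\) (your recursion \(\log c_{n+1}=\log K_n+q\log c_n\) with \(K_n\) uniformly bounded below does keep \(\inf_n c_n>0\), which is all that is needed) and the doubly exponentially growing validity thresholds \(R_n\), compensated by the adaptive truncation \(n(\abs{x})\approx c\log\log\abs{x}\) so that \((\log\abs{x})^{\tau_{n(\abs{x})}-\sigma}\to1\). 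The paper avoids the iteration altogether: it introduces the cumulative quantity \(F(r)=\int_{8\rho}^r\bigl(\int_{B_t\setminus B_{t/2}}u^q\bigr)t^{-(1+N-\alpha)}\,dt\), bounds \(F(r)\) from above by \(r^{-2}\int_{B_{2r}\setminus B_r}u\) via Proposition~\ref{propositionLinearLowerBoundFree} and then by \(\bigl(r^{q(N-2)-N}\int_{B_r\setminus B_{r/2}}u^q\bigr)^{1/q}\) via Harnack; since \(\int_{B_r\setminus B_{r/2}}u^q=r^{1+N-\alpha}F'(r)\) and \(q(N-2)=\alpha\), this is a differential inequality of the type \(F'(r)\gtrsim F(r)^q/r\), and a single integration gives \(F(R)\gtrsim(\log R)^{\sigma}\), hence the conclusion after one more use of Harnack. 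The ODE route thus captures the full logarithmic exponent in one step and dispenses with all the bookkeeping that is the delicate part of your argument; your iteration is more elementary in spirit, and the adaptive choice of \(n(\abs{x})\) is a nice device, but it buys nothing extra here. Two harmless slips for the record: with your choice \(R_{n+1}=\tfrac43R_n^2\) one has \(\log R_{n+1}\sim2\log R_n\), not \(2^{1/\sigma}\log R_n\), so the admissible exponent is \(\gamma=-\log q/\log 2\) rather than \(-\sigma\log q/\log 2\) (any positive \(\gamma\) suffices, so nothing breaks); and the precise limit \(c_n\to K(\sigma)^{1/(1-q)}\) is neither quite justified nor needed --- the uniform positive lower bound your recursion actually delivers is enough.
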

\begin{proof}
By Proposition~\ref{propositionLinearLowerBoundFree}, there exists \(c > 0\) such that for every \(r > 8 \rho\),
\[
  \frac{1}{r^2} \int_{B_{2 r} \setminus B_r} u \ge c \int_{B_{r} \setminus B_{4 \rho}} (I_\alpha \ast u^p) u^q.
\]
Since for every \(x \in \R^N \setminus B_{4\rho}\),
\begin{equation}
\label{eqRieszLowerBound}
  \bigl(I_\alpha \ast u^p\bigr) (x)\ge \frac{A_\alpha}{2^{N - \alpha} \abs{x}^{N - \alpha}}\int_{B_{4\rho} \setminus B_\rho} u^p,
\end{equation}
we have
\begin{equation}
\label{eqLogMultiple}
\frac{1}{r^2} \int_{B_{2 r} \setminus B_r} u
\ge \frac{A_\alpha}{4^{N - \alpha}}
\int_{8 \rho}^{r} \Bigl(\int_{B_{t} \setminus B_{t/2}} u^q\Bigr) \frac{1}{t^{1 + N - \alpha}}\,dt.
\end{equation}
On the other hand, by Harnack's inequality, there exists \(C> 0\) such that
\begin{multline*}
 \frac{1}{r^2} \int_{B_{2 r} \setminus B_r} u
\le \frac{1}{r^2} \int_{B_{2 r} \setminus B_{r/4}} u
\le C r^{N - 2} \inf_{B_{r} \setminus B_{r/2}} u \\
\le C'  \Bigl( \frac{1}{r^{N - q (N - 2)}} \int_{B_r \setminus B_{r/2}} u^q \Bigr)^\frac{1}{q}.
\end{multline*}
We have thus for some \(c' > 0\),
\begin{equation}
\label{equationIntegralLog}
 \frac{\frac{1}{t^{1 + N - \alpha}} \int_{B_r \setminus B_{r/2}} u^q}
{\Bigl( \int_{8 \rho}^{r} \Bigl(\int_{B_{t} \setminus B_{t/2}} u^q\Bigr) \frac{1}{t^{1 + N - \alpha}}\,dt \Bigr)^{1 - q}}
\ge \frac{c'}{r^{1 + q (N - 2) - \alpha}}.
\end{equation}
If \(q = \frac{\alpha}{N - 2} < 1\), the integration of this inequality with respect to \(r\) from \(8 \rho\) to \(R > 8 \rho\) yields
\[
  \Bigl( \int_{8 \rho}^{R}
             \Bigl(\int_{B_{t} \setminus B_{t/2}} u^q\Bigr) \frac{1}{t^{1 + N - \alpha}}\,dt
  \Bigr)^{\frac{N - \alpha - 2}{N - 2}}
\ge c'\frac{N - 2}{N - \alpha - 2} \log \frac{R}{8 \rho}.
\]
Recalling \eqref{eqLogMultiple}, we have for some \(c''\),
\[
 \frac{1}{r^2} \int_{B_{2 r} \setminus B_r} u \ge c''\Bigl(\log \frac{R}{8 \rho}\Bigr)^{\frac{N - 2}{N - \alpha - 2}}.
\]
We conclude by Harnack's inequality.
\end{proof}

The proof of Proposition~\ref{sharp-B1plusplus-bound} shows in fact that any supersolution of \eqref{eqnLocal} with \(q = \frac{\alpha}{N - 2}\) has the same lower bound at infinity. The result seems to be new also for the local
inequality
\[
-\Delta u (x) \ge\frac{c}{\abs{x}^{N-\alpha}}u (x)^\frac{\alpha}{N - 2}\quad\text{for \(x \in \R^N\setminus\Bar{B}_{\rho}\)},
\]
where the lower bound \eqref{log-new} on positive supersolutions is established by the same arguments as above.

In the transitional locally linear case \(\alpha=N-2\) and \(q=1\),
the Green decay bounds of Lemma~\ref{lemmaGreenDecay} can be improved.

\begin{proposition}
Let \(N\ge 3\) and \(\rho > 0\).
Assume that
$$q =\frac{\alpha}{N - 2} = 1.$$
If \(u\ge 0\) is a supersolution of \eqref{eqChoquard0} in \(\R^N \setminus \Bar{B}_{\rho}\), then
either \(u = 0\) in \(\R^N\setminus \Bar{B}_{\rho}\) or there is \(m>0\) such that
\[\liminf_{\abs{x} \to \infty} u (x) \abs{x}^{N - 2-m} >0.
\]
\end{proposition}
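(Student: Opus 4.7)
The plan is to iterate the integral lower bound of Proposition~\ref{propositionLinearLowerBoundFree} applied to the linear inequality $-\Delta u \ge c_0 |x|^{-2} u$, producing a weighted integral of $u$ that grows with a definite positive power of $R$, and then convert this into a pointwise bound through the weak Harnack inequality of Lemma~\ref{P-XXX}.

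First I would rule out the trivial case via Proposition~\ref{propositionLocalGroundstate}, so assume $u > 0$ almost everywhere. Since $\alpha = N-2$, the lower bound \eqref{eqRieszLowerBound} for the Riesz potential reads
\[
 (I_\alpha \ast u^p)(x) \ge \frac{c_0}{\abs{x}^{2}}, \qquad \abs{x} \ge 4\rho,
\]
with $c_0 := \tfrac{A_\alpha}{2^{N-\alpha}} \int_{B_{4\rho}\setminus B_\rho} u^p > 0$. Because $q = 1$, the supersolution then satisfies the linear differential inequality
\[
 -\Delta u \ge \frac{c_0}{\abs{x}^2}\, u \quad \text{in } \R^N \setminus \Bar{B}_{4\rho}.
\]

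Setting $J(R) := \int_{B_R\setminus B_{16\rho}} u(y)\abs{y}^{-2}\,dy$ and applying Proposition~\ref{propositionLinearLowerBoundFree} with $f = c_0 u/\abs{\cdot}^2$ and inner radius $16\rho$, one obtains, for all $R > 32\rho$,
\[
 c_0\, J(R) \le \frac{C}{R^2} \int_{B_{2R}\setminus B_R} u.
\]
On the annulus $\{R \le \abs{y} \le 2R\}$ we have $\abs{y}^2 \le 4R^2$, hence $\int_{B_{2R}\setminus B_R} u \le 4 R^2 \bigl(J(2R) - J(R)\bigr)$. Combining these gives the self-improving geometric recursion
\[
 J(2R) \ge \theta\, J(R), \qquad \theta := 1 + \frac{c_0}{4C} > 1,
\]
valid for $R$ large enough. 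Iterating from a fixed $R_0$ where $J(R_0) > 0$ (by positivity of $u$) yields $J(R) \ge c\, R^{\mu}$ with $\mu := \log_2 \theta > 0$.

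Finally, the recursion also gives $J(2R) - J(R) \ge \frac{c_0}{4C}J(R) \gtrsim R^\mu$, and since $\abs{y}^2 \ge R^2$ on the annulus, this forces $\int_{B_{2R}\setminus B_R} u \gtrsim R^{2+\mu}$. Applying the weak Harnack inequality of Lemma~\ref{P-XXX} to the nonnegative superharmonic function $u$ on this annulus yields
\[
 \inf_{B_{5R/3}\setminus B_{4R/3}} u \ge c\, R^{-N}\!\!\int_{B_{2R}\setminus B_R} u \gtrsim R^{-(N-2-\mu)},
\]
which is exactly the claimed lower bound with $m := \mu$. The main obstacle is extracting the geometric growth of $J$ from the single step estimate of Proposition~\ref{propositionLinearLowerBoundFree}; once this is in place the remainder is routine. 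Note that $m$ depends on $u$ through $c_0$, in agreement with the statement that $m$ is not universal.
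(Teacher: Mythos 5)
Your proof is correct and takes essentially the same route as the paper: the Riesz-kernel lower bound \eqref{eqRieszLowerBound} together with Proposition~\ref{propositionLinearLowerBoundFree} produces a self-improving growth estimate for the weighted annular integrals of \(u\), and the pointwise bound is then extracted via the weak Harnack inequality of Lemma~\ref{P-XXX}. Your dyadic recursion for \(J(R)\) is just a discretized form of the paper's integration of \eqref{equationIntegralLog}, and your initial reduction to the linear inequality \(-\Delta u \ge c_0 \abs{x}^{-2} u\) is precisely the paper's remarked alternative proof (there concluded by Lemma~\ref{lemmaGreenDecay-Hardy}), which you reprove in a self-contained way.
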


\begin{proof}
One follows the line of the proof of Proposition~\ref{sharp-B1plusplus-bound} until \eqref{equationIntegralLog},
whose integration now yields
\[
 \log \Bigl(\frac{1}{r^2} \int_{B_{2 r} \setminus B_r} u \Bigr) \ge m \log \frac{R}{8 \rho},
\]
for some \(m > 0\),
allowing to conclude by the weak Harnack inequality.
\end{proof}

An alternative proof consists in inferring from \eqref{eqRieszLowerBound} that
\[
  - \Delta u (x) \ge \frac{A_\alpha\int_{B_{2 \rho} \setminus B_\rho} u^p}{2^{N - \alpha} \abs{x}^{2}} u(x)  \quad\text{for \(x \in \R^N\setminus\Bar{B}_{2\rho}\)}.
\]
and deducing the assertion from the lower bound of Lemma~\ref{lemmaGreenDecay-Hardy}.

\subsection{Optimal decay.}

We are going to show that the above nonexistence results are sharp
by constructing explicit nontrivial nonnegative supersolutions.
First we prove that in  the Green decay region \eqref{eqChoquard0} admits nontrivial nonnegative supersolutions
which decay at infinity at the same rate as the Green function of \(-\Delta\).

\begin{proposition}\label{sharp-B1}
Let \(N\ge 3\) and \(\rho > 0\).
If 
\begin{align*}
  p &> \frac{\alpha}{N - 2},&
 p + q & > \frac{N + \alpha}{N - 2} &
 & \text{and} &
 q &> \frac{\alpha}{N - 2},
\end{align*}
then \eqref{eqChoquard0} admits a radial nontrivial nonnegative supersolution \(u \in C^\infty (\R^N \setminus \Bar{B}_\rho)\),
which satisfies
\[
  \limsup_{\abs{x} \to \infty} u (x) \abs{x}^{N - 2} < \infty.
\]
\end{proposition}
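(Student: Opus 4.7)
The plan is to construct an explicit radial supersolution whose leading decay is exactly the Green function rate, plus a faster-decaying corrector chosen so that $-\Delta u$ is strictly positive at the optimal scale. Concretely, I will try
\[
 v(x) = \lvert x \rvert^{-(N-2)} - \rho^{\theta - (N - 2)} \lvert x\rvert^{-\theta}, \qquad \lvert x \rvert > \rho,
\]
extended by zero inside $\Bar{B}_\rho$, for an exponent $\theta > N-2$ to be chosen. One checks immediately that $v$ is radial, $C^\infty$ on $\R^N \setminus \Bar{B}_\rho$, nonnegative (it factors as $\lvert x\rvert^{-(N-2)}(1-(\rho/\lvert x\rvert)^{\theta-(N-2)})$, which is positive for $\lvert x\rvert>\rho$ and vanishes on $\partial B_\rho$), and using $-\Delta(\lvert x\rvert^{-s})= s(N-2-s)\lvert x\rvert^{-s-2}$ one obtains
\[
 -\Delta v (x) = \rho^{\theta-(N-2)} \theta (\theta - (N-2)) \lvert x\rvert^{-\theta - 2} > 0.
\]
In particular $v(x) \sim \lvert x\rvert^{-(N-2)}$ at infinity, which is the claimed decay.

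Next I estimate the nonlocal term. Since $0 \le v(x)\le \lvert x\rvert^{-(N-2)}\chi_{\R^N\setminus B_\rho}(x)$, the Riesz potential estimates from Appendix~A yield an upper bound on $I_\alpha\ast v^p$ that splits into two regimes depending on the tail integrability of $\lvert y\rvert^{-p(N-2)}$:
\begin{align*}
(I_\alpha\ast v^p)(x) &\le C\,\lvert x\rvert^{\alpha-p(N-2)} & &\text{if } \tfrac{\alpha}{N-2}<p<\tfrac{N}{N-2},\\
(I_\alpha\ast v^p)(x) &\le C\,\lvert x\rvert^{\alpha-N} & &\text{if } p>\tfrac{N}{N-2},
\end{align*}
for large $\lvert x\rvert$ (with an additional logarithmic factor in the borderline case). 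Combining these with $v(x)^q \le \lvert x\rvert^{-q(N-2)}$, the right-hand side $(I_\alpha\ast v^p)\,v^q$ decays at least as $\lvert x\rvert^{\alpha-(p+q)(N-2)}$ in the first regime and as $\lvert x\rvert^{\alpha-N-q(N-2)}$ in the second.

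Now I choose $\theta$ so that $-\Delta v$ dominates the right-hand side at infinity. A direct comparison of exponents shows that it suffices to pick
\[
\theta \in \bigl( N - 2,\, \min\{(p+q)(N-2) - \alpha - 2,\; q(N-2) + N - \alpha - 2 \} \bigr),
\]
an interval that is nonempty precisely because $p + q > \tfrac{N+\alpha}{N-2}$ and $q > \tfrac{\alpha}{N - 2}$ (both strict). With such $\theta$, the ratio $(I_\alpha\ast v^p)(x)\,v(x)^q / (-\Delta v)(x)$ tends to $0$ as $\lvert x\rvert\to\infty$. As $\lvert x\rvert\to\rho^+$ the same ratio tends to $0$ because $v^q\to 0$ while $-\Delta v$ stays bounded away from zero and $I_\alpha\ast v^p$ is bounded (note $q>\frac{\alpha}{N-2}>0$). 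By continuity on the remaining compact annular region there is therefore a constant $K>0$ with
\[
 (I_\alpha \ast v^p)(x)\, v(x)^q \le K (-\Delta v)(x), \qquad \lvert x \rvert > \rho.
\]

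Finally I set $u = \lambda v$ for a small parameter $\lambda>0$. Then
\[
(I_\alpha \ast u^p) u^q = \lambda^{p+q} (I_\alpha\ast v^p) v^q \le \lambda^{p+q-1} K (-\Delta u),
\]
and since $p + q > \tfrac{N+\alpha}{N-2} > 1$, choosing $\lambda \le K^{-1/(p+q-1)}$ gives $-\Delta u \ge (I_\alpha \ast u^p)u^q$ on $\R^N\setminus\Bar{B}_\rho$ pointwise, hence in the distributional sense of Definition~\ref{def-distro}. The estimate $u(x)\le \lambda\lvert x\rvert^{-(N-2)}$ then yields $\limsup_{\lvert x\rvert\to\infty}u(x)\lvert x\rvert^{N-2}<\infty$. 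The main technical obstacle is the uniform (not just asymptotic) bound on $I_\alpha\ast v^p$ across the two regimes and their borderline, for which I rely on the Riesz potential estimates collected in Appendix~A.
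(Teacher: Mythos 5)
Your proof is correct and follows essentially the same route as the paper's: an explicit radial ansatz decaying at the Green-function rate \(\abs{x}^{-(N-2)}\) with a corrector chosen so that \(-\Delta\) of it is strictly positive, the Riesz term bounded through Lemma~\ref{lemmaUpperasympt} in the three regimes of \(p\), and the smallness parameter absorbed using \(p+q>1\). The only difference is cosmetic: you use a power corrector \(\rho^{\theta-(N-2)}\abs{x}^{-\theta}\) with \(\theta\) in the window \(\bigl(N-2,\min\{(p+q)(N-2)-\alpha-2,\;q(N-2)+N-\alpha-2\}\bigr)\) (nonempty exactly by the two strict hypotheses), whereas the paper uses a logarithmic corrector; both yield the same conclusion.
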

\begin{proof}
Fix \(\beta>0\).
For \(x \in \R^N \setminus \Bar{B}_\rho\) and \(\mu > 0\), set
\[
 u_\mu (x)=\frac{\mu}{\abs{x}^{N - 2}}\biggl(1-\frac{1}{\bigl(1 + \log \frac{\abs{x}}{\rho}\bigr)^\beta}\biggr).
\]
Then for \(x \in \R^N \setminus \Bar{B}_\rho\) we compute
\[
\begin{split}
  -\Delta u_\mu (x)& = \mu\frac{\beta}{\abs{x}^N \bigl(1 + \log \frac{\abs{x}}{\rho}\bigr)^{\beta + 1}}\Bigr((N - 2)+\frac{\beta + 1}{1 + \log \frac{\abs{x}}{\rho}}\Bigr)\\
& \ge \mu\frac{\beta (N - 2)}{\abs{x}^N \bigl(1 + \log \frac{\abs{x}}{\rho}\bigr)^{\beta + 1}} .
\end{split}
\]
On the other hand, since \(\frac{\alpha}{N - 2} < p < \frac{N}{N - 2}\), by Lemma~\ref{lemmaUpperasympt} there exists \(C > 0\) such that  for every \(x \in\R^N \setminus \Bar{B}_\rho\)
\begin{equation}
\label{ineqLowerIalphaW}
  (I_\alpha \ast u_\mu^p) (x) \le \frac{C \mu^p}{\abs{x}^{p(N - 2)-\alpha}}.
\end{equation}
Therefore for every \(x \in\R^N \setminus \Bar{B}_\rho\)
\[
 (I_\alpha \ast u_\mu^p) (x) u_\mu (x)^q \le \frac{C \mu^{p + q}}{\abs{x}^{(p + q)(N - 2)-\alpha}}.
\]
Since \(p + q > 1\) and \(p + q>\frac{N + \alpha}{N - 2}\), we conclude that \(u_\mu\) is the required supersolution for all
sufficiently small \(\mu>0\).
\smallskip

If \(p=\frac{N}{N - 2}\) then instead of \eqref{ineqLowerIalphaW} by Lemma~\ref{lemmaUpperasympt} there exists \(C > 0\) such that for every \(x \in\R^N \setminus \Bar{B}_\rho\)
\[
  (I_\alpha \ast u_\mu^p) (x) \le \frac{C \mu^p \bigl(1 + \log \frac{\abs{x}}{\rho}\bigr)}{\abs{x}^{N - \alpha}},
\]
from which we conclude as before.
\smallskip

Finally, if \(p > \frac{N}{N - 2}\) then by Lemma~\ref{lemmaUpperasympt} there exists \(C > 0\) such that for every \(x \in\R^N \setminus \Bar{B}_\rho\)
\[
  (I_\alpha \ast u_\mu^p)(x) \le \frac{C}{\abs{x}^{N - \alpha}}.
\]
from which we conclude since \(q > \frac{\alpha}{N - 2}\).
\end{proof}

Next we construct a supersolution matching decay estimate \eqref{B-1plusplus} in the transitional region \(q=\frac{\alpha}{N-2} < 1\).

\begin{proposition}\label{sharp-B1plusplus}
Let \(N\ge 3\) and \(\rho > 0\).
If
\[
 p>\frac{N}{N-2}\quad\text{and}\quad q = \frac{\alpha}{N - 2}<1,
\]
then \eqref{eqChoquard0} admits a radial nontrivial nonnegative supersolution
\(u \in C^\infty (\R^N \setminus \Bar{B}_\rho)\),
which satisfies
\[
  \limsup_{\abs{x} \to \infty} u (x) \abs{x}^{N - 2} \bigl(\log \abs{x}\bigr)^{-\frac{N - 2}{N - \alpha - 2}} < \infty.
\]
\end{proposition}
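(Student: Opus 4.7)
The plan is to build an explicit radial supersolution of the form
\[
u_\mu(x) := \mu \,\frac{\bigl(A + \log(\abs{x}/\rho)\bigr)^\beta}{\abs{x}^{N-2}}, \qquad \beta := \frac{N-2}{N-\alpha-2} > 1,
\]
for a constant $A > 0$ to be chosen large and a scaling parameter $\mu > 0$ to be chosen small. This function is smooth and positive on $\R^N \setminus \bar B_\rho$, and its asymptotic profile is exactly the one in the statement, so the $\limsup$ bound will be immediate once $u_\mu$ is shown to be a supersolution.

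First I would compute $-\Delta u_\mu$ by writing $u_\mu(x) = \mu\abs{x}^{-(N-2)}\phi(\log(\abs{x}/\rho))$ with $\phi(s)=(A+s)^\beta$, which gives the clean identity
\[
-\Delta u_\mu(x) = \frac{\mu}{\abs{x}^N}\bigl[(N-2)\phi'(\log(\abs{x}/\rho)) - \phi''(\log(\abs{x}/\rho))\bigr].
\]
Choosing $A$ large enough that $(N-2)A \ge \beta - 1$, the bracket is dominated by its first term and one gets a lower bound
\[
-\Delta u_\mu(x) \ge c_1\,\mu\,\frac{\bigl(A+\log(\abs{x}/\rho)\bigr)^{\beta-1}}{\abs{x}^N} \qquad \text{for all } \abs{x} > \rho,
\]
with $c_1 := \tfrac{1}{2}(N-2)\beta$. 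Next I would bound $I_\alpha \ast u_\mu^p$ from above: because $p > N/(N-2)$, the density $u_\mu^p$ is integrable at infinity (the logarithmic factor is harmless), so applying the upper asymptotics of Lemma~\ref{lemmaUpperasympt} to the three zones $\{\abs{y}\le\abs{x}/2\}$, $\{\abs{y-x}\le\abs{x}/2\}$ and $\{\abs{y}\ge 2\abs{x}\}$ produces
\[
(I_\alpha \ast u_\mu^p)(x) \le \frac{C\mu^p}{\abs{x}^{N-\alpha}}\qquad\text{for } \abs{x}\ge 2\rho,
\]
where $C$ depends on $N,\alpha,p,\rho,A,\beta$ but not on $\mu$.

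Multiplying by $u_\mu^q$ and using the identity $q(N-2)=\alpha$, the nonlocal term is controlled by
\[
(I_\alpha \ast u_\mu^p)(x)\,u_\mu(x)^q \le \frac{C\mu^{p+q}\bigl(A+\log(\abs{x}/\rho)\bigr)^{\beta q}}{\abs{x}^N}.
\]
The arithmetic identity $\beta(1-q)=1$, built into the choice of $\beta$, is precisely $\beta q = \beta - 1$, so the logarithmic powers on the two sides match exactly. Since $p+q > 1$ (which follows from $p > N/(N-2)$ and $q=\alpha/(N-2)$), picking $\mu$ small enough that $C\mu^{p+q-1}\le c_1$ yields $-\Delta u_\mu \ge (I_\alpha\ast u_\mu^p)u_\mu^q$ on $\R^N\setminus\bar B_\rho$, and the conclusion follows. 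The only real technical obstacle is Step~2, the upper bound on the Riesz potential: one must argue that the logarithmic correction in $u_\mu^p$ contributes only to the subleading orders of $I_\alpha\ast u_\mu^p$, so that the leading $\abs{x}^{-(N-\alpha)}$ decay is preserved without a log factor; this is exactly what Lemma~\ref{lemmaUpperasympt} is designed to deliver.
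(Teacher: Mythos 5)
Your proposal is correct and follows essentially the same route as the paper: the paper's own proof uses the identical ansatz \(u_\mu(x)=\mu\,\abs{x}^{-(N-2)}\bigl(A+\log(\abs{x}/\rho)\bigr)^{\frac{N-2}{N-\alpha-2}}\) (with the specific choice \(A=\frac{1}{N-\alpha-2}\)), the same lower bound for \(-\Delta u_\mu\), the bound \((I_\alpha\ast u_\mu^p)(x)\le C\mu^p\abs{x}^{-(N-\alpha)}\) from Lemma~\ref{lemmaUpperasympt} in the regime \(p(N-2)>N\), the exponent identities \(q(N-2)=\alpha\) and \(\beta q=\beta-1\), and smallness of \(\mu\) via \(p+q>1\). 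Two harmless remarks: for your stated constant \(c_1=\tfrac{1}{2}(N-2)\beta\) you need \((N-2)A\ge 2(\beta-1)\) rather than \(\ge\beta-1\) (any positive constant suffices anyway), and the Riesz bound should be extended from \(\abs{x}\ge 2\rho\) to all \(\abs{x}>\rho\), which is immediate by boundedness on the annulus \(\rho<\abs{x}<2\rho\); incidentally, your use of the \(\beta>N\) case of the lemma is the correct one, whereas the paper's printed proof cites the range \(\frac{\alpha}{N-2}<p<\frac{N}{N-2}\), evidently a slip carried over from the proof of Proposition~\ref{sharp-B1}.
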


\begin{proof}
Given \(\mu > 0\), for \(x \in \R^N \setminus \Bar{B}_\rho\) we set
\[
 u_\mu (x)= \mu \frac{\bigl(\frac{1}{N - \alpha - 2} + \log \frac{\abs{x}}{\rho}\bigr)^{\frac{N - 2}{N - \alpha - 2}}}{\abs{x}^{N - 2}} .
\]
One has for every \(x \in \R^N \setminus \Bar{B}_\rho\),
\[
\begin{split}
 - \Delta u_\mu (x) & = \mu  \frac{\bigl(\frac{1}{N - \alpha - 2} + \log \frac{\abs{x}}{\rho}\bigr)^{\frac{\alpha}{N - \alpha - 2}}}{\abs{x}^N}\\
& \qquad \qquad \times \Bigl( \frac{(N-2)^2}{N - \alpha - 2} - \frac{\alpha (N - 2)}{(N - \alpha - 2)^2\bigl(\frac{1}{N - \alpha - 2} + \log \frac{\abs{x}}{\rho}\bigr)}\Bigr)\\
& \ge \mu (N - 2) \frac{\bigl(\frac{1}{N - \alpha - 2} + \log \frac{\abs{x}}{\rho}\bigr)^{\frac{\alpha}{N - \alpha - 2}}}{\abs{x}^N}.
\end{split}
\]
Since \(\frac{\alpha}{N - 2} < p < \frac{N}{N - 2}\), by Lemma~\ref{lemmaUpperasympt}
we obtain  \eqref{ineqLowerIalphaW} for every \(x \in\R^N \setminus \Bar{B}_\rho\).
Since \(p + q > 1\), \(u_\mu\) is a supersolution when \(\mu\) is small enough.
\end{proof}

Next we construct a supersolution matching decay estimate \eqref{B-1plus} in the transitional locally linear r\'egime \(\alpha=N-2\) and \(q=1\),
when the critical line \(q=\frac{\alpha}{N-2}\) belongs to the existence region.

\begin{proposition}\label{sharp-B1plus}
Let \(N\ge 3\) and \(\rho > 0\).
If
\[\alpha=N - 2,\quad p>\frac{N}{N-2}\quad\text{and}\quad q=1.\]
then for every \(m > 0\) equation \eqref{eqChoquard0} admits a radial nontrivial nonnegative supersolution
\(u \in C^\infty (\R^N \setminus \Bar{B}_\rho)\),
which satisfies
\[
  \limsup_{\abs{x} \to \infty} u (x) \abs{x}^{N - 2-m} < \infty.
\]
\end{proposition}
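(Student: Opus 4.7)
The plan is to use a simple power-law ansatz. Set
\[
  u_\mu(x) = \frac{\mu}{\abs{x}^{N - 2 - m}} \qquad (x \in \R^N \setminus \Bar{B}_\rho),
\]
with $\mu > 0$ to be chosen small, and first suppose that $m$ is small enough that $0 < m < N - 2 - \tfrac{N}{p}$; this is possible because the assumption $p > \tfrac{N}{N - 2}$ guarantees $N - 2 - \tfrac{N}{p} > 0$.

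For such $m$ a direct computation gives
\[
  -\Delta u_\mu (x) = \mu\, m\,(N - 2 - m)\, \abs{x}^{m - N}.
\]
On the other hand, the choice $m < N - 2 - \tfrac{N}{p}$ is exactly the condition $p(N - 2 - m) > N$, so $u_\mu^p \in L^1(\R^N \setminus B_\rho)$, and an application of Lemma~\ref{lemmaUpperasympt} (in the regime $p > \tfrac{N}{N - \alpha - (\cdot)}$ used in the proof of Proposition~\ref{sharp-B1}) yields a constant $C > 0$ such that, for every $x \in \R^N \setminus \Bar{B}_\rho$,
\[
  (I_\alpha \ast u_\mu^p)(x) \le \frac{C \mu^p}{\abs{x}^{N - \alpha}} = \frac{C \mu^p}{\abs{x}^2},
\]
since $\alpha = N - 2$. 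Multiplying by $u_\mu$ (recall $q = 1$) then gives
\[
  (I_\alpha \ast u_\mu^p)(x)\, u_\mu(x)^q \le \frac{C \mu^{p + 1}}{\abs{x}^{N - m}}.
\]
Comparing with the expression for $-\Delta u_\mu$, it suffices to take $\mu > 0$ so small that $C \mu^p \le m(N - 2 - m)$ to make $u_\mu$ a classical nonnegative supersolution of \eqref{eqChoquard0} in $\R^N \setminus \Bar{B}_\rho$; the required upper bound $\limsup_{\abs{x} \to \infty} u_\mu(x) \abs{x}^{N - 2 - m} = \mu < \infty$ is immediate.

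Finally, for $m \ge N - 2 - \tfrac{N}{p}$, I would fix any $m_0 \in (0, N - 2 - \tfrac{N}{p})$ and take the supersolution $u_{\mu_0}$ already constructed for that $m_0$; since $u_{\mu_0}(x)\abs{x}^{N - 2 - m} = \mu_0 \abs{x}^{m_0 - m} \to 0$ as $\abs{x} \to \infty$, the stated $\limsup$ bound holds trivially. The only delicate point is the application of Lemma~\ref{lemmaUpperasympt} with the slightly slower decay exponent $N - 2 - m$ in place of the borderline $N - 2$; beyond that, everything reduces to an elementary Laplacian computation and a smallness choice of $\mu$.
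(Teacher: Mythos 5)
Your proposal is correct and follows essentially the same route as the paper: the same power-law ansatz \(u_\mu(x)=\mu\abs{x}^{-(N-2-m)}\), the same reduction to the case \(0<m<N-2-\tfrac{N}{p}\) (the paper states this as a ``without loss of generality''), the same application of Lemma~\ref{lemmaUpperasympt} with \(\beta=p(N-2-m)>N\), and the same smallness choice of \(\mu\).
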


\begin{proof}
Without loss of generality, we can assume that \( m < N - 2 - \frac{N}{p}\).
Given \(\mu > 0\),  we set for every \(x \in \R^N \setminus \Bar{B}_\rho\)
\[
 u_\mu (x)=\frac{\mu}{\abs{x}^{N - 2 - m}}.
\]
Then we compute for every \(x \ge 0\), if \(m \in (0, N - 2)\),
\[
  -\Delta u_\mu (x)= \mu \frac{m (N - 2 - m)}{\abs{x}^2} u_\mu(x)> 0.
\]
On the other hand, if \(\mu \le N - 2 - \frac{N}{p}\) we obtain by Lemma~\ref{lemmaUpperasympt}  since \(N - \alpha=2\) for every \(x \in\R^N \setminus \Bar{B}_\rho\),
\begin{equation*}
(I_\alpha \ast u_\mu^p) (x) \le \frac{C \mu^p}{\abs{x}^{2}}.
\end{equation*}
Since \(p> 0\), we conclude that \(u_\mu\) is the required supersolution for all
sufficiently small \(\mu>0\).
\end{proof}

Finally we construct a supersolution in the sublinear decay region which matches the decay estimate \eqref{B-2}.

\begin{proposition}\label{sharp-B2}
Let \(N\ge 3\) and \(\rho > 0\).
If
\begin{equation*}
1-\frac{N - \alpha - 2}{N} p <  q < \frac{\alpha}{N - 2} < 1,
\end{equation*}
then \eqref{eqChoquard0} admits a radial nontrivial nonnegative supersolution \(u \in C^\infty (\R^N \setminus \Bar{B}_{\rho})\)
which satisfies
\[
 \limsup_{\abs{x} \to \infty} u (x) \abs{x}^{\frac{N - \alpha - 2}{1-q}} < \infty.
\]
\end{proposition}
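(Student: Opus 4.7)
The plan is to exhibit an explicit pure-power supersolution of the form
\[
 u_\mu (x) = \frac{\mu}{\abs{x}^\beta}, \qquad \beta := \frac{N - \alpha - 2}{1 - q},
\]
whose decay saturates the claimed rate exactly. First I would check that \(0 < \beta < N - 2\), which follows from \(0 < \alpha < N - 2\) together with \(q < \tfrac{\alpha}{N - 2} < 1\). A direct computation then gives, for every \(x \in \R^N \setminus \Bar{B}_\rho\),
\[
 -\Delta u_\mu (x) = \mu\, \beta (N - 2 - \beta)\, \abs{x}^{-\beta - 2} > 0.
\]

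The main step is to control the nonlocal term. The crucial algebraic observation is that the existence hypothesis \(q > 1 - \frac{N - \alpha - 2}{N} p\) is equivalent to \(p\beta > N\), so the extension of \(u_\mu^p\) by zero inside \(B_\rho\) is integrable on \(\R^N\). Applying Lemma~\ref{lemmaUpperasympt} from the appendix, exactly in the same way as in the proofs of Propositions~\ref{sharp-B1}--\ref{sharp-B1plus}, one obtains a constant \(C > 0\) with
\[
 (I_\alpha \ast u_\mu^p) (x) \le \frac{C \mu^p}{\abs{x}^{N - \alpha}}, \qquad x \in \R^N \setminus \Bar{B}_\rho,
\]
and therefore
\[
 (I_\alpha \ast u_\mu^p)(x)\, u_\mu (x)^q \le \frac{C \mu^{p + q}}{\abs{x}^{N - \alpha + q\beta}}.
\]

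By the very definition of \(\beta\) one has \(N - \alpha + q\beta = \beta + 2\), so the supersolution inequality reduces to the single scalar requirement \(\mu \beta (N - 2 - \beta) \ge C \mu^{p + q}\). The same hypothesis also yields \(p + q > 1 + \frac{(\alpha + 2)p}{N} > 1\), so this is satisfied for all sufficiently small \(\mu > 0\), after which the bound \(\limsup_{\abs{x} \to \infty} u_\mu (x) \abs{x}^\beta = \mu < \infty\) is automatic. The only point I expect to need any care is the Riesz upper bound in the intermediate regime where \(\alpha < p\beta\) and simultaneously \(p\beta > N\); however, this is precisely the case already handled by the appendix estimates and exploited in the earlier sharpness propositions, so I do not anticipate additional difficulty.
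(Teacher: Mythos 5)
Your proposal is correct and essentially reproduces the paper's own argument: the same pure-power ansatz \(u_\mu(x)=\mu\abs{x}^{-\beta}\) with \(\beta=\frac{N-\alpha-2}{1-q}\), the same use of Lemma~\ref{lemmaUpperasympt} (valid because \(p\beta>N\), i.e.\ \(q>1-\frac{N-\alpha-2}{N}p\)) to get \((I_\alpha\ast u_\mu^p)(x)\le C\mu^p\abs{x}^{-(N-\alpha)}\), and the same conclusion for small \(\mu\) from \(p+q>1+\frac{\alpha+2}{N}p>1\). The only cosmetic difference is that you phrase the positivity of \(-\Delta u_\mu\) via \(0<\beta<N-2\) rather than the explicit coefficient \(\frac{(N-\alpha-2)(\alpha-q(N-2))}{(1-q)^2}>0\), which is the same fact.
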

\begin{proof}
Set for \(\mu > 0\) and \(x \in \R^N \setminus \Bar{B}_\rho\),
\[
 u_\mu (x)=\frac{\mu}{\abs{x}^\frac{N - \alpha - 2}{1-q}}.
\]
We compute
\[
  -\Delta u_\mu (x)= \mu \frac{(N - \alpha - 2)(\alpha-q(N - 2))}{(1-q)^2} \frac{1}{\abs{x}^\frac{N - 2q-\alpha}{1-q}}.
\]
Since \(q < \frac{\alpha}{N - 2} < 1\), we have \(\frac{(N - \alpha - 2)(\alpha-q(N - 2))}{(1-q)^2} > 0\).
On the other hand, \(p \frac{N - \alpha - 2}{1-q} > N\).
Hence by Lemma~\ref{lemmaUpperasympt} we obtain for every \(x \in \R^N \setminus \Bar{B}_\rho\)
\[
 (I_\alpha \ast u_\mu^p)(x) \le \frac{C\mu^p}{\abs{x}^{N - \alpha}}
\]
and thus
\[
 (I_\alpha \ast u_\mu^p)(x) u_\mu (x)^q \le \frac{C\mu^{p + q}}{\abs{x}^\frac{N - 2q-\alpha}{1-q}}.
\]
Note that \(p + q > 1+\frac{\alpha+2}{N}p > 1\), so
we conclude that \(u_\mu\) is the required supersolution for all sufficiently small \(\mu>0\).
\end{proof}

Propositions~\ref{sharp-B1},~\ref{sharp-B1plus} and~\ref{sharp-B2} confirm sharpness
of the nonexistence statements \eqref{eqA} and optimality of the decay estimates \eqref{eqB}.
This completes the proof of Theorem~\ref{Thm-free}.

\subsection{Equation with fast decay potentials.}
\label{sect-Fast}

Theorem~\ref{Thm-free} could be easily extended to the perturbed Choquard equation \eqref{eqChoquardV}
with \emph{fast decay} potentials
\[V(x)=\frac{\lambda}{\abs{x}^\gamma}\]
where \(\lambda \in \R\) and \(\gamma > 2\).
It is well known that  nontrivial nonnegative supersolutions
to the linear Schr\"o\-dinger operator \(-\Delta + V\) with fast decay potential \(V\)
have the same minimal decay rate at infinity
as the fundamental solution of the unperturbed operator \(-\Delta\),
(cf. \cite{KLS}, \cite{Pinchover}*{Section 3} or \cite{MVS}*{Lemma 3.4}).
As a consequence, we can establish a complete analogue of Theorem~\ref{Thm-free}.

\begin{theorem}\label{Thm-fast}
Let \(N\ge 3\), \(\gamma > 2\), \(\lambda\in\R\), \(0<\alpha<N\), \(p > 0\), \(q \in\R\) and \(\rho > 0\).
Then \eqref{eqChoquardFast} has a nontrivial nonnegative supersolution in \(\R^N \setminus \Bar{B}_{\rho}\)
if and only if the assumptions \eqref{eqA} hold simultaneously.\\
Moreover, if \(u\ge 0\) is a nontrivial supersolution of \eqref{eqChoquardFast} in \(\R^N \setminus \Bar{B}_{\rho}\)
then the lower bounds \eqref{eqB} hold and these bounds are optimal.
\end{theorem}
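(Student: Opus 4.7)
The plan is to transfer the entire machinery of Section~\ref{sectFree} to the perturbed operator $-\Delta + V$, exploiting the fact that for $\gamma > 2$ the potential $V(x)=\lambda/\abs{x}^\gamma$ is a ``small perturbation'' of $-\Delta$ at infinity in every relevant sense: integrally, pointwise, and at the level of the fundamental solution. The key observation is that the three ingredients driving Section~\ref{sectFree}—Green-type decay, the quantitative integral inequality Proposition~\ref{propositionLinearLowerBoundFree}, and the nonlocal positivity principle Proposition~\ref{propositionGroundState}—all have direct analogues for $-\Delta+V$ with the \emph{same} critical exponents and decay rates.

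\textbf{Nonexistence.} First, one replaces Lemma~\ref{lemmaGreenDecay} by its fast-decay analogue: every nontrivial nonnegative supersolution $u$ of $-\Delta u + Vu\ge 0$ in $\R^N\setminus\Bar B_\rho$ satisfies $\liminf_{\abs{x}\to\infty}u(x)\abs{x}^{N-2}>0$. This is standard since the fundamental solution of $-\Delta+V$ with $V=O(\abs{x}^{-\gamma})$, $\gamma>2$, decays like $\abs{x}^{-(N-2)}$ (cf.\ \cite{Pinchover}*{Section~3}, \cite{KLS}, \cite{MVS}*{Lemma 3.4}); the lower bound then follows from the weak Harnack inequality and a Phragm\'en--Lindel\"of comparison. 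Second, one extends Lemma~\ref{lemmaLiminfExistence} by applying Proposition~\ref{propositionGroundState} with the cutoff $\varphi_R$ from \eqref{phiR}: by scaling,
\begin{equation*}
 \int_{\R^N}\abs{\nabla\varphi_R}^2 = CR^{N-2}, \qquad
 \Bigabs{\int_{\R^N} V\varphi_R^2} \le C\abs{\lambda}R^{N-\gamma},
\end{equation*}
and since $\gamma>2$ the second term is of lower order in $R$, so the right-hand side of the upper bound remains $C R^{2N-\alpha-2}$. Third, the integral estimate Proposition~\ref{propositionLinearLowerBoundFree} extends to $-\Delta+V$ (in fact, the general Proposition~\ref{propositionLinearLowerBound} announced in the introduction already covers the Hardy case, of which fast decay is a specialization). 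With these three ingredients replacing their unperturbed counterparts, every nonexistence argument of Section~\ref{sectFree}—from the dichotomies $p\le\alpha/(N-2)$, $1\le p+q\le(N+\alpha)/(N-2)$, $1<q\le\alpha/(N-2)$ with $\alpha\ge N-2$, down to $q\le 1-\frac{N-\alpha-2}{N}p$ and $p+q<1$—transfers line by line, yielding necessity of \eqref{eqA}.

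\textbf{Existence and optimal decay.} For $\lambda\ge 0$ this is immediate: since $Vu\ge 0$, every supersolution of \eqref{eqChoquard0} constructed in Propositions~\ref{sharp-B1}, \ref{sharp-B1plusplus}, \ref{sharp-B1plus} and \ref{sharp-B2} is also a supersolution of \eqref{eqChoquardFast}, with the same asymptotic behaviour at infinity and hence realizing the upper matches to \eqref{eqB}. For $\lambda<0$ one inspects the explicit supersolutions of Section~\ref{sectFree} and observes that in each case $-\Delta u_\mu$ dominates $\abs{x}^{-N}$ up to a logarithmic factor, whereas the negative contribution $Vu_\mu=-\abs{\lambda}u_\mu/\abs{x}^\gamma$ is of order $\abs{x}^{-(N-2+\gamma)} u_\mu\cdot\abs{x}^{N-2}$, i.e.\ smaller by a factor $\abs{x}^{2-\gamma}$ with $\gamma>2$. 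A multiplicative correction of the form $u_\mu(x)(1+c\abs{x}^{2-\gamma})$ produces an additional positive Laplacian contribution exactly of order $u_\mu/\abs{x}^\gamma$, which can be tuned with $c$ large enough to absorb $\abs{\lambda}u_\mu/\abs{x}^\gamma$ on all of $\R^N\setminus\Bar B_\rho$ without altering the leading-order decay. The lower bounds \eqref{eqB} then match, proving optimality.

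\textbf{Main obstacle.} The only genuinely non-routine point is justifying that the Green-type lower bound on supersolutions of $-\Delta u+Vu\ge 0$ persists for \emph{every} $\rho>0$ and \emph{every} $\lambda\in\R$; for $\lambda<0$ one must ensure that $-\Delta+V$ admits a positive supersolution decaying like $\abs{x}^{-(N-2)}$ on the whole exterior domain, which requires a comparison argument and positivity of the perturbed fundamental solution at infinity. Once this is established (from the $\gamma>2$ fast decay as cited), everything else is a mechanical but careful transcription of Section~\ref{sectFree}, and the existence/decay part is a perturbative adjustment of the explicit supersolutions.
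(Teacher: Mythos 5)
Your nonexistence half and the lower bounds \eqref{eqB} follow the same route as the paper: transfer Section~\ref{sectFree} using the scaling \(\int V\varphi_R^2=O(R^{N-\gamma})=o(R^{N-2})\) so that Lemma~\ref{lemmaLiminfExistence} is stable, together with Green-type decay and an integral estimate for \(-\Delta+V\); for \(\lambda\ge 0\) your existence remark (unperturbed supersolutions remain supersolutions) is also what the paper leaves implicit. One caution there: a fast decay potential is not a special case of a Hardy potential; on \(\R^N\setminus\Bar{B}_\rho\) it is only dominated by an \(\varepsilon\)-Hardy potential, and that comparison by itself yields the exponent \(\frac{N-2}{2}+\nu\) with \(\nu>\frac{N-2}{2}\), i.e.\ a lower bound \(\abs{x}^{-(N-2)-\delta}\), which is not enough in the borderline cases \(p+q=\frac{N+\alpha}{N-2}\) and \(q=\frac{\alpha}{N-2}\) where the divergence arguments need the sharp rate \(\abs{x}^{2-N}\); one must either redo the construction of Proposition~\ref{propositionLinearLowerBound} with the fast decay potential itself (as the paper indicates) or invoke the sharp Green-decay results you cite. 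That is repairable.

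The genuine gap is your existence construction for \(\lambda<0\). First, the corrector has the wrong sign: with \(u(x)=\abs{x}^{2-N}\) (the leading behaviour of the supersolutions of Propositions~\ref{sharp-B1}--\ref{sharp-B2}), one computes \(-\Delta\bigl(u(x)(1+c\abs{x}^{2-\gamma})\bigr)=-c(\gamma-2)(N+\gamma-4)\abs{x}^{-(N+\gamma-2)}<0\); the cross term \(-2\nabla u\cdot\nabla(c\abs{x}^{2-\gamma})\) dominates \(u\,(-\Delta)(c\abs{x}^{2-\gamma})\) and is negative, so multiplying by \(1+c\abs{x}^{2-\gamma}\) adds a \emph{negative} contribution of exactly the same order \(\abs{x}^{-(N+\gamma-2)}\) as the term \(Vu\) you want to absorb; the corrector must be \(1-c\abs{x}^{2-\gamma}\). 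Second, and more fundamentally, even with the correct sign, positivity of \(1-c\abs{x}^{2-\gamma}\) on \(\R^N\setminus\Bar{B}_\rho\) forces \(c<\rho^{\gamma-2}\), so the absorption only works under a smallness condition of the form \(\abs{\lambda}<(\gamma-2)(N+\gamma-4)\rho^{\gamma-2}\); the claim that \(c\) can be taken ``large enough'' for every \(\lambda<0\) and every \(\rho>0\) is false, and no construction can rescue it: by Proposition~\ref{propositionLocalGroundstate} (with \(W=I_\alpha\ast u^p\ge 0\)) any nontrivial nonnegative supersolution forces \(\int\abs{\nabla\varphi}^2+\lambda\int\abs{x}^{-\gamma}\varphi^2\ge 0\) for all \(\varphi\in C^\infty_c(\R^N\setminus\Bar{B}_\rho)\), while the scaling \(\varphi(x)=\psi(x/\rho)\) shows this quadratic form is indefinite as soon as \(\lambda<-\rho^{\gamma-2}\mu_\gamma\), where \(\mu_\gamma:=\inf\bigl\{\int\abs{\nabla\psi}^2:\psi\in C^\infty_c(\R^N\setminus\Bar{B}_1),\ \int\abs{y}^{-\gamma}\psi^2\,dy=1\bigr\}\) is finite. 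Hence for \(\lambda<0\) the existence half can only be established (and can only hold) under such a relation between \(\abs{\lambda}\) and \(\rho^{\gamma-2}\) (equivalently, on exterior domains with \(\rho\) large in terms of \(\lambda\) and \(\gamma\)); as written, your final step both miscomputes the sign and asserts something your own positivity principle rules out, so this part needs to be reworked, while for \(\lambda\ge 0\) your argument is complete.
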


The proof of Theorem~\ref{Thm-fast} follows closely the proof of Theorem~\ref{Thm-free}.
Note only that if \(V\) is a fast decay potential then complete analogues
of Proposition~\ref{propositionLinearLowerBoundFree} and Lemma~\ref{lemmaGreenDecay}
could be established following the arguments in the proof of
Proposition~\ref{propositionLinearLowerBound}.
In addition, if \(\varphi_R\) is defined by \eqref{phiR},
then
\begin{equation*}
 \lim_{R \to \infty} \frac{1}{R^{N - 2}} \int_{{\R^N \setminus B_{\rho}}} V\abs{\varphi_R}^2=0.
\end{equation*}
The estimate of Lemma~\ref{lemmaLiminfExistence}
remains thus stable after a perturbation of \eqref{eqChoquardV} by a fast decay potential.
We omit further details.

\section{Equation with Hardy potentials.}\label{Sect-Hardy}

In this section we consider perturbed equation \eqref{eqChoquardV} with \emph{Hardy potential}
\begin{equation*}
V(x)=\frac{\nu^2-\big(\tfrac{N - 2}{2}\big)^2}{\abs{x}^2}\qquad(\nu >0).
\end{equation*}
It is well known that if \(V\) is a Hardy potential then nontrivial nonnegative supersolutions
to the linear Schr\"odinger operator \(-\Delta+V\) decay polynomially at infinity,
however the exact rate of decay depends explicitly on the value of the constant \(\nu\).
We will show that all the results of Theorem~\ref{Thm-free} could be extended with minimal suitable
modifications to Choquard's equations \eqref{eqChoquardSlow} with Hardy potentials.

\subsection{Equation with Hardy potentials.}

Using decay estimate for supersolutions to linear equations with Hardy's potential
we deduce the following extension of Theorem~\ref{Thm-free}.

\begin{theorem}\label{Thm-Hardy}
Let \(N\ge 2\), \(0<\alpha<N\), \(p > 0\), \(q \in\R\), \(\nu>0\) and \(\rho > 0\).
Then \eqref{eqChoquardHardy} has a nontrivial nonnegative supersolution in \(\R^N \setminus \Bar{B}_{\rho}\) if and only if
the following assumptions hold simultaneously:
\begin{subequations}\label{eqA-Hardy}
\begin{align}
p & > \frac{\alpha}{\frac{N - 2}{2}+\nu},\label{A-1-Hardy}\\
p + q & > 1 + \frac{\alpha+2} {\frac{N - 2}{2}+\nu},\label{A-2-Hardy}\\
q & > 1 + \frac{\alpha-(N - 2)} {\frac{N - 2}{2}+\nu} & & \text{if } \alpha>N-2 \label{A-3-Hardy},\\
q & \ge 1, & & \text{if } \alpha=N-2 \label{A-3plus-Hardy},\\
q & > 1-\frac{N - \alpha - 2}{N}p& & \text{if } \alpha<N-2\label{A-3+Hardy},\\
q & > 1 - \frac{N - \alpha - 2} {\frac{N - 2}{2}-\nu}& & \text{if \(\alpha<N-2\) and \(0<\nu<\tfrac{N - 2}{2}\).}\label{A-4-Hardy}
\end{align}
\end{subequations}
Moreover, if \(u\ge 0\) is a nontrivial supersolution of \eqref{eqChoquardHardy} in \(\R^N \setminus \Bar{B}_{\rho}\) then
\begin{subequations}
\label{eqB-Hardy}
\begin{align}
&\liminf_{\abs{x} \to \infty} u (x) \abs{x}^{\frac{N - 2}{2}+\nu}>0& & \text{if }\textstyle{q > 1 + \frac{\alpha-(N - 2)} {\frac{N - 2}{2}+\nu} > 1,}\label{B-1-Hardy}\\
\exists m>0\;:&\liminf_{\abs{x} \to \infty} u (x) \abs{x}^{\frac{N - 2}{2}+\nu-m}>0& & \text{if \(q = 1\) and \(\alpha=N - 2\),}\label{B-1plus-Hardy}\\
&\liminf_{\abs{x} \to \infty} u (x) \abs{x}^{\frac{N - 2}{2} + \nu}  \bigl(\log \abs{x}\bigr)^{-\frac{\frac{N - 2}{2} + \nu}{N - \alpha - 2}}
>0 & & \text{if \(q = 1 - \frac{N - \alpha - 2} {\frac{N - 2}{2}+\nu} < 1\),}\label{B-1plusplus-Hardy}\\
&\liminf_{\abs{x} \to \infty} u (x) \abs{x}^{\frac{N - \alpha - 2}{1-q}}>0& &\text{if \(q < 1 - \frac{N - \alpha - 2} {\frac{N - 2}{2}+\nu} < 1\).}\label{B-2-Hardy}
\end{align}
\end{subequations}
The above lower bounds are optimal.
\end{theorem}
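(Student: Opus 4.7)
The plan is to follow the architecture of the proof of Theorem~\ref{Thm-free} in Section~\ref{sectFree}, with the Green function decay $\abs{x}^{-(N-2)}$ of the Laplacian systematically replaced by the Hardy decay $\abs{x}^{-(\frac{N-2}{2}+\nu)}$. The linear inputs are the Phragm\'en--Lindel\"of style integral estimate of Proposition~\ref{propositionLinearLowerBound} and its pointwise corollary Lemma~\ref{lemmaGreenDecay-Hardy}, which yields $\liminf_{\abs{x}\to\infty} u(x)\abs{x}^{\frac{N-2}{2}+\nu}>0$ for every nontrivial nonnegative supersolution of $-\Delta u + V u \ge 0$; the nonlocal positivity principle of Proposition~\ref{propositionGroundState} is unchanged.

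The key new ingredient is the Hardy analogue of Lemma~\ref{lemmaLiminfExistence}. I would derive it by plugging the test function $\varphi_R(x) = \abs{x}^{-(\frac{N-2}{2}-\nu)}\chi(x/R)$ into Proposition~\ref{propositionGroundState}, where $\chi$ is a fixed cutoff supported in $B_4 \setminus \Bar{B}_{1/2}$. Since $\abs{x}^{-(\frac{N-2}{2}-\nu)}$ is a positive solution of $-\Delta + V = 0$, a ground-state transformation collapses $\int \abs{\nabla \varphi_R}^2 + V \varphi_R^2$ to $\int \abs{x}^{-(N-2-2\nu)} \abs{\nabla \chi(x/R)}^2 \,dx$, which scales as $R^{2\nu}$. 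The resulting estimate
\[
  \Bigl(\int_{B_{4R}\setminus B_\rho} u^p\Bigr)\Bigl(\int_{B_{4R}\setminus B_{R/2}} u^{q-1} \abs{x}^{-(N-2-2\nu)}\,dx\Bigr) \le C R^{N-\alpha+2\nu}
\]
replaces the $R^{2N-\alpha-2}$ scaling of Lemma~\ref{lemmaLiminfExistence} and introduces a weight that reflects the presence of the Hardy potential.

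The nonexistence assertions \eqref{A-1-Hardy}--\eqref{A-3plus-Hardy} then follow along the lines of Section~\ref{sectFree}: \eqref{A-1-Hardy} comes from comparing the Hardy Green decay with the Riesz integrability condition \eqref{equLp}; \eqref{A-2-Hardy} comes from a Cauchy--Schwarz argument applied to the new weighted integral bound, with the critical case handled by the improved lower bound obtained from integrating the pointwise inequality of the type \eqref{eqRieszLowerBound} against the Hardy Green decay; \eqref{A-3-Hardy} and \eqref{A-3plus-Hardy} are read off from the local comparison $-\Delta u + V u \ge c\abs{x}^{\alpha-N} u^q$. The decay bounds \eqref{B-1-Hardy}--\eqref{B-2-Hardy} are obtained by combining the Hardy Green decay, the weak Harnack inequality (Lemma~\ref{P-XXX}), and, in the logarithmic and polynomially-corrected transitional cases, by integrating the self-improving iteration inequality \eqref{equationIntegralLog} adapted to the exponent $\frac{N-2}{2}+\nu$.

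For $\alpha < N-2$ there are two sublinear thresholds: the unperturbed-type condition \eqref{A-3+Hardy} arises from the H\"older argument of Proposition~\ref{propLimsupExistence-local-free}, which is insensitive to $V$; the genuinely Hardy-sensitive condition \eqref{A-4-Hardy} arises from testing the weighted integral bound above against the polynomial sublinear decay $\abs{x}^{-(N-\alpha-2)/(1-q)}$, and is effective precisely when $\nu<\tfrac{N-2}{2}$, i.e., when the weight $\abs{x}^{-(N-2-2\nu)}$ is locally integrable at infinity. Optimality is achieved by explicit supersolutions modeled on Propositions~\ref{sharp-B1}--\ref{sharp-B2}, with $\abs{x}^{-(N-2)}$ systematically replaced by $\abs{x}^{-(\frac{N-2}{2}+\nu)}$ and the Riesz asymptotics of Lemma~\ref{lemmaUpperasympt}. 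The main obstacle is the careful bookkeeping of the two characteristic exponents $\tfrac{N-2}{2}\pm\nu$, which play complementary roles as Green-decay rate on one side and ground-state weight on the other, and in particular the verification that the Hardy-specific threshold \eqref{A-4-Hardy} is sharp in the window $0<\nu<\frac{N-2}{2}$ where the two mechanisms coexist.
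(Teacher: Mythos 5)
Most of your plan coincides with the paper's: the paper also reduces Theorem~\ref{Thm-Hardy} to the machinery of Theorem~\ref{Thm-free}, using Lemma~\ref{lemmaGreenDecay-Hardy} in place of Lemma~\ref{lemmaGreenDecay}, and it obtains the key integral estimate even more simply than your ground-state transformation, by observing that for the standard cutoffs \eqref{phiR} one has \(\int_{\R^N\setminus B_\rho}V\abs{\varphi_R}^2\le CR^{N-2}\), so the unweighted bound of Lemma~\ref{lemmaLiminfExistence}, \(\bigl(\int_{B_{2R}\setminus B_\rho}u^p\bigr)\bigl(\int_{B_{2R}\setminus B_R}u^{q-1}\bigr)\le CR^{2N-\alpha-2}\), survives verbatim. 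Your weighted estimate is equivalent to this on dyadic annuli, since \(\abs{x}^{-(N-2-2\nu)}\simeq R^{-(N-2-2\nu)}\) there; in particular it carries no additional information about the exponent \(\tfrac{N-2}{2}-\nu\). With that, your treatment of \eqref{A-1-Hardy}--\eqref{A-3+Hardy}, of the decay bounds \eqref{eqB-Hardy}, and of the optimality constructions is essentially the paper's.

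The genuine gap is the Hardy-specific region \eqref{A-4-Hardy}. The mechanism you propose --- testing the weighted integral bound against the sublinear decay \(\abs{x}^{-\frac{N-\alpha-2}{1-q}}\) --- cannot yield a contradiction: that decay is a \emph{lower} bound for \(u\), hence (as \(q-1<0\)) an \emph{upper} bound for \(u^{q-1}\), and the positivity principle also only bounds \(\int u^{q-1}\) from above, so the two facts point the same way; moreover your explanation of the window \(0<\nu<\tfrac{N-2}{2}\) (integrability of the weight \(\abs{x}^{-(N-2-2\nu)}\) at infinity) is false, since that weight is never integrable at infinity. What is actually needed, and what the paper uses in Proposition~\ref{non-A-4-Hardy}, is the Phragmen--Lindel\"of \emph{upper} bound contained in Proposition~\ref{propositionLinearLowerBound} and in Lemma~\ref{lemmaGreenDecay-Hardy}, namely \(\int_{B_{2R}\setminus B_R}u\le CR^{\frac{N+2}{2}+\nu}\), which is where the ``large solution'' exponent \(\tfrac{N-2}{2}-\nu\) enters; you quote only the lower-bound half of that lemma. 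Combining this upper bound with \(\int_{B_{2R}\setminus B_R}u^{q-1}\le CR^{2N-\alpha-2}\) via H\"older, \(\bigl(\int_{B_{2R}\setminus B_R}1\bigr)^{2-q}\le\bigl(\int_{B_{2R}\setminus B_R}u\bigr)^{1-q}\int_{B_{2R}\setminus B_R}u^{q-1}\), gives the contradiction exactly when \(\bigl(\tfrac{N-2}{2}-\nu\bigr)(1-q)>N-\alpha-2\); the critical equality case requires an extra bootstrap (showing \(\int(I_\alpha\ast u^p)u^q\abs{x}^{-\frac{N-2}{2}-\nu}\,dx=\infty\) and feeding this back into Proposition~\ref{propositionLinearLowerBound} to improve the upper bound), which your sketch does not address. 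Without these two ingredients the nonexistence below the line \eqref{A-4-Hardy}, and hence the ``only if'' part of the theorem for \(0<\nu<\tfrac{N-2}{2}\), is not established.
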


The optimality of lower bounds \eqref{eqB-Hardy} is understood in the sense similar to that of Theorem~\ref{Thm-free}.
In particular, if \(q =1\),  \(\alpha=N-2\) and \(p>\frac{N}{\frac{N-2}{2}+\nu}\) then for every \(m > 0\) there exists a positive radial supersolution \(u\in C^\infty(\R^N\setminus\Bar{B}_\rho)\)  such that
\[
 \limsup_{\abs{x} \to \infty} u (x) \abs{x}^{\frac{N - 2}{2}+\nu-m}< \infty.
\]

\begin{figure}
\subfigure[\(\alpha \ge N - 2\)]{\includegraphics{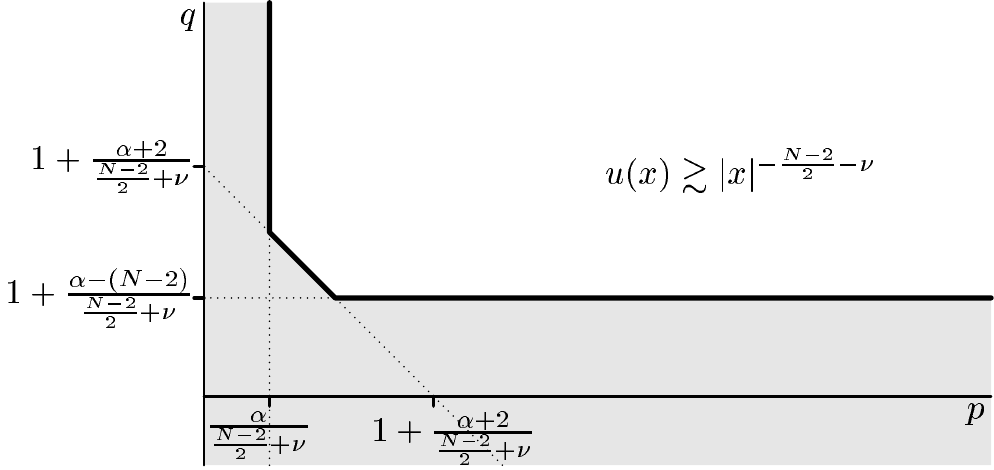}}
\subfigure[\(\alpha < N - 2\) and \(\nu \ge \frac{N - 2}{2} \)]{\includegraphics{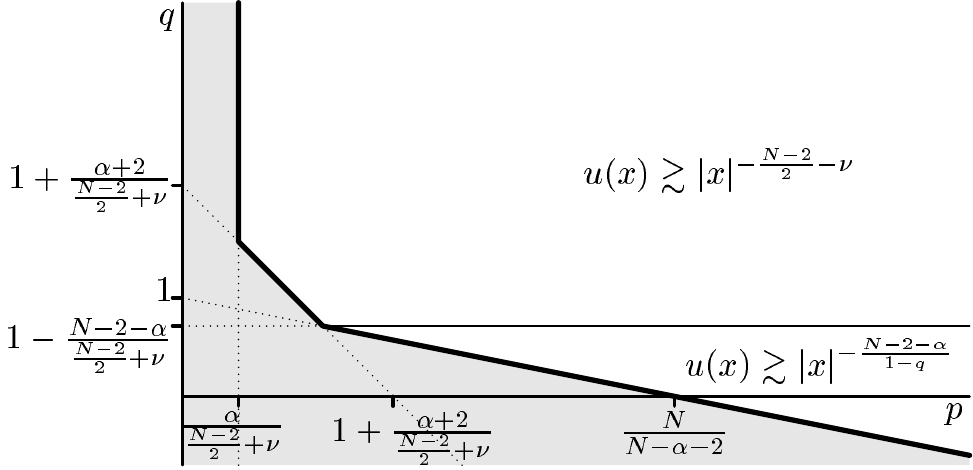}}
\subfigure[\(\alpha < N - 2\) and \(\nu < \frac{N - 2}{2} \)]{\includegraphics{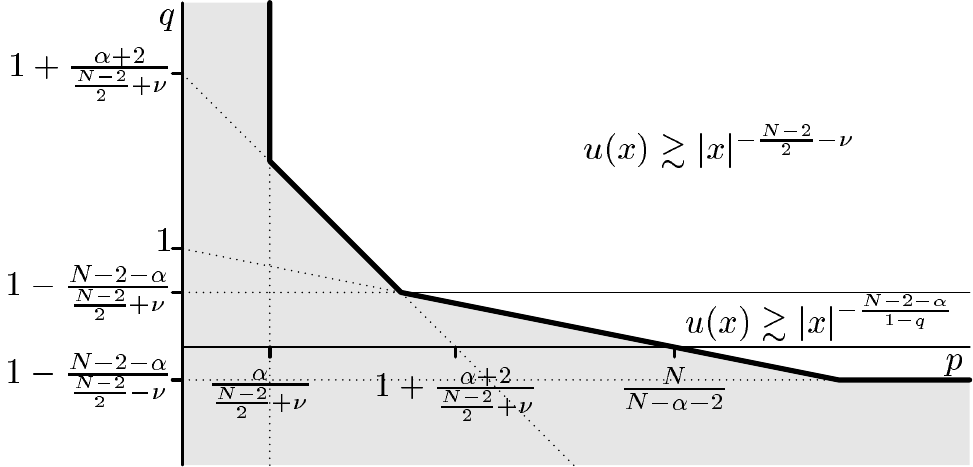}}
\caption{Existence, decay and nonexistence regions for \eqref{eqChoquardHardy} in the \((p,q)\)--plane}
\end{figure}

The nonexistence region \eqref{A-3+Hardy} as well as lower bound \eqref {B-1plus-Hardy} are
stable with respect to the variation of \(\nu\).
The nonexistence region \eqref{A-4-Hardy}, which is nonempty only for \(0<\nu<\frac{N - 2}{2}\),
is a new phenomenon compared to the free Laplacian.

\subsection{Estimates for linear equations with Hardy potentials.}
We derive several decay estimate for the auxiliary linear equations with Hardy potential.
Our first result is an integral version of the Phragmen-Lindel\"of type estimates.

\begin{proposition}
\label{propositionLinearLowerBound}
Let \(N\ge 2\), \(R,r > 0\) be such that \(R > 2 r\), \(u \in L^1_\mathrm{loc}(B_{4 R} \setminus B_{r/4})\) and \(f \in L^1_\mathrm{loc}(B_{2R} \setminus B_{r/4})\). If
\(u \ge 0\), \(f \ge 0\) and
\[
 -\Delta u (x) + \frac{\nu^2-(\frac{N - 2}{2})^2}{\abs{x}^2} u (x) \ge f(x)\quad\text{in}\quad \R^N\setminus \Bar{B}_{r/4}
\]
in the sense of distributions, then
\begin{align*}
 \frac{1}{R^{\frac{N+2}{2}+\nu}} \int_{B_{2R} \setminus B_{R}} u
 + \int_{B_{R} \setminus B_{r}} \frac{f(x)}{\abs{x}^{\frac{N - 2}{2}+\nu}} \,dx
& \le \frac{C}{r^{\frac{N+2}{2}+\nu}} \int_{B_{r} \setminus B_{r/2}} u,\\
 \frac{1}{r^{\frac{N+2}{2}-\nu}} \int_{B_{r} \setminus B_{r/2}} u
 + \int_{B_{R} \setminus B_{r}} \frac{f(x)}{\abs{x}^{\frac{N - 2}{2}-\nu}} \,dx
& \le \frac{C}{R^{\frac{N+2}{2}-\nu}} \int_{B_{2R} \setminus B_{R}} u.
\end{align*}
\end{proposition}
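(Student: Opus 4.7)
My plan is to prove both inequalities by testing the distributional supersolution inequality against a cutoff multiple of one of the two radial homogeneous solutions of the operator $-\Delta + V$. These fundamental solutions are $v_1(x) := \abs{x}^{-\frac{N-2}{2}-\nu}$ and $v_2(x) := \abs{x}^{-\frac{N-2}{2}+\nu}$; a direct computation confirms $-\Delta v_i + V v_i = 0$ on $\R^N \setminus \{0\}$ for $i=1,2$. Matching the exponents $\frac{N+2}{2}+\nu$ and $\frac{N-2}{2}+\nu$ appearing in inequality~(i) against $v_1/\abs{x}^2$ and $v_1$ identifies $v_1$ as the correct test-function factor for (i); symmetrically, $v_2$ is the right choice for (ii).

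For (i), I fix a smooth radial cutoff $\chi$ with $\chi \equiv 1$ on $[r, R]$, $\supp \chi \subset [r/4, 4R]$, $\abs{\chi'(s)} \leq C/s$, and $\abs{\chi''(s)} \leq C/s^2$, and substitute $\varphi = v_1 \chi$ into the definition of a distributional supersolution. Because $-\Delta v_1 + V v_1 = 0$, the Leibniz rule collapses the expression to
\begin{equation*}
-\Delta \varphi + V \varphi = -v_1 \Delta \chi - 2 \nabla v_1 \cdot \nabla \chi,
\end{equation*}
which is supported only on the transition annuli $B_r \setminus B_{r/2}$ and $B_{2R} \setminus B_R$ since $\chi$ is constant elsewhere. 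Exploiting $\varphi = v_1$ on the middle annulus $B_R \setminus B_r$, the tested inequality $\int u(-\Delta\varphi + V\varphi) \geq \int f \varphi$ becomes
\begin{equation*}
-\int_{(B_r \setminus B_{r/2}) \cup (B_{2R} \setminus B_R)} u\,(v_1 \Delta \chi + 2 \nabla v_1 \cdot \nabla \chi) \geq \int_{B_R \setminus B_r} f v_1,
\end{equation*}
up to transition contributions to the $f$-integral that are easily absorbed. A direct scale computation using $v_1(x) \sim \abs{x}^{-\frac{N-2}{2}-\nu}$ and the bounds on $\chi'$, $\chi''$ shows that $\abs{v_1 \Delta \chi + 2 \nabla v_1 \cdot \nabla \chi} \leq C \abs{x}^{-\frac{N+2}{2}-\nu}$ on each transition annulus, producing precisely the orders $R^{-\frac{N+2}{2}-\nu}\int_{B_{2R} \setminus B_R} u$ and $r^{-\frac{N+2}{2}-\nu}\int_{B_r \setminus B_{r/2}} u$ appearing in (i).

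The main obstacle is sign bookkeeping on the transition regions, since the scale bound above controls the transition integrands only in absolute value whereas the proposition asserts a one-sided inequality. To pin down the signs I would integrate by parts once in $\chi$, which after spherical symmetrization reduces the transition integrals to $\int \chi'(s) W_1(s)\, ds$, where
\begin{equation*}
W_1(s) := s^{N-1}\bigl(v_1(s) \bar u'(s) - \bar u(s) v_1'(s)\bigr) = s^{1-2\nu}\bigl(v_1^{-1}\bar u\bigr)'(s)
\end{equation*}
is the radial Wronskian and $\bar u, \bar f$ are the spherical averages of $u, f$. The ground-state transformation $u = v_1 w$ then yields $W_1'(s) \leq -s^{\frac{N}{2}-\nu}\bar f(s)$, i.e., $W_1$ is non-increasing modulo the $f$-integral; combining this monotonicity with the definite signs $\chi' \geq 0$ on $[r/2, r]$ and $\chi' \leq 0$ on $[R, 2R]$ forces the outer transition integral to have the opposite sign from the inner one and delivers (i) with the correct orientation and an absolute constant. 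Inequality~(ii) is proved by the same argument with $v_1$ replaced by $v_2$, equivalently $\nu$ replaced by $-\nu$ throughout.
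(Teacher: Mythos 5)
Your setup (testing with \(\varphi=v_1\chi\), reducing to spherical averages, and observing that the Wronskian \(W_1=s^{1-2\nu}(\bar u/v_1)'\) satisfies \(W_1'\le -s^{\frac N2-\nu}\bar f\)) is sound, but the final step — "combining this monotonicity with the signs of \(\chi'\) \ldots delivers (i)" — is a genuine gap, and in its natural reading it is false. The tested inequality only gives \(\int_{r/2}^{r}W_1\chi'+\int_{R}^{2R}W_1\chi'\ \ge\ \int_r^R s^{\frac N2-\nu}\bar f\,ds\); to obtain (i) you would need the outer transition integral to supply the term \(R^{-\frac{N+2}{2}-\nu}\int_{B_{2R}\setminus B_R}u\) with a definite sign, i.e. \(\int_R^{2R}W_1\chi'\le -c\,R^{-\frac{N+2}{2}-\nu}\int_{B_{2R}\setminus B_R}u\). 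Take \(u=v_1\) and \(f=0\): this is an admissible nonnegative supersolution, \(W_1\equiv 0\), so both transition integrals vanish identically for any cutoff, while the target outer term is comparable to \(R^{-2\nu}>0\) (the proposition still holds in this example, but only because \(R^{-2\nu}\le C r^{-2\nu}\), i.e. the outer term is absorbed by the \emph{inner} target term). The same happens for \(u=v_1+\varepsilon v_2\), where the transition integrals are \(O(\varepsilon)\) uniformly in \(R\). So neither "opposite signs" of the two transition integrals nor a lower bound of the right size for the outer one can be deduced from monotonicity of \(W_1\) alone; the inequality is a global statement exploiting \(u\ge 0\) on the whole range, not a local sign bookkeeping on the two transition annuli. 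Your idea can be salvaged, but it needs an extra argument you did not supply: for instance, \(W_1\) nonincreasing plus \(w=\bar u/v_1\ge 0\) means \(w\) is (up to the \(f\)-correction) concave in the variable \(t=s^{2\nu}\), and nonnegative concavity on \([(r/4)^{2\nu},(4R)^{2\nu}]\) yields \(w\lesssim (R/r)^{2\nu}\,\fint_{r/2}^r w\) on \([R,2R]\) together with the pointwise bounds on \(W_1\) at well-chosen points needed to recover the \(f\)-term; none of this is in your sketch, and one must also justify the passage to spherical averages and the BV regularity of \(W_1\) for \(u\in L^1_{\mathrm{loc}}\).

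For comparison, the paper sidesteps this sign problem by not using a flat cutoff at all: it solves the radial ODE \(-\theta''-\frac{N-1}{s}\theta'+\frac{\nu^2-(\frac{N-2}{2})^2}{s^2}\theta=-\eta\) backwards from \(s=4\), where \(\eta\ge 0\) is a bump equal to \(1\) on \([1,2]\), and tests with \(\varphi(x)=\theta(\abs{x}/R)\psi(x/r)\). In your notation this amounts to replacing the constant middle portion of \(\chi\) by \(A-Bs^{2\nu}\) (a combination of \emph{both} homogeneous solutions), so that \(-\Delta\varphi+V\varphi=-R^{-2}\eta(\abs{x}/R)\le 0\) everywhere in \(\{\abs{x}>r\}\), is \(\le -R^{-2}\) exactly on \(B_{2R}\setminus B_R\), and all uncontrolled contributions (on \(B_{4R}\setminus B_{2R}\) and \(B_R\setminus B_{R/2}\)) come with a favorable sign and are simply discarded; only the inner annulus \(B_r\setminus B_{r/2}\) produces an error, which is estimated in absolute value. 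That construction is precisely what makes the one-sided inequality come out directly, and it is the ingredient your cutoff-plus-Wronskian argument is missing.
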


\begin{proof}
To prove the first inequality, choose \(\eta \in C^\infty((0, \infty))\) such that \(\eta \ge 0\), \(\eta = 1\) on \([1, 2]\) and \(\supp \eta \subset (1/2, 4)\). Let \(\theta \in C^\infty((0, \infty))\) be the solution of the Cauchy problem
\[
 \left\{
   \begin{aligned}
     -\theta''(s) & - \frac{N-1}{s} \theta'(s) + \frac{\nu^2-(\frac{N - 2}{2})^2}{s^2}\theta (s) = - \eta (s) && \text{for \(s \in (0, \infty)\) },\\
     \theta(4) & = 0,\\
     \theta'(4) & = 0.
   \end{aligned}
 \right.
\]
By the variation of parameters formula \(\theta\) can be represented for \(s \in (0, \infty)\) by
\[
 \theta (s)
  = \frac{1}{2\nu} \int_s^4 \biggl(\Bigl( \frac{\sigma}{s}\Bigr)^{\frac{N - 2}{2}+\nu} - \Bigl( \frac{\sigma}{s}\Bigr)^{\frac{N - 2}{2}-\nu} \biggr) \sigma \eta (\sigma) \, d\sigma.
\]
In particular, for \(s \in (0, 1)\),
\[
 \theta(s) = \frac{1}{2\nu {s}^{\frac{N - 2}{2}+\nu}} \int_1^4 \eta (\sigma) \sigma^{\frac{N}{2}+\nu} \, d\sigma
- \frac{1}{2\nu {s}^{\frac{N - 2}{2}-\nu}}\int_1^4 \eta (\sigma) \sigma^{\frac{N}{2}-\nu} \, d\sigma,
\]
so we conclude that \(\lim_{s\to 0}\theta(s)=+\infty\) and for every \(\theta \ge 0\) in \((0, \infty)\).
Choose \(\psi \in C^\infty(\R^N)\) such that \(\psi = 0\) on \(B_{1/2}\) and \(\psi=1\) on \(\R^N \setminus \Bar{B}_1\) and define for \(x \in B_{4R} \setminus \Bar{B}_{r/4}\)
\[
 \varphi (x) = \theta \Bigl(\frac{\abs{x}}{R}\Bigr) \psi \Bigl(\frac{x}{r}\Bigr).
\]
Since \(\varphi \in C^\infty_c(B_{4R} \setminus \Bar{B}_{r/4})\) and \(\varphi \ge 0\),
we have
\[
 \int_{B_{4R} \setminus B_{r/2}} u (x) \Bigl(-\Delta \varphi (x) + \frac{\nu^2-(\frac{N - 2}{2})^2}{\abs{x}^2} \varphi (x)\Bigr)\,dx \ge \int_{B_{4R} \setminus B_{r/2}} f \varphi.
\]
Noting that
\begin{align*}
 R^{\frac{N - 2}{2}+\nu} \int_{B_{R} \setminus B_{r}} \frac{f(x)}{\abs{x}^{\frac{N - 2}{2}+\nu}} \,dx & \le \int_{B_{4R} \setminus B_{r/2}} f \varphi,\\
 \int_{B_{4R} \setminus B_{r}} u (x) \Bigl(-\Delta \varphi (x) + \frac{\nu^2-(\frac{N - 2}{2})^2}{\abs{x}^2} \varphi (x)\Bigr)\,dx
 &= - \frac{1}{R^2} \int_{B_{4R} \setminus B_{r}} u (x) \eta\Bigl( \frac{x}{R}\Bigr)\,dx\\
& \le  -\frac{1}{R^2} \int_{B_{2R} \setminus B_{R}} u,\\
  \int_{B_{r} \setminus B_{r/2}} u (x) \Bigl(-\Delta \varphi (x) + \frac{\nu^2-(\frac{N - 2}{2})^2}{\abs{x}^2} \varphi (x)\Bigr)\,dx
 & \le C \frac{R^{\frac{N - 2}{2}+\nu}}{r^{\frac{N+2}{2}+\nu}} \int_{B_{r} \setminus B_{r/2}} u.
\end{align*}
we complete the proof of the first inequality.

To obtain the second inequality, choose \(\eta \in C^\infty((0, \infty))\) such that \(\eta = 1\) on \([1/2, 1]\) and \(\supp \eta \subset (1/4, 2)\), define \(\theta \in C^\infty((0, \infty))\) as the solution of the Cauchy problem
\[
  \left\{
   \begin{aligned}
     -\theta''(s) &- \frac{N-1}{s} \theta'(s) + \frac{\nu^2-(\frac{N - 2}{2})^2}{s^2}\theta (s) = \eta (s) && \text{for every \(s \in (0, \infty)\) },\\
     \theta(\tfrac{1}{4}) & = 0,\\
     \theta'(\tfrac{1}{4}) & = 0,
   \end{aligned}
 \right.
\]
choose
\(\psi \in C^\infty(\R^N)\) such that \(\psi = 1\) on \(B_{1/2}\) and \(\psi=0\) on \(\R^N \setminus \Bar{B}_1\), set for \(x \in B_{4R} \setminus \Bar{B}_{r/4}\),
\[
 \varphi (x) = \theta \Bigl(\frac{\abs{x}}{r}\Bigr) \psi \Bigl(\frac{x}{R}\Bigr).
\]
and conclude similarly to the above argument.
\end{proof}

Taking \(f=0\) and employing the weak Harnack inequality
(see \cite{Lieb-Loss}*{Theorem 9.10} or Lemma~\ref{P-XXX} below),
we immediately derive from Proposition~\ref{propositionLinearLowerBoundFree} the usual
pointwise lower bound and a Phragmen--Lindel\"of type integral upper bound on nontrivial nonnegative supersolutions
to linear equations with Hardy's potential in exterior domains
(see~\cite{LLM}*{Lemma 4.7} for relevant results).

\begin{lemma}\label{lemmaGreenDecay-Hardy}
Let \(N\ge 2\), \(\nu>0\), \(r > 0\) and \(u \in L^1_\mathrm{loc}(\R^N\setminus B_{r})\).
If \(u \ge 0\) and
\[
 -\Delta u (x) + \frac{\nu^2-(\frac{N - 2}{2})^2}{\abs{x}^2} u (x) \ge 0\quad\text{in}\quad \R^N\setminus \Bar{B}_{r}
\]
in the sense of distributions, then
\begin{equation*}
\limsup_{R \to \infty} R^{-\frac{N+2}{2}-\nu} \int_{B_{2R} \setminus \Bar{B}_R} u < \infty
\end{equation*}
and either \(u = 0\) in \(\R^N\setminus \Bar{B}_{\rho}\), or
\begin{equation*}
\liminf_{\abs{x} \to \infty} u (x) \abs{x}^{\frac{N - 2}{2}+\nu} > 0.
\end{equation*}
\end{lemma}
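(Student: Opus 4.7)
The plan is to derive both assertions from Proposition~\ref{propositionLinearLowerBound} combined with a positivity alternative and a rescaled weak Harnack inequality.

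First, for the upper bound, I would apply the first inequality of Proposition~\ref{propositionLinearLowerBound} with \(f \equiv 0\) and with the inner radius in the proposition taken equal to the fixed \(r\) of the lemma. Since \(u \in L^1_{\mathrm{loc}}(\R^N \setminus \Bar{B}_r)\), the integral \(\int_{B_r \setminus B_{r/2}} u\) is a finite constant, and the inequality yields
\[
  R^{-\frac{N+2}{2} - \nu} \int_{B_{2R} \setminus B_R} u \le C r^{-\frac{N+2}{2} - \nu} \int_{B_r \setminus B_{r/2}} u
\]
for every \(R > 2r\), which is exactly the desired \(\limsup\) bound.

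For the dichotomy, I would invoke the local positivity principle of Proposition~\ref{propositionLocalGroundstate} with \(W \equiv 0\); since the Hardy potential lies in \(L^1_{\mathrm{loc}}(\R^N \setminus \Bar{B}_r)\), either \(u = 0\) almost everywhere or \(u > 0\) almost everywhere in \(\R^N \setminus \Bar{B}_r\). Assuming the latter, I choose some \(r_0 > 2r\) with \(\int_{B_{r_0} \setminus B_{r_0/2}} u > 0\) and apply the second inequality of Proposition~\ref{propositionLinearLowerBound} (with the small radius equal to \(r_0\)) to obtain, for every \(R > 2r_0\),
\[
  \int_{B_{2R} \setminus B_R} u \ge c R^{\frac{N+2}{2} - \nu},
\]
where \(c > 0\) depends on \(r_0\) and \(u\) but not on \(R\).

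It remains to upgrade this annular lower bound to a pointwise one at the correct decay rate. Here I would rescale: for each large \(R\), set \(\tilde u(y) := u(R y)\) on \(B_2 \setminus B_{1/2}\); then \(\tilde u \ge 0\) satisfies \(-\Delta \tilde u + \tilde V \tilde u \ge 0\) with
\[
  \tilde V(y) = \frac{\nu^2 - \bigl(\frac{N-2}{2}\bigr)^2}{\abs{y}^2},
\]
whose \(L^\infty\)-norm on \(B_2 \setminus B_{1/2}\) depends only on \(N\) and \(\nu\). The weak Harnack inequality of Lemma~\ref{P-XXX}, applied to \(\tilde u\) on a neighbourhood of the unit sphere, then yields
\[
  \inf_{B_{7/4} \setminus B_{5/4}} \tilde u \ge c' \int_{B_2 \setminus B_1} \tilde u,
\]
with \(c' > 0\) independent of \(R\). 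Unscaling,
\[
  \inf_{5R/4 \le \abs{x} \le 7R/4} u(x) \ge c' R^{-N} \int_{B_{2R} \setminus B_R} u \ge c'' R^{-\frac{N-2}{2} - \nu},
\]
which delivers the desired lower bound.

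The principal obstacle is this final step: one must ensure that the constant in the weak Harnack inequality is genuinely uniform in \(R\). This is precisely what the rescaling accomplishes, since it reduces the family of equations on the annuli \(B_{2R} \setminus B_R\) to a single equation on the fixed annulus \(B_2 \setminus B_{1/2}\) with a potential whose \(L^\infty\) bound is independent of \(R\).
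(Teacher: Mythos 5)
Your argument is correct and is essentially the paper's own proof: the lemma is obtained there by taking \(f=0\) in Proposition~\ref{propositionLinearLowerBound} (upper bound from the first inequality, annular lower bound from the second after the positivity alternative) and then upgrading to a pointwise bound via the weak Harnack inequality of Lemma~\ref{P-XXX} on rescaled annuli, exactly as you do. The only bookkeeping point is that Proposition~\ref{propositionLinearLowerBound} requires the differential inequality on \(\R^N\setminus \Bar{B}_{r_0/4}\), so its inner radius should be taken at least \(4r\) rather than equal to \(r\) or merely \(>2r\); since \(u>0\) almost everywhere once it is nontrivial, this changes nothing in the argument.
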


\subsection{Sketch of the proof of the nonexistence.}

\begin{proof}[Sketch of the proof of Theorem~\ref{Thm-Hardy}]
The proof follows closely the proof of Theorem~\ref{Thm-free},
with Lemma~\ref{lemmaGreenDecay-Hardy} being used instead of Lemma~\ref{lemmaGreenDecay}.
Note that when \(V\) is the Hardy potential,
and \(\varphi_R\) is defined as in \eqref{phiR},
then there exists \(C > 0\) such that for every \(R \ge \rho\),
\[
 \int_{{\R^N \setminus B_{\rho}}} V\abs{\varphi_R}^2 \le  C R^{N - 2}
\]
growth at the same rate as \eqref{scaling-grad}.
Hence, the estimates of Lemma~\ref{lemmaLiminfExistence}
are not affected by perturbation of \eqref{eqChoquardV} by the Hardy potential.
Then the proofs of nonexistence in the cases
\eqref{A-1-Hardy}, \eqref{A-2-Hardy}, \eqref{A-3-Hardy}, \eqref{A-3plus-Hardy}, \eqref{A-3+Hardy}
are carried over following the same arguments as in the proof of Theorem~\ref{Thm-free}.

The nonexistence region \eqref{A-4-Hardy}, which is nonempty only for \(0<\nu<\frac{N - 2}{2}\),
is a new phenomenon compared to the free Laplacian. This is the case when the upper bound of Lemma~\ref{lemmaGreenDecay-Hardy}
becomes relevant. The complete proof of the nonexistence of nontrivial nonnegative supersolutions below the line \eqref{A-4-Hardy}
is given in Proposition~\ref{non-A-4-Hardy} below.
\end{proof}

\begin{proposition}\label{non-A-4-Hardy}
Let \(N\ge 3\), \(N - 2<\alpha<N\), \(p>0\), \(\nu>0\) and \(\rho > 0\).
If \(q  < 1\),
\[
  \Big(\frac{N - 2}{2}-\nu\Big)(1-q) \ge N - \alpha - 2.
\]
and \(u \ge 0\) is a supersolution of \eqref{eqChoquardHardy} in \(\R^N \setminus \Bar{B}_{\rho}\), then \(u = 0\) in \(\R^N\setminus \Bar{B}_{\rho}\).
\end{proposition}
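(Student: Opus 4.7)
The plan is to derive a contradiction by combining the nonlocal positivity principle, the weak Harnack inequality, and the Phragm\'en--Lindel\"of-type integral bound of Lemma~\ref{lemmaGreenDecay-Hardy}, exploiting the fact that \(V\) is bounded on \(\R^N\setminus\bar{B}_\rho\) so the standard linear theory applies. Assume toward contradiction that \(u\) is a nontrivial nonnegative supersolution; I would first invoke Proposition~\ref{propositionGroundState} to deduce that \(u > 0\) almost everywhere and \(u^{q-1}\in L^1_{\mathrm{loc}}\). Next, as the author has already observed in the sketch of Theorem~\ref{Thm-Hardy}, the estimate of Lemma~\ref{lemmaLiminfExistence} is unaffected by the Hardy perturbation; combined with the strict positivity of \(c_0 := \int_{B_{2\rho}\setminus B_\rho} u^p\), it yields
\[
 \int_{B_{2R}\setminus B_R} u^{q-1} \le \frac{C}{c_0}\, R^{2N - \alpha - 2} \qquad (R \ge 2\rho).
\]

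I would then apply the weak Harnack inequality of Lemma~\ref{P-XXX} with the negative exponent \(q-1<0\), following the use made in Proposition~\ref{propLimsupExistence-local-free}, to obtain
\[
 \inf_{B_{5R/3}\setminus B_{4R/3}} u \ge c \Bigl(R^{-N}\int_{B_{2R}\setminus B_R} u^{q-1}\Bigr)^{\frac{1}{q-1}} \ge c\, R^{\beta_1}, \quad \beta_1 := \frac{\alpha + 2 - N}{1-q} > 0,
\]
while Lemma~\ref{lemmaGreenDecay-Hardy} provides the averaged upper bound \(R^{-N}\int_{B_{2R}\setminus B_R} u \le C R^{\nu - (N-2)/2}\). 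Since \(\inf_{B_{2R}\setminus B_R} u \le C R^{-N}\int_{B_{2R}\setminus B_R} u\), matching the two estimates as \(R \to \infty\) forces \(\beta_1 \le \nu - (N-2)/2\), i.e.\ \((1-q)((N-2)/2 - \nu) \le N - \alpha - 2\). Under the strict form of the proposition's hypothesis this is an immediate contradiction; in particular, the entire subcase \(\nu \le (N-2)/2\) is handled automatically, since then the left-hand side is nonnegative while the right-hand side is strictly negative because \(\alpha > N - 2\).

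To close the boundary case \((1-q)((N-2)/2 - \nu) = N - \alpha - 2\), possible only when \(\nu > (N-2)/2\), I would perform a single bootstrap step: feed the pointwise bound \(u \ge c R^{\beta_1}\) on \(B_{5R/3}\setminus B_{4R/3}\) back into Lemma~\ref{lemmaLiminfExistence} to get \(\int_{B_{2R}\setminus B_\rho} u^p \ge c R^{N + p\beta_1}\), which tightens the upper bound to \(\int u^{q-1} \le C R^{N - \alpha - 2 - p\beta_1}\); reapplying the weak Harnack then produces \(\inf u \ge c R^{\beta_2}\) with \(\beta_2 = (\alpha + 2 + p\beta_1)/(1-q)\). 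Using the equality \(\beta_1 = \nu - (N-2)/2\), a direct computation gives
\[
 \beta_2 - \beta_1 = \frac{N + p(\nu - (N-2)/2)}{1-q} > 0,
\]
so \(\beta_2 > \nu - (N-2)/2\), contradicting the averaged upper bound.

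The main technical obstacle is verifying that this single bootstrap iteration strictly improves the exponent in the sublinear regime \(p + q < 1\), where the recursion \(\beta \mapsto (\alpha + 2 + p\beta)/(1-q)\) admits a finite positive fixed point \(\beta^\ast = (\alpha+2)/(1-p-q)\); the displayed identity is precisely what confirms \(\beta_1 < \beta^\ast\) (equivalent to \(N + p(\nu - (N-2)/2) > 0\), automatic when \(\nu > (N-2)/2\)), so one iteration already suffices to cross the threshold.
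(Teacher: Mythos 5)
Your proof is correct, and it is built from the same three ingredients as the paper's own proof -- the Hardy counterpart of Lemma~\ref{lemmaLiminfExistence}, the Phragmen--Lindel\"of upper bound of Lemma~\ref{lemmaGreenDecay-Hardy} (equivalently Proposition~\ref{propositionLinearLowerBound}), and the weak Harnack inequality of Lemma~\ref{P-XXX} -- but it combines them along a different route. In the subcritical case the paper never passes to pointwise bounds: it interpolates directly by H\"older, \(\bigl(\int_{B_{2R}\setminus B_R}1\bigr)^{2-q}\le\bigl(\int_{B_{2R}\setminus B_R}u\bigr)^{1-q}\int_{B_{2R}\setminus B_R}u^{q-1}\), between the two integral estimates; your detour through the weak Harnack inequality with the negative exponent \(q-1\) yields the same exponent inequality (two small points: the comparison should be made with the average of \(u\) over the annulus \(B_{5R/3}\setminus B_{4R/3}\) where the Harnack lower bound lives, rather than with \(\inf_{B_{2R}\setminus B_R}u\), and the Harnack constant on annuli of radius \(R\) must be made \(R\)-uniform by the usual scaling and chaining -- the same convention the paper uses implicitly for \eqref{B-2-Hardy} and in Proposition~\ref{propLimsupExistence-local-free}). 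The genuine divergence is in the equality case: the paper proves that \(\int (I_\alpha\ast u^p)u^q\,\abs{x}^{-\frac{N-2}{2}-\nu}\) diverges (with a case analysis \(q=0\), \(0<q<1\), \(q<0\) via H\"older) and feeds this into Proposition~\ref{propositionLinearLowerBound} to improve the linear upper bound on \(\int_{B_{2R}\setminus B_R}u\), whereas you keep that upper bound fixed and instead improve the nonlinear factor, \(\int_{B_{2R}\setminus B_\rho}u^p\ge cR^{N+p\beta_1}\), by a one-step bootstrap of the pointwise lower bound; this sharpens the bound on \(\int_{B_{2R}\setminus B_R}u^{q-1}\) and pushes the decay exponent strictly beyond \(\nu-\frac{N-2}{2}\). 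Your identity \(\beta_2-\beta_1=\frac{N+p\beta_1}{1-q}>0\) is exactly what is needed there, and your scheme has the advantage of avoiding the paper's case analysis in the sign of \(q\) and of treating both subcases in a single exponent-comparison framework, at the modest price of invoking the annular weak Harnack inequality where the paper's argument stays purely at the level of integral estimates.
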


\begin{proof}
In the \emph{subcritical case}
\((\frac{N - 2}{2}-\nu)(1-q) > N - \alpha - 2\),
by Lemma~\ref{propositionLinearLowerBound},
\begin{equation}\label{u-upper}
  \int_{B_{2R} \setminus B_R} u \le C R^{\frac{N+2}{2} + \nu},
\end{equation}
by the counterpart of Lemma~\ref{lemmaLiminfExistence},
\begin{equation}\label{lower-q - 1}
  \int_{B_{2R} \setminus B_R} u^{q - 1} \le \frac{C}{\int_{B_{2 \rho \setminus \rho} u^p}} R^{2 N - \alpha - 2}.
\end{equation}
Now by H\"older's inequality, since \(q < 1\),
\[
  \Bigl(\int_{B_{2 R} \setminus B_R} 1 \Bigr)^{2 - q} \le \Bigl(\int_{B_{2R} \setminus B_R} u \Bigr)^{1 - q} \int_{B_{2R} \setminus B_R} u^{q - 1},
\]
this brings a contradiction in the subcritical case \(\big(\frac{N - 2}{2}-\nu\big)(1-q) > N - \alpha - 2\).

In the critical case \((\frac{N - 2}{2}-\nu)(1-q) = N - \alpha - 2\) we first note that
\[
 \int_{\R^N \setminus B_{\rho}} \frac{(I_\alpha \ast u^p)(x) u(x)^q}{\abs{x}^{\frac{N - 2}{2} + \nu}} \,dx \ge c \int_{\R^N \setminus B_{\rho}} \frac{u (x)^q}{\abs{x}^{N - \alpha + \frac{N - 2}{2} + \nu}} \, dx
\]
If we can prove that the integral on the right hand side diverges, then
using Proposition~\ref{propositionLinearLowerBound} we can improve the upper bound \eqref{u-upper}
and reach  the contradiction as before.
This is clearly the case when \(q = 0\).

If \(0< q < 1\) then by H\"older's inequality
\[
 \int_{B_{2R} \setminus B_R} u^q \ge \frac{\displaystyle \Bigl(\int_{B_{2 R} \setminus B_R} 1\Bigr)^{\frac{1}{1 - q} }}{\displaystyle \Bigl(\int_{B_{2 R} \setminus B_R} u^{q - 1} \Bigr)^{\frac{q}{1 - q} }}.
\]
This implies by \eqref{u-upper} that
\[
 \int_{B_{2R} \setminus B_R} u^q \ge c R^{N - \frac{(N - \alpha - 2)q}{1 - q}}.
\]
Since \((\frac{N - 2}{2}-\nu)(1-q) = N - \alpha - 2\) we conclude that
\[
 \int_{\R^N \setminus B_{\rho}} \frac{u (x)^q}{\abs{x}^{N - \alpha + \frac{N - 2}{2} + \nu}} \, dx = \infty.
\]

If \(q < 0\) then by H\"older's inequality
\[
 \int_{B_{2R} \setminus B_R} u^q \ge \frac{\displaystyle \Bigl(\int_{B_{2 R} \setminus B_R} 1\Bigr)^{1 - q} }{\displaystyle \Bigl(\int_{B_{2 R} \setminus B_R} u \Bigr)^{-q}},
\]
and thus by \eqref{lower-q - 1}
\[
 \int_{B_{2R} \setminus B_R} u^q
\ge c R^{N - (\frac{N - 2}{2} - \nu) q},
\]
so we conclude as previously.
\end{proof}

An alternative proof of Proposition~\ref{non-A-4-Hardy} is obtained by
noting that \(u\) solves
\begin{equation*}
-\Delta u(x)+V(x)u(x) \ge\frac{A_\alpha \int_{B_{2\rho} \setminus B_\rho} u^p  }{2^{N - \alpha} \abs{x}^{N - \alpha}}u (x)^q,
\end{equation*}
which does not have positive solutions in exterior domains
if \(q\le 1 - \frac{N - \alpha - 2}{\frac{N - 2}{2}-\nu}\), \(\alpha > N - 2\) and \(0<\nu<\frac{N - 2}{2}\)
(see \cite{LLM}*{Theorem 2.2}).

\subsection{Pointwise decay bounds and optimal decay.}
In the Green decay region \(q \ge 1-\frac{N - \alpha - 2}{\frac{N - 2}{2}+\nu}\),
the polynomial lower bound \eqref{B-1-Hardy} follows directly from Lemma~\ref{lemmaGreenDecay-Hardy}.
In the sublinear decay region \(q<1-\frac{N - \alpha - 2}{\frac{N - 2}{2}+\nu}\), the polynomial lower bound \eqref{B-2-Hardy}
is derived similarly to the proof of Proposition~\ref{propLimsupExistence-local-free},
using the integral estimate \eqref{lower-q - 1} and the weak Harnack inequality of Lemma~\ref{P-XXX}.

The constructions of explicit nontrivial nonnegative supersolutions showing the optimality \eqref{B-1-Hardy}, \eqref{B-1plus-Hardy} and \eqref{B-1plusplus-Hardy} follow very closely the proofs of Proposition~\ref{sharp-B1}, \ref{sharp-B1plus} and \ref{sharp-B1plusplus}.
The same applies to the existence of explicit nontrivial nonnegative supersolutions with the decay rate of \eqref{B-2-Hardy}
in the sublinear decay region in the case when \(\nu\ge\frac{N - 2}{2}\),
where the arguments repeat Proposition~\ref{sharp-B2}.
Below we construct a radial supersolution with the decay rate of \eqref{B-2-Hardy}
in the sublinear decay region under the assumption \(0<\nu<\frac{N - 2}{2}\),
when the upper bound of Lemma~\ref{lemmaGreenDecay-Hardy} becomes relevant.

\begin{proposition}
Let \(N\ge 3\) and \(\rho > 0\).
If \(\alpha<N - 2\), \(0<\nu<\frac{N - 2}{2}\),
\[
   1-\frac{N - \alpha - 2}{N}p < q,
\]
and
\[
  1-\frac{N - \alpha - 2}{\frac{N - 2}{2}-\nu}<  q < 1-\frac{N - \alpha - 2}{\frac{N - 2}{2}+\nu}.
\]
then \eqref{eqChoquardHardy}
admits a radial nontrivial nonnegative supersolution
\(u \in C^\infty (\R^N \setminus \Bar{B}_\rho)\),
which satisfies
\[
 \lim_{\abs{x} \to \infty} u(x) \abs{x}^{\frac{N - \alpha - 2}{1-q}} < \infty.
\]
\end{proposition}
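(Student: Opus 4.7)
The plan is to try the scaling-matched ansatz
\[
 u_\mu(x) := \frac{\mu}{\abs{x}^s}, \qquad s := \frac{N-\alpha-2}{1-q},
\]
which is precisely the decay rate predicted by the conclusion, and to show that for all sufficiently small \(\mu>0\) this is a classical supersolution of \eqref{eqChoquardHardy} on \(\R^N\setminus\Bar{B}_\rho\). A direct computation in polar coordinates yields
\[
 -\Delta u_\mu(x) + V(x) u_\mu(x) = \mu\, \kappa \, \abs{x}^{-s-2},
\]
with
\[
 \kappa := s(N-2-s) + \nu^2 - \Bigl(\tfrac{N-2}{2}\Bigr)^2 = \nu^2 - \Bigl(s - \tfrac{N-2}{2}\Bigr)^2.
\]
The double bound on \(q\) in the hypothesis is equivalent (dividing by \(1-q>0\)) to the strict inequalities \(\tfrac{N-2}{2}-\nu < s < \tfrac{N-2}{2}+\nu\), which is exactly the condition \(\kappa>0\). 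This is the conceptual heart of the argument: it is the narrow band in which the Laplacian term beats the attractive Hardy term on the power \(\abs{x}^{-s}\); outside this band the construction would fail (and indeed Proposition~\ref{non-A-4-Hardy} shows nonexistence there).

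To bound the right-hand side, note that the remaining assumption \(1-\tfrac{N-\alpha-2}{N}p < q\) is equivalent, after rewriting via \(s(1-q)=N-\alpha-2\), to \(sp>N\). Applying Lemma~\ref{lemmaUpperasympt} to \(u_\mu^p = \mu^p\abs{x}^{-sp}\) with \(sp>N\) gives a constant \(C>0\) such that
\[
 (I_\alpha \ast u_\mu^p)(x) \le C\mu^p \abs{x}^{-(N-\alpha)} \qquad \text{for all } x\in\R^N\setminus\Bar{B}_\rho.
\]
Multiplying by \(u_\mu(x)^q = \mu^q\abs{x}^{-sq}\) (allowed pointwise since \(u_\mu>0\), even if \(q\) is negative) yields
\[
 (I_\alpha\ast u_\mu^p)(x)\, u_\mu(x)^q \le C\mu^{p+q}\abs{x}^{-(N-\alpha)-sq},
\]
and by the very definition of \(s\) one has \((N-\alpha)+sq = s+2\); the exponents on the two sides of the equation match identically.

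Consequently the supersolution inequality collapses to the algebraic condition \(\mu\kappa \ge C\mu^{p+q}\). The lower bound on \(q\) forces \(p+q > 1 + \tfrac{\alpha+2}{N}p > 1\), so this is satisfied for every sufficiently small \(\mu>0\), and \(\kappa>0\) from the previous step guarantees the inequality is non-trivial. The asymptotic \(\lim_{\abs{x}\to\infty} u_\mu(x)\abs{x}^s = \mu\) is immediate from the ansatz. The only subtle point is the positivity of \(\kappa\); once that is packaged as the translation of the two-sided interval for \(q\) into the interval \(\bigl(\tfrac{N-2}{2}-\nu,\,\tfrac{N-2}{2}+\nu\bigr)\) for \(s\), the rest is a matching of exponents and a choice of small \(\mu\).
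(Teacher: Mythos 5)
Your proposal is correct and is essentially the paper's own argument: the same ansatz \(u_\mu(x)=\mu\abs{x}^{-\frac{N-\alpha-2}{1-q}}\), the same observation that the two-sided restriction on \(q\) is exactly positivity of the coefficient \(\nu^2-\bigl(s-\tfrac{N-2}{2}\bigr)^2\), the same use of Lemma~\ref{lemmaUpperasympt} via \(sp>N\), and the same smallness choice of \(\mu\) using \(p+q>1+\tfrac{\alpha+2}{N}p>1\). No gaps.
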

\begin{proof}
For \(\mu >0\) and \(x \in \R^N \setminus \Bar{B}_\rho\), set
\[
 u_\mu (x)=\frac{\mu}{\abs{x}^\frac{N - \alpha - 2}{1-q}}.
\]
We compute
\[
  -\Delta u_\mu(x)+\frac{\nu^2-(\frac{N - 2}{2})^2}{\abs{x}^2} u_\mu(x)=
  \frac{\mu\Big(\nu-\tfrac{N - 2}{2}+\tfrac{N - \alpha - 2}{1-q}\Big)\Big(\nu+\tfrac{N - 2}{2}-\tfrac{N - \alpha - 2}{1-q}\Big)}{\abs{x}^\frac{N - 2q-\alpha}{1-q}}
\]
and we observe that if \(1-\frac{N - \alpha - 2}{\frac{N - 2}{2}-\nu}< q < 1-\frac{N - \alpha - 2}{\frac{N - 2}{2}+\nu}<1\), \(\big(\nu-\tfrac{N - 2}{2}+\tfrac{N - \alpha - 2}{1-q}\big)\big(\nu+\tfrac{N - 2}{2}-\tfrac{N - \alpha - 2}{1-q}\big)> 0\).
Since \(p \frac{N - \alpha - 2}{1-q} > N\), Lemma~\ref{lemmaUpperasympt} gives \(C > 0\) such that for every \(x \in \R^N \setminus \Bar{B}_\rho\),
\[
 (I_\alpha \ast u_\mu^p)(x)\, u_\mu(x)^q \le \frac{C\mu^{p + q}}{\abs{x}^\frac{N - 2q-\alpha}{1-q}}.
\]
Noting that \(p + q > 1+\frac{\alpha+2}{N}p > 1\), we conclude that \(u_\mu\) is the required supersolution for all sufficiently small \(\mu>0\).
\end{proof}

\section{Equation with slow decay potentials: case $q\ge 1$.}\label{sectSlowLargeq}

In this and subsequent sections we consider perturbed Choquard equation \eqref{eqChoquardV}
with \emph{slow decay} potential
\[
V(x)=\frac{\lambda^2}{\abs{x}^\gamma}.
\]
for some fixed \(\lambda>0\) and \(-\infty<\gamma<2\).
It is well known that nontrivial nonnegative supersolutions
to the linear Schr\"odinger operator \(-\Delta+V\) decay exponentially at infinity.
The decay rates for the nonlocal Choquard equation \eqref{eqChoquardV} are more complex.
We will distinguish between the exponential decay region \(q\ge 1\) and polynomial decay region \(q<1\).
Within the exponential decay region we consider separately the case \(q>1\) and the borderline \emph{locally linear} case \(q=1\).
Before doing this, we shall consider a related class of linear equation.

\subsection{Estimates for linear equations with slow decay potentials.}\label{fundamental-sect}

Here we establish sharp decay estimates for the minimal positive solutions at infinity (see Definition~\ref{App-minimal} in the Appendix~\ref{App-B}) of the linear Schr\"odinger equations with slow decay potentials.
The following result extends a fine decay estimate by S.\thinspace Agmon \cite{Agmon-2}*{Theorem 3.3}.

\begin{proposition}\label{prop-exp-lambda}
Let \(N\ge 1\), \(\gamma < 2\), \(\rho \ge 0\)  and \(W \in C^1([\rho, \infty))\) be a nonnegative function.
If
\[
  \lim_{s \to \infty} W(s)>0,
\]
and for some \(\beta > 0\),
\[
 \lim_{s \to \infty} W'(s)s^{1 + \beta} = 0,
\]
then there exists a nonnegative radial function \(H : \R^N \setminus \Bar{B}_\rho \to \R\)
such that
\begin{equation}\label{lin-exp-W}
 -\Delta H(x) + \frac{W(\abs{x})^2}{\abs{x}^\gamma} H(x) = 0\quad\text{for every \(x \in \R^N \setminus \Bar{B}_\rho\)},
\end{equation}
and
\begin{equation}\label{asymp-W}
\lim_{\abs{x}\to \infty} H(x)\abs{x}^{\frac{N - 1}{2} - \frac{\gamma}{4}}
\exp \int_{\rho}^{\abs{x}}\frac{W(s)}{s^\frac{\gamma}{2}}\,ds =1.
\end{equation}
\end{proposition}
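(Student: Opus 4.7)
The plan is to reduce \eqref{lin-exp-W} to a linear ODE by radial symmetry, construct an explicit Liouville--Green (WKB) candidate, correct it to a true solution by a contraction argument near infinity, and recover global nonnegativity through a Perron-type limiting procedure.

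Seeking $H(x)=h(|x|)$ reduces the problem to
\begin{equation*}
-h''(r)-\frac{N-1}{r}h'(r)+\frac{W(r)^2}{r^\gamma}h(r)=0,\qquad r>\rho,
\end{equation*}
and the natural WKB candidate is
\begin{equation*}
\tilde h(r):=r^{-\frac{N-1}{2}+\frac{\gamma}{4}}\exp\Bigl(-\int_{\rho}^{r}\frac{W(s)}{s^{\gamma/2}}\,ds\Bigr),
\end{equation*}
the polynomial prefactor being chosen precisely so that the $r^{-1-\gamma/2}W$ cross-terms produced by the transport part $(N-1)r^{-1}h'$ and by $(\log\tilde h)'$ cancel exactly. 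A direct differentiation then yields
\begin{equation*}
-\tilde h''-\frac{N-1}{r}\tilde h'+\frac{W^2}{r^\gamma}\tilde h=f(r)\,\tilde h,\qquad f(r):=\frac{W'(r)}{r^{\gamma/2}}+\frac{(2N-2-\gamma)(2N-6+\gamma)}{16\,r^{2}},
\end{equation*}
and the hypotheses $\gamma<2$ together with $W'(s)s^{1+\beta}\to 0$ give $|f(s)|\le C\,s^{-\min\{2,\,1+\beta+\gamma/2\}}$.

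I would seek the true solution in the form $h=\tilde h\,(1+u)$ with $u\to 0$ at infinity; expanding the ODE, $u$ satisfies
\begin{equation*}
u''+\Bigl(\frac{\gamma}{2r}-\frac{2W(r)}{r^{\gamma/2}}\Bigr)u'=(1+u)\,f(r),
\end{equation*}
which, with integrating factor $\mu(r):=r^{\gamma/2}\exp\bigl(-2\int_{\rho}^{r}W/s^{\gamma/2}\,ds\bigr)$ and boundary conditions $u(\infty)=u'(\infty)=0$, becomes the fixed-point equation
\begin{equation*}
u(r)=\int_{r}^{\infty}\frac{1}{\mu(t)}\int_{t}^{\infty}\mu(s)\,(1+u(s))\,f(s)\,ds\,dt.
\end{equation*}
An integration by parts (Laplace's method) yields $\mu(t)^{-1}\int_{t}^{\infty}\mu(s)\,ds\sim t^{\gamma/2}/(2W(t))$ as $t\to\infty$, so the iterated kernel is bounded by $C\int_r^\infty t^{-\min\{2-\gamma/2,\,1+\beta\}}\,dt$, integrable at infinity because $\gamma<2$ and $\beta>0$. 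Banach's contraction mapping on the ball of radius $1/2$ in $C_b([R,\infty))$ (for $R$ large) then produces a unique fixed point $u$ with $u(r)\to 0$, so that $h_{\mathrm{sub}}:=\tilde h(1+u)$ is a positive solution of the ODE on $[R,\infty)$ satisfying $h_{\mathrm{sub}}(r)\,r^{(N-1)/2-\gamma/4}\exp\int_{\rho}^{r}W(s)/s^{\gamma/2}\,ds\to 1$.

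To obtain a nonnegative solution on the whole of $\R^N\setminus\bar B_\rho$ I would invoke a Perron-type argument: for each $n>R$ take the solution $h_n$ of the ODE on $[\rho,n]$ with $h_n(\rho)=\tilde h(\rho)$ and $h_n(n)=\tilde h(n)$; the weak maximum principle (positive boundary values, nonnegative potential) gives $h_n\ge 0$, and writing $h_n=\alpha_n h_{\mathrm{sub}}+\beta_n h_{\mathrm{dom}}$ in the basis given by $h_{\mathrm{sub}}$ and a solution $h_{\mathrm{dom}}$ with opposite (exponentially growing) behaviour at infinity, the second boundary condition forces $\beta_n\to 0$, while the first fixes $\alpha_n\to\tilde h(\rho)/h_{\mathrm{sub}}(\rho)>0$. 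Consequently $h_n$ converges locally uniformly to a positive multiple of $h_{\mathrm{sub}}$, which is therefore nonnegative on $[\rho,\infty)$, and a final rescaling produces $H$ with limit exactly $1$. The principal technical obstacle is the contraction step: one must balance the exponential growth of $1/\mu$ against the exponential decay of $\mu$ within the double integral and exploit the decay of $f$ to close the fixed-point argument. This is where the hypothesis $W'(s)s^{1+\beta}\to 0$ with $\beta>0$ is used in an essential way — a weaker decay of $W'$ would leave a non-vanishing additive correction in the exponent of the asymptotic and prevent normalisation to $1$.
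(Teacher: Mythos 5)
Your proposal is correct in substance but takes a genuinely different route from the paper. The paper perturbs the WKB exponent: it works with \(\Phi_\tau(x)=\exp\int_\rho^{\abs{x}}\bigl(-\tfrac{N-1-\gamma/2}{2s}-W(s)s^{-\gamma/2}+\tau s^{-1-\beta}\bigr)\,ds\) (your \(\tilde h\) is exactly \(\Phi_0\)), checks by direct computation that \(\Phi_{\overline\tau}\) with \(\overline\tau>0\) is a supersolution and \(\Phi_{\underline\tau}\) with \(\underline\tau<0\) a subsolution outside a large ball (this is where \(\lim_{s\to\infty}W(s)>0\) and \(W'(s)s^{1+\beta}\to0\) enter), and then invokes the sub/supersolution method, the two barriers being of comparable size because \(\beta>0\). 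You instead correct the bare WKB profile exactly by variation of constants and a contraction on \(C_b([R,\infty))\); your computation of the error \(f\), of the integrating factor \(\mu\), and the Laplace-type bound \(\mu(t)^{-1}\int_t^\infty\mu(s)\abs{f(s)}\,ds\lesssim t^{\gamma/2-\min\{2,\,1+\beta+\gamma/2\}}/W(t)\) are all correct (boundedness of \(1/W\) is where the hypothesis \(\lim W>0\) is used), and the exponent bookkeeping \(\min\{2-\gamma/2,\,1+\beta\}>1\) closes the fixed point under precisely \(\gamma<2\), \(\beta>0\). Your route is in fact more quantitative: it produces the limit equal to \(1\) directly, whereas the squeeze between \(\Phi_{\underline\tau}\) and \(\Phi_{\overline\tau}\) by itself only gives two-sided bounds of the same order.

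The one soft spot is the backward extension step. Writing \(h_n=\alpha_n h_{\mathrm{sub}}+\beta_n h_{\mathrm{dom}}\) and asserting \(\alpha_n\to\tilde h(\rho)/h_{\mathrm{sub}}(\rho)>0\) presupposes that the backward ODE continuation of \(h_{\mathrm{sub}}\) does not vanish (and is positive) at \(\rho\), which is essentially what the Perron step is supposed to deliver; as written this is mildly circular. It is repairable — renormalize \(h_n\) by \(\abs{\alpha_n}+\abs{\beta_n}\), pass to a subsequence, and use \(h_{\mathrm{sub}}>0\) on \([R,\infty)\) to fix the sign, which yields \(h_{\mathrm{sub}}\ge0\) in all three cases \(h_{\mathrm{sub}}(\rho)>0\), \(=0\), \(<0\) — but the paper's argument is shorter: if the radial continuation vanished at some \(r_0>\rho\), it would be a solution on \(\R^N\setminus\Bar{B}_{r_0}\) with zero boundary data, nonnegative potential and decay to zero at infinity, hence identically zero by the maximum principle, contradicting positivity near infinity. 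Note also that only nonnegativity on the open set \(\abs{x}>\rho\) is required, and that in the admissible case \(\rho=0\) your two-point problems should be posed on \([\rho',n]\) with \(\rho'>\rho\) to avoid the singular coefficients at the origin.
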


When \(W(s)=\lambda>0\) the result is standard (see for example \cite{AmbrosettiMalchiodiRuiz}*{p.332}).
When \(\gamma = 1\) and \(\beta>\frac{1}{2}\) the result was proved by Agmon \cite{Agmon-2}*{Theorem 3.3}.
Relevant estimates also could be found in \cite{Kato}*{Theorem 2, 3}.

It follows immediately that \(H\) is a minimal positive solution at infinity of \eqref{lin-exp-W}. Indeed, the function \(U(x)=1\)
is a supersolution to \eqref{lin-exp} and the pair \(U(x)=1\) and \(H(x)\) satisfy condition \eqref{minimal-def}.

\begin{proof}[Proof of Proposition~\ref{prop-exp-lambda}.]
Since \(\gamma < 2\), we can assume without loss of generality that \(\beta < 1 - \frac{\gamma}{2}\).
Similarly to the construction in the proof of S.\thinspace Agmon \cite{Agmon-2}*{Theorem 3.3}, for \(s > \rho\) and \(\tau \in \R\) we define
\[
 \phi_\tau (s) = - \frac{N - 1 - \frac{\gamma}{2}}{2 s} - \frac{W(s)}{s^\frac{\gamma}{2}} + \frac{\tau}{s^{1 + \beta}},
\]
and for \(x \in \R^N \setminus \Bar{B}_{\rho}\)
\[
  \Phi_\tau (x) = \exp\Big(\int_{\rho}^{\abs{x}} {\phi_\tau}(s)\,ds\Big).
\]
By the chain rule,
\[
  \Delta \Phi_\tau (x) = \Big(\phi_\tau'(\abs{x})+\frac{N-1}{\abs{x}}\phi_\tau (\abs{x})+\phi_\tau (\abs{x})^2\Big)\Phi_\tau (x),
\]
hence
\[
\begin{split}
 -\Delta \Phi_\tau (x) &= \biggl(\frac{(N - 2) (N - 1 - \frac{\gamma}{2})}{2\abs{x}^2} + \frac{W'(\abs{x})}{\abs{x}^{\frac{\gamma}{2}}} + \frac{(N - 1 - \frac{\gamma}{2})W(\abs{x})}{\abs{x}^{\frac{\gamma}{2} + 1}} \\
&\qquad - \frac{(N - 2 - \beta)\tau}{\abs{x}^{2 + \beta}} - \Bigl( \frac{N - 1 - \frac{\gamma}{2}}{2 \abs{x}} + \frac{W(\abs{x})}{\abs{x}^\frac{\gamma}{2}} - \frac{\tau}{\abs{x}^{1 + \beta}} \Bigr)^2\biggr) \Phi_\tau (x)\\
&= \Bigl(- \frac{W(\abs{x})^2}{\abs{x}^\gamma} + \frac{2\tau W(\abs{x}) +\omega_\tau (x)}{\abs{x}^{1 + \frac{\gamma}{2} + \beta}}\Bigr)\Phi_\tau (x),
\end{split}
\]
where \(\omega_\tau : \R^N \setminus \Bar{B}_\rho \to \R\) is given by
\[
  \omega_\tau (x)=W'(\abs{x})\abs{x}^{1+\beta}+\frac{(N-1-\frac{\gamma}{2})(N-3+\frac{\gamma}{2})}{4\abs{x}^{1-\frac{\gamma}{2}-\beta}}
+\frac{\tau (1-\frac{\gamma}{2}+\beta)}{\abs{x}^{1-\frac{\gamma}{2}}}-\frac{\tau^2}{\abs{x}^{1-\frac{\gamma}{2}+\beta}}.
\]
Observe that \(\lim_{\abs{x} \to \infty} \omega_\tau (x) = 0\).

Choose \(\overline{\tau} > 0\) and \(\underline{\tau} < 0\).
A direct computation verifies that \(\Phi_{\underline{\tau}}\) is a subsolution
and \(\Phi_{\overline{\tau}}\) is a supersolution to equation \eqref{lin-exp-W} in the exterior of a ball \(B_{R}\),
for a sufficiently large \(R>\rho\).
Applying the classical sub and supersolutions principle,
we conclude that \eqref{lin-exp-W} admits a radial solution \(H\) in \(\R^N \setminus \Bar{B}_R\),
such that \(\Phi_{\underline{\tau}} <H< \Phi_{\overline{\tau}} \). Since \(\limsup_{\abs{x} \to \infty} \frac{\Phi_{\overline{\tau}}(x)}{\Phi_{\underline{\tau}}(x)} < \infty\),
we deduce that up to multiplication by a constant \(H\) has the required asymptotic.

Since \(H\) is radial, it can be extended to a positive solution on \(\R^N \setminus \Bar{B}_\rho\).
Indeed, otherwise \(H\) would vanish on a sphere \(\partial B_r\) with \(r>\rho\).
Since \(\lim_{\abs{x} \to \infty} H(x) = 0\) and \(V \ge 0\),
this would imply by the maximum principle that \(H = 0\) on \(\R^N \setminus \Bar{B}_r\).
\end{proof}

\begin{remark}\label{r-exp-lambda}
We apply Proposition~\ref{prop-exp-lambda} in order to understand the rate of decay of positive solutions
of the linear equation
\begin{align}\label{lin-exp}
-\Delta u (x) +\frac{\lambda^2}{\abs{x}^\gamma}u(x) & =\frac{m}{\abs{x}^{\sigma}}u(x) &
&\text{for every \(x \in \R^N\setminus \Bar{B}_\rho\)}.
\end{align}
where \(-\infty<\gamma<2\), \(\lambda>0\), \(\sigma>\gamma\), \(m \le \lambda^2 \rho^{\gamma - \sigma}\)
(see~\cite{Agmon-2}*{Theorem 3.3} for the case \(\gamma=0\)).
Equation \eqref{lin-exp} appears as a localization of Choquard's equation \eqref{eqChoquardSlow} in the half-linear case \(q=1\).
Let \(H(x)\) be a minimal positive solution at infinity of \eqref{lin-exp}, as constructed in Proposition~\ref{prop-exp-lambda}.
The asymptotics of \(H\) are related to the asymptotics as \(r \to \infty\) infinity of the function
\[
\psi(r):=\int_{\rho_0}^r \sqrt{\frac{\lambda^2}{s^{\gamma}}-\frac{m}{s^{\sigma}}\;} \,ds
= \frac{\lambda}{1 - \gamma/2} \int_{\rho_0^{1 - \gamma/2}}^{r^{1 - \gamma/2}} \sqrt{1-\frac{m}{\lambda^2 t^{\frac{\sigma-\gamma}{1-\gamma/2}}}\;} \,ds,
\]
where \(\rho_0>0\) is chosen so that
\(\frac{\lambda^2}{\rho_0^{\gamma}}-\frac{m}{\rho_0^{\sigma}} \ge 0\).
By the Taylor expansion of the square root, we have for every \(k \in \N\),
\[
 \sqrt{1-\frac{m}{\lambda^2 t^{\frac{\sigma-\gamma}{1-\gamma/2}}}\;}
= 1 - \sum_{j=1}^k \frac{1}{2 j - 1} \binom{2 j}{j} \biggl( \frac{m}{(2\lambda)^2 t^{\frac{\sigma-\gamma}{1-\gamma/2}}}\biggr)^j + O \biggl( \frac{1}{t^{(k + 1) \frac{\sigma-\gamma}{1-\gamma/2}}}\biggr).
\]
If \(k <\frac{1-\gamma/2}{\sigma-\gamma} < k + 1\), then
\[
  \psi (r) = \lambda \biggl(\frac{r^{1 - \gamma/2}}{1 - \gamma/2} - \sum_{j = 1}^k \frac{r^{1 - \gamma/2 - j (\sigma - \gamma)}}{(1 -  \gamma/2 - j(\sigma - \gamma))(2 j - 1)} \binom{2 j}{j} \frac{m^j}{(2\lambda)^{2 j}} \biggr) + O (1).
\]
whereas if \(k = \frac{1-\gamma/2}{\sigma-\gamma}\),
\begin{multline*}
  \psi (r) = \lambda \biggl(\frac{r^{1 - \gamma/2}}{1 - \gamma/2} - \sum_{j = 1}^k \frac{r^{1 - \gamma/2 - j (\sigma - \gamma)}} {(1 -  \gamma/2 - j(\sigma - \gamma))(2 j - 1)} \binom{2 j}{j} \frac{m^j}{(2\lambda)^{2 j}} \\
- \frac{1}{2 k - 1} \binom{2 k}{k} \frac{m^k}{(2\lambda)^{2 k}} \log r \biggr)+ O (1).
\end{multline*}
In particular, if \(\sigma > 1 + \frac{\gamma}{2}\), then
\[
 \lim_{\abs{x}\to \infty} H(x)\abs{x}^{\frac{N - 1}{2} - \frac{\gamma}{4}}
\exp\big(\tfrac{2\lambda}{2-\gamma}\abs{x}^{1-\frac{\gamma}{2}}\big) \in (0, \infty);
\]
if \(\sigma=1+\textstyle{\frac{\gamma}{2}}\), then
\[
\lim_{\abs{x}\to \infty}\textstyle{H(x)\abs{x}^{{\frac{N - 1}{2} - \frac{\gamma}{4}}-\frac{m}{2 \lambda}}
\exp\big(\frac{2\lambda}{2-\gamma}\abs{x}^{1-\frac{\gamma}{2}}\big)} \in (0, \infty);
\]
if \(\frac{1}{2} + \frac{3\gamma}{4} <\sigma<\textstyle{1+\frac{\gamma}{2}}\), then
\[
\lim_{\abs{x}\to \infty}\textstyle{H(x)\abs{x}^{\frac{N - 1}{2} - \frac{\gamma}{4}}
\exp\big(\frac{2\lambda}{2-\gamma}\abs{x}^{1-\frac{\gamma}{2}}\!-\!\frac{m}{\lambda(2+\gamma-2\sigma)}\abs{x}^{1+\frac{\gamma}{2}-\sigma}\big) \in (0, \infty)};
\]
and if \(\sigma=\frac{1}{2} + \frac{3\gamma}{4}\), then
\[
\lim_{\abs{x}\to \infty}\textstyle{H(x)\abs{x}^{{\frac{N - 1}{2} - \frac{\gamma}{4}}-\frac{m^2}{8\lambda^3}}
\exp\big(\frac{2\lambda}{2-\gamma}\abs{x}^{1-\frac{\gamma}{2}}\!-\!\frac{m}{\lambda(2+\gamma-2\sigma)}\abs{x}^{1+\frac{\gamma}{2}-\sigma}\big) \in (0, \infty)}.
\]
\end{remark}

\subsection{Exponential decay region $q>1$: proof of Theorem~\ref{Thm-exp}.}

First we establish the lower bound \eqref{exp-1+} of Theorem~\ref{Thm-exp}.

\begin{proposition}
\label{lemma-exp-decay}
Let \(N\ge 1\),  \(\gamma<2\), \(\lambda > 0\), \(0 < \alpha < N\), \(p > 0\), \(q>1\) and \(\rho > 0\).
If \(u\ge 0\) is a nontrivial supersolution of \eqref{eqChoquardSlow} in \(\R^N \setminus \Bar{B}_{\rho}\), then
\begin{equation*}
\liminf_{\abs{x}\to \infty}u (x)\abs{x}^{\frac{N - 1}{2} - \frac{\gamma}{4}}\exp\Big(\frac{2\lambda}{2-\gamma}\abs{x}^{1-\frac{\gamma}{2}}\Big)>0.
\end{equation*}
\end{proposition}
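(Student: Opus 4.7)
The plan is to discard the nonnegative nonlinear right hand side, reducing the problem to a pure linear Phragmen--Lindel\"of lower bound, and then to compare \(u\) with the minimal positive solution at infinity of the linear Schr\"odinger equation produced by Proposition~\ref{prop-exp-lambda}.

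First, since \(q > 1\) and \(u \ge 0\), the term \((I_\alpha \ast u^p) u^q\) is nonnegative, so \(u\) satisfies distributionally in \(\R^N \setminus \Bar{B}_\rho\) the linear differential inequality
\[
  -\Delta u + \frac{\lambda^2}{\abs{x}^\gamma} u \ge 0.
\]
By the nonlocal positivity principle (Proposition~\ref{propositionGroundState}) applied to the original equation, the assumption that \(u\) is nontrivial forces \(u > 0\) almost everywhere in \(\R^N \setminus \Bar{B}_\rho\), and by the weak Harnack inequality quoted from Appendix~B there exists \(R_0 > \rho\) and \(c_1 > 0\) such that \(\inf_{\partial B_{R_0}} u \ge c_1\) in an essential sense sufficient for the comparison below.

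Next, I would apply Proposition~\ref{prop-exp-lambda} with the constant weight \(W(s) \equiv \lambda\), which trivially satisfies the hypotheses, to obtain a positive radial solution \(H\) of
\[
  -\Delta H(x) + \frac{\lambda^2}{\abs{x}^\gamma} H(x) = 0 \quad \text{in } \R^N \setminus \Bar{B}_{R_0},
\]
with the precise asymptotic
\[
  \lim_{\abs{x} \to \infty} H(x) \abs{x}^{\frac{N-1}{2} - \frac{\gamma}{4}} \exp\Bigl(\tfrac{2\lambda}{2-\gamma} \abs{x}^{1-\gamma/2}\Bigr) = c_0 \in (0, \infty).
\]
In particular \(H \to 0\) at infinity, and \(H\) is bounded on \(\partial B_{R_0}\) by some constant \(c_2\).

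Finally, setting \(c := c_1/c_2 > 0\), the function \(c H\) is a positive solution of the linear equation in \(\R^N \setminus \Bar{B}_{R_0}\) satisfying \(c H \le u\) on \(\partial B_{R_0}\) and \(c H \to 0\) at infinity, while \(u\) is a nonnegative supersolution of the same linear equation on the exterior domain. The comparison principle for distributional supersolutions of \(-\Delta + V\) with \(V \ge 0\) (Appendix~B) then yields \(u \ge c H\) in \(\R^N \setminus \Bar{B}_{R_0}\), and the asymptotic of \(H\) above gives the desired lower bound with constant \(c c_0 > 0\).

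The main obstacle is to justify rigorously the anchoring step that furnishes a pointwise (or almost-everywhere) lower bound \(u \ge c H\) on \(\partial B_{R_0}\) for distributional supersolutions with a singular measurable potential, and to invoke the comparison principle on the \emph{unbounded} exterior domain; both will rest on the Appendix~B versions of the weak Harnack inequality and comparison principle tailored to this distributional setting.
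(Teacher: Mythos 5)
Your proposal is correct and follows essentially the same route as the paper: drop the nonnegative nonlocal term, take the minimal positive solution \(H\) from Proposition~\ref{prop-exp-lambda} with \(W \equiv \lambda\), and compare; the "comparison principle for distributional supersolutions on the exterior domain" you appeal to is exactly Proposition~\ref{p-minimal} of Appendix~B (whose hypothesis \(\inf_{B_{3\rho}\setminus \Bar{B}_\rho} u > 0\) is supplied, as you indicate, by the positivity principle together with the weak Harnack inequality of Lemma~\ref{P-XXX}).
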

\begin{proof}
Simply note that \(u\) is a supersolution to the linear equation
$$-\Delta u (x)+\frac{\lambda^2}{\abs{x}^\gamma}u(x)=0\quad\text{for every \( x \in \R^N\setminus \Bar{B}_\rho\)},
$$
so the assertion follows
by the comparison principle of Proposition~\ref{p-minimal} and the standard decay estimates for the linear equation, see \cite{AmbrosettiMalchiodiRuiz}*{p.\thinspace 332}, or Proposition~\ref{prop-exp-lambda} with \(W(s)=\lambda\) above.
\end{proof}

Next we construct a supersolution to \eqref{eqChoquardSlow} which justifies optimality
of the lower bound of Proposition~\ref{lemma-exp-decay} and thus completes the proof of Theorem~\ref{Thm-exp}.

\begin{proposition}
Let \(N\ge 1\),  \(\gamma<2\), \(\lambda > 0\), \(0 < \alpha < N\), \(p > 0\), \(q>1\) and \(\rho > 0\).
Then \eqref{eqChoquardSlow} admits a radial nontrivial nonnegative supersolution
\(u \in C^\infty (\R^N \setminus\Bar{B}_\rho)\),
such that
\begin{equation*}
\limsup_{\abs{x}\to \infty} u (x)\abs{x}^{\frac{N - 1}{2} - \frac{\gamma}{4}}\exp\Big(\frac{2\lambda}{2-\gamma}\abs{x}^{1-\frac{\gamma}{2}}\Big)<\infty.
\end{equation*}
\end{proposition}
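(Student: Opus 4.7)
The plan is to take $u_\mu = \mu H_W$, where $H_W$ is supplied by Proposition~\ref{prop-exp-lambda} for a linear potential $W^2/\abs{x}^\gamma$ that agrees with $\lambda^2/\abs{x}^\gamma$ at leading order but is strictly smaller on the whole exterior domain. Concretely, I will fix $\beta' > 1-\gamma/2$ and $M \in (0,\lambda\rho^{\beta'}]$ and put $W(s) := \lambda - Ms^{-\beta'}$ on $[\rho,\infty)$. Then $W$ is nonnegative, tends to $\lambda$, and $W'(s) s^{1+\beta} \to 0$ for any $\beta \in (0,\beta')$, so Proposition~\ref{prop-exp-lambda} yields a positive radial $H_W \in C^\infty(\R^N \setminus \Bar{B}_\rho)$ with asymptotic \eqref{asymp-W}. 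Since $\beta' > 1-\gamma/2$, the correction $M\int_\rho^\infty s^{-\gamma/2-\beta'}\,ds$ converges, so $\int_\rho^{\abs{x}} W(s)\,s^{-\gamma/2}\,ds = \tfrac{2\lambda}{2-\gamma}\abs{x}^{1-\gamma/2} + O(1)$, and $u_\mu$ automatically inherits the target $\limsup$ matching the lower bound of Proposition~\ref{lemma-exp-decay}.

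The decisive ingredient is the pointwise identity
\[
-\Delta H_W + \frac{\lambda^2}{\abs{x}^\gamma} H_W \;=\; \frac{\lambda^2 - W(\abs{x})^2}{\abs{x}^\gamma} H_W \;=\; \frac{M\bigl(2\lambda - M\abs{x}^{-\beta'}\bigr)}{\abs{x}^{\gamma+\beta'}}\, H_W.
\]
The constraint $M \le \lambda\rho^{\beta'}$ forces $2\lambda - M\abs{x}^{-\beta'} \ge \lambda$ throughout $\R^N \setminus \Bar{B}_\rho$, producing the uniform linear reserve $-\Delta H_W + \lambda^2\abs{x}^{-\gamma} H_W \ge \lambda M\abs{x}^{-\gamma-\beta'} H_W$. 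For $u_\mu = \mu H_W$, the nonlinear supersolution inequality then reduces to the pointwise bound
\[
\mu^{p+q-1}\,(I_\alpha \ast H_W^p)(x)\,H_W(x)^{q-1} \;\le\; \lambda M\,\abs{x}^{-\gamma-\beta'} \qquad \text{for every } \abs{x}\ge\rho.
\]

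To close the argument I will bound the left-hand side globally. Since $H_W$ is smooth on $\R^N \setminus \Bar{B}_\rho$ and $H_W^p$ is integrable with exponential decay, Lemma~\ref{lemmaUpperasympt} gives $(I_\alpha\ast H_W^p)(x) \le C\abs{x}^{\alpha-N}$ for $\abs{x}$ large, while the convolution is continuous and bounded on any compact subannulus of $\R^N \setminus \Bar{B}_\rho$. Combined with the exponential smallness of $H_W^{q-1}$ (this is where the assumption $q>1$ is essential), the function $g(x) := (I_\alpha\ast H_W^p)(x)\,H_W(x)^{q-1}\,\abs{x}^{\gamma+\beta'}$ is bounded on the whole of $\R^N\setminus\Bar{B}_\rho$, and choosing $\mu$ small enough that $\mu^{p+q-1}\sup g \le \lambda M$ finishes the construction. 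The main obstacle I anticipate is enforcing the supersolution property \emph{globally} rather than only at infinity; the device of perturbing $\lambda$ by the lower-order term $Ms^{-\beta'}$, kept uniformly small on $[\rho,\infty)$, is exactly what supplies the reserve $\lambda M\abs{x}^{-\gamma-\beta'}$ down to $\abs{x}=\rho$ and avoids any interior patching or cutoff argument.
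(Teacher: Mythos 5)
Your proof is correct and follows essentially the same route as the paper: both arguments take $u_\mu=\mu H$ with $H$ the linear solution supplied by Proposition~\ref{prop-exp-lambda} for a slightly perturbed potential carrying a positive lower-order reserve, bound $(I_\alpha\ast H^p)$ by Lemma~\ref{lemmaUpperasympt}, and absorb the nonlinearity into that reserve using $q>1$ and smallness of $\mu$. The only difference is cosmetic: the paper creates the reserve $m\abs{x}^{-2}H$ by solving $-\Delta H+\lambda^2\abs{x}^{-\gamma}H=m\abs{x}^{-2}H$ with $m<\lambda^2\rho^{2-\gamma}$, whereas you perturb $\lambda$ itself to $W(s)=\lambda-Ms^{-\beta'}$ with $\beta'>1-\gamma/2$; both corrections are integrable against $s^{-\gamma/2}$, so the stated asymptotics are unaffected.
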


\begin{proof}
Choose \(m>0\) such that \(m < \lambda^2 \rho^{2 - \gamma}\).
Let
\(H \in C^2(\R^N \setminus \Bar{B}_\rho)\) be the positive solution of the equation
\begin{align*}
  -\Delta H(x) + \frac{\lambda^2}{\abs{x}^\gamma}H(x) &= \frac{m}{\abs{x}^2} H(x)&
&\text{for every \(x \in \R^N\setminus\Bar{B}_\rho\)}
\end{align*}
such that
\[
 \lim_{\abs{x}\to \infty} H(x) \abs{x}^{\frac{N-1}{2} -\frac{\gamma}{4}} \exp \Big(\frac{2\lambda}{2-\gamma}\abs{x}^{1-\frac{\gamma}{2}}\Big) = 1
\]
given by Proposition~\ref{prop-exp-lambda}.
Set \(u_\mu = \mu H\) for \(\mu > 0\).
By Lemma~\ref{lemmaUpperasympt} there exists \(C > 0\) such that for every \(x \in \R^N \setminus \Bar{B}_\rho\),
\[
  (I_\alpha\ast u_\mu^p)(x)\le \frac{C \mu^p }{\abs{x}^{N - \alpha}}.
\]
Since \(q > 1\),  there exists a sufficiently small \(\bar\mu>0\) such that for every \(x \in \R^N \setminus \Bar{B}_\rho\),
\[
  \frac{C\bar\mu^{p + q - 1}}{\abs{x}^{N - \alpha}}H(x)^{q - 1}
  \le\frac{m}{\abs{x}^2}.
\]
Hence, if \(\mu < \bar{\mu}\),
\begin{equation*}
\begin{split}
-\Delta u_\mu (x)+\frac{\lambda^2}{\abs{x}^\gamma} u_\mu (x)&= \frac{m}{\abs{x}^{2}} u_\mu (x)\\
&\ge \frac{C\mu^{p + q - 1}}{\abs{x}^{N - \alpha}}H(x)^{q - 1} u_\mu (x)\ge \bigl(I_\alpha\ast u _\mu^p(x)\bigr)u_\mu^q(x),
\end{split}
\end{equation*}
that is, \(u_\mu\) is the required supersolution of \eqref{eqChoquardSlow}.
\end{proof}

\subsection{Borderline region $q=1$: proof of Theorem~\ref{Thm-exp+}.}

Our next step is to explore nonlocal positivity principle of Proposition~\ref{propositionGroundState}
in order to obtain a slow decay counterpart of Lemma~\ref{lemmaLiminfExistence}.

\begin{lemma}
\label{lemmaLiminfExistence-slow}
Let \(N\ge 1\), \(\gamma < 2\), \(\lambda > 0\), \(0 < \alpha < N\), \(p>0\), \(q\in\R\) and \(\rho > 0\).
If \(u \ge 0\) is a supersolution of \eqref{eqChoquardSlow} in \(\R^N \setminus \Bar{B}_{\rho}\), then either \(u = 0\) almost everywhere in \(\R^N \setminus \Bar{B}_{\rho}\) or
\(u = 0\) almost everywhere in \(\R^N \setminus \Bar{B}_{\rho}\), \(u^{q - 1} \in L^1_\mathrm{loc} (\R^N \setminus \Bar{B}_{\rho})\) and for
every \(R>2 \rho\),
\[
   \Bigl(\int_{B_{2R}  \setminus B_{\rho}} u^{p} \Bigr) \Bigl(\int_{B_{2R} \setminus B_R} u^{q - 1}\Bigr) \le C R^{2N - \alpha - \gamma}.
\]
\end{lemma}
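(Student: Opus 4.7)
\smallskip
\noindent\textbf{Proof proposal.} The plan is to follow the same strategy as in the proof of Lemma~\ref{lemmaLiminfExistence}, the only new ingredient being the contribution of the slow decay potential to the quadratic form in the nonlocal positivity principle. First, I would apply Proposition~\ref{propositionGroundState} to obtain the dichotomy: since \(V \in L^1_\mathrm{loc}(\R^N \setminus \Bar{B}_\rho)\) and \(u\) is a supersolution of \eqref{eqChoquardSlow}, either \(u = 0\) almost everywhere in \(\R^N \setminus \Bar{B}_\rho\), or \(u > 0\) almost everywhere, \(u^{q-1} \in L^1_\mathrm{loc}(\R^N \setminus \Bar{B}_\rho)\), and the integral inequality of Proposition~\ref{propositionGroundState} holds for every nonnegative test function.

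Next I would use the same family of test functions as in the proof of Lemma~\ref{lemmaLiminfExistence}: fix \(\varphi \in C^\infty_c(\R^N)\) with \(\supp \varphi \subset B_4 \setminus \Bar{B}_{1/2}\), \(\varphi = 1\) on \(B_2 \setminus \Bar{B}_1\), \(0 \le \varphi \le 1\), and set \(\varphi_R(x) = \varphi(x/R)\) as in \eqref{phiR}. For \(R \ge 2 \rho\), \(\supp \varphi_R \subset B_{4R} \setminus \Bar{B}_{R/2} \subset \R^N \setminus \Bar{B}_\rho\), and the standard change of variables gives
\[
\int_{\R^N \setminus B_\rho} \abs{\nabla \varphi_R}^2 = R^{N - 2} \int_{\R^N} \abs{\nabla \varphi}^2,
\qquad
\int_{\R^N \setminus B_\rho} \frac{\lambda^2}{\abs{x}^\gamma} \varphi_R^2 \,dx
= \lambda^2 R^{N - \gamma} \int_{\R^N} \frac{\varphi^2}{\abs{y}^\gamma}\,dy.
\]
The second integral on the right is finite because \(\gamma < N\) and the integrand is supported away from the origin. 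Since \(\gamma < 2\), the potential contribution \(R^{N - \gamma}\) dominates the gradient contribution \(R^{N - 2}\) for \(R\) large, which is precisely the reason the critical exponent on the right-hand side shifts from \(2N - \alpha - 2\) to \(2N - \alpha - \gamma\).

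Applying Proposition~\ref{propositionGroundState} with the parameter \(4R\) in place of \(R\) (so that \(\varphi_R \in C^\infty_c((\R^N \setminus \Bar{B}_\rho) \cap B_{4R})\)), I would deduce
\[
R^{N - 2} \int_{\R^N} \abs{\nabla \varphi}^2 + \lambda^2 R^{N - \gamma} \int_{\R^N} \frac{\varphi^2}{\abs{y}^\gamma}\,dy
\ge \frac{A_\alpha}{2^{N - \alpha}(4R)^{N - \alpha}} \Bigl(\int_{B_{4R} \setminus B_\rho} u^p \Bigr)\Bigl(\int_{B_{2R} \setminus B_R} u^{q - 1} \Bigr),
\]
which, after collecting constants and using \(R^{N - 2} \le R^{N - \gamma}\) for \(R\) bounded away from \(0\), yields
\[
\Bigl(\int_{B_{4R} \setminus B_\rho} u^p \Bigr)\Bigl(\int_{B_{2R} \setminus B_R} u^{q - 1} \Bigr) \le C R^{2N - \alpha - \gamma}.
\]
Since \(\int_{B_{2R} \setminus B_\rho} u^p \le \int_{B_{4R} \setminus B_\rho} u^p\), the claimed estimate follows. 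There is no genuine obstacle in this argument: it is a direct adaptation of Lemma~\ref{lemmaLiminfExistence}, with only the bookkeeping of the potential term requiring care, and the inequality \(\gamma < 2\) ensuring that this term is the dominant one.
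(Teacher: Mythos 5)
Your argument is correct and coincides with the paper's proof: the paper likewise applies Proposition~\ref{propositionGroundState} to the test functions \(\varphi_R\) of \eqref{phiR}, notes that \(\int V\varphi_R^2 = O(R^{N-\gamma})\) dominates the gradient term \(R^{N-2}\) because \(\gamma<2\), and then proceeds exactly as in Lemma~\ref{lemmaLiminfExistence}. Your bookkeeping of the constants and of the radius \(4R\) in the positivity principle is sound, so nothing is missing.
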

\begin{proof}
Apply Lemma~\ref{propositionGroundState} to the family of test functions \eqref{phiR}
and note that if \(V\) is a slow decay potential then
\begin{align*}
 \int_{{\R^N \setminus B_{\rho}}} V\abs{\varphi_R}^2&= O\big(R^{N - \gamma}\big)&
&\text{as \(R\to\infty\)}.
\end{align*}
Then proceed as in the proof of Lemma~\ref{lemmaLiminfExistence}.
\end{proof}

As an immediate consequence of the upper bound of Lemma~\ref{lemmaLiminfExistence-slow}
we prove the nonexistence statement \eqref{nonexist-exp+} of Theorem~\ref{Thm-exp+}.

\begin{proposition}
Let \(N\ge 1\),  \(\alpha\in(N-2,N)\), \(N - \alpha<\gamma<2\), \(\lambda > 0\), \(p>0\), \(q=1\) and \(\rho > 0\).
If \(u \ge 0\) is a supersolution of \eqref{eqChoquardSlow} in \(\R^N \setminus \Bar{B}_{\rho}\) then \(u = 0\) in \(\R^N\setminus \Bar{B}_{\rho}\).
\end{proposition}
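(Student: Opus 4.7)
The plan is to combine Lemma~\ref{lemmaLiminfExistence-slow} with an elementary volume computation that becomes available precisely in the locally linear case $q=1$.

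First I would argue by contradiction: assume $u \not\equiv 0$ on $\R^N \setminus \bar B_\rho$, so that the positivity alternative in Lemma~\ref{lemmaLiminfExistence-slow} places us in the regime $u > 0$ almost everywhere and the integral estimate applies. Since $q = 1$ gives $u^{q-1} = 1$, the second factor in the estimate is a pure volume:
\[
\int_{B_{2R} \setminus B_R} u^{q-1} = \abs{B_{2R} \setminus B_R} = c_N R^N.
\]
Inserting this into Lemma~\ref{lemmaLiminfExistence-slow} and rearranging yields, for every $R > 2\rho$,
\[
\int_{B_{2R} \setminus B_\rho} u^p \;\le\; C\, R^{N - \alpha - \gamma}.
\]

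Next I would exploit the assumption $\gamma > N - \alpha$, which makes the exponent $N - \alpha - \gamma$ strictly negative. Since the left-hand side is monotone nondecreasing in $R$ (the integrand being nonnegative), letting $R \to \infty$ and invoking Lebesgue's monotone convergence theorem forces
\[
\int_{\R^N \setminus B_\rho} u^p = 0,
\]
and hence $u = 0$ almost everywhere in $\R^N \setminus \bar B_\rho$, contradicting the standing hypothesis that $u$ is nontrivial.

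There is essentially no obstacle to overcome here: the delicate work has already been done inside Lemma~\ref{lemmaLiminfExistence-slow} (which rests on the nonlocal positivity principle and on the growth rate $R^{N-\gamma}$ of $\int V\varphi_R^2$ for a slow decay potential). The strict inequality $\gamma > N - \alpha$ is exactly what is needed to turn the right-hand side into a quantity that vanishes at infinity when $q = 1$; the case $\gamma = N - \alpha$, in contrast, is only marginal and is handled by the companion existence/decay part of Theorem~\ref{Thm-exp+} rather than by this nonexistence argument. The constraint $\alpha > N-2$ in the statement serves only to ensure that the range $N - \alpha < \gamma < 2$ is nonempty.
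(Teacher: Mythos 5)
Your argument is correct and is essentially the paper's own proof: apply Lemma~\ref{lemmaLiminfExistence-slow} with \(q=1\), note that \(\int_{B_{2R}\setminus B_R}u^{q-1}\) is just the volume \(\sim R^N\), and let \(R\to\infty\) using \(N-\alpha-\gamma<0\) to force \(\int u^p=0\), contradicting nontriviality. No further comments are needed.
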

\begin{proof}
Assume that \(u > 0\) almost everywhere.
Since \(q=1\), by Lemma~\ref{lemmaLiminfExistence-slow} for any \(R>\rho\) we have
\[
R^{2N - \alpha-\gamma}\ge C \Bigl(\int_{B_R \setminus B_{\rho}} u^{p}\Bigr)\Bigl(\int_{B_{2R} \setminus B_R} 1\Bigr).
\]
This brings a contradiction if \(N - \gamma<\alpha\).
\end{proof}

To understand the existence and asymptotic properties of positive supersolution
of \eqref{eqChoquardSlow} when \(q=1\) and \(\gamma\le N - \alpha\),
we consider the local equation
\begin{align}\label{fundamental-2}
-\Delta u (x) + \frac{\lambda^2}{\abs{x}^\gamma} u (x) &=\frac{m}{\abs{x}^{N - \alpha}} u (x)&
\text{for \(x \in \R^N\setminus\Bar{B}_\rho\)}.
\end{align}
Clearly, if \(\gamma= N - \alpha\) and \(m>\lambda^2\) then \eqref{fundamental-2}
has no positive supersolutions, by the local positivity principle of Proposition~\ref{propositionLocalGroundstate}.
If \(\gamma= N - \alpha\) and \(m=\lambda^2\) then \eqref{fundamental-2} simplifies to \(-\Delta u=0\) in \(\R^N\setminus\Bar{B}_\rho\).
In all other cases, that is if \(\gamma= N - \alpha\) and \(m<\lambda^2\) or if  \(\gamma<N - \alpha\) and \(m\in\R\),
equation \eqref{fundamental-2} in \(\R^N\setminus\Bar{B}_\rho\) with a sufficiently large \(\rho>0\) admits by Proposition~\ref{prop-exp-lambda} an exponentially decaying
minimal positive solution at infinity with the decay rate given by \eqref{asymp-W}.
The result below shows that the decay of positive supersolutions of Choquard's equation \eqref{eqChoquardSlow}
is controlled by the decay rate of minimal positive solution at infinity of \eqref{fundamental-2}.

\begin{proposition}
Let \(N\ge 1\), \(\gamma < 2\), \(\lambda > 0\), \(0 < \alpha < N\), \(p>0\), \(q=1\) and \(\rho > 0\). If \( \gamma \le N - \alpha\) and if \(u\ge 0\) is a supersolution of \eqref{eqChoquardSlow} in \(\R^N \setminus \Bar{B}_{\rho}\)
then either \(u = 0\) almost everywhere or there exists \(\rho_m > 0\) and
\(m > 0\) such that \(m < \lambda^2 \rho_m^{N - \alpha - \gamma}\) and
\begin{equation*}
\liminf_{\abs{x}\to \infty} u(x)\abs{x}^{\frac{N - 1}{2} - \frac{\gamma}{4}}
\exp \int_{\rho_m}^{\abs{x}}\sqrt{\frac{\lambda^2}{s^{\gamma}}-\frac{m}{s^{N - \alpha}}\;} \,ds >0.
\end{equation*}
\end{proposition}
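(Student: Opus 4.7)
Assume $u \not\equiv 0$. The strategy is to reduce the nonlinear inequality to the linear Schr\"odinger inequality with effective potential $\lambda^2/\abs{x}^\gamma - m/\abs{x}^{N-\alpha}$ on an exterior annulus, and then to compare $u$ against the minimal positive solution at infinity of that linear equation, which Proposition~\ref{prop-exp-lambda} constructs with precisely the desired asymptotic.

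The first reduction combines positivity and a Riesz lower bound. The nonlocal positivity principle (Proposition~\ref{propositionGroundState}) gives $u > 0$ almost everywhere in $\R^N \setminus \Bar{B}_\rho$, so that
\[
    M := \frac{A_\alpha}{2^{N-\alpha}} \int_{B_{4\rho} \setminus B_\rho} u^p > 0.
\]
Then \eqref{eqRieszLowerBound} yields $(I_\alpha \ast u^p)(x) \ge M/\abs{x}^{N-\alpha}$ for $\abs{x} \ge 4\rho$, and since $q = 1$ the supersolution inequality upgrades to
\[
    -\Delta u + \Bigl(\frac{\lambda^2}{\abs{x}^\gamma} - \frac{m}{\abs{x}^{N-\alpha}}\Bigr) u \ge 0 \quad \text{on } \R^N \setminus \Bar{B}_{4\rho}
\]
for any $m \in (0, M]$.

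I then pick $m > 0$ and $\rho_m \ge 4\rho$ satisfying $m < \lambda^2 \rho_m^{N-\alpha-\gamma}$, which is exactly the condition ensuring that the effective potential is nonnegative on $[\rho_m, \infty)$. For $\gamma < N - \alpha$, I take $m = M$ and $\rho_m$ sufficiently large; for $\gamma = N - \alpha$ I reduce $m$ to any value below $\min(M, \lambda^2)$ -- the inequality above is preserved since $M \ge m$ -- and take $\rho_m = 4\rho$. With $W(s) := \sqrt{\lambda^2 - m\, s^{\gamma - (N-\alpha)}}$ the effective potential becomes $W(s)^2/s^\gamma$; one has $\lim_{s\to\infty} W(s) = \lambda > 0$ since $\gamma \le N-\alpha$, and $W'(s) = O(s^{\gamma-(N-\alpha)-1})$ decays quickly enough to verify the hypotheses of Proposition~\ref{prop-exp-lambda} for any $\beta \in (0, N-\alpha-\gamma)$, the borderline case $W \equiv \mathrm{const}$ being trivial. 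That proposition then supplies a positive radial $H$ solving the linear equation on $\R^N \setminus \Bar{B}_{\rho_m}$ with asymptotic
\[
    \lim_{\abs{x} \to \infty} H(x)\abs{x}^{\frac{N-1}{2} - \frac{\gamma}{4}} \exp \int_{\rho_m}^{\abs{x}} \sqrt{\frac{\lambda^2}{s^\gamma} - \frac{m}{s^{N-\alpha}}}\,ds = 1.
\]

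To conclude, $u$ is a nonnegative supersolution and $H$ is a positive -- in fact the minimal positive at infinity -- solution of the same linear operator on $\R^N \setminus \Bar{B}_{\rho_m}$, so the comparison principle of Proposition~\ref{p-minimal} delivers a constant $c > 0$ with $u \ge c H$ on $\R^N \setminus \Bar{B}_{\rho_m}$, and the claimed $\liminf$ bound drops out of the asymptotic of $H$. The main obstacle I expect is the interplay between the Riesz-driven value $M$ of the constant (intrinsic to $u$) and the positivity constraint $m < \lambda^2 \rho_m^{N-\alpha-\gamma}$: away from the threshold $\gamma = N-\alpha$ it suffices to enlarge $\rho_m$, but at the threshold the constraint degenerates to $m < \lambda^2$ uniformly in $\rho_m$, forcing $m$ to be chosen strictly smaller than $M$ in general and thereby producing a weaker (though still exponential) lower bound.
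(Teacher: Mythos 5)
Your argument is correct and follows essentially the same route as the paper: the Riesz lower bound $(I_\alpha\ast u^p)(x)\ge m\abs{x}^{-(N-\alpha)}$ turns $u$ into a supersolution of the linear equation \eqref{fundamental-2}, and the conclusion then follows from the comparison principle of Proposition~\ref{p-minimal} together with the minimal positive solution at infinity and its asymptotics from Proposition~\ref{prop-exp-lambda}. Your explicit bookkeeping of the admissible pair $(m,\rho_m)$ (taking $\rho_m$ large when $\gamma<N-\alpha$, and shrinking $m$ below $\lambda^2$ at the threshold $\gamma=N-\alpha$, which only weakens the claimed bound) is exactly what the paper's terse proof leaves implicit.
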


\begin{proof}
Let \(u\ge 0\) be a nontrivial supersolution of \eqref{eqChoquardSlow} in \(\R^N\setminus \Bar{B}_\rho\).
One has then for every \(x \in \R^N \setminus \Bar{B}_{2 \rho}\),
\[
 (I_\alpha\ast u^p)(x)\ge\frac{A_\alpha}{2^{N - \alpha} \abs{x}^{N - \alpha}} \int_{B_{2 \rho} \setminus B_\rho} u^p,
\]
Hence, for every \(x \in \R^N\setminus\Bar{B}_{2 \rho}\),
\[
  (I_\alpha\ast u^p)(x)\ge\frac{m}{\abs{x}^{N - \alpha}},
\]
with \(m = \frac{A_\alpha}{2^{N - \alpha}} \int_{B_{2 \rho} \setminus B_\rho} u^p > 0\).
By the comparison principle of Proposition~\ref{p-minimal} and decay estimate \eqref{asymp-W} we conclude that
\(u\) satisfies the announced asymptotics.
\end{proof}

The presence of the correction term related to the size of the constants \(m\) in the asymptotic is essential. We prove that for any admissible \(m>0\) there is a supersolution to \eqref{eqChoquardSlow} for which the lower bound cannot be improved. This justifies optimality of the lower bound and thus completes the proof of Theorem~\ref{Thm-exp+}.

\begin{proposition}\label{prop-exp-equiv}
Let \(N\ge 1\), \(\gamma < 2\), \(\lambda > 0\), \(0 < \alpha < N\), \(p>0\), \(q=1\) and \(\rho > 0\).
If \(\gamma \le N - \alpha\), then for every \(\rho_m > 0\) and \(0 < m < \lambda^2 \rho_m^{N - \alpha - \gamma}\),
there exists a radial nontrivial nonnegative supersolution of \eqref{eqChoquardSlow}
\(u \in C^\infty (\R^N \setminus\Bar{B}_{\rho_m})\) such that
\begin{equation*}
\limsup_{\abs{x}\to \infty} u (x)\abs{x}^{\frac{N - 1}{2} - \frac{\gamma}{4}}
\exp \int_{\rho_m}^{\abs{x}}\sqrt{\frac{\lambda^2}{s^{\gamma}}-\frac{m}{s^{N - \alpha}}\;} \,ds < \infty.
\end{equation*}
\end{proposition}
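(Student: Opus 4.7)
The strategy is to take $u_\mu := \mu H$, where $H$ is a positive radial solution of an auxiliary linear Schr\"odinger equation produced by Proposition~\ref{prop-exp-lambda} and $\mu > 0$ is small. This mirrors the construction used in the proof of Theorem~\ref{Thm-exp}, with the constant potential $W \equiv \lambda$ now replaced by a non-constant weight that encodes the Riesz corrector $m\abs{x}^{-(N-\alpha)}$.

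Given $\rho_m > 0$ and $0 < m < \lambda^2 \rho_m^{N-\alpha-\gamma}$, I define
\[
  W(s) := \sqrt{\lambda^2 - m s^{\gamma - (N-\alpha)}} \quad \text{for } s \ge \rho_m.
\]
Since $\gamma \le N - \alpha$ and $m < \lambda^2 \rho_m^{N-\alpha-\gamma}$, the function $W$ is nondecreasing, $C^1$ on $[\rho_m, \infty)$, strictly positive, and $\lim_{s\to\infty}W(s) = \lambda > 0$. A direct computation gives $W'(s) = O(s^{\gamma-(N-\alpha)-1})$, so $W'(s)s^{1+\beta} \to 0$ for any $\beta \in (0, N-\alpha-\gamma)$ (vacuously when $\gamma = N - \alpha$, since $W$ is then constant). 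Proposition~\ref{prop-exp-lambda} therefore supplies a positive radial $H \in C^2(\R^N \setminus \Bar{B}_{\rho_m})$ satisfying $-\Delta H + W(\abs{x})^2\abs{x}^{-\gamma}H = 0$, that is,
\[
  -\Delta H + \frac{\lambda^2}{\abs{x}^\gamma} H = \frac{m}{\abs{x}^{N-\alpha}} H,
\]
together with the asymptotic
\[
  \lim_{\abs{x}\to\infty} H(x)\abs{x}^{\frac{N-1}{2}-\frac{\gamma}{4}} \exp\int_{\rho_m}^{\abs{x}} \sqrt{\frac{\lambda^2}{s^\gamma} - \frac{m}{s^{N-\alpha}}}\,ds = 1,
\]
which is exactly the decay rate prescribed in the statement.

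Setting $u_\mu := \mu H$, linearity gives $-\Delta u_\mu + \lambda^2 \abs{x}^{-\gamma} u_\mu = m \abs{x}^{-(N-\alpha)} u_\mu$, so since $q = 1$, it suffices to verify
\[
  (I_\alpha \ast u_\mu^p)(x) \le \frac{m}{\abs{x}^{N-\alpha}} \quad \text{for every } x \in \R^N \setminus \Bar{B}_{\rho_m}.
\]
Because $\gamma < 2$ and $W$ is bounded below by a positive constant, the exponential factor in the asymptotic of $H$ makes $H$ lie in $L^p(\R^N \setminus B_{\rho_m})$ for every $p > 0$. Invoking Lemma~\ref{lemmaUpperasympt} on the exponentially decaying function $u_\mu^p$ then yields a constant $C > 0$, depending on $H$, $\rho_m$, $\alpha$ and $p$ but not on $\mu$, such that $(I_\alpha \ast u_\mu^p)(x) \le C \mu^p \abs{x}^{-(N-\alpha)}$ uniformly on $\R^N \setminus \Bar{B}_{\rho_m}$. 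Choosing $\mu$ so small that $C\mu^p \le m$ completes the construction; the desired $\limsup$ bound for $u_\mu$ then follows at once from the asymptotic of $H$.

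The main subtlety is ensuring the pointwise bound on $I_\alpha \ast u_\mu^p$ holds uniformly down to the inner boundary $\abs{x} = \rho_m$, not merely as $\abs{x} \to \infty$. This is handled by the standard splitting of the convolution into the contribution from $\{\abs{y} \le \abs{x}/2\}$, where $\abs{x - y} \ge \abs{x}/2$ yields $I_\alpha(x-y) \le C\abs{x}^{-(N-\alpha)}$, and from $\{\abs{y} \ge \abs{x}/2\}$, where the exponential decay of $H$ overwhelms any local singularity of $I_\alpha$; this splitting is precisely what underlies Lemma~\ref{lemmaUpperasympt}, so the verification is routine once $H$ is in hand.
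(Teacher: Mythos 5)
Your proposal is correct and follows essentially the same route as the paper: take the positive radial solution $H$ of the linearized equation \eqref{fundamental-2} supplied by Proposition~\ref{prop-exp-lambda} (your explicit choice $W(s)=\sqrt{\lambda^2-m\,s^{\gamma-(N-\alpha)}}$ is exactly how that proposition is meant to be invoked here), bound $I_\alpha\ast H^p$ by $C\abs{x}^{-(N-\alpha)}$ via Lemma~\ref{lemmaUpperasympt}, and set $u_\mu=\mu H$ with $\mu\le (m/C)^{1/p}$ so that, since $q=1$, the equation for $H$ yields the supersolution inequality and the asymptotic \eqref{asymp-W} gives the claimed $\limsup$ bound.
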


\begin{proof}
Without loss of generality, we can assume that \(m < \lambda^2 \rho^{N - \alpha - \gamma}\).
Let \(H_m\) be the solution of \eqref{fundamental-2} given by Proposition~\ref{prop-exp-lambda}.
By Lemma~\ref{lemmaUpperasympt}, there exists \(C > 0\) such that for every \(x \in \R^N \setminus \Bar{B}_{\rho}\),
\[
  (I_\alpha\ast H_m^p)(x)\le\frac{C}{\abs{x}^{N - \alpha}},
\]
for some \(C > 0\).
Set \(u_\mu= \mu H_m\). Then if \(0<\mu<\left(\frac{m}{C}\right)^{1/p}\), one has for every \(x \in \R^N \setminus \Bar{B}_{\rho}\)
\[
  -\Delta u_\mu(x)+\frac{\lambda^2}{\abs{x}^{\gamma}}u_\mu(x)=\frac{m}{\abs{x}^{N - \alpha}}u_\mu(x)\ge \bigl(I_\alpha\ast u_\mu^p(x)\bigr)\,u_\mu(x),
\]
that is, \(u_\mu\) is the required solution of \eqref{eqChoquardSlow}.
\end{proof}

\section{Equation with slow decay potentials: case $q < 1$.}
\label{sectSlowSmallq}

\subsection{Nonexistence.}
We shall establish two qualitatively different nonexistence results,
first in the region where \(q<1\) and \(p + q\ge1\), and second in the sublinear region \(p + q<1\).
We will see that the values \(\gamma=N - \alpha\) and  \(\gamma=-\alpha\) represent the critical decay rate thresholds where different mechanisms are responsible for the existence and nonexistence of positive solutions of \eqref{eqChoquardSlow}.
First we prove nonexistence statements of Theorems~\ref{Thm-slow} and~\ref{Thm-slow-moderate}.

\begin{proposition}\label{nonSlowDecayq}
Let \(N\ge 1\), \(0 < \alpha < N\), \(\gamma < 2\), \(\lambda > 0\), \(q<1\), \(p > 0\) and \(\rho > 0\).
If
\begin{align*}
  p + q &> 1 &
& \text{ and }&
q &\le 1 - \frac{N - \alpha - \gamma}{N}p,\\
\intertext{or}
p + q &= 1&
& \text{ and }&
\gamma &> - \alpha,
\end{align*}
and \(u \ge 0\) is a supersolution of \eqref{eqChoquardSlow} in \(\R^N \setminus \Bar{B}_{\rho}\), then \(u = 0\) in \(\R^N\setminus \Bar{B}_{\rho}\).
\end{proposition}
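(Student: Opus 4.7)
The plan is to derive a contradiction by combining the integral estimate of Lemma~\ref{lemmaLiminfExistence-slow} with a H\"older inequality on annuli. Suppose \(u \ge 0\) is a nontrivial supersolution of \eqref{eqChoquardSlow} on \(\R^N\setminus \Bar{B}_\rho\); by Proposition~\ref{propositionGroundState} we have \(u > 0\) almost everywhere, so \(A := \int_{B_{2\rho}\setminus B_\rho} u^p > 0\). Lemma~\ref{lemmaLiminfExistence-slow} then yields, for every \(R > 2\rho\), the \emph{pair estimate} \(\bigl(\int_{B_{2R}\setminus B_\rho} u^p\bigr)\bigl(\int_{B_{2R}\setminus B_R} u^{q-1}\bigr) \le C R^{2N-\alpha-\gamma}\) and, by dropping the first factor down to \(A\), the \emph{single estimate} \(\int_{B_{2R}\setminus B_R} u^{q-1} \le (C/A) R^{2N-\alpha-\gamma}\).

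Next, since \(p > 0\) and \(q < 1\), H\"older's inequality with conjugate exponents \((p+1-q)/(1-q)\) and \((p+1-q)/p\) gives \(\abs{B_{2R}\setminus B_R} \le C \bigl(\int_{B_{2R}\setminus B_R} u^p\bigr)^{(1-q)/(p+1-q)} \bigl(\int_{B_{2R}\setminus B_R} u^{q-1}\bigr)^{p/(p+1-q)}\). Raising to the power \(p+1-q\) produces the master inequality \(R^{N(p+1-q)} \le C\bigl(\int u^p\bigr)^{1-q}\bigl(\int u^{q-1}\bigr)^p\). Splitting \(\bigl(\int u^{q-1}\bigr)^p = \bigl(\int u^{q-1}\bigr)^{1-q}\bigl(\int u^{q-1}\bigr)^{p+q-1}\), which uses \(p+q \ge 1\), and applying the pair estimate to \(\bigl(\int u^p\bigr)^{1-q}\bigl(\int u^{q-1}\bigr)^{1-q}\) together with the single estimate to the remaining factor, I arrive at \(R^{N(p+1-q)} \le C R^{p(2N-\alpha-\gamma)}\), i.e.\ \(R^{N(1-q) - p(N-\alpha-\gamma)} \le C\). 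In the strictly subcritical regime \(q < 1 - \frac{N-\alpha-\gamma}{N}p\) the exponent on \(R\) is positive, giving a contradiction as \(R \to \infty\). The homogeneous case \(p+q = 1\) with \(\gamma > -\alpha\) is handled identically: here \(1-q = p\) and the master inequality collapses, after a single use of the pair estimate, to \(R^{p(\alpha+\gamma)} \le C\), which is again impossible.

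The delicate point is the critical line \(q = 1 - \frac{N-\alpha-\gamma}{N}p\) combined with \(p+q > 1\), where the scaling above degenerates to \(1 \le C\). Here I follow the strategy used in the free case: instead of bounding \(\int u^p\) above, I solve H\"older for it and insert only the single estimate, obtaining \(\int_{B_{2R}\setminus B_R} u^p \ge c\, A^{p/(1-q)}\) for a universal \(c > 0\) and every \(R > 2\rho\). Summing this lower bound along a dyadic sequence \(R_k = 2^k \rho\) shows \(\int_{B_{2R}\setminus B_\rho} u^p \to \infty\) as \(R \to \infty\). Re-inserting this divergence into the pair estimate of Lemma~\ref{lemmaLiminfExistence-slow} upgrades the single estimate to \(\int_{B_{2R}\setminus B_R} u^{q-1} = o(R^{2N-\alpha-\gamma})\); replaying the argument of the previous paragraph with this sharpened bound turns the critical identity \(R^{N(p+1-q)} = R^{p(2N-\alpha-\gamma)}\) into \(R^{p(2N-\alpha-\gamma)} \le o(R^{p(2N-\alpha-\gamma)})\), the desired contradiction. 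The main obstacle is orchestrating this two-step bootstrap so that the \(o\)-improvement is distributed only among the factors for which the single estimate has been sharpened, which is precisely the role played by the \(1-q\) versus \(p+q-1\) splitting of the exponent \(p\) introduced above.
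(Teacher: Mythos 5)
Your argument is correct and follows essentially the same route as the paper's proof: the same H\"older inequality on annuli, the same splitting of the exponent \(p\) into \((1-q)+(p+q-1)\) combined with the pair and single estimates from Lemma~\ref{lemmaLiminfExistence-slow}, and in the critical case \(q = 1 - \frac{N-\alpha-\gamma}{N}p\) the same bootstrap (a uniform lower bound on \(\int_{B_{2R}\setminus B_R} u^p\), dyadic summation to force \(\int_{B_{2R}\setminus B_\rho} u^p \to \infty\), and the resulting \(o\)-improvement of the single estimate). The separate treatment of \(p+q=1\), \(\gamma>-\alpha\) is a harmless redundancy, since that case already falls under the strictly subcritical regime.
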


The statement simplifies for some values of \(\gamma\): if \(\gamma \ge N - \alpha\), then there is no nontrivial solution for \(q < 1\) whereas if \(\gamma \le - \alpha\) there is no nontrivial solution for \(q <1\) and \(p + q > 1\).

\begin{proof}[Proof of Proposition~\ref{nonSlowDecayq}]
Let \(R>\rho\).
Since \(q < 1\), \(p > 0\), by H\"older's inequality we have
\begin{equation}
\label{eqTSR1}
\begin{split}
  \int_{B_{2R} \setminus B_R} 1 &\le \Bigl( \int_{B_{2R} \setminus B_R} u^p \Bigr)^\frac{1 - q}{p + 1 - q}\Bigl( \int_{B_{2R} \setminus B_R} u^{q - 1} \Bigr)^\frac{p}{p + 1 - q}\\
  &\le \left(\Bigl( \int_{B_{2R} \setminus B_R} u^p \Bigr)\Bigl( \int_{B_{2R} \setminus B_R} u^{q - 1} \Bigr)\right)^\frac{1 - q}{p + 1 - q}\Bigl( \int_{B_{2R} \setminus B_R} u^{q - 1} \Bigr)^\frac{p + q - 1}{p + 1 - q}.
\end{split}
\end{equation}
By Lemma~\ref{lemmaLiminfExistence-slow},
on the one hand
\begin{equation}
\label{eqTSR2}
 \Bigl(\int_{B_{2R} \setminus B_R} u^p \Bigr)\Bigl( \int_{B_{2R} \setminus B_R} u^{q - 1} \Bigr) \le C R^{2N - \alpha - \gamma}
\end{equation}
and on the other hand
\begin{equation}
\label{eqTSR3}
 \int_{B_{2R} \setminus B_R} u^{q - 1} \le C \frac{R^{2N - \alpha - \gamma}}{\displaystyle \int_{B_{2R}  \setminus B_\rho} u^{p}} \le C \frac{R^{2N - \alpha - \gamma}}{\displaystyle \int_{B_{2 \rho}  \setminus B_\rho} u^{p}}.
\end{equation}
This brings a contradiction when \(p + q\ge 1\) and \(q < 1 - \frac{N - \alpha - \gamma}{N}p\).

In the \emph{critical case} \(q = 1 - \frac{N - \alpha - \gamma}{N}p\),
by \eqref{eqTSR1}, \eqref{eqTSR2} and \eqref{eqTSR3}, there exists \(c > 0\) such that
\[
 \int_{B_{2R} \setminus B_R} u^p \ge
 \frac{\Bigl(\displaystyle{\int_{B_{2R} \setminus B_R} 1}\Bigr)^{1 + \frac{p}{1 - q}}} {\displaystyle{\Bigl(\int_{B_{2R} \setminus B_R} u^{1 - q} }\Bigr)^\frac{p}{1 - q} } \ge c R^{N-\frac{p}{1 - q} (N + \alpha + \gamma)}= c.
\]
Therefore,
\[
 \lim_{R \to \infty} \int_{B_{2R}  \setminus B_\rho} u^{p} = \infty
\]
and \eqref{eqTSR3} can be improved to
\[
 \lim_{R \to \infty} R^{\alpha + \gamma - 2 N} \int_{B_{2R} \setminus B_R} u^{q - 1} = 0,
\]
so that we can conclude as previously.
\end{proof}

In the sublinear region \(p + q<1\), described in Theorem~\ref{Thm-slow-fast}, the nonexistence r\'egime is different.

\begin{proposition}\label{nonSlowDecaypq}
Let \(N\ge 1\), \(0 < \alpha < N\), \(\gamma < 2\), \(\lambda > 0\), \(p>0\), \(q < 1\) and \(\rho > 0\).
If \(p + q<1\),
\[
  q \le 1 + \frac{\gamma}{\alpha}p,
\]
and \(u \ge 0\) is a supersolution of \eqref{eqChoquardSlow} in \(\R^N \setminus \Bar{B}_{\rho}\) then \(u = 0\) in \(\R^N\setminus \Bar{B}_{\rho}\).
\end{proposition}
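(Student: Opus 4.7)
The plan is to adapt the argument used for the globally sublinear case of the unperturbed equation (the last nonexistence proposition in the proof of Theorem~\ref{Thm-free}), substituting the slow-decay integral estimate of Lemma~\ref{lemmaLiminfExistence-slow} in place of Lemma~\ref{lemmaLiminfExistence}. The key point is that in the sublinear r\'egime \(p+q<1\) we have \(1-q-p>0\), so H\"older's inequality applied to \(\int 1 = \int u^{pa} u^{(q-1)b}\) with \(a=\tfrac{1-q}{p+1-q}\) and \(b=\tfrac{p}{p+1-q}\) may be split, and the integrability condition \eqref{equLp} provides an upper bound on \(\int_{B_{2R}\setminus B_R} u^p\) stronger than what Lemma~\ref{lemmaLiminfExistence-slow} alone yields.

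Concretely, I would first apply H\"older's inequality with conjugate exponents \(\tfrac{p+1-q}{1-q}\) and \(\tfrac{p+1-q}{p}\) to obtain
\[
\int_{B_{2R}\setminus B_R} 1 \le \Bigl(\int_{B_{2R}\setminus B_R} u^p\Bigr)^{\frac{1-q}{p+1-q}} \Bigl(\int_{B_{2R}\setminus B_R} u^{q-1}\Bigr)^{\frac{p}{p+1-q}}.
\]
Since \(1-q-p>0\), rewrite the first factor as \((\int u^p)^{\frac{1-q-p}{p+1-q}}(\int u^p)^{\frac{p}{p+1-q}}\) and pair the second copy of \(\int u^p\) with \(\int u^{q-1}\); by Lemma~\ref{lemmaLiminfExistence-slow} (using \(\int_{B_{2R}\setminus B_R} u^p \le \int_{B_{2R}\setminus B_\rho}u^p\)), the resulting product is controlled by \(C R^{2N-\alpha-\gamma}\). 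Rearranging yields the central inequality
\[
R^{N(1-q)-p(N-\alpha-\gamma)} \le C \Bigl(\int_{B_{2R}\setminus B_R} u^p\Bigr)^{1-q-p}.
\]

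The second step is the algebraic identity
\[
N(1-q)-p(N-\alpha-\gamma) = (N-\alpha)(1-q-p) + \bigl(\alpha(1-q)+p\gamma\bigr),
\]
which is tailor-made for the hypothesis: \(q\le 1+\tfrac{\gamma}{\alpha}p\) is equivalent to \(\alpha(1-q)+p\gamma\ge 0\). Hence the exponent on the left is at least \((N-\alpha)(1-q-p)\), and extracting the \((1-q-p)\)-th root (permissible since \(1-q-p>0\)) gives \(R^{N-\alpha} \le C'\int_{B_{2R}\setminus B_R} u^p\). On the other hand, \eqref{equLp} combined with \(1+|x|^{N-\alpha}\simeq R^{N-\alpha}\) on the annulus and Lebesgue's dominated convergence theorem yields \(\int_{B_{2R}\setminus B_R} u^p = o(R^{N-\alpha})\) as \(R\to\infty\); this is the desired contradiction.

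There is no real obstacle: both the strict subcritical case \(q<1+\tfrac{\gamma}{\alpha}p\) and the critical equality \(q=1+\tfrac{\gamma}{\alpha}p\) are handled uniformly by the same identity, and in either case the contradiction comes from the simple comparison \(R^{N-\alpha}\le o(R^{N-\alpha})\). The only point deserving care is the verification of the identity for the exponent and the correct bookkeeping of the domains of integration (\(B_{2R}\setminus B_\rho\) versus \(B_{2R}\setminus B_R\)), which is why we slightly enlarge the domain of \(\int u^p\) when invoking Lemma~\ref{lemmaLiminfExistence-slow}.
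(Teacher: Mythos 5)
Your proposal is correct and follows essentially the same route as the paper: the identical H\"older splitting of \(\int_{B_{2R}\setminus B_R}1\), the bound of Lemma~\ref{lemmaLiminfExistence-slow}, and the decay \(\int_{B_{2R}\setminus B_R}u^p=o(R^{N-\alpha})\) coming from \eqref{equLp}. The only difference is cosmetic bookkeeping: you isolate \(\int_{B_{2R}\setminus B_R}u^p\) and contradict \eqref{equLp} directly, whereas the paper compares the growth of the product \(\bigl(\int u^p\bigr)\bigl(\int u^{q-1}\bigr)\) with the bound \(CR^{2N-\alpha-\gamma}\); the two rearrangements are equivalent and both handle the critical case \(q=1+\frac{\gamma}{\alpha}p\) uniformly.
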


If \(\gamma \ge -\alpha\), this proposition merely states that there is no nontrivial supersolution for \(p + q < 1\).

\begin{proof}[Proof of Proposition~\ref{nonSlowDecaypq}]
Let \(R>\rho\).
Since \(p + q  < 1\), by the H\"older inequality we have
\begin{equation}
\label{eqAUIE}
\begin{split}
  \int_{B_{2R} \setminus B_R} 1 &\le \Bigl( \int_{B_{2R} \setminus B_R} u^p \Bigr)^\frac{1 - q}{p + 1 - q}\Bigl( \int_{B_{2R} \setminus B_R} u^{q - 1} \Bigr)^\frac{p}{p + 1 - q}\\
  &\le \Bigl( \int_{B_{2R} \setminus B_R} u^p \Bigr)^\frac{1 - q - p}{p + 1 - q}
  \left(\Bigl( \int_{B_{2R} \setminus B_R} u^{p} \Bigr)\Bigl( \int_{B_{2R} \setminus B_R} u^{q - 1} \Bigr)\right)^\frac{p}{p + 1 - q}.
\end{split}
\end{equation}
By \eqref{equLp} and by Lebesgue's dominated convergence theorem, one has
\[
  \lim_{R \to \infty} R^{N - \alpha} \int_{B_{2R} \setminus B_R} u^p = 0,
\]
hence by \eqref{eqAUIE}
\[
 \lim_{R \to \infty} R^{-(2 N - \alpha) - \alpha \frac{1 - q}{p}} \Bigl( \int_{B_{2R} \setminus B_R} u^{p} \Bigr)\Bigl( \int_{B_{2R} \setminus B_R} u^{q - 1} \Bigr) = \infty.
\]
This brings a contradiction with Lemma~\ref{lemmaLiminfExistence-slow} when \(q \le 1 + \frac{\gamma}{\alpha}p\).
\end{proof}

\subsection{Pointwise decay bounds.}
In the sublinear decay region \(q<1\) the exponential decay estimates of Proposition~\ref{prop-exp-lambda} are no longer relevant.
In fact, if \(q<1\) then nontrivial nonnegative supersolution of \eqref{eqChoquardSlow} in \(\R^N \setminus \Bar{B}_{\rho}\)
decay at a polynomial rate. We prove this in several steps.
We first observe that the decay of \(u\) is related to the behavior of the integral of \(u^p\) on large balls.

\begin{lemma}
\label{lemmaLimsupExistence-local}
Let \(N\ge 1\), \(\gamma<2\), \(\lambda > 0\), \(0 < \alpha < N\), \(p>0\), \(q<1\) and \(\rho > 0\).
If \(u\ge 0\) is a nontrivial supersolution of \eqref{eqChoquardSlow} in \(\R^N \setminus \Bar{B}_{\rho}\), then
\begin{equation*}
 \liminf_{\abs{x} \to \infty} \frac{\abs{x}^{N - \alpha - \gamma} }{\displaystyle \int_{B_{2\abs{x}} \setminus B_\rho} u^p} u (x)^{1 - q} > 0.
\end{equation*}
\end{lemma}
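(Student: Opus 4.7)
The plan is to derive the pointwise lower bound on $u(x)^{1-q}$ by combining the integral upper bound of Lemma~\ref{lemmaLiminfExistence-slow} with the weak Harnack inequality of Lemma~\ref{P-XXX}, in the spirit of the proof of Proposition~\ref{propLimsupExistence-local-free} for the free-Laplacian case. Without loss of generality $u>0$ almost everywhere, by the nonlocal positivity principle of Proposition~\ref{propositionGroundState}, so the quantities $u^{q-1}$ below make sense.

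First I would revisit the proof of Lemma~\ref{lemmaLiminfExistence-slow}: the scaled test function $\varphi_R$ there is supported in $B_{4R}\setminus\Bar{B}_{R/2}$ and equals $1$ on $B_{2R}\setminus\Bar{B}_R$, so Proposition~\ref{propositionGroundState} in fact yields the stronger bound
\[
\Bigl(\int_{B_{4R} \setminus B_\rho} u^p\Bigr)\Bigl(\int_{B_{2R} \setminus B_R} u^{q-1}\Bigr) \le C R^{2N - \alpha - \gamma},
\]
the printed form being a weakening via $B_{2R}\subset B_{4R}$. Applying Lemma~\ref{P-XXX} to $u$ as a nonnegative supersolution of $-\Delta u + V u \ge 0$ gives
\[
\inf_{B_{5R/3} \setminus B_{4R/3}} u \ge c \Bigl(R^{-N} \int_{B_{2R} \setminus B_R} u^{q-1}\Bigr)^{1/(q-1)},
\]
and since $1/(q-1)<0$, combining these two displays yields
\[
\inf_{B_{5R/3} \setminus B_{4R/3}} u^{1-q} \ge c' \frac{\int_{B_{4R} \setminus B_\rho} u^p}{R^{N-\alpha-\gamma}}.
\]
For each $x$ with $\abs{x}$ large, I would then choose $R=2\abs{x}/3$, so that $\abs{x}\in(8\abs{x}/9,10\abs{x}/9)=(4R/3,5R/3)$ and thus $x\in B_{5R/3}\setminus\Bar{B}_{4R/3}$; moreover $4R=8\abs{x}/3>2\abs{x}$, so monotonicity of $R\mapsto \int_{B_{2R}\setminus B_\rho} u^p$ yields $\int_{B_{4R}\setminus B_\rho}u^p \ge \int_{B_{2\abs{x}}\setminus B_\rho}u^p$, whence
\[
u(x)^{1-q} \ge c''\,\frac{\int_{B_{2\abs{x}}\setminus B_\rho} u^p}{\abs{x}^{N-\alpha-\gamma}},
\]
uniformly for $\abs{x}$ large, which is exactly the claimed $\liminf$ estimate.

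The delicate point is the coordination between the weak Harnack shell $B_{5R/3}\setminus\Bar{B}_{4R/3}$, which forces $R\le 3\abs{x}/4<\abs{x}$ for $x$ to sit in it, and the requirement to lower-bound $u(x)^{1-q}$ by an integral over $B_{2\abs{x}}$, which naively would push $R$ above $\abs{x}$. The stronger version of Lemma~\ref{lemmaLiminfExistence-slow} extracted from its proof resolves this tension: with the $u^p$-integral read over $B_{4R}$ rather than $B_{2R}$, the choice $R=2\abs{x}/3$ simultaneously places $x$ inside the Harnack shell and guarantees $B_{4R}\supset B_{2\abs{x}}$.
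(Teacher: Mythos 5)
Your strengthening of Lemma~\ref{lemmaLiminfExistence-slow} (reading the $u^p$-integral over $B_{4R}$, since $\supp\varphi_R\subset B_{4R}$) and the final bookkeeping with $R=2\abs{x}/3$ are fine, but the middle step is a genuine gap: the weak Harnack inequality of Lemma~\ref{P-XXX} does not yield $\inf_{B_{5R/3}\setminus B_{4R/3}}u\ge c\bigl(R^{-N}\int_{B_{2R}\setminus B_R}u^{q-1}\bigr)^{1/(q-1)}$ with $c$ independent of $R$. The constant in Lemma~\ref{P-XXX} (via \cite{GilbargTrudinger1983}*{Theorem 8.18}) depends on the zeroth-order coefficient through the dimensionless quantity (coefficient)$\times$(radius)$^2$; to apply it at scale $R$ you must take the coefficient $\sup_{B_{2R}\setminus B_R}\lambda^2\abs{x}^{-\gamma}\sim\lambda^2R^{-\gamma}$, and $\lambda^2R^{-\gamma}\cdot R^2=\lambda^2R^{2-\gamma}\to\infty$ precisely because $\gamma<2$ (for $\gamma\le 0$ the potential even grows). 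In Proposition~\ref{propLimsupExistence-local-free} the analogous step is legitimate because $-\Delta u\ge0$ is scale invariant; here $u$ is only a supersolution of $-\Delta u+Vu\ge0$, and your scale-$R$ estimate is in fact false: for $\gamma=0$ the positive solution $u(x)=\cosh\bigl(\lambda(x_1-\tfrac{3R}{2})\bigr)$ of $-\Delta u+\lambda^2u=0$ has $\inf_{B_{5R/3}\setminus B_{4R/3}}u=1$, while $R^{-N}\int_{B_{2R}\setminus B_R}u^{q-1}\le C\bigl((1-q)\lambda R\bigr)^{-1}$, so the right-hand side of your display grows like $R^{1/(1-q)}$ and the inequality fails for every fixed $c$ once $R$ is large. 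More generally, positive supersolutions of $-\Delta+\lambda^2\abs{x}^{-\gamma}$ can vary by a factor $\exp\bigl(c\lambda R^{1-\gamma/2}\bigr)$ across an annulus of width $R$, so no Harnack-type comparison with constants uniform in $R$ is available at that scale, and since the desired conclusion is only polynomial in $\abs{x}$, an exponentially degenerating constant destroys the argument.

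This is exactly where the paper's proof departs from the free case: it abandons dyadic annuli and applies the positivity principle of Proposition~\ref{propositionGroundState} to test functions $\psi_x(y)=\varphi\bigl((y-x)/\abs{x}^{\gamma/2}\bigr)$ supported in a ball of radius $\abs{x}^{\gamma/2}$ centered at the point $x$ itself. At this length scale both $\int\abs{\nabla\psi_x}^2$ and $\int V\abs{\psi_x}^2$ are of order $\abs{x}^{\gamma\frac{N-2}{2}}$, and the positivity principle, used with the ball $B_{2\abs{x}}$ containing $\supp\psi_x$, gives $\int_{B(x)}u^{q-1}\le C\,\abs{x}^{\gamma\frac{N-2}{2}}\abs{x}^{N-\alpha}\bigl(\int_{B_{2\abs{x}}\setminus B_\rho}u^p\bigr)^{-1}$ over the small ball $B(x)$ where $\psi_x=1$; the weak Harnack inequality is then invoked only inside that small ball, where after rescaling the potential is bounded uniformly in $x$, so the constant of Lemma~\ref{P-XXX} is uniform and the stated lower bound for $u(x)^{1-q}$ follows. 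Any repair of your annular scheme would have to pass to this natural length scale $\abs{x}^{\gamma/2}$, which essentially reproduces the paper's argument.
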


\begin{proof}
Choose a function \(\varphi \in C^\infty_c (\R^N)\) such that \(\varphi = 1\) on \(B_{1/2}\) and \(\supp \varphi \subset B_1\).
For \(x \in \R^N \setminus B_r\) define
\[
  \psi_x (y) = \varphi \Bigl( \frac{y - x}{\abs{x}^\frac{\gamma}{2}} \Bigr).
\]
If \(\abs{x}\) is large enough, one has \(\supp \psi_x \cap B_\rho = \emptyset\).
Note that \(\varphi \ge 1 \) on \(B_{\abs{x}^\gamma / 2}(x)\) and that
\[
 \int_{\R^N \setminus B_r} \abs{\nabla \psi_x (y)}^2 + \frac{1}{\abs{x}^\gamma} \abs{\psi_x (y)}^2\,dy
 \le C \abs{x}^{\gamma \frac{N - 2}{2} }.
\]
By Proposition~\ref{propositionGroundState}, one has
\[
  \frac{1}{\abs{x}^{\gamma \frac{N - 2}{2} }} \int_{B_{\abs{x}^\gamma / 2}(x)} u^{q - 1} \le C \frac{\abs{x}^{N - \alpha - \gamma} }{\displaystyle \int_{B_{2\abs{x}} \setminus B_r} u^p}.
\]
Since \(q < 1\), by the weak Harnack inequality of Proposition~\ref{P-XXX},
applied in the ball \(B_{\abs{x}^\gamma / 2}(x)\), we conclude that
\[
  u (x)^{1 - q} \ge \frac{c}{\abs{x}^{N - \alpha - \gamma}} \int_{B_{2\abs{x}} \setminus B_r} u^p,
\]
where the constant \(c > 0\) does not depend on \(x \in \R^N \setminus B_r\).
\end{proof}

As first consequence, we have the following asymptotics:

\begin{proposition}
\label{propositionSlowDecayq}
Let \(N\ge 1\), \(\gamma<2\), \(\lambda > 0\), \(0 < \alpha < N\), \(p>0\), \(q<1\) and \(\rho > 0\).
If \(u\ge 0\) is a nontrivial supersolution of \eqref{eqChoquardSlow} in \(\R^N \setminus \Bar{B}_{\rho}\), then
\begin{equation*}
 \liminf_{\abs{x} \to \infty} \abs{x}^{\frac{N - \alpha - \gamma}{1-q}} u (x) > 0.
\end{equation*}
\end{proposition}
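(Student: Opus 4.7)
The plan is to combine Lemma~\ref{lemmaLimsupExistence-local} with the nonlocal positivity principle of Proposition~\ref{propositionGroundState}. The lemma directly yields a pointwise lower bound on \(u(x)^{1-q}\) in terms of the ratio of \(\int_{B_{2\abs{x}}\setminus B_\rho} u^p\) to \(\abs{x}^{N-\alpha-\gamma}\), so the only remaining task is to show that this integral is bounded below by a fixed positive constant as \(\abs{x}\to\infty\).

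To that end, I would first use the fact that \(u\) is a nontrivial distributional supersolution of \eqref{eqChoquardSlow}: Proposition~\ref{propositionGroundState} then provides the dichotomy \(u\equiv 0\) or \(u>0\) almost everywhere in \(\R^N\setminus\Bar{B}_\rho\), and nontriviality rules out the first alternative, so \(u>0\) a.e. Local integrability of \(u^p\) on bounded subsets of \(\R^N\setminus\Bar{B}_\rho\) (granted by \eqref{equLp}, since \((1+\abs{x}^{N-\alpha})^{-1}\) is bounded below on any bounded set) then gives \(C_0:=\int_{B_{2\rho}\setminus B_\rho}u^p\in(0,\infty)\). Because \(u^p\ge 0\), the map \(R\mapsto\int_{B_R\setminus B_\rho}u^p\) is non-decreasing, so for every \(\abs{x}\ge\rho\),
\[
\int_{B_{2\abs{x}}\setminus B_\rho}u^p\ge C_0>0.
\]

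Feeding this uniform lower bound into Lemma~\ref{lemmaLimsupExistence-local} produces constants \(c>0\) and \(R^{\ast}\ge\rho\) such that \(u(x)^{1-q}\ge cC_0\abs{x}^{-(N-\alpha-\gamma)}\) for all \(\abs{x}\ge R^{\ast}\). Since \(1-q>0\), raising both sides to the positive exponent \(1/(1-q)\) preserves the inequality and yields \(u(x)\ge (cC_0)^{1/(1-q)}\abs{x}^{-(N-\alpha-\gamma)/(1-q)}\), from which the claimed positivity of \(\liminf_{\abs{x}\to\infty}\abs{x}^{(N-\alpha-\gamma)/(1-q)}u(x)\) follows immediately. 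There is no real obstacle at this stage: the substantive analytic work is already packaged inside Lemma~\ref{lemmaLimsupExistence-local}, which in turn rests on the quadratic form inequality of Proposition~\ref{propositionGroundState} applied to cutoffs supported on balls of radius \(\abs{x}^{\gamma/2}\) centered at \(x\) together with the weak Harnack inequality to pass from an integral bound on \(u^{q-1}\) to a pointwise bound on \(u\).
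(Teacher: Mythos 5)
Your argument is correct and is essentially the paper's proof: the paper likewise applies Lemma~\ref{lemmaLimsupExistence-local} and simply observes that \(\int_{B_{2\abs{x}}\setminus B_\rho}u^p\ge\int_{B_{2\rho}\setminus B_\rho}u^p>0\) for \(\abs{x}\ge\rho\). Your extra justification of the positivity of that integral via the dichotomy of Proposition~\ref{propositionGroundState} and the local integrability from \eqref{equLp} is a harmless (and correct) elaboration of what the paper leaves implicit.
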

\begin{proof}
Apply Lemma~\ref{lemmaLimsupExistence-local} and note that \(\int_{B_{2\abs{x}} \setminus B_\rho} u^p \ge \int_{B_{2\rho} \setminus B_\rho} u^p > 0\)
when \(\abs{x} \ge \rho\).
\end{proof}

In the fully sublinear case \(p + q<1\) the lower bound of Proposition~\ref{propositionSlowDecayq} can be further improved.

\begin{lemma}
\label{lemmaRieszdivergence}
Let \(N\ge 1\), \(\gamma<2\), \(\lambda > 0\), \(0 < \alpha < N\), \(p>0\), \(p + q<1\) and \(\rho > 0\).
Let \(u\ge 0\) be a nontrivial supersolution of \eqref{eqChoquardSlow} in \(\R^N \setminus \Bar{B}_{\rho}\).
\begin{enumerate}[(i)]
\item If \(q < 1 - \frac{N - \alpha - \gamma}{N}p\) then for every \(R \ge 2 \rho\)
\[
 \int_{B_R \setminus B_\rho} u^p \ge cR^{N + p \frac{\alpha + \gamma}{1 - p - q}}.
\]
\item If \(q = 1 - \frac{N - \alpha - \gamma}{N}p\) then for every \(R \ge 2 \rho\)
\[
  \int_{B_R \setminus B_\rho} u^p \ge c\Bigl(\log \frac{R}{\rho}\Bigr)^{\frac{1-q}{1-p-q}}.
\]
\end{enumerate}
\end{lemma}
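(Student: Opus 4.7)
The plan is to derive a dyadic recurrence for $F(R) := \int_{B_R \setminus B_\rho} u^p$ from the pointwise estimate of Lemma~\ref{lemmaLimsupExistence-local}, and then to unfold this recurrence by elementary means. By Lemma~\ref{lemmaLimsupExistence-local} there is $c > 0$ such that for every $x$ of sufficiently large modulus,
\[
  u(x)^{1-q} \ge c\,\frac{F(2\abs{x})}{\abs{x}^{N-\alpha-\gamma}}.
\]
Raising to the exponent $\sigma := p/(1-q)$, which lies in $(0,1)$ since $p+q<1$, and integrating over the dyadic shell $B_{2R}\setminus B_R$ (bounding $F(2\abs{x}) \ge F(R)$ from monotonicity, and $\abs{x}^{-\sigma(N-\alpha-\gamma)} \ge c_0 R^{-\sigma(N-\alpha-\gamma)}$ on the shell, whatever the sign of $N-\alpha-\gamma$), I expect
\[
  F(2R) - F(R) \;\ge\; c_1\,R^{\theta} F(R)^{\sigma},
  \qquad \theta := N - \sigma(N-\alpha-\gamma).
\]
A direct calculation gives $\theta/(1-\sigma) = N + p(\alpha+\gamma)/(1-p-q)$ and $1/(1-\sigma) = (1-q)/(1-p-q)$, so the two exponents appearing in the statement are already encoded in this single dyadic inequality.

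In case (i) the hypothesis is equivalent to $\theta > 0$. Fix $R_0 \ge 2\rho$ with $F(R_0) > 0$, set $R_k := 2^k R_0$ and $L_k := \log F(R_k)$; the dyadic inequality becomes the affine recurrence $L_{k+1} \ge \sigma L_k + (\theta \log 2)\,k + O(1)$. Since $\sigma \in (0,1)$, comparison with the linear particular solution $k \mapsto k(\theta\log 2)/(1-\sigma)$ yields $L_k \ge k(\theta\log 2)/(1-\sigma) - C$, i.e., $F(R_k) \ge c_2 R_k^{\theta/(1-\sigma)}$, and monotonicity of $F$ extends the bound to all $R \ge 2\rho$.

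In case (ii) we have $\theta = 0$, so the dyadic inequality reads $a_{k+1} - a_k \ge c_1 a_k^{\sigma}$ where $a_k := F(R_k)$. Setting $b_k := a_k^{1-\sigma}$, the tangent-line estimate for the concave function $t \mapsto t^{1-\sigma}$ gives
\[
  b_{k+1} - b_k \;\ge\; (1-\sigma)\,a_{k+1}^{-\sigma}(a_{k+1} - a_k) \;\ge\; c_1(1-\sigma)(a_k/a_{k+1})^{\sigma}.
\]
A short dichotomy --- either $a_{k+1} \le 2 a_k$, yielding $b_{k+1} - b_k \ge c_1(1-\sigma)\,2^{-\sigma}$, or $a_{k+1} > 2 a_k$, yielding $b_{k+1} - b_k \ge (2^{1-\sigma}-1)\,b_0$ --- produces a uniform lower bound $b_{k+1} - b_k \ge c_3 > 0$. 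Summing gives $b_k \ge c_3 k - C$, hence $F(R_k) \ge c_4 k^{1/(1-\sigma)}$; since $k = \log_2(R_k/R_0)$ and $1/(1-\sigma) = (1-q)/(1-p-q)$, monotonicity of $F$ delivers the announced logarithmic bound for all $R \ge 2\rho$.

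The main technical subtlety I anticipate lies in step~1, namely executing cleanly the passage from the pointwise estimate of Lemma~\ref{lemmaLimsupExistence-local} to the dyadic inequality with the correct exponents $\theta$ and $\sigma$: the sign of $N-\alpha-\gamma$ inside the weight must be handled in two subcases, and the replacement $F(2\abs{x}) \ge F(R)$ must be coupled with it so that the gain in the shell is extracted at the right order. The subsequent iterations are routine, the case~(i) one being a standard affine recurrence and the case~(ii) one being the natural discrete counterpart of the ODE $f' = c f^{\sigma}$.
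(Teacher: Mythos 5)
Your argument is correct, and it reaches the dyadic difference inequality by a different route than the paper. The paper works purely at the integral level: it combines the positivity-principle estimate of Lemma~\ref{lemmaLiminfExistence-slow}, namely \(\bigl(\int_{B_{2R}\setminus B_\rho}u^p\bigr)\bigl(\int_{B_{2R}\setminus B_R}u^{q-1}\bigr)\le CR^{2N-\alpha-\gamma}\), with H\"older's inequality on the annulus \(B_{2R}\setminus B_R\) to get the per-shell lower bound \(\int_{B_{2R}\setminus B_R}u^p\ge cR^{N+p\frac{\alpha+\gamma}{1-p-q}}\), which settles case (i) at once and is then ``summed over dyadic annuli'' in case (ii). You instead start from the pointwise estimate of Lemma~\ref{lemmaLimsupExistence-local} (whose proof already invokes the weak Harnack inequality), raise it to the power \(\sigma=p/(1-q)\) and integrate over the shell to obtain \(F(2R)-F(R)\ge c_1R^\theta F(R)^\sigma\); your exponent bookkeeping \(\theta/(1-\sigma)=N+p\frac{\alpha+\gamma}{1-p-q}\), \(1/(1-\sigma)=\frac{1-q}{1-p-q}\) is exact, and your resolutions of the affine recurrence (case (i)) and of the discrete analogue of \(f'=cf^\sigma\) via the concavity/dichotomy argument (case (ii)) are sound. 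Your route uses a slightly heavier input (the Harnack-based pointwise lemma where the paper only needs H\"older), but it pays off precisely in case (ii): the paper's one-line ``summation over dyadic annuli'' is terse, since literally summing the constant per-shell bound only yields the first power of \(\log(R/\rho)\); obtaining the stated exponent \(\frac{1-q}{1-p-q}>1\) requires exactly the kind of difference-inequality iteration you carry out (equivalently, keeping the full integral \(\int_{B_{2R}\setminus B_\rho}u^p\) in the positivity-principle estimate and iterating), so your write-up supplies the detail the paper leaves implicit. The only small point you leave tacit is the passage from the dyadic radii \(R_k\ge R_0\) (with \(R_0\) large enough for the liminf in Lemma~\ref{lemmaLimsupExistence-local} to apply) down to all \(R\ge 2\rho\), which needs \(\int_{B_{2\rho}\setminus B_\rho}u^p>0\) — guaranteed by nontriviality and the a.e.\ positivity from the positivity principle — together with a constant adjustment on the compact range \([2\rho,R_0]\); this is routine.
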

\begin{proof}
From Lemma~\ref{lemmaLiminfExistence-slow}, for \(R \ge 2 \rho\) it holds
\[
 \int_{B_{2R} \setminus B_R} u^p \int_{B_{2R} \setminus B_R} u^{q - 1} \le C R^{2N - \alpha - \gamma}.
\]
On the other hand, since \(q <  1\) by H\"older's inequality we have
\[
 \int_{B_{2R} \setminus B_R} u^{q - 1} \ge \frac{\displaystyle \Bigl(\int_{B_{2R} \setminus B_R} 1 \Bigr)^{1 + \frac{1-q}{p}} }{\displaystyle \Bigl(\int_{B_{2R} \setminus B_R} u^p \Bigr)^\frac{1-q}{p}}.
\]
Since \(2N - \alpha - \gamma \le N(1 + \frac{1-q}{p})\) we obtain
\[
 \int_{B_{2R} \setminus B_R} u^p \le C R^{N - \alpha - \gamma - N \frac{1-q}{p}} \Bigl(\int_{B_{2R} \setminus B_R} u^p \Bigr)^\frac{1-q}{p}.
\]
We deduce that there exists \(c > 0\) such that for every \(R \ge 2 \rho\),
\[
 \int_{B_{2R} \setminus B_R} u^p \ge c R^{N + p \frac{\alpha + \gamma}{1 - p - q}}.
\]
Then the conclusion is immediate when \(q < 1 - \frac{N - \alpha - \gamma}{N}p\),
and follows by summation over dyadic annuli when \(q = 1 - \frac{N - \alpha - \gamma}{N}p\).
\end{proof}

\begin{proposition}
Let \(N\ge 1\), \(\gamma<2\), \(\lambda > 0\), \(0 < \alpha < N\), \(p>0\), \(p + q<1\) and \(\rho > 0\).
Let \(u\ge 0\) be a nontrivial supersolution of \eqref{eqChoquardSlow} in \(\R^N \setminus \Bar{B}_{\rho}\).
\begin{enumerate}
 \item  If \(q < 1 - \frac{N - \alpha - \gamma}{N}p\) then
\begin{equation*}
 \liminf_{\abs{x} \to \infty} \abs{x}^{-\frac{\gamma + \alpha}{1 - q - p}} u (x) > 0.
\end{equation*}
 \item If \(q = 1 - \frac{N - \alpha - \gamma}{N}p\) then
\begin{equation*}
 \liminf_{\abs{x} \to \infty} \abs{x}^{-\frac{\gamma + \alpha}{1 - q - p}}(\log \abs{x})^\frac{-1}{1 - p - q} u (x) > 0.
\end{equation*}
\end{enumerate}
\end{proposition}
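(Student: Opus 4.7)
The plan is to combine the two preceding lemmas in a direct fashion. Lemma~\ref{lemmaLimsupExistence-local} provides the pointwise lower bound
\[
 u(x)^{1-q} \ge c\, \frac{\int_{B_{2\abs{x}}\setminus B_\rho} u^p}{\abs{x}^{N-\alpha-\gamma}}
\]
for all \(\abs{x}\) large enough, reducing the task to quantifying the growth of \(R \mapsto \int_{B_R\setminus B_\rho} u^p\). This growth is precisely what Lemma~\ref{lemmaRieszdivergence} supplies in the regime \(p+q<1\), so nothing beyond a substitution and some arithmetic is required.

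For the strictly subcritical case \(q<1-\frac{N-\alpha-\gamma}{N}p\), plugging the polynomial lower bound \(\int_{B_{2\abs{x}}\setminus B_\rho}u^p \ge c\abs{x}^{N+p\frac{\alpha+\gamma}{1-p-q}}\) into the inequality above gives
\[
 u(x)^{1-q} \ge c'\, \abs{x}^{N+p\frac{\alpha+\gamma}{1-p-q}-(N-\alpha-\gamma)} = c'\, \abs{x}^{(\alpha+\gamma)\frac{1-q}{1-p-q}},
\]
where the simplification uses \(1 + \frac{p}{1-p-q} = \frac{1-q}{1-p-q}\). Extracting the \((1-q)\)-th root yields \(u(x) \ge c''\abs{x}^{\frac{\alpha+\gamma}{1-p-q}}\), which is the first asserted lower bound.

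For the critical case \(q = 1-\frac{N-\alpha-\gamma}{N}p\), Lemma~\ref{lemmaRieszdivergence}(ii) produces only a logarithmic blow-up \(\int_{B_R\setminus B_\rho}u^p \ge c(\log(R/\rho))^{\frac{1-q}{1-p-q}}\). Substitution gives
\[
 u(x) \ge c''' \abs{x}^{-\frac{N-\alpha-\gamma}{1-q}} (\log\abs{x})^\frac{1}{1-p-q}
\]
for large \(\abs{x}\), and the critical identity \(1-q=\frac{N-\alpha-\gamma}{N}p\) rearranges to \(-\frac{N-\alpha-\gamma}{1-q} = \frac{\alpha+\gamma}{1-p-q}\), so the bound takes exactly the form claimed.

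There is no genuine obstacle: both ingredients have been set up in the preceding lemmas, and the only content of the proof is the two algebraic simplifications above. The hypothesis \(p+q<1\) ensures \(1-p-q>0\), keeping denominators and signs well-behaved throughout; note that the resulting exponent \(\frac{\alpha+\gamma}{1-p-q}\) is nonpositive precisely when \(\gamma\le -\alpha\), consistent with the fast-growth regime of Theorem~\ref{Thm-slow-fast} where this proposition lives.
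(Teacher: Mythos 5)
Your proposal is correct and is essentially the paper's own argument: the paper likewise writes the proof as an immediate combination of Lemma~\ref{lemmaLimsupExistence-local} and Lemma~\ref{lemmaRieszdivergence}, and your algebraic simplifications (including the identity \(-\tfrac{N-\alpha-\gamma}{1-q}=\tfrac{\alpha+\gamma}{1-p-q}\) in the critical case) are exactly the bookkeeping left implicit there.
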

\begin{proof}
Writing
\[
  u(x)^{q - 1} \ge \frac{u (x)^{q - 1}}{\displaystyle \int_{B_{2\abs{x}} \setminus B_\rho} u^p}\displaystyle \int_{B_{2\abs{x}} \setminus B_\rho} u^p,
\]
the conclusion follows from Lemmas~\ref{lemmaLimsupExistence-local} and~\ref{lemmaRieszdivergence}.
\end{proof}

\subsection{Optimal decay.}

We complete the proof of Theorem~\ref{Thm-slow-moderate} by constructing explicit supersolutions to \eqref{eqChoquardSlow}.
which show the optimality of nonexistence and decay results of Propositions~\ref{nonSlowDecayq} and~\ref{propositionSlowDecayq}.

\begin{proposition}\label{slow-polynom-pq+optimal}
Let \(N\ge 1\), \(\gamma< 2\), \(\lambda > 0\), \(0 < \alpha < N\), \(p>0\) and \(\rho > 0\).
If
\[
1-\frac{N - \alpha - \gamma}{N}p<q<1,
\]
and \(p + q \ne 1\),
then \eqref{eqChoquardSlow}
admits a radial nontrivial nonnegative supersolution
\(u\in C^\infty (\R^N \setminus\Bar{B}_\rho)\),
such that
\begin{equation*}
 \limsup_{\abs{x} \to \infty} u(x) \abs{x}^{\frac{N - \alpha - \gamma}{1-q}} < \infty.
\end{equation*}
\end{proposition}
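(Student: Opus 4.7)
The plan is to use the scale-invariant power ansatz
\[
 u_\mu(x) = \frac{\mu}{\abs{x}^\sigma}, \qquad \sigma := \frac{N-\alpha-\gamma}{1-q},
\]
which has exactly the decay claimed in the proposition. Note that $\sigma > 0$ and $p\sigma > N$: combining $q<1$ with the hypothesis $q > 1 - \frac{N-\alpha-\gamma}{N}p$ yields $N-\alpha-\gamma > 0$, and the inequality itself is equivalent to $p\sigma > N$.

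A direct radial computation gives
\[
 -\Delta u_\mu(x) + \frac{\lambda^2}{\abs{x}^\gamma}u_\mu(x) = \mu\abs{x}^{-\sigma-\gamma}\bigl[\lambda^2 + \sigma(N-2-\sigma)\abs{x}^{\gamma-2}\bigr].
\]
Since $p\sigma > N$, Lemma~\ref{lemmaUpperasympt} yields a constant $C>0$ (depending only on $\rho$ and the fixed parameters) with $(I_\alpha \ast u_\mu^p)(x) \le C\mu^p \abs{x}^{-(N-\alpha)}$ on $\R^N\setminus\Bar{B}_\rho$. A short algebraic manipulation shows the key identity $N-\alpha + q\sigma = \sigma + \gamma$, so that
\[
 (I_\alpha \ast u_\mu^p)(x)\, u_\mu(x)^q \le C\mu^{p+q}\abs{x}^{-\sigma-\gamma},
\]
and the supersolution inequality collapses to the pointwise algebraic inequality
\[
 \lambda^2 + \sigma(N-2-\sigma)\abs{x}^{\gamma-2} \ge C\mu^{p+q-1}, \qquad x \in \R^N\setminus\Bar{B}_\rho.
\]

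Because $\gamma < 2$, the correction term $\sigma(N-2-\sigma)\abs{x}^{\gamma-2}$ tends to $0$ as $\abs{x}\to\infty$, and hence on $\R^N\setminus\Bar{B}_{R_0}$ for $R_0$ sufficiently large the left-hand side exceeds $\lambda^2/2$. The hypothesis $p+q\ne 1$ is exactly what is needed to fix the scale: one chooses $\mu$ small when $p+q > 1$ and $\mu$ large when $p+q < 1$, so that $C\mu^{p+q-1} \le \lambda^2/2$ in either case.

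The main technical obstacle is the regime $\sigma > N-2$, which occurs for $q$ close to $1$: there the correction term is negative and can swamp $\lambda^2$ near the inner boundary, so the naive ansatz only furnishes a supersolution on $\R^N\setminus\Bar{B}_{R_0}$ for some $R_0\ge\rho$. To extend to $\R^N\setminus\Bar{B}_\rho$ for arbitrary $\rho$, one can glue $u_\mu$ on $\R^N\setminus\Bar{B}_{R_0}$ with a smooth radial extension on the compact annulus $B_{R_0}\setminus\Bar{B}_\rho$ (e.g.\ a sharply decaying exponential profile) whose Laplacian contributes a large positive term to $-\Delta$; the non-local convolution is bounded on this annulus and absorbed into the potential term, yielding the supersolution property on the whole exterior domain.
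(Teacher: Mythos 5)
Your analysis on the outer region is correct and is essentially the paper's argument: the exponent $\sigma=\frac{N-\alpha-\gamma}{1-q}$ satisfies $p\sigma>N$, Lemma~\ref{lemmaUpperasympt} gives $(I_\alpha\ast u_\mu^p)(x)\le C\mu^p\abs{x}^{-(N-\alpha)}$, the identity $N-\alpha+q\sigma=\sigma+\gamma$ reduces everything to $\lambda^2+\sigma(N-2-\sigma)\abs{x}^{\gamma-2}\ge C\mu^{p+q-1}$, and $p+q\ne1$ lets you choose $\mu$ small or large. The genuine gap is the step you yourself flag as the main obstacle: making the construction work on all of $\R^N\setminus\Bar{B}_\rho$ when $\sigma(N-2-\sigma)<0$ and $\rho$ is small. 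Your proposed glue fails as described. A sharply decaying (or sharply growing) radial exponential $w(r)\sim e^{\pm Mr}$ has $-\Delta w\approx -M^2w$, i.e.\ a \emph{large negative} contribution, exactly the opposite of what you claim; such a profile destroys, rather than restores, the supersolution property on the annulus. (More generally, by the positivity principle you cannot have $-\Delta w\ge Kw$ with $w>0$ on a fixed annulus for $K$ beyond its principal Dirichlet eigenvalue, so the strategy ``make $-\Delta$ contribute a large positive term'' is not available; what one must exploit instead is the $\mu$-scaling, which requires the linear part $-\Delta w+\lambda^2\abs{x}^{-\gamma}w$ merely to be bounded below by a positive quantity on the annulus.) In addition, your sketch does not address how the glued function remains a \emph{distributional} supersolution across the interface (this requires $C^1$ matching or a minimum-type construction with the correct kink sign), nor how the modification on the annulus feeds back into the nonlocal term $(I_\alpha\ast u^p)$ on the far region and interacts with the choice of $\mu$.

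The paper sidesteps all of this with one formula: take $v_\nu(x)=(\abs{x}^2+\nu^2)^{-\frac{N-\alpha-\gamma}{2(1-q)}}$. For $\abs{x}$ large the computation reduces to yours (the correction is $O(\abs{x}^{-2})$ and is beaten by $\lambda^2\abs{x}^{-\gamma}$ since $\gamma<2$), while for $\rho\le\abs{x}\le R$ and $\nu$ large the Laplacian term is $\approx \frac{\sigma N}{\nu^2}v_\nu>0$, so $-\Delta v_\nu+\lambda^2\abs{x}^{-\gamma}v_\nu\ge c\abs{x}^{-\sigma-\gamma}$ on the whole exterior domain; then $u_\mu=\mu v_\nu$ with the same Riesz estimate and the same $\mu$-dichotomy finishes the proof, and the supersolution is automatically smooth with the claimed decay. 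If you want to keep a gluing argument, you would need to replace the exponential by a positive solution of the linear equation $-\Delta h+\lambda^2\abs{x}^{-\gamma}h=0$ on the annulus matched in $C^1$ to $\abs{x}^{-\sigma}$ at the interface (backward radial integration preserves positivity), and then track the nonlocal term of the glued function before scaling by $\mu$; as written, your proof does not establish the proposition.
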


Note that the assumption implies that \(\gamma < N - \alpha\).

\begin{proof}[Proof of Proposition~\ref{slow-polynom-pq+optimal}]
Set for \(\nu > 0\) and \(x \in \R^N \setminus \Bar{B}_{\rho}\),
\[
  v_{\nu} (x)=\frac{1}{(\abs{x}^2 + \nu^2)^{\frac{N - \alpha - \gamma}{2(1-q)}}}.
\]
Compute for every \(x \in \R^N \setminus \Bar{B}_{\rho}\),
\begin{multline*}
  - \Delta v_{\nu} (x) + \frac{\lambda^2}{\abs{x}^\gamma} v_{\nu} (x)\\
  = \Bigl( \frac{N - \alpha - \gamma}{(1 - q)(\abs{x}^2 + \nu^2)} \Bigl(N - \Bigl(2 + \frac{N - \alpha - \gamma}{1 - q} \Bigr)\frac{\abs{x}^2}{\abs{x}^2 + \nu^2} \Bigr) + \frac{\lambda^2}{\abs{x}^\gamma} \Bigr) v_\nu (x).
\end{multline*}
Since \(\gamma < 2\), there exists \(R > 0\) such that for every \(\nu > 0\) and \(x \in \R^N \setminus B_R\),
\[
 - \Delta v_{\nu} (x) + \frac{\lambda^2}{\abs{x}^\gamma} v_{\nu} (x) \ge \frac{\lambda^2}{2 \abs{x}^\gamma} v_\nu (x).
\]
If \(\nu > 0\) is sufficiently large, for every \(x \in B_R \setminus \Bar{B}_\rho\),
\[
 - \Delta v_{\nu} (x) + \frac{\lambda^2}{\abs{x}^\gamma} v_{\nu} (x) > 0.
\]
Then there exists \(c > 0\) such that for every \(x \in \R^N \setminus \Bar{B}_\rho\),
\[
 - \Delta v_{\nu} (x) + \frac{\lambda^2}{\abs{x}^\gamma} v_{\nu} (x) \ge \frac{c}{\abs{x}^{\frac{N - \alpha - \gamma}{1 - q} + \gamma}}.
\]

For \(\mu > 0\), set \(u_\mu = \mu v_\nu\).
Since \(p \frac{N - \alpha - \gamma}{1-q} > N\), by Lemma~\ref{lemmaUpperasympt} there exists \(C > 0\) such that for every \(x \in \R^N \setminus \Bar{B}_\rho\),
\begin{equation*}
 (I_\alpha \ast u_\mu^p)(x) \le \frac{C \mu^p}{\abs{x}^{N - \alpha}}.
\end{equation*}
One concludes that \(u\) is a supersolution to \eqref{eqChoquardSlow} in \(\R^N \setminus \Bar{B}_\rho\)
for all sufficiently small \(\mu>0\) if \(p + q>1\), or for all large \(\mu\) if \(p + q<1\).
\end{proof}

Now we complete the proof of Theorem~\ref{Thm-slow-fast} by establishing the optimality
of Propositions~\ref{nonSlowDecayq} and~\ref{propositionSlowDecayq}.

\begin{proposition}
Let \(N\ge 1\), \(\lambda > 0\), \(0 < \alpha < N\), \(p>0\), \(\gamma<-\alpha\) and \(\rho > 0\).
If
\begin{equation*}
q=1-\frac{N - \alpha - \gamma}{N}p,
\end{equation*}
then \eqref{eqChoquardSlow}
admits a radial nontrivial nonnegative supersolution
\(u\in C^\infty (\R^N \setminus\Bar{B}_\rho)\),
such that
\begin{equation*}
 \lim_{\abs{x} \to \infty} u(x) \abs{x}^{\frac{N}{p}}(\log\abs{x})^\frac{1}{1 - q - p} = c>0.
\end{equation*}
\end{proposition}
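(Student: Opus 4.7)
The plan is to exhibit an explicit smooth radial supersolution whose behaviour at infinity matches the lower bound~\eqref{C-2}. Set $\sigma:=\tfrac{1}{1-p-q}$ and observe that $\sigma>0$: indeed $1-p-q=-\tfrac{p(\alpha+\gamma)}{N}>0$ because $\gamma<-\alpha$. A convenient ansatz is
\begin{equation*}
u_\mu(x) = \mu\,(A+\abs{x}^2)^{-\frac{N}{2p}}\bigl(\log(A+\abs{x}^2)\bigr)^{\sigma},
\end{equation*}
with a fixed constant $A>1$ large enough that $\log(A+\abs{x}^2)\ge 1$ on $\R^N\setminus\Bar{B}_\rho$, and a parameter $\mu>0$ to be determined; at infinity $u_\mu(x)=(1+o(1))\,2^\sigma\mu\,\abs{x}^{-N/p}(\log\abs{x})^{\sigma}$.

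First I would expand both sides of the inequality at infinity. Direct differentiation gives $-\Delta u_\mu=O\bigl(\abs{x}^{-N/p-2}(\log\abs{x})^{\sigma}\bigr)$, subdominant compared to $\lambda^2\abs{x}^{-\gamma}u_\mu(x)\sim 2^\sigma\lambda^2\mu\,\abs{x}^{-\gamma-N/p}(\log\abs{x})^{\sigma}$ since $\gamma<2$. The weight $u_\mu^p(x)\sim\mu^p 2^{\sigma p}\abs{x}^{-N}(\log\abs{x})^{\sigma p}$ is borderline non-integrable, and Lemma~\ref{lemmaUpperasympt} together with a direct computation of the dominant piece $\int_{\abs{y}\le\abs{x}/2}u_\mu^p$ yields
\begin{equation*}
(I_\alpha\ast u_\mu^p)(x) = (1+o(1))\,C_0\,\mu^p\,\abs{x}^{\alpha-N}(\log\abs{x})^{\sigma p+1}.
\end{equation*}
Multiplying by $u_\mu^q\sim\mu^q 2^{\sigma q}\abs{x}^{-Nq/p}(\log\abs{x})^{\sigma q}$ and invoking the critical identity $-(N-\alpha)-Nq/p=-\gamma-N/p$ (which is precisely $q=1-\tfrac{N-\alpha-\gamma}{N}p$), we obtain
\begin{equation*}
(I_\alpha\ast u_\mu^p)(x)\,u_\mu(x)^q = (1+o(1))\,K\,\mu^{p+q}\,\abs{x}^{-\gamma-N/p}(\log\abs{x})^{\sigma(p+q)+1}
\end{equation*}
for an explicit constant $K>0$.

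The crucial algebraic identity $\sigma(1-p-q)=1$ forces $\sigma(p+q)+1=\sigma$, so the two sides have matching $\abs{x}$-powers \emph{and} matching $\log$-powers. The supersolution condition thus reduces to the leading-coefficient inequality
\begin{equation*}
2^\sigma\lambda^2\mu \ge K\mu^{p+q}+o(1),
\end{equation*}
equivalently $\mu^{1-p-q}\ge K/(2^\sigma\lambda^2)+o(1)$. Since $1-p-q>0$, this holds for every sufficiently large $\mu$ on some exterior domain $\R^N\setminus\Bar{B}_{\rho_0}$; by further enlarging $A$ (or rescaling) one arranges $\rho_0=\rho$, and the inequality on the remaining compact annular region $\{\rho\le\abs{x}\le\rho_0\}$ follows from continuity and the fact that $-\Delta u_\mu+\lambda^2\abs{x}^{-\gamma}u_\mu$ is strictly positive there for $A$ large. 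Reading off the asymptotic then yields $\lim_{\abs{x}\to\infty}u_\mu(x)\abs{x}^{N/p}(\log\abs{x})^{-\sigma}=2^\sigma\mu=:c>0$, matching the optimality claim corresponding to~\eqref{C-2}.

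The main obstacle is the sharp marginal-case asymptotic for the Riesz potential. One must extract from Lemma~\ref{lemmaUpperasympt}, or establish by a short direct argument, a genuine two-sided $(1+o(1))$-asymptotic $(I_\alpha\ast u_\mu^p)(x)\sim C\abs{x}^{\alpha-N}(\log\abs{x})^{\sigma p+1}$ for the borderline weight $u_\mu^p\sim\abs{x}^{-N}(\log\abs{x})^{\sigma p}$, with a remainder uniform enough to be absorbed in the leading-coefficient comparison. The contributions from $\{\abs{y}\ge 2\abs{x}\}$ and from the annulus $\{\abs{x}/2\le\abs{y}\le 2\abs{x}\}$ only enter at the strictly smaller order $\abs{x}^{\alpha-N}(\log\abs{x})^{\sigma p}$, smaller by a full power of $\log\abs{x}$, and are therefore harmless; pinning this down cleanly is the one nontrivial technical point beyond routine bookkeeping.
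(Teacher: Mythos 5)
Your construction is essentially the paper's own proof: the same ansatz \(\mu\,(\nu^2+\abs{x}^2)^{-\frac{N}{2p}}\bigl(\log(\nu^2+\abs{x}^2)\bigr)^{\frac{1}{1-p-q}}\), the same observation that \(\gamma<2\) lets the potential term \(\lambda^2\abs{x}^{-\gamma}u_\mu\) carry the leading order \(\abs{x}^{-\frac{N}{p}-\gamma}(\log\abs{x})^{\frac{1}{1-p-q}}\), the same exponent bookkeeping from \(q=1-\frac{N-\alpha-\gamma}{N}p\), and the same choice of large \(\mu\) exploiting \(1-p-q>0\). The one adjustment: the borderline Riesz estimate you single out as the main technical obstacle is exactly the paper's Lemma~\ref{lemmaUpperasymptCritical} (not Lemma~\ref{lemmaUpperasympt}, which only covers pure power weights), and only that one-sided upper bound is required, since the limit \(c\) in the statement is read off directly from the explicit ansatz, so the two-sided \((1+o(1))\) asymptotic for \(I_\alpha\ast u_\mu^p\) is not needed.
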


\begin{proof}
Set for \(\nu > 1\) and \(x \in \R^N \setminus \Bar{B}_\rho\),
\[
 v_\nu (x)= \frac{\big(\log (\abs{x}^2 + \abs{\nu}^2)\big)^\frac{1}{1 - q - p}}{(\abs{x}^2 + \nu^2)^{\frac{N}{2 p}}}.
\]
One has
\begin{multline*}
  - \Delta v_\nu (x)
= \biggl( \frac{N^2}{p} - \frac{2 N}{(1 - p - q)\log (\abs{x}^2 + \nu^2)}\\
+
\Bigl( - \frac{N (N + 2 p)}{p^2} + \frac{4 (N + p)}{p (1 - p - q) \log (\abs{x}^2 + \nu^2)}\\
- \frac{4 (p + q)}{(1 - p - q)^2 \bigl(\log (\abs{x}^2 + \nu^2)\bigr)^2} \Bigr) \frac{\abs{x}^2}{\abs{x}^2 + \nu^2} \biggr)v_\nu (x).
\end{multline*}
One observes that as \(\gamma < 2\), if \(\nu\) is large enough, then there exists \(c > 0\) such that for every \(x \in \R^N \setminus \Bar{B}_\rho\),
\[
 - \Delta v_\nu (x) \ge \frac{c}{\abs{x}^{-\frac{\alpha + \gamma}{1 - p - q} + \gamma}}.
\]
Set now \(u_\mu = \mu v_\nu\).
By Lemma~\ref{lemmaUpperasymptCritical}, there exists \(C > 0\) such that for every \(x \in \R^N \setminus \Bar{B}_\rho\),
\begin{equation*}
 (I_\alpha \ast u^p)(x) \le \frac{C\mu^p\big(1+\log\frac{\abs{x}}{\rho}\big)^{\frac{p}{1 - q - p}+1}}{\abs{x}^{N - \alpha}}.
\end{equation*}
Since \(q=1-\frac{N - \alpha - \gamma}{N}p < 1 - p\), \(u_\mu\) is a supersolution to \eqref{eqChoquardSlow} in \(\R^N \setminus \Bar{B}_\rho\)
for all sufficiently large \(\mu>0\).
\end{proof}

\begin{proposition}
Let \(N\ge 1\), \(\lambda > 0\), \(0 < \alpha < N\), \(p>0\), \(\gamma<-\alpha\) and \(\rho > 0\). If
\begin{equation*}
1+\frac{\gamma}{\alpha}p<q<1-\frac{N - \alpha - \gamma}{N}p,
\end{equation*}
then \eqref{eqChoquardSlow} admits a nontrivial nonnegative supersolution \(u\) in \(\R^N \setminus \Bar{B}_\rho\) such that
\begin{equation*}
 \lim_{\abs{x} \to \infty} u(x) \abs{x}^{-\frac{\alpha + \gamma}{1 - q - p}} = c>0.
\end{equation*}
\end{proposition}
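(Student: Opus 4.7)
The plan is to construct an explicit radial supersolution that mirrors the ansatz of Proposition~\ref{slow-polynom-pq+optimal} but with an exponent tuned to the new decay rate. Set
\[
\beta := -\frac{\alpha+\gamma}{1-q-p};
\]
the hypothesis $\gamma<-\alpha$ makes the numerator positive, while $q<1-\tfrac{N-\alpha-\gamma}{N}p$ together with $\tfrac{N-\alpha-\gamma}{N}>1$ forces $1-q-p>0$, so $\beta>0$. Define $u_\mu:=\mu v_\nu$ with $v_\nu(x):=(\abs{x}^2+\nu^2)^{-\beta/2}$, which has the required asymptotics $v_\nu(x)\abs{x}^\beta\to 1$ as $\abs{x}\to\infty$; the announced limit will then be $c=\mu$.

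The first step is a pointwise lower bound on the linear operator. A direct computation gives
\[
-\Delta v_\nu(x) + \frac{\lambda^2}{\abs{x}^\gamma}v_\nu(x) = \left(\frac{\beta\bigl(N - (\beta+2)\tfrac{\abs{x}^2}{\abs{x}^2+\nu^2}\bigr)}{\abs{x}^2+\nu^2}+\frac{\lambda^2}{\abs{x}^\gamma}\right)v_\nu(x).
\]
Since $\gamma<2$, the potential $\lambda^2/\abs{x}^\gamma$ dominates the Laplacian term for $\abs{x}$ large; taking $\nu$ sufficiently large also keeps the bracket positive on $B_R\setminus\bar{B}_\rho$ for any fixed $R$. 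This produces a constant $c>0$ such that
\[
-\Delta v_\nu(x)+\frac{\lambda^2}{\abs{x}^\gamma}v_\nu(x) \ge \frac{c}{\abs{x}^{\gamma+\beta}} \quad\text{on } \R^N \setminus \bar{B}_\rho.
\]

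The second step controls the nonlinearity. The interval $1+\tfrac{\gamma}{\alpha}p<q<1-\tfrac{N-\alpha-\gamma}{N}p$ is precisely the range $\beta p\in(\alpha,N)$, which places us in the subcritical regime of Lemma~\ref{lemmaUpperasympt}; it provides $C>0$ with $(I_\alpha\ast v_\nu^p)(x)\le C\abs{x}^{\alpha-\beta p}$ on $\R^N\setminus\bar{B}_\rho$. Combined with the bound $v_\nu(x)^q\le C'\abs{x}^{-\beta q}$ valid for $\abs{x}\ge\rho$, and the key identity
\[
\beta(p+q-1) = \alpha+\gamma,
\]
which is an immediate rearrangement of the definition of $\beta$, this yields
\[
(I_\alpha\ast u_\mu^p)(x)\,u_\mu(x)^q \le \frac{C\mu^{p+q}}{\abs{x}^{\beta(p+q)-\alpha}} = \frac{C\mu^{p+q}}{\abs{x}^{\gamma+\beta}}.
\]

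Combining the two steps, $u_\mu$ is a supersolution of \eqref{eqChoquardSlow} as soon as $c\mu\ge C\mu^{p+q}$, i.e.\ $\mu^{1-p-q}\ge C/c$; since $p+q<1$ this holds for every sufficiently large $\mu$, and the construction is complete. The main obstacle is purely algebraic bookkeeping: one must check that the prescribed range of $q$ translates exactly into $\beta p\in(\alpha,N)$ — so that Lemma~\ref{lemmaUpperasympt} applies in its clean power form — and that the identity $\beta(p+q-1)=\alpha+\gamma$ makes the two sides of the differential inequality share the same power of $\abs{x}$. Once these alignments are in place, the condition $p+q<1$ makes the choice of $\mu$ immediate.
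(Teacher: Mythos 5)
Your proposal is correct and follows essentially the same route as the paper: the same ansatz \(u_\mu=\mu(\abs{x}^2+\nu^2)^{-\beta/2}\) with \(\beta=-\frac{\alpha+\gamma}{1-q-p}\), the same choice of \(\nu\) (as in Proposition~\ref{slow-polynom-pq+optimal}) to get the lower bound \(c/\abs{x}^{\beta+\gamma}\) on the linear part, the same application of Lemma~\ref{lemmaUpperasympt} after checking \(\alpha<\beta p<N\), and the same conclusion by taking \(\mu\) large since \(p+q<1\). Your algebraic verifications (that the stated range of \(q\) is exactly \(\beta p\in(\alpha,N)\) and that \(\beta(p+q-1)=\alpha+\gamma\) aligns the powers) match what the paper uses implicitly.
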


\begin{proof}
Set
\[
u_\mu (x)=\frac{\mu}{(\abs{x}^2 + \nu^2)^{-\frac{\alpha + \gamma}{2 (1 - q - p)}}},
\]
where \(\nu\) is chosen as in the proof of Proposition~\ref{slow-polynom-pq+optimal} so that there exists \(c > 0\) such that for every \(x \in \R^N \setminus \Bar{B}_\rho\),
\[
 - \Delta u_{\mu} (x) + \frac{\lambda^2}{\abs{x}^\gamma} u_{\mu} (x) \ge \frac{c \mu}{\abs{x}^{-\frac{\alpha + \gamma}{1 - q - p} + \gamma}}.
\]

By our assumptions, we have \(\alpha<-\frac{p(\alpha + \gamma)}{1 - q - p}<N\).
Therefore, by Lemma~\ref{lemmaUpperasympt} we obtain for every \(x \in \R^N \setminus \Bar{B}_\rho\),
\begin{equation*}
 (I_\alpha \ast u_\mu^p)(x) \le \frac{C\mu^p}{\abs{x}^{-\frac{p(\alpha + \gamma)}{1 - q - p}-\alpha}}.
\end{equation*}
Since \(p + q < 1\), we conclude that \(u\) a supersolution to \eqref{eqChoquardSlow} in \(\R^N \setminus \Bar{B}_\rho\)
for all sufficiently large \(\mu>0\).
\end{proof}

\subsection{Homogeneous regime $p+q=1$: proof of Theorem~\ref{t-pq1}.}\label{sect-pq1}

We now consider the homogeneous case of equation \eqref{eqChoquardSlow},
that is the equation
\begin{equation}\label{eqChoquardSlow-hom}
-\Delta u+\frac{\lambda^2}{\abs{x}^\gamma} u=(I_\alpha\ast u^p)u^{1-p}.
\end{equation}
In order to study \eqref{eqChoquardSlow-hom}
we will need a modified version of the nonlocal positivity principle of Proposition~\ref{propositionGroundState} which allows a more accurate control of constants in the integral inequality.

\begin{lemma}
\label{lemmaGroundState-symmetric}
Let \(N\ge 1\), \(0<\alpha<N\), \(\lambda>0\), $\gamma<2$, \(p>0\) and \(\rho > 0\).
If \(u \in L^1_\mathrm{loc}(\R^N\setminus\Bar{B}_\rho)\) be a nonnegative supersolution of \eqref{eqChoquardSlow-hom}
in \(\R^N\setminus\Bar{B}_\rho\),
then either \(u = 0\) almost everywhere in \(\R^N\setminus \Bar{B}_{\rho}\), or \(u > 0\) almost everywhere in \(\R^N\setminus\Bar{B}_\rho\)
and for every \(\varphi \in C^\infty_c(\R^N\setminus\Bar{B}_\rho)\), one has
\[
  \int_{\R^N\setminus\Bar{B}_\rho} \abs{\nabla \varphi}^2 +
  \int_{\R^N\setminus\Bar{B}_\rho} \frac{\lambda^2}{\abs{x}^\gamma}\abs{\varphi}^2 \ge
  \int_{\R^N\setminus\Bar{B}_\rho} \int_{\R^N\setminus\Bar{B}_\rho}\varphi(x)\,I_\alpha(x-y)\varphi(y)\,dx\,dy.
\]
\end{lemma}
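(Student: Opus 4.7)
The proof combines the local nonlinear positivity principle of Proposition~\ref{propositionLocalGroundstate} with a Cauchy--Schwarz symmetrisation adapted to the Riesz bilinear form. Writing $\Omega := \R^N \setminus \Bar{B}_\rho$, the homogeneity $p + q = 1$ permits a direct application of Proposition~\ref{propositionLocalGroundstate} with potential $V = \lambda^2/\abs{x}^\gamma$, weight $W := I_\alpha \ast u^p$ and exponent $q := 1 - p$: the integrability $Wu^q \in L^1_{\mathrm{loc}}(\Omega)$ is built into Definition~\ref{def-distro}, while $Vu \in L^1_{\mathrm{loc}}(\Omega)$ holds because $V$ is locally bounded on $\Omega$. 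The proposition yields the announced dichotomy and, in the nontrivial case, both $u^{-p}(I_\alpha \ast u^p) \in L^1_{\mathrm{loc}}(\Omega)$ and the inequality
\[
  \int_{\Omega} \abs{\nabla \varphi}^2 + \int_{\Omega} \frac{\lambda^2}{\abs{x}^\gamma}\, \varphi^2 \ge \int_{\Omega} u^{-p}\,(I_\alpha \ast u^p)\, \varphi^2
\]
for every $\varphi \in C^\infty_c(\Omega)$.

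It remains to bound the right-hand side from below by the symmetric bilinear form. Since $I_\alpha$ is nonnegative and even, $d\mu(x,y) := I_\alpha(x-y)\,dx\,dy$ is a positive symmetric measure on $\Omega \times \Omega$. On the full-$\mu$-measure set $\{u > 0\}^2$ the plan is to factor
\[
  \varphi(x)\,\varphi(y) = \Bigl[\varphi(x)\bigl(u(y)/u(x)\bigr)^{p/2}\Bigr]\, \Bigl[\varphi(y)\bigl(u(x)/u(y)\bigr)^{p/2}\Bigr]
\]
and apply Cauchy--Schwarz in $L^2(\mu)$. By the swap $(x,y) \leftrightarrow (y,x)$, which preserves $\mu$ since $I_\alpha$ is even, the two resulting $L^2(\mu)$-norms are equal and each evaluates to $\int_\Omega \varphi^2\, u^{-p}\, (I_\alpha \ast u^p)$, which is finite by the previous step. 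This produces
\[
  \int_{\Omega}\!\!\int_{\Omega} \varphi(x)\, I_\alpha(x-y)\, \varphi(y)\,dx\,dy \le \int_{\Omega} u^{-p}(I_\alpha \ast u^p)\, \varphi^2,
\]
and chaining the two inequalities delivers the lemma.

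The main delicate point is the legitimacy of the factorisation above, since $u^{\pm p/2}$ is only defined where $u > 0$; this is precisely ensured by the almost-everywhere positivity part of the dichotomy, and the $L^2(\mu)$-finiteness of both factors is provided by $u^{-p}(I_\alpha \ast u^p) \in L^1_{\mathrm{loc}}(\Omega)$. If a strictly pointwise argument is preferred, the same computation performed with $u$ replaced by $u + \varepsilon$, followed by monotone convergence as $\varepsilon \to 0^+$, yields the inequality without any almost-everywhere caveats.
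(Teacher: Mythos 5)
Your proposal is correct and follows essentially the same route as the paper: apply Proposition~\ref{propositionLocalGroundstate} with \(W = I_\alpha \ast u^p\) and \(q = 1-p\) to get the dichotomy and the inequality with \((I_\alpha\ast u^p)u^{-p}\varphi^2\), then symmetrize by Cauchy--Schwarz on the kernel \(I_\alpha(x-y)\,dx\,dy\) with exactly the factorization \(\varphi(x)\varphi(y) = \bigl[\varphi(x)(u(y)/u(x))^{p/2}\bigr]\bigl[\varphi(y)(u(x)/u(y))^{p/2}\bigr]\), which is the paper's own argument.
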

\begin{proof}
By Proposition~\ref{propositionLocalGroundstate} with \(W = I_\alpha \ast u^p\in L^1_\mathrm{loc} (\Omega)\),
either \(u = 0\) in \(\R^N\setminus \Bar{B}_{\rho}\) or \(u > 0\) almost everywhere in \(\R^N\setminus\Bar{B}_\rho\), \(u^{-p} \in L^1_\mathrm{loc} (\Omega)\)
and for every \(\varphi \in C^\infty_c(\Omega)\)
\begin{equation*}
  \int_{\Omega} \abs{\nabla \varphi}^2 +\frac{\lambda^2}{\abs{x}^\gamma} \varphi^2
  \ge \int_{\Omega}(I_\alpha \ast u^p) u^{-p}\varphi^2.
\end{equation*}
Now by the Cauchy--Schwarz inequality we derive
\begin{multline*}
 \int_{\Omega} \int_{\Omega}\varphi(x)I_\alpha(x-y)\varphi(y)\,dx\,dy \\
\le \Bigl( \int_{\Omega} \int_{\Omega} u(x)^{-p} \varphi(x)^2 I_\alpha(x-y) u(y)^p \,dx\,dy\Bigr)^\frac{1}{2}\\
\shoveright{\Bigl( \int_{\Omega} \int_{\Omega} u(x)^{p} I_\alpha(x-y) u(y)^{-p} \varphi(y)^2\,dx\,dy\Bigr)^\frac{1}{2}}\\
= \int_{\Omega} \int_{\Omega} u(x)^{-p}\varphi(x)^2 I_\alpha(x-y) u(y)^p \,dx\,dy,
\end{multline*}
and the conclusion follows.
\end{proof}

\subsubsection{Case \(\gamma\neq -\alpha\) and \(p + q=1\).}
If $\alpha>-\gamma$ the nonexistence follows from Proposition \ref{nonSlowDecayq},
while for $\alpha<-\gamma$ we can construct a solution outside
a sufficiently large ball.

\begin{proposition}\label{slow-polynom-pq+optimalhomog}
Let \(N\ge 1\), \(\lambda > 0\), \(0 < \alpha < N\), \(p>0\) and \(q = 1 - p\).
If \(\gamma < - \alpha\),
then there exists \(\rho_0 > 0\) such that \eqref{eqChoquardSlow}
admits a radial nontrivial nonnegative supersolution
\(u\in C^\infty (\R^N \setminus\Bar{B}_{\rho_0})\),
such that
\begin{equation*}
 \limsup_{\abs{x} \to \infty} u(x) \abs{x}^{\frac{N - \alpha - \gamma}{1-q}} < \infty.
\end{equation*}
\end{proposition}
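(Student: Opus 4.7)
The plan is to adapt the radial ansatz from Proposition \ref{slow-polynom-pq+optimal}, but with the essential twist that, in the homogeneous r\'egime $p+q=1$, multiplying the candidate by a factor $\mu>0$ rescales both sides of the Choquard inequality by the same power of $\mu$ and hence buys nothing. The free parameter is instead the radius $\rho_0$ of the exterior domain: when $\gamma<-\alpha$, the integral $\int_{\R^N\setminus\Bar{B}_{\rho_0}}v^p$ of the candidate's $p$-th power tends to $0$ as $\rho_0\to\infty$, which makes the nonlocal right-hand side quantitatively small compared with the linear Schr\"odinger part $-\Delta+\lambda^2/\abs{x}^\gamma$.

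Concretely, set $\beta=(N-\alpha-\gamma)/p=(N-\alpha-\gamma)/(1-q)$ and take the pure power $u(x)=\abs{x}^{-\beta}$, which is smooth, radial, and satisfies $u(x)\abs{x}^{(N-\alpha-\gamma)/(1-q)}=1$ throughout $\R^N\setminus\Bar{B}_{\rho_0}$. First I would compute
\[
  -\Delta u(x)+\frac{\lambda^2}{\abs{x}^\gamma}u(x)=\frac{\beta(N-2-\beta)}{\abs{x}^{\beta+2}}+\frac{\lambda^2}{\abs{x}^{\beta+\gamma}};
\]
since $\gamma<-\alpha<2$, the potential contribution dominates at infinity, so that
\[
  -\Delta u(x)+\frac{\lambda^2}{\abs{x}^\gamma}u(x)\ge \frac{\lambda^2}{2}\,\abs{x}^{-(\beta+\gamma)}\qquad\text{for \(\abs{x}\ge \rho_0\),}
\]
provided $\rho_0$ is chosen large enough.

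Second, since $p\beta=N-\alpha-\gamma>N$ (this is exactly the condition $\gamma<-\alpha$), Lemma \ref{lemmaUpperasympt} yields $(I_\alpha\ast u^p)(x)\le C/\abs{x}^{N-\alpha}$. The key quantitative refinement is that splitting the convolution integral into the regions $\Bar{B}_{\abs{x}/2}$, $B_{2\abs{x}}\setminus\Bar{B}_{\abs{x}/2}$, and $\R^N\setminus\Bar{B}_{2\abs{x}}$ shows that the leading constant is proportional to
\[
  \int_{\R^N\setminus\Bar{B}_{\rho_0}}u^p=\frac{\abs{S^{N-1}}}{-(\alpha+\gamma)}\,\rho_0^{\alpha+\gamma},
\]
while the remaining contributions carry strictly faster decay at infinity. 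Combining this with the arithmetic identity $N-\alpha+q\beta=\beta+\gamma$, which is a direct consequence of $p+q=1$, gives
\[
  (I_\alpha\ast u^p)(x)\,u(x)^q\le C_\star\,\rho_0^{\alpha+\gamma}\,\abs{x}^{-(\beta+\gamma)}\qquad\text{for \(\abs{x}\ge \rho_0\)},
\]
with $C_\star$ depending only on $N,\alpha,\gamma,p$. The supersolution inequality then reduces to the numerical condition $\lambda^2/2\ge C_\star\rho_0^{\alpha+\gamma}$, which holds for all sufficiently large $\rho_0$ because $\alpha+\gamma<0$.

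The main obstacle is quantitative rather than structural: the power of $\abs{x}$ on the two sides of the supersolution inequality is forced by the homogeneity $p+q=1$ to coincide, so the whole construction hinges on making the constant in front of the Riesz potential strictly smaller than that governing the linear part. This is precisely where $\gamma<-\alpha$ enters, through the smallness of $\rho_0^{\alpha+\gamma}$ for large $\rho_0$. A minor technicality is checking that each subleading term produced by the dyadic splitting of $I_\alpha\ast u^p$ carries an exponent strictly larger than $\beta+\gamma$; this is immediate once exponents are tracked, using $\gamma<-\alpha<2$. The announced asymptotic bound $\limsup_{\abs{x}\to\infty}u(x)\abs{x}^{(N-\alpha-\gamma)/(1-q)}<\infty$ is then trivial from the pure power form of $u$.
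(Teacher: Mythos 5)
Your construction is correct and is essentially the paper's own proof: the same pure-power ansatz \(u(x)=\abs{x}^{-(N-\alpha-\gamma)/(1-q)}\) on an exterior domain of large radius, the same lower bound on the linear part using \(\gamma<2\), and the same mechanism of gaining smallness through the factor \(\rho_0^{\alpha+\gamma}\) thanks to \(\gamma<-\alpha\). The only minor difference is that you re-derive the Riesz-potential bound by splitting the convolution into three regions and tracking the \(\rho_0\)-dependence of the constants, whereas the paper obtains the same \(\rho^{\gamma+\alpha}\) factor by rescaling to the unit exterior domain and applying Lemma~\ref{lemmaUpperasympt} there; both yield the same numerical condition \(C\rho_0^{\alpha+\gamma}\le\lambda^2/2\).
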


\begin{proof}
Define \(u_\rho : \R^N \setminus \Bar{B}_\rho \to \R\) for \(x \in  \R^N \setminus \Bar{B}_\rho\) by
\[
  u_\rho (x) = \frac{1}{\abs{x}^{\frac{N - \alpha - \gamma}{1 - q}}}.
\]
One has for every \(x \in  \R^N \setminus \Bar{B}_\rho\)
\[
 - \Delta u_\rho (x) + \frac{\lambda^2}{\abs{x}^\gamma} u_\rho (x)
  = \frac{N - \alpha - \gamma}{1 - q}\Bigl(N - 2 - \frac{N - \alpha - \gamma}{1 - q}\Bigr)
\frac{1}{\abs{x}^{\frac{N - \alpha - \gamma}{1 - q} + 2}} + \frac{\lambda^2}{\abs{x}^{\frac{N - \alpha - \gamma}{1 - q} + \gamma}}.
\]
Since \(\gamma < 2\), there exists \(\rho_0\) such that if \(\rho \ge \rho_0\),
\[
 - \Delta u_\rho (x) + \frac{\lambda^2}{\abs{x}^\gamma} u_\rho (x) \ge \frac{\lambda^2}{2 \abs{x}^{\frac{N - \alpha - \gamma}{1 - q} + \gamma}}.
\]
By a change of variable, by the assumption \(\gamma + \alpha < 0\)  and by Lemma~\ref{lemmaUpperasympt}, there exists \(C > 0\) such that for every \(x \in \R^N \setminus B_\rho\),
\[
\begin{split}
 (I_\alpha \ast u_\rho^p) (x)
&= \int_{\R^N \setminus \Bar{B}_\rho} I_\alpha (x - y) \frac{1}{\abs{y}^{N - \alpha - \gamma}}\, dy\\
&= \rho^{\gamma + \alpha - (N - \alpha)}\int_{\R^N \setminus \Bar{B}_1} I_\alpha \Bigl(\frac{x}{\rho} - y\Bigr) \frac{1}{\abs{y}^{N - \alpha - \gamma}}\, dy \le \frac{C \rho^{\gamma + \alpha}}{\abs{x}^{N - \alpha}};
\end{split}
\]
hence,
\[
 (I_\alpha \ast u_\rho^p) (x) u (x)^{q} \le \frac{C \rho^{\gamma + \alpha}}{\abs{x}^{\frac{N - \alpha - \gamma}{1 - q} + \gamma}}.
\]
Since \(\gamma + \alpha < 0\), we conclude that \(u_\rho\) is a supersolution
to \eqref{eqChoquardSlow} in \(\R^N \setminus \Bar{B}_\rho\) for all sufficiently large \(\rho \ge \rho_0\).
\end{proof}

The restriction on the radius $\rho_0>0$ is essential.

\begin{proposition}
Let \(N\ge 1\), \(\lambda > 0\), \(0 < \alpha < N\), \(p>0\) and \(q = 1 - p\).
If \(\gamma < - \alpha\),
then there exists \(\rho^* > 0\) such that \eqref{eqChoquardSlow} has no nontrivial nonnegative supersolution in \(\R^N \setminus\Bar{B}_{\rho^*}\),
\end{proposition}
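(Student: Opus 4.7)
The plan is to argue by contradiction using the symmetric nonlocal positivity principle (Lemma~\ref{lemmaGroundState-symmetric}): were a nontrivial nonnegative supersolution to exist on \(\R^N\setminus\Bar{B}_{\rho^*}\), one would have
\[
\int \abs{\nabla\varphi}^2 + \lambda^2\int \frac{\abs{\varphi}^2}{\abs{x}^\gamma} \ge \int\int \varphi(x)\,I_\alpha(x-y)\,\varphi(y)\,dx\,dy
\]
for every \(\varphi\in C^\infty_c(\R^N\setminus\Bar{B}_{\rho^*})\), so the task reduces to exhibiting a single admissible \(\varphi\) for which this inequality fails, once \(\rho^*\) has been taken sufficiently small.

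Fixing a nonnegative radial profile \(\varphi_0\in C^\infty_c(B_4\setminus\Bar{B}_{1/2})\), I would consider the scaled test functions \(\varphi_R(x)=\varphi_0(x/R)\). A direct change of variable yields
\[
\int \abs{\nabla\varphi_R}^2 = A R^{N-2},\quad \int \frac{\abs{\varphi_R}^2}{\abs{x}^\gamma} = B R^{N-\gamma},\quad \int\int \varphi_R\, I_\alpha\varphi_R = D R^{N+\alpha},
\]
with positive constants \(A,B,D\) depending only on \(\varphi_0\); after dividing through by \(R^{N+\alpha}\), the inequality to be violated becomes
\[
A\,R^{-(\alpha+2)} + \lambda^2 B\,R^{\abs{\gamma}-\alpha} < D.
\]
The hypothesis \(\gamma<-\alpha\) is crucial here: it makes both exponents \(\alpha+2\) and \(\abs{\gamma}-\alpha\) strictly positive, so the left-hand side is a strictly convex function of \(R>0\) attaining its unique minimum at the balance radius \(R_*(\lambda)=\bigl(c_{\alpha,\gamma} A/(\lambda^2 B)\bigr)^{1/(\abs{\gamma}+2)}\), with minimum value a constant multiple of \(A^{(\abs{\gamma}-\alpha)/(\abs{\gamma}+2)}\,B^{(\alpha+2)/(\abs{\gamma}+2)}\,\lambda^{2(\alpha+2)/(\abs{\gamma}+2)}\).

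I would then choose \(\rho^*\in(0,R_*(\lambda)/2)\), which is legitimate for any \(\lambda>0\) since \(R_*(\lambda)>0\); this guarantees that \(\supp \varphi_{R_*(\lambda)}\subset B_{4R_*(\lambda)}\setminus\Bar{B}_{R_*(\lambda)/2}\subset\R^N\setminus\Bar{B}_{\rho^*}\), so \(\varphi_{R_*(\lambda)}\) is an admissible test function. Testing Lemma~\ref{lemmaGroundState-symmetric} against it reduces the required violation to making the scale-invariant shape ratio \(A^{(\abs{\gamma}-\alpha)/(\abs{\gamma}+2)}\,B^{(\alpha+2)/(\abs{\gamma}+2)}/D\) sufficiently small relative to \(\lambda^{-2(\alpha+2)/(\abs{\gamma}+2)}\). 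The main obstacle lies precisely here: since the ratio is invariant under dilation of \(\varphi_0\), it cannot be reduced by rescaling and must be controlled by a careful choice of profile, for instance by taking \(\varphi_0\) close to an extremal for the associated weighted Hardy--Littlewood--Sobolev inequality of Stein--Weiss type. The resulting contradiction with Lemma~\ref{lemmaGroundState-symmetric} yields the claimed nonexistence.
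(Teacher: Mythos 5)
Your plan is the paper's plan — test the symmetric positivity principle of Lemma~\ref{lemmaGroundState-symmetric} against dilated annular test functions — and your scaling identities, your observation that under \(\gamma<-\alpha\) both exponents \(\alpha+2\) and \(\abs{\gamma}-\alpha\) are positive (so neither \(R\to 0\) nor \(R\to\infty\) alone violates the inequality), and your reduction to the scale-invariant shape ratio are all correct. With a fixed profile this already proves the statement for \(\lambda\) below a profile-dependent threshold: the minimum over \(R\) of \(A R^{-(\alpha+2)}+\lambda^2 B R^{\abs{\gamma}-\alpha}\) equals \(c\,A^\theta(\lambda^2B)^{1-\theta}\) with \(\theta=\frac{\abs{\gamma}-\alpha}{\abs{\gamma}+2}\), and one then picks \(\rho^*\) below the balancing radius. (Incidentally, the paper's own one-line proof uses the same dilations and claims a contradiction ``for \(R\) small enough'' under the condition ``\(-\gamma<\alpha<2\)'', which conflicts with the standing hypothesis \(\gamma<-\alpha\); your analysis correctly shows that in this regime the gradient term dominates as \(R\to0\) and the potential term as \(R\to\infty\), so any contradiction can only occur at an intermediate scale.)

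The genuine gap is exactly the step you flag as the ``main obstacle'', and it cannot be repaired in the way you suggest: the ratio \(A^\theta B^{1-\theta}/D\) is bounded \emph{below} by a positive constant depending only on \(N,\alpha,\gamma\), uniformly over all profiles, so it cannot be made small by approaching a Stein--Weiss extremal. Indeed, by the semigroup property and \eqref{HLSw}, \(\int (I_\alpha\ast\varphi)\varphi=\int\abs{I_{\alpha/2}\ast\varphi}^2\le \sigma_\alpha^\ast\int\abs{x}^\alpha\abs{\varphi}^2\); by H\"older between the weights \(\abs{x}^\alpha\) and \(\abs{x}^{\abs{\gamma}}\) (using \(0<\alpha<\abs{\gamma}\)), \(\int\abs{x}^\alpha\abs{\varphi}^2\le\bigl(\int\abs{\varphi}^2\bigr)^{1-\alpha/\abs{\gamma}}\bigl(\int\abs{x}^{\abs{\gamma}}\abs{\varphi}^2\bigr)^{\alpha/\abs{\gamma}}\); and by the standard uncertainty-type interpolation (a special case of the Caffarelli--Kohn--Nirenberg inequalities), \(\int\abs{\varphi}^2\le C\bigl(\int\abs{\nabla\varphi}^2\bigr)^{\abs{\gamma}/(\abs{\gamma}+2)}\bigl(\int\abs{x}^{\abs{\gamma}}\abs{\varphi}^2\bigr)^{2/(\abs{\gamma}+2)}\). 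Chaining these gives \(\int(I_\alpha\ast\varphi)\varphi\le C(N,\alpha,\gamma)\,A^\theta B^{1-\theta}\) for every test function, with precisely your exponent \(\theta\). Hence, by Young's inequality, as soon as \(\lambda^{2(1-\theta)}\ge C(N,\alpha,\gamma)\) the inequality of Lemma~\ref{lemmaGroundState-symmetric} holds for \emph{all} \(\varphi\in C^\infty_c(\R^N\setminus\{0\})\), so no choice of \(\varphi_0\), \(R\) and \(\rho^*\) within your scheme can produce the required violation for large \(\lambda\). Since the proposition asserts nonexistence for every \(\lambda>0\), your argument is incomplete in an essential way: for \(\lambda\) above the threshold the test-function/positivity-principle route gives no obstruction at all, and some genuinely different information about the supersolution (beyond the quadratic-form inequality) would have to be brought in.
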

\begin{proof}
Choose \(\varphi \in C^\infty_c (\R^N \setminus B_1)\) and let for \(R > 0\) and \(x \in \R^N\),
\(\varphi_R (x) = \varphi (x/R)\).
One has
\begin{align*}
 \int_{\R^N} (I_\alpha \ast \varphi_R) \varphi_R& =R^{N + \alpha} \int_{\R^N} (I_\alpha \ast \varphi) \varphi,\\
\int_{\R^N} \abs{\nabla \varphi_R}^2 & =R^{N - 2} \int_{\R^N} \abs{\nabla \varphi}^2,\\
\int_{\R^N}  \frac{\abs{\varphi_R (x)}^2}{\abs{x}^\gamma}\,dx
& = R^{N - \gamma} \int_{\R^N}  \frac{\abs{\varphi (x)}^2}{\abs{x}^\gamma}\,dx.
\end{align*}
Since \(- \gamma < \alpha < 2\), this brings a contradiction with the positivity principle of Lemma~\ref{lemmaGroundState-symmetric} if \(R\) is small enough.
\end{proof}

\subsubsection{Case \(\gamma=-\alpha\) and \(p + q=1\).}
We now consider the most delicate case of equation \eqref{eqChoquardSlow} when \(\gamma=-\alpha\), that is the equation
\begin{equation}\label{eqChoquardSlow-1}
-\Delta u+\lambda^2\abs{x}^\alpha u=(I_\alpha\ast u^p)u^{1-p}.
\end{equation}

In order to study our problem, we review relevant inequalities.
The weighted version
of the Hardy--Littlewood--Sobolev inequality of Stein and Weiss \cite{Stein-Weiss},
\begin{equation}\label{HLSw}
\int_{\R^N} \abs{I_{\alpha/2}\ast\varphi}^2 \le \lambda^2 \int_{\R^N} \abs{x}^\alpha \abs{\varphi (x)}^2 \,dx,
\end{equation}
holds if and only if
\[
  \lambda^2 \ge \sigma_\alpha^\ast = 2^{-\alpha}\Bigl(\frac{\Gamma(\frac{N - \alpha}{4})}{\Gamma(\frac{N + \alpha}{4})}\Bigr)^2
\]
see \cite{Herbst}*{Theorem 2.5}.

The constant \(\sigma_\alpha\) is also related to convolution of Riesz kernels.
Indeed, according to the semigroup property of the Riesz kernels \cite{Riesz}*{p.20}, for \(0 < \alpha <\beta< N\) it holds
\[
  \int_{\R^N} I_\alpha (x - y) \frac{1}{\abs{y}^\beta}\,dy = \frac{\sigma_\alpha(\beta)}{\abs{x}^{\beta-\alpha}},
\]
where
\begin{equation*}
\sigma_\alpha(\beta)=\frac{A_{N-\beta+\alpha}}{A_{N-\beta}}=
2^{-\alpha}\frac{\Gamma\Big(\frac{\beta-\alpha}{2}\Big)\Gamma\Big(\frac{N-\beta}{2}\Big)}
{\Gamma\Big(\frac{N-\beta+\alpha}{2}\Big)\Gamma\Big(\frac{\beta}{2}\Big)}.
\end{equation*}
One can verify (see \cite{Frank}*{Lemma 2.1}) that \(\sigma_\alpha:(\alpha,N)\to\R\) is an even function with respect to \(\beta=(N + \alpha)/2\). Moreover,
\[
 \lim_{\beta\to\alpha}\sigma_\alpha(\beta)=\lim_{\beta\to N}\sigma_\alpha(\beta)=+\infty,
\]
\(\sigma_\alpha\) is strictly decreasing on \((\alpha,N + \alpha)/2)\), strictly increasing on \(((N + \alpha)/2,N)\),
and attains its minimum at \(\beta=(N + \alpha)/2\), with
\begin{equation*}
\sigma_\alpha^\ast=\min_{\alpha<\beta<N}\sigma_\alpha(\beta)=\sigma_\alpha\Big(\frac{N + \alpha}{2}\Big).
\end{equation*}

\begin{lemma}
\label{lemmaOptimalSW}
Let \(N \ge 1\), \(0 < \alpha < N\), \(\lambda > 0\) and \(\rho > 0\).
One has for every \(\varphi \in C^\infty_c (\R^N \setminus \Bar{B}_\rho)\)
\[
  \int_{\R^N} (I_\alpha \ast \varphi) \varphi  \le \int_{\R^N} \abs{\nabla \varphi}^2 + \lambda^2 \int_{\R^N} \abs{x}^\alpha \abs{\varphi (x)}^2\,dx
\]
if and only if \(\lambda^2 \ge \sigma^*_\alpha\).
\end{lemma}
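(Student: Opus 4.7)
The whole argument rests on the semigroup identity \(I_{\alpha/2} \ast I_{\alpha/2} = I_\alpha\) (valid because \(0 < \alpha < N\)) combined with the symmetry of the Riesz kernel, which yields
\[
\int_{\R^N} (I_\alpha \ast \varphi)\,\varphi\, dx = \int_{\R^N} \abs{I_{\alpha/2} \ast \varphi}^2\, dx
\]
for every \(\varphi \in C^\infty_c(\R^N)\), reducing the nonlocal quadratic form on the left-hand side of the lemma to the square norm that appears in the Stein--Weiss inequality \eqref{HLSw}. Sufficiency is then immediate: if \(\lambda^2 \ge \sigma_\alpha^\ast\), \eqref{HLSw} gives \(\int (I_\alpha \ast \varphi) \varphi \le \lambda^2 \int \abs{x}^\alpha \varphi^2\), and adding the nonnegative gradient term produces the stated inequality for every \(\varphi \in C^\infty_c(\R^N \setminus \Bar{B}_\rho)\).

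For the necessity I would argue by scaling followed by a density argument. Given \(\varphi \in C^\infty_c(\R^N \setminus \Bar{B}_1)\) and \(R \ge \rho\), the dilate \(\varphi_R(x) := \varphi(x/R)\) lies in \(C^\infty_c(\R^N \setminus \Bar{B}_\rho)\), and a change of variable shows that the three terms of the assumed inequality scale as \(R^{N-2}\), \(R^{N + \alpha}\) and \(R^{N + \alpha}\) respectively. Dividing by \(R^{N + \alpha}\) and letting \(R \to \infty\) kills the gradient contribution and leaves
\[
\int_{\R^N} (I_\alpha \ast \varphi)\, \varphi \le \lambda^2 \int_{\R^N} \abs{x}^\alpha \abs{\varphi}^2
\]
for every \(\varphi \in C^\infty_c(\R^N \setminus \Bar{B}_1)\), and by another dilation, for every \(\varphi \in C^\infty_c(\R^N \setminus \{0\})\). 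For an arbitrary \(\psi \in C^\infty_c(\R^N)\), I would then cut off near the origin with \(\eta_\epsilon(x) := \eta(x/\epsilon)\), where \(\eta \in C^\infty(\R^N)\) vanishes in a neighborhood of \(0\) and equals \(1\) near infinity, and let \(\epsilon \to 0\) to promote the inequality to all \(\psi \in C^\infty_c(\R^N)\). The optimality in Stein--Weiss \eqref{HLSw} would then force \(\lambda^2 \ge \sigma_\alpha^\ast\).

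The technical point to verify is the density step: both the Riesz quadratic form and the weighted \(L^2\) norm of \(\eta_\epsilon \psi\) must converge to those of \(\psi\) as \(\epsilon \to 0\). Since \(\psi\) is compactly supported and smooth, \(I_\alpha \ast \abs{\psi}\) is bounded and \(\abs{x}^\alpha\) is locally integrable for \(\alpha > 0\), so dominated convergence applies without any subtlety and no genuine obstacle arises beyond bookkeeping.
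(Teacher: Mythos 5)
Your proof is correct and follows essentially the same route as the paper: sufficiency from the semigroup identity \(I_{\alpha/2}\ast I_{\alpha/2}=I_\alpha\) together with the Stein--Weiss inequality \eqref{HLSw}, and necessity by testing on dilates \(\varphi_R(x)=\varphi(x/R)\), dividing by \(R^{N+\alpha}\) so the gradient term (of order \(R^{N-2}\)) vanishes as \(R\to\infty\), and then invoking the sharpness of the constant \(\sigma_\alpha^\ast\). Your additional cutoff argument passing from \(C^\infty_c(\R^N\setminus\{0\})\) to \(C^\infty_c(\R^N)\) is a harmless extra step that merely makes explicit what the paper subsumes in its appeal to the optimality condition in \eqref{HLSw}.
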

\begin{proof}
It is clear by \eqref{HLSw} and the semigroup property of the Riesz potential that
\(\lambda^2 \ge \sigma^*_\alpha\) implies the required inequality.

Now assume that the inequality holds. Choose \(\varphi \in C^\infty_c (\R^N \setminus \{0\})\).
Let \(\varphi_R (x) = \varphi (x/R)\). For \(R\) large enough, \(\supp \varphi_R \subset \R^N \setminus \Bar{B}_\rho\), and by assumption
\[
\begin{split}
R^{N + \alpha} \int_{\R^N} (I_\alpha \ast \varphi) \varphi
&= \int_{\R^N} (I_\alpha \ast \varphi_R) \varphi_R \\
& \le \int_{\R^N} \abs{\nabla \varphi_R}^2 + \lambda^2 \int_{\R^N} \abs{x}^\alpha \abs{\varphi_R (x)}^2\,dx\\
&= R^{N - 2} \int_{\R^N} \abs{\nabla \varphi}^2 + \lambda^2 R^{N + \alpha} \int_{\R^N} \abs{x}^\alpha \abs{\varphi (x)}^2\,dx
\end{split}
\]
Letting now \(R \to \infty\), we deduce that
\[
 \int_{\R^N} (I_\alpha \ast \varphi) \varphi \le \lambda^2\int_{\R^N} \abs{x}^\alpha \abs{\varphi (x)}^2\,dx.
\]
Since \(\varphi \in C^\infty_c (\R^N \setminus \{0\})\) is arbitrary and by the optimality condition in \eqref{HLSw}, we conclude that \(\lambda^2 \ge \sigma_\alpha^*\).
\end{proof}

Using the above modified nonlocal positivity principle we prove the following.

\begin{proposition}
\label{propauie}
Let \(N\ge 2\), \(0<\alpha<N\), \(p>0\) and \(\rho > 0\). If
\[
 0<\lambda^2 < \sigma_\alpha^\ast,
\]
and \(u \ge 0\) is a supersolution of \eqref{eqChoquardSlow-1} in \(\R^N \setminus \Bar{B}_{\rho}\), then \(u = 0\) in \(\R^N\setminus \Bar{B}_{\rho}\).
\end{proposition}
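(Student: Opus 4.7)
The plan is to obtain the nonexistence by juxtaposing the nonlocal positivity principle of Lemma~\ref{lemmaGroundState-symmetric} with the sharpness portion of the Stein--Weiss type inequality in Lemma~\ref{lemmaOptimalSW}. Both tools have already been prepared in the text for exactly this case; the proof will consist of checking that they fit together and drawing the contradiction.

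First I would suppose, for the sake of contradiction, that $u \ge 0$ is a nontrivial supersolution of \eqref{eqChoquardSlow-1} in $\R^N \setminus \Bar{B}_\rho$. Because \eqref{eqChoquardSlow-1} is exactly the homogeneous equation \eqref{eqChoquardSlow-hom} with $\gamma = -\alpha$ and $q = 1-p$, Lemma~\ref{lemmaGroundState-symmetric} is directly applicable and yields that $u > 0$ almost everywhere and that the integral inequality
\[
  \int_{\R^N} \abs{\nabla \varphi}^2 + \lambda^2 \int_{\R^N} \abs{x}^\alpha \abs{\varphi(x)}^2 \,dx \ge \int_{\R^N} \int_{\R^N} \varphi(x)\, I_\alpha(x-y)\, \varphi(y)\,dx\,dy
\]
holds for every test function $\varphi \in C^\infty_c(\R^N \setminus \Bar{B}_\rho)$.

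Next I would invoke Lemma~\ref{lemmaOptimalSW}, which characterises precisely when such an inequality can hold on all of $C^\infty_c(\R^N \setminus \Bar{B}_\rho)$: namely, if and only if $\lambda^2 \ge \sigma_\alpha^*$. Since by hypothesis $\lambda^2 < \sigma_\alpha^*$, the ``only if'' direction of that lemma supplies a test function $\varphi_0 \in C^\infty_c(\R^N \setminus \Bar{B}_\rho)$ for which the displayed inequality fails. This contradicts the previous paragraph and forces $u \equiv 0$ on $\R^N \setminus \Bar{B}_\rho$.

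I do not anticipate any genuine obstacle, since the two lemmas have been tailored for this application. The only point that deserves a word of care is that Lemma~\ref{lemmaGroundState-symmetric} provides the inequality with the same weight $\abs{x}^\alpha$ (coming from $\lambda^2/\abs{x}^\gamma$ with $\gamma = -\alpha$) and the same symmetric bilinear form on the right hand side that appears in Lemma~\ref{lemmaOptimalSW}, so the two conditions are literally identical and the contradiction is immediate without any further rescaling or truncation argument.
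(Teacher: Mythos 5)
Your proposal is correct and is essentially identical to the paper's own argument: the paper proves Proposition~\ref{propauie} exactly by combining Lemma~\ref{lemmaGroundState-symmetric} (applied to \eqref{eqChoquardSlow-hom} with \(\gamma=-\alpha\)) with the ``only if'' direction of Lemma~\ref{lemmaOptimalSW}. Your remark that the weight and the bilinear form match literally, so no rescaling or truncation is needed, is also the reason the paper can dispatch the proof in one line.
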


\begin{proof}
This follows from Lemma~\ref{lemmaGroundState-symmetric} and Lemma~\ref{lemmaOptimalSW}.
\end{proof}

Using semigroup property of the Riesz kernels it easy to construct explicit supersolutions to \eqref{eqChoquardSlow-1}.

\begin{proposition}
\label{proptsrn}
Let \(N\ge 1\), \(0<\alpha<N\) and \(p>0\). If
\[
\lambda^2 > \sigma_\alpha^\ast,
\]
then there exists \(\rho > 0\) such that \eqref{eqChoquardSlow-1} admits a radial nontrivial nonnegative supersolution
\(u\in C^\infty (\R^N \setminus\Bar{B}_\rho)\) satisfying
\begin{equation*}
 \limsup_{\abs{x} \to \infty} u(x) \abs{x}^{\frac{N}{p}} < +\infty.
\end{equation*}
\end{proposition}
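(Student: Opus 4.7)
Plan: I would take the radial ansatz \(u(x) = \mu|x|^{-\beta}\) with \(\beta = (N+\alpha)/(2p)\), which places \(u^p(x) = \mu^p|x|^{-(N+\alpha)/2}\) exactly at the symmetric point where \(\sigma_\alpha(\cdot)\) attains its minimum \(\sigma_\alpha^\ast\).

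First, by the semigroup property of Riesz kernels one has \(I_\alpha \ast |\cdot|^{-(N+\alpha)/2} = \sigma_\alpha^\ast\,|\cdot|^{-(N-\alpha)/2}\) on \(\R^N \setminus \{0\}\); since restricting the convolution to the exterior \(\{|y|>\rho\}\) only decreases it, one obtains \((I_\alpha \ast u^p)(x)\,u(x)^{1-p} \le \sigma_\alpha^\ast\, \mu\,|x|^{\alpha-\beta}\) on \(\R^N \setminus \Bar{B}_\rho\). A direct calculation yields
\[
-\Delta u + \lambda^2|x|^\alpha u = \mu\beta(N-2-\beta)|x|^{-\beta-2} + \mu\lambda^2\,|x|^{\alpha-\beta},
\]
and because \(\alpha+2>0\), the Laplacian contribution is asymptotically lower order. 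Combined with the slack \(\lambda^2>\sigma_\alpha^\ast\), choosing \(\rho\) large enough so that \((\lambda^2-\sigma_\alpha^\ast)|x|^{\alpha+2} > |\beta(\beta-N+2)|\) on \(\{|x|>\rho\}\) produces the supersolution inequality.

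The main obstacle is the claimed decay bound \(\limsup u(x)|x|^{N/p} < \infty\): the construction above only yields \(u \sim |x|^{-(N+\alpha)/(2p)}\), strictly slower than \(|x|^{-N/p}\) since \((N+\alpha)/(2p)<N/p\). The sharper ansatz \(u = |x|^{-N/p}\) fails to be a supersolution, because on the exterior the convolution \((I_\alpha \ast |\cdot|^{-N})(x)\) acquires an unavoidable logarithmic correction \(\sim A_\alpha\omega_{N-1}\log(|x|/\rho)\,|x|^{-(N-\alpha)}\) (from the borderline exponent \(p\beta = N\)) that cannot be dominated by \(\lambda^2|x|^\alpha u\) uniformly in \(|x|\), no matter how large \(\rho\) is chosen. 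Closing this gap likely requires either combining the symmetric-point power ansatz with a faster-decaying correction that preserves the supersolution property, or invoking an abstract existence argument based on the strict positivity \(\int|\nabla\varphi|^2 + \lambda^2\int|x|^\alpha\varphi^2 - \int(I_\alpha \ast \varphi)\varphi \ge (\lambda^2-\sigma_\alpha^\ast)\int|x|^\alpha\varphi^2\) guaranteed by Lemma~\ref{lemmaOptimalSW}, combined with elliptic regularity to extract the sharp \(|x|^{-N/p}\) decay matching the lower bound of Theorem~\ref{t-pq1}.
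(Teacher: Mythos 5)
Your construction coincides with the paper's own proof: the paper takes exactly \(u_\rho(x)=\abs{x}^{-\frac{N+\alpha}{2p}}\), bounds the nonlocal term by \(\sigma_\alpha^\ast\abs{x}^{\alpha-\frac{N+\alpha}{2p}}\) via the semigroup property of the Riesz kernels (restricting the convolution to \(\R^N\setminus\Bar{B}_\rho\) only decreases it), and uses the strict inequality \(\lambda^2>\sigma_\alpha^\ast\) together with \(\alpha+2>0\) to absorb the lower-order Laplacian term by taking \(\rho\) large. So on the construction itself you are not missing anything.

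The obstacle you flag is not a gap in your argument but a defect of the statement as printed: the paper's proof ends with this same \(u_\rho\sim\abs{x}^{-\frac{N+\alpha}{2p}}\) and never verifies the displayed rate \(\abs{x}^{-N/p}\). Your logarithmic obstruction is correct, and pushed one step further it shows the stated bound is unattainable by \emph{any} nontrivial supersolution: by Proposition~\ref{propositionSlowDecayq} (with \(\gamma=-\alpha\), \(q=1-p\)) one has \(\liminf_{\abs{x}\to\infty}u(x)\abs{x}^{N/p}>0\), so a supersolution with \(\limsup_{\abs{x}\to\infty}u(x)\abs{x}^{N/p}<\infty\) would satisfy \(u\asymp\abs{x}^{-N/p}\) at infinity; then \(u^p\gtrsim\abs{x}^{-N}\) gives \((I_\alpha\ast u^p)(x)\ge c\,\abs{x}^{-(N-\alpha)}\log\abs{x}\), hence \(-\Delta u\ge c'\abs{x}^{\alpha-N/p}\log\abs{x}\) outside a large ball after subtracting the potential term, and this is incompatible with \(u\lesssim\abs{x}^{-N/p}\) by the integral comparison of Proposition~\ref{propositionLinearLowerBoundFree}, since \(\int_{B_R}\abs{x}^{\alpha-N/p}\log\abs{x}\,dx\) outgrows \(R^{-2}\int_{B_{2R}\setminus B_R}u\lesssim R^{N-2-N/p}\) precisely because \(\alpha+2>0\). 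The exponent in the statement should therefore read \(\frac{N+\alpha}{2p}\), which is consistent with Theorem~\ref{t-pq1} claiming optimality of the \(\abs{x}^{-N/(1-q)}\) lower bound only when \(\alpha<-\gamma\). In particular, do not invest in your proposed repairs (log-corrected ansatz, or an abstract existence argument from Lemma~\ref{lemmaOptimalSW}): no construction can reach the rate \(\abs{x}^{-N/p}\) in this borderline case.
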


\begin{proof}
Set for \(x \in \R^N \setminus \Bar{B}_\rho\),
\[
  u_\rho(x)=\frac{1}{\abs{x}^\frac{N + \alpha}{2 p}}.
\]
Then we compute
\[
\begin{split}
-\Delta u_\rho (x)+\frac{\lambda^2}{\abs{x}^{\alpha}}u_\rho(x)
&=
\frac{N + \alpha}{2 p}\Bigl(N - 2-\frac{N + \alpha}{2 p}\Bigr)\abs{x}^{-\frac{N + \alpha}{2 p}-2}+\lambda^2\abs{x}^{\alpha-\frac{N + \alpha}{2 p}}\\
&=\lambda^2 \abs{x}^{\alpha-\frac{N + \alpha}{2 p}}(1 + o (1)),
\end{split}
\]
as \(\abs{x} \to \infty\).
On the other hand,
\[
-(I_\alpha\ast u_\rho^p)(x)u_\rho^{1 - p}(x) \le \sigma_\alpha\Bigl(\frac{N + \alpha}{2}\Bigr) \abs{x}^{\alpha-\frac{N + \alpha}{2 p}}
= \sigma_\alpha^* \abs{x}^{\alpha-\frac{N + \alpha}{2 p}}
\]
We conclude that \(u_\rho\) is a supersolution of \eqref{eqChoquardSlow-1} if \(\rho > 0\) is large enough.
\end{proof}

We do not make any claim about the existence or nonexistence of nontrivial nonnegative supersolutions
of \eqref{eqChoquardSlow-1} at the threshold value \(\lambda=\sqrt{\sigma_\alpha^\ast}\).
The above proof shows that supersolutions exist if \(\alpha < N - 4\) and \(\frac{N + \alpha}{2 (N - 2)} < p < 1\).

\begin{proof}[Proof of Theorem~\ref{t-pq1}]
If \(\alpha\neq -\gamma\) then Theorem~\ref{t-pq1} is a consequence of
Propositions~\ref{nonSlowDecayq},~\ref{slow-polynom-pq+optimalhomog} and the decay estimate of Propositions~\ref{propositionSlowDecayq}.
When \(\alpha= -\gamma\) the conclusion of Theorem~\ref{t-pq1} follows our results for equation \eqref{eqChoquardSlow-1} (Propositions~\ref{propauie} and \ref{proptsrn})
and Proposition~\ref{propositionSlowDecayq}.
\end{proof}

\appendix
\section{Riesz potential estimates.}
Here we collect some estimates of the Riesz potentials which were extensively used in the main part of the paper.
Most of the estimates are standard,
however we sketch some of the proofs for the readers convenience.

\begin{lemma}\label{lemmaUpperasympt}
Let \(v\in L^1_{\mathrm{loc}}(\R^N)\), \(\alpha \in (0, N)\) and \(\beta > \alpha\).
If
\[
 \limsup_{\abs{x} \to \infty} v(x) \abs{x}^\beta <\infty,
\]
then
\begin{align*}
\limsup_{\abs{x} \to \infty} (I_\alpha \ast v)(x)\abs{x}^{\beta-\alpha} &<\infty& & \text{if \(\alpha<\beta<N\)},\\
\limsup_{\abs{x} \to \infty} (I_\alpha \ast v)(x) \frac{\abs{x}^{N - \alpha}}{\log \abs{x}} &<\infty&
&\text{if \(\beta=N\)},\bigskip\\
\limsup_{\abs{x} \to \infty} (I_\alpha \ast v)(x)\abs{x}^{N - \alpha} &<\infty& & \text{if \(\beta>N\)}.
\end{align*}
\end{lemma}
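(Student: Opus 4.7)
The plan is the classical three-region decomposition of the convolution integral. Fix $R_0 > 0$ so that $v(y) \le C\abs{y}^{-\beta}$ for $\abs{y} \ge R_0$, and assume $\abs{x} \ge 4R_0$. Split $\R^N$ into
\begin{equation*}
A_1 = \{\abs{y} \le \abs{x}/2\},\quad A_2 = \{\abs{y - x} \le \abs{x}/2\},\quad A_3 = \R^N \setminus (A_1 \cup A_2),
\end{equation*}
and write correspondingly $(I_\alpha \ast v)(x) = I_1 + I_2 + I_3$.

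First I would handle $A_1$, where $\abs{x - y} \ge \abs{x}/2$ yields $I_\alpha(x - y) \lesssim \abs{x}^{\alpha - N}$, so $I_1 \lesssim \abs{x}^{\alpha - N} \int_{A_1} v$. Splitting the last integral into a bounded local contribution from $\abs{y} \le R_0$ and a tail controlled by $\int_{R_0}^{\abs{x}/2} r^{N - 1 - \beta}\,dr$, the tail is bounded for $\beta > N$, of order $\log \abs{x}$ for $\beta = N$, and of order $\abs{x}^{N - \beta}$ for $\alpha < \beta < N$. This is the only piece of the decomposition that discriminates between the three cases in the statement.

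Next, in $A_2$ the decay bound forces $v(y) \lesssim \abs{x}^{-\beta}$; pulling this factor out and estimating $\int_{\abs{z} \le \abs{x}/2} I_\alpha(z)\,dz \lesssim \abs{x}^{\alpha}$ gives $I_2 \lesssim \abs{x}^{\alpha - \beta}$. For $A_3$ I would subdivide further at $\abs{y} = 2\abs{x}$: on the annulus $\abs{x}/2 \le \abs{y} \le 2\abs{x}$ both $\abs{y}$ and $\abs{y - x}$ are comparable to $\abs{x}$, producing a contribution of order $\abs{x}^{\alpha - \beta}$; on $\abs{y} \ge 2\abs{x}$ one has $\abs{y - x} \ge \abs{y}/2$, reducing to $\int_{2\abs{x}}^{\infty} r^{\alpha - \beta - 1}\,dr$, which converges precisely because $\beta > \alpha$ and again yields $\abs{x}^{\alpha - \beta}$.

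Adding $I_1, I_2, I_3$ and keeping the dominant rate gives the three stated bounds (for $\beta \ge N$ the rate $\abs{x}^{\alpha - \beta}$ from $I_2, I_3$ is dominated by $\abs{x}^{\alpha - N}$). The main—and essentially only—point of vigilance is the remote tail in $A_3$: this is where the standing hypothesis $\beta > \alpha$ is indispensable, and where one must resist the cruder bound $I_\alpha(x - y) \lesssim \abs{x}^{\alpha - N}$, which would fail to be integrable against $v$ when $\beta \le N$.
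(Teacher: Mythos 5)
Your proof is correct and follows essentially the same route as the paper's: the same decomposition into a ball around \(x\), a region near the origin, and the remainder, with the same elementary kernel estimates, and with the case distinction in \(\beta\) coming from the same integral \(\int r^{N-1-\beta}\,dr\) over \(\{\abs{y}\lesssim\abs{x}\}\). The only cosmetic difference is that the paper groups the annulus \(\abs{x}/2\le\abs{y}\le 2\abs{x}\) with the piece where \(I_\alpha(x-y)\lesssim\abs{x}^{\alpha-N}\) is used, whereas you place it in \(A_3\) and exploit the decay of \(v\) there; the resulting bounds are identical.
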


\begin{proof}
Without loss of generality, we can assume that \(v(x) \le \frac{1}{\abs{x}^\beta}\) if \(\abs{x} \ge 1\).
Then for  every \(x \in \R^N\setminus B_2\) it holds
\[
  (I_\alpha \ast v)(x) \le A_\alpha \int_{\R^N\setminus B_1} \frac{1}{\abs{y}^\beta} \frac{1}{\abs{x-y}^{N - \alpha}}\,dy+ \frac{2^{N - \alpha} A_\alpha}{\abs{x}^{N - \alpha}}\int_{B_1} \abs{v}.
\]
Observe that if \(\abs{x-y} \le \frac{\abs{x}}{2}\), then \(\abs{y} \ge \frac{\abs{x}}{2}\).
Hence, since \(\alpha < N\),
\[
 \int_{B_{\abs{x}/2}(x)}  \frac{1}{\abs{y}^\beta} \frac{1}{\abs{x-y}^{N - \alpha}}\,dy
\le 2^\beta \int_{B_{\abs{x}/2}(x)}  \frac{1}{\abs{x}^\beta} \frac{1}{\abs{x-y}^{N - \alpha}}\,dy=\frac{C}{\abs{x}^{\beta-\alpha}}.
\]
Next, if \(\abs{y} \ge 2\abs{x}\), then \(\abs{x-y} \ge \frac{\abs{y}}{2}\).
Therefore, since \(\beta > \alpha\),
\[
 \int_{\R^N \setminus B_{2\abs{x}}}  \frac{1}{\abs{y}^\beta} \frac{1}{\abs{x-y}^{N - \alpha}}\,dy
\le 2^{N - \alpha} \int_{\R^N \setminus B_{2\abs{x}}}   \frac{1}{\abs{y}^{\beta+N - \alpha}}\,dy=\frac{C'}{\abs{x}^{\beta-\alpha}}.
\]
Finally, we obtain
\[
\begin{split}
 \int_{B_{2\abs{x}} \setminus (B_1 \cup B_{\abs{x}/2}(x))} \frac{1}{\abs{y}^\beta} \frac{1}{\abs{x-y}^{N - \alpha}}\,dy &\le \int_{B_{2\abs{x}} \setminus (B_1 \cup B_{\abs{x}/2}(x))} \frac{1}{\abs{y}^\beta} \frac{2^{N - \alpha}}{\abs{x}^{N - \alpha}}\,dy\\
&\le \frac{2^{N - \alpha}}{\abs{x}^{N - \alpha}} \int_{B_{2\abs{x}} \setminus B_1 } \frac{1}{\abs{y}^{\beta}}\,dy,
\end{split}
\]
where
\[
\int_{B_{2\abs{x}} \setminus B_1 } \frac{1}{\abs{y}^\beta}\,dy \le
\begin{cases}
C''\abs{x}^{N-\beta}& \text{if \(\beta<N\)},\\
C''\log (2\abs{x})&\text{if \(\beta=N\)},\\
C''& \text{if \(\beta > N\)},\\
\end{cases}
\]
so the assertion follows.
\end{proof}

\begin{lemma}\label{lemmaUpperasymptCritical}
Let \(v\in L^1_{\mathrm{loc}}(\R^N)\), \(\alpha \in (0, N)\) and \(\sigma\in\R\).
If
\[
 \limsup_{\abs{x} \to \infty} v(x) \frac{\abs{x}^N}{(\log \abs{x})^\sigma} <\infty.
\]
then
\begin{align*}
\limsup_{\abs{x} \to \infty} (I_\alpha \ast v)(x) \abs{x}^{N - \alpha} &<\infty & &\text{if \(\sigma < -1\)},\\
\limsup_{\abs{x} \to \infty} (I_\alpha \ast v)(x) \frac{\abs{x}^{N - \alpha}}{(\log (\log \abs{x}))} &<\infty & &\text{if \(\sigma = -1\)},\\
\limsup_{\abs{x} \to \infty} (I_\alpha \ast v)(x) \frac{\abs{x}^{N - \alpha}}{(\log \abs{x})^{\sigma+1}} &<\infty & &\text{if \(\sigma > -1\)}.
\end{align*}
\end{lemma}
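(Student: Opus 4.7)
The proof will mirror that of Lemma~\ref{lemmaUpperasympt}. Normalizing, I would assume $v(y) \le (\log \abs{y})^\sigma / \abs{y}^N$ for $\abs{y} \ge 2$ and bound $\int_{B_2} \abs{v}$ trivially. For \(\abs{x}\) large, I would decompose
\[
(I_\alpha \ast v)(x) = A_\alpha \int_{\R^N} \frac{v(y)}{\abs{x-y}^{N-\alpha}}\,dy
\]
into four pieces: the unit-size bulk $B_2$, the near-singularity ball $B_{\abs{x}/2}(x)$, the tail $\R^N \setminus B_{2\abs{x}}$, and the intermediate annulus $B_{2\abs{x}} \setminus (B_2 \cup B_{\abs{x}/2}(x))$. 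The first piece contributes at most $C/\abs{x}^{N-\alpha}$ by the lower bound $\abs{x-y} \ge \abs{x}/2$.

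For the ball $B_{\abs{x}/2}(x)$, the bound $\abs{y} \ge \abs{x}/2$ together with the eventual monotonicity of $r \mapsto (\log r)^\sigma / r^N$ gives $v(y) \le C (\log \abs{x})^\sigma / \abs{x}^N$, and integrating $\abs{x-y}^{\alpha-N}$ over the ball produces a factor $\abs{x}^\alpha$. For the tail region, $\abs{x-y} \ge \abs{y}/2$ reduces the integral to $\int_{2\abs{x}}^{\infty} (\log r)^\sigma r^{\alpha - N - 1}\,dr$, which, since $N - \alpha > 0$, is controlled by its boundary value. In both cases the estimate obtained is \(O\bigl((\log \abs{x})^\sigma / \abs{x}^{N-\alpha}\bigr)\), which is dominated by each of the three target bounds of the lemma.

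The main contribution is from the intermediate annulus, where $\abs{x-y} \ge \abs{x}/2$, so after pulling this factor out I am left with
\[
\frac{C}{\abs{x}^{N-\alpha}} \int_{B_{2\abs{x}} \setminus B_2} v \le \frac{C'}{\abs{x}^{N-\alpha}} \int_{2}^{2 \abs{x}} \frac{(\log r)^\sigma}{r}\,dr.
\]
The substitution $u = \log r$ turns the last integral into $\int_{\log 2}^{\log (2\abs{x})} u^\sigma\,du$, which is bounded when $\sigma < -1$, equals $\log \log (2\abs{x}) - \log \log 2$ when $\sigma = -1$, and equals $\bigl((\log 2\abs{x})^{\sigma + 1} - (\log 2)^{\sigma+1}\bigr)/(\sigma+1)$ when $\sigma > -1$. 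These three asymptotics match exactly the three bounds claimed in the statement. There is no genuine obstacle; the only care needed is in the bookkeeping of the monotonicity of the integrand for both signs of $\sigma$ and in ensuring the three ancillary regions are uniformly dominated by the target bounds.
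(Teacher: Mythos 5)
Your proposal is correct and follows essentially the same route as the paper's proof: the same normalization, the same four-region decomposition with the near-singularity ball, the tail, and the intermediate annulus, and the same reduction of the dominant annulus term to $\int (\log r)^\sigma r^{-1}\,dr$, whose three asymptotic regimes give the three claimed bounds. No gaps; the monotonicity and boundary-value estimates you invoke for the ancillary regions are exactly those used in the paper.
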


\begin{proof}
Without loss of generality, we can assume that \(v(x) \le \frac{\abs{\log \abs{x}}^\sigma}{\abs{x}^N}\)
for \(\abs{x} \ge 2\).
Then for \(x \in \R^N\setminus B_4\) it holds
\[
  (I_\alpha \ast v)(x) \le C \int_{\R^N\setminus B_2} \frac{\abs{\log \abs{y}}^\sigma}{\abs{y}^N} \frac{1}{\abs{x-y}^{N - \alpha}}\,dy+ \frac{C}{\abs{x}^{N - \alpha}}\int_{B_2} \abs{v}.
\]
Observe also that if \(\abs{x-y} \le \frac{\abs{x}}{2}\) then \(\abs{y} \ge \frac{\abs{x}}{2}\). Since \(r \mapsto (\log r)^\sigma/r^N\) is nonincreasing for \(r \ge e^\frac{\sigma}{N}\), if \(\abs{x} \ge  2 e^\frac{\sigma}{N}\)
\[
\begin{split}
 \int_{B_{\abs{x}/2}(x)}  \frac{\abs{\log \abs{y}}^\sigma}{\abs{y}^N} \frac{1}{\abs{x-y}^{N - \alpha}}\,dy
& \le 2^N \int_{B_{\abs{x}/2}(x)}  \frac{\abs{\log{\frac{\abs{x}}{2}}}^\sigma}{\abs{x}^N} \frac{1}{\abs{x-y}^{N - \alpha}}\,dy\\
&=\frac{C\abs{\log{\frac{\abs{x}}{2}}}^\sigma}{\abs{x}^{N-\alpha}}.
\end{split}
\]
Next, if \(\abs{y} \ge 2\abs{x}\) then \(\abs{x-y} \ge \frac{\abs{y}}{2}\). Hence, since \(N > \alpha\),
\[
 \int_{\R^N \setminus B_{2\abs{x}}}  \frac{\abs{\log \abs{y}}^\sigma}{\abs{y}^N} \frac{1}{\abs{x-y}^{N - \alpha}}\,dy
\le C \int_{\R^N \setminus B_{2\abs{x}}}   \frac{\abs{\log \abs{y}}^\sigma}{\abs{y}^{2 N - \alpha}}\,dy\le  C'\frac{\abs{\log \abs{x}}^\sigma}{\abs{x}^{N-\alpha}}.
\]
Finally, we obtain
\[
\begin{split}
 \int_{B_{2\abs{x}} \setminus (B_2 \cup B_{\abs{x}/2}(x))}\!\!\! \frac{\abs{\log \abs{y}}^\sigma}{\abs{y}^N} \frac{1}{\abs{x-y}^{N - \alpha}}\,dy
&\le \int_{B_{2\abs{x}} \setminus (B_2 \cup B_{\abs{x}/2}(x))} \!\!\! \frac{\abs{\log \abs{y}}^\sigma}{\abs{y}^N} \frac{2^{N - \alpha}}{\abs{x}^{N - \alpha}} \,dy\\
&\le \frac{2^{N - \alpha}}{\abs{x}^{N - \alpha}} \int_{B_{2\abs{x}} \setminus B_2 }  \frac{\abs{\log \abs{y}}^\sigma}{\abs{y}^{N}}\,dy,
\end{split}
\]
where
\[
\int_{B_{2\abs{x}} \setminus B_1 } \frac{1}{\abs{y}^N}=
\begin{cases}
C''& \text{if \(\sigma < - 1\)},\\
C''\log (\log (2\abs{x}))&\text{if \(\sigma = - 1\)},\\
C''(\log \abs{x})^{\sigma + 1} & \text{if \(\sigma > - 1\)},\\
\end{cases}
\]
which completes the proof.
\end{proof}

\section{Tools for distributional solutions.}\label{App-B}

\subsection{Truncation of supersolutions.}
The following lemma provides a powerful tool of approximation of distributional supersolutions
by weak supersolutions. It is essentially a reformulation of two truncation results
by H.\thinspace Brezis and A.\thinspace Ponce \cite{BrezisPonce2003}*{Lemma 1 and 2}.

\begin{lemma}
\label{lemmaKato}
Let \(\Omega\subseteq\R^N\) be an open connected set, \(u \in L^1_{\mathrm{loc}}(\Omega)\) and
\(V : \Omega \to \R\) be measurable
If \(V u \in L^1_{\mathrm{loc}} (\Omega)\), \(u \ge 0\) and
\[
 -\Delta u + V u \ge 0\quad\text{in}\quad\Omega
\]
in the sense of distributions, then for every \(k \in \R\) one has
\[
 T_k(u) \in H^1_{\mathrm{loc}} (\Omega)
\]
and
\[
 -\Delta T_k(u) + V T_k(u) \ge 0\quad\text{in}\quad\Omega
\]
in the weak sense, where
\(
  T_k (s) = \min (s, k).
\)
\end{lemma}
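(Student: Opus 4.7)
The plan is to prove both assertions by mollification and passage to the limit via Kato's inequality, following the strategy of Brezis and Ponce~\cite{BrezisPonce2003}. Fix a relatively compact subdomain $\omega \subset\subset \Omega$ and a standard nonnegative smooth mollifier $\eta_\delta$ with $\supp \eta_\delta \subset B_\delta$. For $\delta$ sufficiently small, $u_\delta := \eta_\delta \ast u \in C^\infty(\omega)$, and convolving the distributional inequality yields the pointwise inequality $-\Delta u_\delta + \eta_\delta \ast (V u) \ge 0$ on $\omega$.

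First, to establish $T_k(u) \in H^1_{\mathrm{loc}}(\Omega)$, I would derive a uniform $H^1$-bound on the family $\{T_k(u_\delta)\}_\delta$. Given a nonnegative cutoff $\varphi \in C^\infty_c(\omega)$, test the smoothed inequality against $\varphi^2\, T_k(u_\delta)_+$, a nonnegative Lipschitz function. Using $\nabla u_\delta \cdot \nabla T_k(u_\delta) = \lvert\nabla T_k(u_\delta)\rvert^2$ almost everywhere and rewriting the cross term as $\int \varphi\, \nabla G(u_\delta) \cdot \nabla \varphi$, where $G(s) := \int_0^s T_k(t)_+\,dt$ satisfies $0 \le G(s) \le k_+\,s$, integration by parts produces the uniform estimate
\[
  \int \varphi^2 \lvert \nabla T_k(u_\delta)_+ \rvert^2 \le C(k,\varphi)\Bigl(\lVert u_\delta\rVert_{L^1(\supp \varphi)} + \lVert \eta_\delta \ast (Vu) \rVert_{L^1(\supp \varphi)}\Bigr),
\]
the right-hand side being bounded independently of $\delta$ by $L^1_{\mathrm{loc}}$-convergence of mollifications. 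Weak compactness in $L^2_{\mathrm{loc}}$ together with $T_k(u_\delta) \to T_k(u)$ in $L^1_{\mathrm{loc}}$ then delivers $T_k(u) \in H^1_{\mathrm{loc}}(\Omega)$ and $\nabla T_k(u_\delta) \rightharpoonup \nabla T_k(u)$ weakly in $L^2_{\mathrm{loc}}$ along a subsequence.

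Next, the classical Kato inequality applied to the smooth function $u_\delta$ and the concave truncation $T_k$ yields, in the weak sense,
\[
  -\Delta T_k(u_\delta) \ge -\chi_{\{u_\delta < k\}}\, \Delta u_\delta \ge -\chi_{\{u_\delta < k\}}\, \eta_\delta \ast (Vu).
\]
Passing to the limit $\delta \to 0$ along a subsequence on which $u_\delta \to u$ almost everywhere, the first term converges (tested against any $\phi \in C^\infty_c(\omega)$) via the weak convergence already established, while for the potential term I would invoke $\chi_{\{u_\delta < k\}} \to \chi_{\{u < k\}}$ a.e.\ off the level set $\{u=k\}$, combined with $\eta_\delta \ast (Vu) \to Vu$ in $L^1_{\mathrm{loc}}$, and apply the dominated convergence theorem. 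Identifying the limit via $\chi_{\{u<k\}} Vu = V T_k(u)$ on $\{u<k\}$ together with the vanishing of $\nabla T_k(u)$ a.e.\ on $\{u \ge k\}$ delivers the desired inequality $-\Delta T_k(u) + V T_k(u) \ge 0$ in the weak sense.

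The main obstacle lies precisely in the final identification of the potential term: since $\chi_{\{u_\delta < k\}}$ is discontinuous along $\{u=k\}$, care is needed when this level set has positive measure, and moreover the pointwise values of $V T_k(u)$ on $\{u > k\}$ (namely $kV$) must be matched with the limit of $\chi_{\{u_\delta < k\}}\, \eta_\delta \ast (Vu)$. I would handle this by first treating the countable set of regular values of $k$ (those for which $|\{u=k\}|=0$) and then extending to arbitrary $k$ by monotone approximation $k_n \nearrow k$, using the monotone convergence $T_{k_n}(u) \nearrow T_k(u)$ in $H^1_{\mathrm{loc}}$; alternatively, one may replace the sharp characteristic by a smooth nonincreasing approximation $\chi_\varepsilon$ of $\chi_{(-\infty,k]}$ and perform the iterated limit $\delta \to 0$ followed by $\varepsilon \to 0$, dropping the nonpositive term $\varphi \chi_\varepsilon'(u_\delta)\lvert\nabla u_\delta\rvert^2$ arising from the chain rule.
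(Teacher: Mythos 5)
Your overall plan (mollify \(u\), get a uniform local \(H^1\) bound on the truncations, apply Kato's inequality to \(u_\delta\), pass to the limit) is the same in spirit as the paper's, which simply invokes the two truncation lemmas of Brezis and Ponce with the test function \(\check\eta_\delta \ast \bigl(\varphi (k - T_k(\eta_\delta \ast u))\bigr)\). However, your first step contains a genuine error: testing the supersolution inequality \(-\Delta u_\delta + \eta_\delta\ast(Vu)\ge 0\) with the nonnegative, \emph{nondecreasing-in-\(u_\delta\)} function \(\varphi^2 T_k(u_\delta)_+\) yields
\[
\int \varphi^2\,\abs{\nabla T_k(u_\delta)_+}^2 \;+\; 2\int \varphi\, T_k(u_\delta)_+\,\nabla u_\delta\cdot\nabla\varphi \;+\; \int \bigl(\eta_\delta\ast(Vu)\bigr)\varphi^2\, T_k(u_\delta)_+ \;\ge\; 0,
\]
so the Dirichlet term sits on the side that is bounded \emph{from below}; after your (correct) treatment of the cross term via \(G\), all you obtain is \(\int\varphi^2\abs{\nabla T_k(u_\delta)_+}^2 \ge -C\), which is vacuous, and the claimed uniform estimate does not follow. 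For a supersolution one must test with a function that is nonincreasing in \(u_\delta\): the right choice is \(\varphi^2\bigl(k - T_k(u_\delta)\bigr) = \varphi^2 (k - u_\delta)_+\), for which \((k-u_\delta)_+\nabla u_\delta = -\nabla\bigl(\tfrac12 (k-u_\delta)_+^2\bigr)\) turns the cross term into \(\int (k-u_\delta)_+^2\, \mathrm{div}(\varphi\nabla\varphi) \le k^2 \Norm{\mathrm{div}(\varphi\nabla\varphi)}_{L^1}\), and one gets \(\int \varphi^2 \abs{\nabla T_k(u_\delta)}^2 \le k^2 C(\varphi) + k \int_{\supp\varphi}\abs{\eta_\delta\ast(Vu)}\), uniformly in \(\delta\). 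This is precisely the (mollified) test function indicated in the paper, following Brezis--Ponce, Lemma 1; with this correction your compactness argument and the conclusion \(T_k(u)\in H^1_{\mathrm{loc}}(\Omega)\) go through.

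The second half is essentially sound: Kato's inequality for the smooth \(u_\delta\) gives \(-\Delta T_k(u_\delta) \ge -\chi_{\{u_\delta<k\}}\,\eta_\delta\ast(Vu)\), and your treatment of the level set \(\{u=k\}\) (regular values of \(k\), then monotone approximation \(k_n\nearrow k\)) is an acceptable way to pass to the limit, yielding \(-\Delta T_k(u) \ge -\chi_{\{u<k\}}\,Vu\) in the sense of distributions. But the final identification is not justified by ``vanishing of \(\nabla T_k(u)\) on \(\{u\ge k\}\)'': to go from this to \(-\Delta T_k(u) + V T_k(u)\ge 0\) one needs \(V T_k(u) \ge \chi_{\{u<k\}} V u\), i.e.\ \(k V\ge 0\) a.e.\ on \(\{u\ge k\}\). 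This is harmless for the paper, which applies the lemma only with \(V\ge 0\) (and without such a sign condition the stated inequality can fail on open sets where \(u>k\) and \(V<0\)), but your write-up should either invoke \(V\ge 0\) at this point or record the sharper conclusion \(-\Delta T_k(u)\ge \chi_{\{u<k\}}(-Vu)\).
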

The first part of the lemma is proved by taking
\(
 \check \eta_\delta \ast (\varphi (k - T_k (\eta_\delta \ast u)))
\)
as a test function in the inequality, for a suitable family of mollifiers \(\eta_\delta\), see \cite{BrezisPonce2003}*{Lemma 1}.
The second part is a consequence of a variant on Kato's inequality \cite{BrezisPonce2003}*{Lemma 2}.

\subsection{Minimal positive solutions at infinity.}

Consider the linear Schr\"odinger equation
\begin{equation}\label{loc-V}
-\Delta u+Vu=0\quad
\text{in \(\R^N\setminus \Bar{B}_\rho\)},
\end{equation}
where \(\rho> 0\) and \(V\in L^\infty_\mathrm{loc}(\R^N\setminus \Bar{B}_\rho)\).

\begin{definition}\label{App-minimal}
We say that \(H\in C^{1}(\R^N\setminus \Bar{B}_\rho)\) is a \emph{minimal positive solution at infinity} of \eqref{loc-V}
if \(H\) is a weak positive solution of \eqref{loc-V} and there exists a weak positive supersolution \(U\in H^1(\R^N\setminus \Bar{B}_\rho)\)
of \eqref{loc-V} such that
\begin{equation}\label{minimal-def}
\liminf_{\abs{x}\to\infty}\frac{U(x)}{H(x)}=+\infty.
\end{equation}
\end{definition}

For example, the fundamental solution of \eqref{loc-V} in \(\R^N\) (if it exists)
is a minimal positive solution of \eqref{loc-V} at infinity.
A minimal positive solution of \eqref{loc-V} might however not decay at infinity
may not decay to zero at infinity. For instance, constants are minimal positive solutions
at infinity for \(-\Delta\) in \(\R^2\setminus \Bar{B}_\rho\).

\begin{proposition}\label{p-minimal}
Assume that \(V\in L^\infty_\mathrm{loc}(\R^N \setminus \Bar{B}_{\rho})\) is nonnegative.
Let \(H\) be a minimal positive solution at infinity of \eqref{loc-V}.
If \(u \in L^1_\mathrm{loc}(\R^N \setminus \Bar{B}_{\rho})\) satisfies
\[
 -\Delta u + V u \ge 0\quad\text{in}\quad \R^N\setminus \Bar{B}_\rho
\]
in the sense of distributions and
\[
 \inf_{B_{3 \rho} \setminus \Bar{B}_{\rho}} u > 0,
\]
then
\[
 u \ge  \frac{\inf_{B_{3 \rho} \setminus \Bar{B}_{\rho}} u}{\sup_{\partial B_{2\rho}} H} H\quad\text{in \(\R^N \setminus \Bar{B}_{2 \rho}\)}.\]
\end{proposition}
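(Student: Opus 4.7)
The plan is to prove the comparison $u \ge cH$ on $\R^N \setminus \Bar{B}_{2\rho}$, with $c := \inf_{B_{3\rho}\setminus\Bar{B}_\rho} u / \sup_{\partial B_{2\rho}} H$, by a weak maximum principle argument in which $U$ plays the role of an asymptotic barrier. Setting $v := u - cH$, linearity gives that $v$ is still a distributional supersolution of $-\Delta + V$ in $\R^N \setminus \Bar{B}_\rho$; and by the choice of $c$, one has $v \ge 0$ almost everywhere on the annulus $B_{3\rho}\setminus \Bar{B}_\rho$, hence in a full neighborhood of $\partial B_{2\rho}$. The problem thus reduces to propagating nonnegativity of $v$ from this inner annulus out to all of $\R^N \setminus \Bar{B}_{2\rho}$.

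For each $\varepsilon > 0$ I would perturb by the $H^1$--integrable barrier from Definition~\ref{App-minimal}, setting $v_\varepsilon := v + \varepsilon U$. This is still a distributional supersolution, is strictly positive on $B_{3\rho}\setminus\Bar{B}_\rho$, and the minimality condition $\liminf_{\abs{x}\to \infty} U(x)/H(x) = +\infty$ together with $U \in H^1(\R^N \setminus \Bar{B}_\rho)$ guarantees that $H$ decays sufficiently rapidly at infinity for the negative part $(v_\varepsilon)_-$ to be square integrable there. I would then apply Lemma~\ref{lemmaKato} to $(v_\varepsilon)_-$: Kato's inequality yields that it is a weak subsolution of $-\Delta w + V w \le 0$, belongs to $H^1_\mathrm{loc}$, and vanishes in a full neighborhood of $\partial B_{2\rho}$. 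Testing this weak inequality against $\chi_R^2 (v_\varepsilon)_-$, where $\chi_R$ is a standard cutoff supported on $B_{2R}$ and equal to $1$ on $B_R$, and using $V \ge 0$ to absorb the zeroth-order contribution, yields the Caccioppoli-type estimate
\begin{equation*}
\int_{\R^N\setminus\Bar{B}_{2\rho}} \chi_R^2 \abs{\nabla (v_\varepsilon)_-}^2
\le \int_{B_{2R} \setminus B_R} \abs{\nabla \chi_R}^2 (v_\varepsilon)_-^2.
\end{equation*}
As $R \to \infty$ the right-hand side vanishes thanks to the pointwise control $(v_\varepsilon)_- \le cH$ at infinity combined with $H/U \to 0$ and $U \in H^1$, forcing $(v_\varepsilon)_- \equiv 0$ and hence $v_\varepsilon \ge 0$. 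Letting $\varepsilon \to 0$ then delivers the required inequality.

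The main obstacle is precisely the vanishing of the cutoff error term: one has to control the far-field size of $(v_\varepsilon)_-$, which in the worst case is comparable to $cH$, and this requires an honest quantitative use of the minimality condition $H/U \to 0$ together with the $H^1$-integrability of $U$. Weaker hypotheses on the smallness of $H$ at infinity would leave open the possibility of boundary contributions on $\partial B_R$ that do not vanish under the Caccioppoli estimate, so the two-sided control encoded in Definition~\ref{App-minimal} enters the argument in an essential way rather than merely providing cosmetic motivation.
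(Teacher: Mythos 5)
The anchoring step of your propagation argument does not hold: it is not true in general that \(v=u-cH\ge 0\) almost everywhere on the annulus \(B_{3\rho}\setminus\Bar{B}_\rho\), nor on a collar \(\{2\rho<\abs{x}<2\rho+\delta\}\). The choice \(c=\inf_{B_{3\rho}\setminus\Bar{B}_\rho}u/\sup_{\partial B_{2\rho}}H\) only gives \(cH\le\inf u\) \emph{on the sphere} \(\partial B_{2\rho}\); away from it \(H\) may well exceed \(\sup_{\partial B_{2\rho}}H\), since \(V\ge 0\) makes \(H\) subharmonic, so its maximum over the annulus sits on \(\partial B_\rho\cup\partial B_{3\rho}\), not on the middle sphere. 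Concretely, take \(V\equiv 0\), \(u\equiv 1\) and \(H(x)=(2\rho)^{N-2}\abs{x}^{2-N}\) (with \(N\) large enough that \(U=\abs{x}^{-s}\), \(N/2<s<N-2\), serves as the barrier in Definition~\ref{App-minimal}): then \(c=1\) and \(u-cH<0\) on all of \(B_{2\rho}\setminus\Bar{B}_\rho\). This matters because your Caccioppoli argument tests the weak inequality in \(\R^N\setminus\Bar{B}_{2\rho}\) against \(\chi_R^2(v_\varepsilon)_-\), and for that test function to be admissible (no inner boundary contribution) you genuinely need \((v_\varepsilon)_-\) to vanish on a collar just outside \(\partial B_{2\rho}\) — which is exactly what is left unproved. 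The step can be repaired for each fixed \(\varepsilon>0\): continuity of \(H\) gives \(\sup_{2\rho\le\abs{x}\le 2\rho+\delta}H\le\sup_{\partial B_{2\rho}}H+o(1)\) as \(\delta\to0\), and a weak Harnack inequality gives a positive essential lower bound for \(U\) on the collar, so \(v_\varepsilon=u-cH+\varepsilon U\ge 0\) there once \(\delta=\delta(\varepsilon)\) is small; but this is where the real content lies and it is missing from your argument.

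A secondary point: Lemma~\ref{lemmaKato} is a truncation statement for \emph{nonnegative} distributional supersolutions, so it does not, as cited, give that \((v_\varepsilon)_-\) is an \(H^1_{\mathrm{loc}}\) weak subsolution for the sign-changing function \(v_\varepsilon=u-cH+\varepsilon U\); you would need the genuine Kato-inequality statement of Brezis--Ponce together with an \(H^1_{\mathrm{loc}}\) regularity claim for the negative part, which is a different (though available) lemma. Note how the paper's proof sidesteps both difficulties at once: it truncates \(u\) at the constant level \(\inf_{B_{3\rho}\setminus\Bar{B}_\rho}u\) via Lemma~\ref{lemmaKato}, so the resulting \(H^1_{\mathrm{loc}}\) supersolution equals that constant on the whole annulus, and then compares it with the subsolution \(c(H-\varepsilon U)\) on \(\R^N\setminus\Bar{B}_{2\rho}\) by the weak comparison principle; there the only inequality on \(H\) ever used is \(H\le\sup_{\partial B_{2\rho}}H\) on the sphere itself, which is precisely the one that holds.
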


\begin{proof}
Define \(v = \min (u, \inf_{B_{3 \rho} \setminus \Bar{B}_{\rho}}u)\).
By Lemma~\ref{lemmaKato} we conclude that \(v \in H^1_\mathrm{loc} (\R^N \setminus \Bar{B}_\rho)\) and
\[
 -\Delta v + V v \ge 0\quad\text{in}\quad \R^N\setminus \Bar{B}_\rho.
\]
Moreover, \(v = \inf_{B_{3 \rho} \setminus \Bar{B}_{\rho}}\) on \(B_{3 \rho} \setminus \Bar{B}_{\rho}\).

Let \(U>0\) be a supersolution of \eqref{loc-V} given by \eqref{minimal-def}.
By the comparison principle for weak sub and supersolutions (cf. \cite{Agmon}*{Theorem 2.7}),
we obtain
\[
  v \ge \frac{\inf_{B_{3 \rho} \setminus \Bar{B}_{\rho}} u}{\sup_{\partial B_{2\rho}} H} (H-\varepsilon U)\quad\text{in \(\R^N \setminus \Bar{B}_{2 \rho}\),}
\]
for every sufficiently small \(\varepsilon>0\).
Since \(u \ge v\), the assertion follows.
\end{proof}

\subsection{Weak Harnack inequality.}
The weak Harnack inequality is usually formulated in the literature for classical or weak supersolutions of elliptic equations \cite{GilbargTrudinger1983}*{Theorem 8.18}.
The proposition below shows that the result remains valid for distributional supersolutions
(see \cite{Lieb-Loss}*{Theorem 9.10} for the case \(p=1\)).

\begin{proposition}\label{P-XXX}
Let \(N \ge 0\), \(\lambda \ge 0\) and \(p \in (-\infty, \tfrac{N}{N - 2})\) and \(p \ne 0\).
There exists \(C > 0\) such that for every \(u \in L^1 (B_{3\rho})\), if \(u\ge 0\),
\[
  - \Delta u + \lambda u \ge 0 \quad\text{in}\quad B_{3\rho}
\]
and \(u^p \in L^1 (B_{2\rho})\), then
\[
  \inf_{B_\rho} u \ge C\,\Bigl(\frac{1}{\rho^N} \int_{B_{2\rho}} u^p \Bigr)^\frac{1}{p}.
\]
\end{proposition}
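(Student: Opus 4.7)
The plan is to reduce the distributional weak Harnack inequality to the classical version for weak $H^1_{\mathrm{loc}}$ supersolutions via a truncation argument, then pass to the limit as the truncation level tends to infinity. The first step is to apply Lemma~\ref{lemmaKato} with the bounded potential $V = \lambda$: for every $k > 0$, the truncation $T_k(u) := \min(u,k)$ lies in $H^1_{\mathrm{loc}}(B_{3\rho})$ and satisfies
\[
  -\Delta T_k(u) + \lambda T_k(u) \ge 0 \qquad\text{in } B_{3\rho}
\]
in the weak sense. The hypothesis $u \in L^1(B_{3\rho})$ guarantees $u < +\infty$ almost everywhere, so $T_k(u) \nearrow u$ pointwise almost everywhere as $k \to \infty$.

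The second step is to invoke the classical weak Harnack inequality for nonnegative weak supersolutions (in the spirit of \cite{GilbargTrudinger1983}*{Theorem 8.18}, together with its extension to negative exponents obtained by running Moser's iteration with negative powers) applied to the bounded function $T_k(u)$: for the given $p \in (-\infty, \tfrac{N}{N-2}) \setminus \{0\}$, there exists a constant $C > 0$ depending only on $N$, $p$, and $\lambda\rho^2$, but independent of $k$, such that
\[
  \inf_{B_\rho} T_k(u) \ge C\,\Bigl(\frac{1}{\rho^N} \int_{B_{2\rho}} T_k(u)^p\Bigr)^{1/p}.
\]
Since $T_k(u)$ is bounded and strictly positive on sets where $u > 0$, all quantities are finite and the classical theory applies without any interpretational obstacle.

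The final step is to pass to the limit $k \to \infty$. As $u \in L^1(B_{3\rho})$ and $u$ is superharmonic (up to a zero-order term), its essential infimum on $B_\rho$ is finite, so for $k \ge \inf_{B_\rho} u$ one has $\inf_{B_\rho} T_k(u) = \inf_{B_\rho} u$ and the left-hand side stabilises. For the right-hand side, when $p > 0$ one has $T_k(u)^p \nearrow u^p$ and monotone convergence yields $\int T_k(u)^p \to \int u^p$. When $p < 0$, the hypothesis $u^p \in L^1(B_{2\rho})$ forces $u > 0$ almost everywhere, hence $T_k(u) > 0$ and $T_k(u)^p \searrow u^p$; the dominating function $T_1(u)^p$ is integrable, since $T_1(u)^p = u^p$ on $\{u < 1\}$ and equals $1$ on $\{u \ge 1\}$, so dominated convergence again delivers $\int T_k(u)^p \to \int u^p$.

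The main obstacle is ensuring that the classical weak Harnack inequality is available in the full range $p \in (-\infty, \tfrac{N}{N-2}) \setminus \{0\}$. Most textbook statements cover only $0 < p < \tfrac{N}{N-2}$; the extension to negative $p$ is standard in the Moser--Trudinger theory (it follows by iterating the Caccioppoli estimate for $u^{-s}$ with $s > 0$, or equivalently by combining a $\mathrm{BMO}$-type estimate on $\log u$ with the John--Nirenberg inequality), but it is typically absorbed in the proof of the full two-sided Harnack inequality rather than isolated as its own statement. One must also verify that the constant in the cited inequality does not depend on the truncation level, which is immediate from the classical proofs since they only use the weak-supersolution structure of $T_k(u)$ and not any $L^\infty$ bound on it.
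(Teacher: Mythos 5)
Your proposal is correct and follows essentially the same route as the paper: truncate via Lemma~\ref{lemmaKato}, apply the classical weak Harnack inequality (with its standard extension to exponents outside $[1,\tfrac{N}{N-2})$, constant independent of the truncation level) to $T_k(u)$, and pass to the limit $k\to\infty$. The only difference is in the limit step for $p<0$, where you use dominated convergence with the integrable majorant $T_1(u)^p$ while the paper invokes Fatou's lemma; your choice is in fact the more appropriate tool here, since for $p<0$ one needs $\int T_k(u)^p \to \int u^p$ from above rather than the lower semicontinuity that Fatou provides.
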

\begin{proof}
Define \(
 u_k = T_k (u)
\),
where  \(T_k (s) = \min (s, k)\).
By Lemma~\ref{lemmaKato}, \(u_k \in H^1_\mathrm{loc} (\R^N \setminus \Bar{B}_{3\rho})\) and
\[
 -\Delta v_k + \lambda v_k \ge 0\quad\text{in}\quad B_{3\rho}.
\]
Then by the weak Harnack inequality for weak supersolutions (cf. \cite{GilbargTrudinger1983}*{Theorem 8.18}),
\[
  \inf_{B_{\rho}} u_k \ge C\Bigl(\frac{1}{\rho^N} \int_{B_{2\rho}} (u_k)^p \Bigr)^\frac{1}{p}.
\]
We conclude by Lebesgue's monotone convergence theorem if \(p>0\), or by Fatou's lemma if \(p<0\).
\end{proof}

\section*{Acknowledgements}

VM is grateful to Wolfgang Reichel and to Marcello Lucia and Prashanth Srinivasan for stimulating discussions
on the Agmon--Allegretto--Piepenbrink positivity principle.

\begin{bibdiv}

\begin{biblist}

\bib{Agmon}{misc}{
    AUTHOR = {Agmon, Shmuel},
     TITLE = {On positivity and decay of solutions of second order elliptic
              equations on {R}iemannian manifolds},
 BOOKTITLE = {Methods of functional analysis and theory of elliptic
              equations ({N}aples, 1982)},
     PAGES = {19--52},
 PUBLISHER = {Liguori},
   ADDRESS = {Naples},
      YEAR = {1983},
}

\bib{Agmon-2}{book}{
   author={Agmon, Shmuel},
   title={Lectures on exponential decay of solutions of second-order
   elliptic equations: bounds on eigenfunctions of \(N\)-body Schr\"odinger
   operators},
   series={Mathematical Notes},
   volume={29},
   publisher={Princeton University Press},
   place={Princeton, NJ},
   date={1982},
   pages={118},
   isbn={0-691-08318-5},
}

\bib{AmbrosettiMalchiodiRuiz}{article}{
   author={Ambrosetti, A.},
   author={Malchiodi, A.},
   author={Ruiz, D.},
   title={Bound states of nonlinear Schr\"odinger equations with potentials
   vanishing at infinity},
   journal={J. Anal. Math.},
   volume={98},
   date={2006},
   pages={317--348},
   issn={0021-7670},
}

\bib{Bidaut-Veron}{article}{
   author={Bidaut-V{\'e}ron, Marie-Fran{\c{c}}oise},
   title={Local and global behavior of solutions of quasilinear equations of
   Emden-Fowler type},
   journal={Arch. Rational Mech. Anal.},
   volume={107},
   date={1989},
   number={4},
   pages={293--324},
   issn={0003-9527},
}

\bib{BrezisPonce2003}{article}{
   author={Brezis, Ha{\"{\i}}m},
   author={Ponce, Augusto C.},
   title={Remarks on the strong maximum principle},
   journal={Differential Integral Equations},
   volume={16},
   date={2003},
   number={1},
   pages={1--12},
   issn={0893-4983},
}

\bib{Brezis-Tesei}{article}{
   author={Brezis, Ha{\"{\i}}m},
   author={Dupaigne, Louis},
   author={Tesei, Alberto},
   title={On a semilinear elliptic equation with inverse-square potential},
   journal={Selecta Math. (N.S.)},
   volume={11},
   date={2005},
   number={1},
   pages={1--7},
   issn={1022-1824},
}

\bib{Caristi}{article}{
   author={Caristi, G.},
   author={D'Ambrosio, L.},
   author={Mitidieri, E.},
   title={Liouville theorems for some nonlinear inequalities},
   journal={Tr. Mat. Inst. Steklova},
   volume={260},
   date={2008},
   number={Teor. Funkts. i Nelinein. Uravn. v Chastn. Proizvodn.},
   pages={97--118},
   issn={0371-9685},
   translation={
      journal={Proc. Steklov Inst. Math.},
      volume={260},
      date={2008},
      number={1},
      pages={90--111},
      issn={0081-5438},
   },
}

\bib{Choquard-08}{article}{
   author={Choquard, Philippe},
   author={Stubbe, Joachim},
   author={Vuffray, Marc},
   title={Stationary solutions of the Schr\"odinger-Newton model---an ODE
   approach},
   journal={Differential Integral Equations},
   volume={21},
   date={2008},
   number={7-8},
   pages={665--679},
   issn={0893-4983},
}

\bib{CyconFroeseKirschSimon}{book}{
   author={Cycon, H. L.},
   author={Froese, R. G.},
   author={Kirsch, W.},
   author={Simon, B.},
   title={Schr\"odinger operators with application to quantum mechanics and
   global geometry},
   series={Texts and Monographs in Physics},
   edition={Springer Study Edition},
   publisher={Springer-Verlag},
   place={Berlin},
   date={1987},
   pages={x+319},
   isbn={3-540-16758-7},
}

\bib{Fall-I}{article}{
   author={Fall, Mouhamed Moustapha},
   author={Musina, Roberta},
   title={Sharp nonexistence results for a linear elliptic inequality
   involving Hardy and Leray potentials},
   journal={J. Inequal. Appl.},
   date={2011},
   pages={Art. ID 917201, 21},
   issn={1025-5834},
}

\bib{Frank}{article}{
    AUTHOR = {Frank, Rupert L.},
     TITLE = {A simple proof of Hardy-Lieb-Thirring inequalities},
   JOURNAL = {Comm. Math. Phys.},
    VOLUME = {290},
      YEAR = {2009},
    NUMBER = {2},
     PAGES = {789--800},
      ISSN = {0010-3616},
}

\bib{Gidas}{article}{
   author={Gidas, Basilis},
   title={Symmetry properties and isolated singularities of positive
   solutions of nonlinear elliptic equations},
   conference={
      title={Nonlinear partial differential equations in engineering and applied  science},
      address={Proc. Conf., Univ. Rhode Island, Kingston, R.I.},
      date={1979},
   },
   book={
      series={Lecture Notes in Pure and Appl. Math.},
      volume={54},
      publisher={Dekker},
      place={New York},
   },
   date={1980},
   pages={255--273},
}

\bib{GilbargTrudinger1983}{book}{
    AUTHOR = {Gilbarg, David},
    author={Trudinger, Neil S.},
     TITLE = {Elliptic partial differential equations of second order},
    SERIES = {Grundlehren der Mathematischen Wissenschaften},
    VOLUME = {224},
 PUBLISHER = {Springer-Verlag},
   ADDRESS = {Berlin},
      YEAR = {1983},
     PAGES = {xiii+513},
      ISBN = {3-540-13025-X},
}

\bib{Herbst}{article}{
   author={Herbst, Ira W.},
   title={Spectral theory of the operator
   \((p^{2}+m^{2})^{1/2}-Ze^{2}/r\)},
   journal={Comm. Math. Phys.},
   volume={53},
   date={1977},
   number={3},
   pages={285--294},
   issn={0010-3616},
}

\bib{Kato}{article}{
   author={Kato, Tosio},
   title={Growth properties of solutions of the reduced wave equation with a
   variable coefficient},
   journal={Comm. Pure Appl. Math.},
   volume={12},
   date={1959},
   pages={403--425},
   issn={0010-3640},
   review={\MR{0108633 (21 \#7349)}},
}

\bib{KLS}{article}{
   author={Kondratiev, Vladimir},
   author={Liskevich, Vitali},
   author={Sobol, Zeev},
   title={Second-order semilinear elliptic inequalities in exterior domains},
   journal={J. Differential Equations},
   volume={187},
   date={2003},
   number={2},
   pages={429--455},
   issn={0022-0396},
}

\bib{Lenzmann}{article}{
   author={Lenzmann, Enno},
   title={Uniqueness of ground states for pseudorelativistic Hartree
   equations},
   journal={Anal. PDE},
   volume={2},
   date={2009},
   number={1},
   pages={1--27},
   issn={1948-206X},
}

\bib{Lieb-77}{article}{
   author={Lieb, Elliott H.},
   title={Existence and uniqueness of the minimizing solution of Choquard's
   nonlinear equation},
   journal={Studies in Appl. Math.},
   volume={57},
   date={1976/77},
   number={2},
   pages={93--105},
}

\bib{Lieb-Loss}{book}{
   author={Lieb, Elliott H.},
   author={Loss, Michael},
   title={Analysis},
   series={Graduate Studies in Mathematics},
   volume={14},
   edition={2},
   publisher={American Mathematical Society},
   place={Providence, RI},
   date={2001},
   pages={xxii+346},
   isbn={0-8218-2783-9},
}

\bib{Lieb-polaron}{article}{
   author={Lieb, Elliott H.},
   author={Thomas, Lawrence E.},
   title={Exact ground state energy of the strong-coupling polaron},
   journal={Comm. Math. Phys.},
   volume={183},
   date={1997},
   number={3},
   pages={511--519},
   issn={0010-3616},
}

\bib{Lions-80}{article}{
   author={Lions, P.-L.},
   title={The Choquard equation and related questions},
   journal={Nonlinear Anal.},
   volume={4},
   date={1980},
   number={6},
   pages={1063--1072},
   issn={0362-546X},
}

\bib{Lions-1-1}{article}{
   author={Lions, P.-L.},
   title={The concentration-compactness principle in the calculus of
   variations. The locally compact case.},
   part = {I},
   journal={Ann. Inst. H. Poincar\'e Anal. Non Lin\'eaire},
   volume={1},
   date={1984},
   number={2},
   pages={109--145},
   issn={0294-1449},
}

\bib{LLM}{article}{
    AUTHOR = {Liskevich, Vitali},
  author={Lyakhova, Sofya},
author={Moroz, Vitaly},
     TITLE = {Positive solutions to singular semilinear elliptic equations
              with critical potential on cone-like domains},
   JOURNAL = {Adv. Differential Equations},
    VOLUME = {11},
      YEAR = {2006},
    NUMBER = {4},
     PAGES = {361--398},
      ISSN = {1079-9389},
}

\bib{Ma-Zhao}{article}{
   author={Ma, Li},
   author={Zhao, Lin},
   title={Classification of positive solitary solutions of the nonlinear
   Choquard equation},
   journal={Arch. Ration. Mech. Anal.},
   volume={195},
   date={2010},
   number={2},
   pages={455--467},
   issn={0003-9527},
}

\bib{Menzala-80}{article}{
   author={Menzala, Gustavo Perla},
   title={On regular solutions of a nonlinear equation of Choquard's type},
   journal={Proc. Roy. Soc. Edinburgh Sect. A},
   volume={86},
   date={1980},
   number={3-4},
   pages={291--301},
   issn={0308-2105},
}

\bib{Menzala-83}{article}{
   author={Menzala, Gustavo Perla},
   title={On the nonexistence of solutions for an elliptic problem in
   unbounded domains},
   journal={Funkcial. Ekvac.},
   volume={26},
   date={1983},
   number={3},
   pages={231--235},
   issn={0532-8721},
}

\bib{Mitidieri}{article}{
   author={Mitidieri, {\`E}.},
   author={Pokhozhaev, S. I.},
   title={Liouville theorems for some classes of nonlinear nonlocal
   problems},
   language={Russian},
   journal={Tr. Mat. Inst. Steklova},
   volume={248},
   date={2005},
   number={Issled. po Teor. Funkts. i Differ. Uravn.},
   pages={164--184},
   issn={0371-9685},
   translation={
      journal={Proc. Steklov Inst. Math.},
      date={2005},
      number={1 (248)},
      pages={158--178},
      issn={0081-5438},
   },
}

\bib{Penrose-1}{article}{
   author={Moroz, Irene M.},
   author={Penrose, Roger},
   author={Tod, Paul},
   title={Spherically-symmetric solutions of the Schr\"odinger-Newton
   equations},
   journal={Classical Quantum Gravity},
   volume={15},
   date={1998},
   number={9},
   pages={2733--2742},
   issn={0264-9381},
}

\bib{MVS}{article}{
   author={Moroz, Vitaly},
   author={Van Schaftingen, Jean},
   title={Semiclassical stationary states for nonlinear Schr\"odinger
   equations with fast decaying potentials},
   journal={Calc. Var. Partial Differential Equations},
   volume={37},
   date={2010},
   number={1-2},
   pages={1--27},
   issn={0944-2669},
}

\bib{MVS-ground}{article}{
  author = {Moroz, Vitaly},
  author = {Van Schaftingen, Jean},
  title = {Groundstates of nonlinear Choquard equations: existence, qualitative properties and decay asymptotics},
  note = {In preparation},
}

\bib{Pekar}{book}{
   author={Pekar, S.},
   title={Untersuchung {\"u}ber die Elektronentheorie der Kristalle},
   publisher={Akademie Verlag},
   place={Berlin},
   date={1954},
   pages={184},
}

\bib{Penrose}{article}{
   author={Penrose, Roger},
   title={On gravity's role in quantum state reduction},
   journal={Gen. Relativity Gravitation},
   volume={28},
   date={1996},
   number={5},
   pages={581--600},
   issn={0001-7701},
}

\bib{Pinchover}{article}{
   author={Pinchover, Yehuda},
   title={Topics in the theory of positive solutions of second-order
   elliptic and parabolic partial differential equations},
   conference={
      title={Spectral theory and mathematical physics: a Festschrift in
      honor of Barry Simon's 60th birthday},
   },
   book={
      series={Proc. Sympos. Pure Math.},
      volume={76},
      publisher={Amer. Math. Soc.},
      place={Providence, RI},
   },
   date={2007},
   pages={329--355},
}

\bib{Quittner-Souplet}{book}{
   author={Quittner, Pavol},
   author={Souplet, Philippe},
   title={Superlinear parabolic problems},
   series={Birkh\"auser Advanced Texts: Basler Lehrb\"ucher},
   subtitle={Blow-up, global existence and steady states},
   publisher={Birkh\"auser Verlag},
   place={Basel},
   date={2007},
   pages={xii+584},
   isbn={978-3-7643-8441-8},
}

\bib{Riesz}{article}{
   author={Riesz, Marcel},
   title={L'int\'egrale de Riemann-Liouville et le probl\`eme de Cauchy},
   journal={Acta Math.},
   volume={81},
   date={1949},
   pages={1--223},
   issn={0001-5962},
}

\bib{Secchi}{article}{
      author={Secchi, Simone},
       title={A note on Schr\"odinger--Newton systems with
        decaying electric potential},
      journal={Nonlinear Anal.},
     volume={72},
        date={2009},
    number={9-10},
    pages={3842-3856},
}

\bib{Penrose-2}{article}{
   author={Tod, Paul},
   author={Moroz, Irene M.},
   title={An analytical approach to the Schr\"odinger-Newton equations},
   journal={Nonlinearity},
   volume={12},
   date={1999},
   number={2},
   pages={201--216},
   issn={0951-7715},
}

\bib{Stein-Weiss}{article}{
   author={Stein, E. M.},
   author={Weiss, Guido},
   title={Fractional integrals on \(n\)-dimensional Euclidean space},
   journal={J. Math. Mech.},
   volume={7},
   date={1958},
   pages={503--514},
}

\bib{Wei-Winter}{article}{
   author={Wei, Juncheng},
   author={Winter, Matthias},
   title={Strongly interacting bumps for the Schr\"odinger-Newton equations},
   journal={J. Math. Phys.},
   volume={50},
   date={2009},
   number={1},
   pages={012905, 22},
   issn={0022-2488},
}

\end{biblist}

\end{bibdiv}

\end{document}